\theoremstyle{plain}{
  \newtheorem{thm}{Theorem}[section]
  \newtheorem{cor}[thm]{Corollary}
  \newtheorem{lem}[thm]{Lemma}
  \newtheorem{prop}[thm]{Proposition}

  }
\theoremstyle{definition}{
  \newtheorem{defn}[thm]{Definition}
 \newtheorem{notation}[thm]{Notation}
  }
\theoremstyle{remark}
\renewcommand{\subsubsection}{\sssection\rm}
\numberwithin{equation}{section}
\newcommand{\pt}{pt}
\newcommand{\Spec}{\operatorname{Spec}}
\newcommand{\Hom}{\operatorname{Hom}}
\newcommand{\rk}{\operatorname{rk}}
\newcommand{\op}{\mathrm{op}}
\newcommand{\SmOp}{\mathcal Sm\mathcal Op}
\newcommand{\Sm}{\mathcal Sm}
\newcommand{\Sp}{\mathnormal{Sp}}
\newcommand{\Aff}{\mathbf{A}}
\newcommand{\ZZ}{\mathbb{Z}}
\newcommand{\M}{\mathbf{M}}
\newcommand{\colim}{\operatornamewithlimits{colim}}
\newcommand \xra {\xrightarrow }
\newcommand \lra {\longrightarrow }
\newcommand \hra {\hookrightarrow }
\newcommand \xla {\xleftarrow }
\newcommand{\angles}[1]{\langle #1 \rangle}
\DeclareMathOperator{\HGr}{\mathnormal{HGr}}
\DeclareMathOperator{\USp}{\mathnormal{HU}}
\newcommand{\OO}{\mathcal{O}}
\newcommand{\parens}[1]{\textup{(}#1\textup{)}}
\newcommand{\veps}{\varepsilon}
\DeclareMathOperator{\thom}{\mathnormal{th}}
\newcommand{\onto}{\twoheadrightarrow}
\DeclareMathOperator{\coker}{coker}
\newcommand{\fhom}{\operatorname{\mathbf{hom}}_{\bullet}}
\DeclareMathOperator{\hocolim}{hocolim}
\newcommand{\finite}{\text{\textit{fin}}}
\newcommand{\geom}{\text{\textit{geom}}}
\newcommand{\mono}{\rightarrowtail}
\newcommand{\shf}{\mathcal}
\newcommand{\ft}{\text{\textit{small}}}
\newcommand{\homog}{\text{\textit{homog}}}
\newcommand{\KO}{\mathbf{KO}}
\newcommand{\BO}{\mathbf{BO}}
\newcommand{\KSp}{\mathbf{KSp}}
\newcommand{\BGL}{\mathbf{BGL}}
\newcommand{\NN}{\mathbb{N}}
\newcommand{\HH}{\mathsf{H}}
\newcommand{\GG}{\mathbb{G}}
\newcommand{\hh}{\mathsf{h}}
\begin{document}

\title{On the motivic commutative ring spectrum $\BO$}

\author{Ivan Panin}
\address{Steklov Institute of Mathematics at St.~Petersburg, Russia}
\author {Charles Walter}
\address{Laboratoire J.-A.~Dieudonn\'e \\ UMR 6621 du CNRS \\ Universit\'e de Nice -- Sophia Antipolis \\ 28 Avenue Valrose \\ 06108 Nice Cedex 02 \\ France}

\thanks{The first author gratefully acknowledge excellent working conditions and support provided by
Laboratoire J.-A. Dieudonn\'{e}, UMR 6621 du CNRS, Universit\'{e} de Nice  Sophia Antipolis, 
and by the RCN Frontier Research Group Project no. 250399 “Motivic Hopf equations" at University of Oslo.
}

\begin{abstract}
We construct an algebraic commutative ring $T$-spectrum $\BO$ which
is stably fibrant and $(8,4)$-periodic and such that on
$\SmOp/S$ the cohomology theory
$(X,U) \mapsto \BO^{p,q}(X_{+}/U_{+})$ and
Schlichting's hermitian $K$-theory functor
$(X,U) \mapsto KO^{[q]}_{2q-p}(X,U)$
are canonically isomorphic.
We use the motivic weak equivalence $\ZZ \times HGr
\xra{\sim} \KSp$ relating the infinite quaternionic Grassmannian to symplectic $K$-theory
to
equip $\BO$ with the structure of
a commutative monoid in the motivic stable homotopy category.
When the base scheme is $\Spec \ZZ[\frac 12]$,
this monoid structure and the induced ring structure on the cohomology theory
$\BO^{*,*}$ are the unique structures compatible with the
products
$$KO^{[2m]}_0(X) \times KO^{[2n]}_0(Y) \to KO^{[2m+2n]}_0(X \times Y).$$
on Grothendieck-Witt groups induced by the tensor product of symmetric chain complexes.
The cohomology theory is bigraded commutative
with the switch map acting on $\BO^{*,*}(T \wedge T)$ in the same way as
multiplication by the Grothendieck-Witt class of the symmetric bilinear space $\angles{-1}$.
\end{abstract}

\maketitle

\section{Introduction}

In a recent paper
\cite{Panin:2010fk}
we
defined motivic versions of symplectically oriented cohomology theories $A$ and of
quaternionic Grassmannians $HGr(r,n)$.  This $HGr(r,n)$ is the open subscheme of the ordinary
Grassmannian $Gr(2r,2n)$ parametrizing subspaces on which the standard symplectic form on
$\OO^{\oplus 2n}$ is nondegenerate.
We defined Borel classes of symplectic bundles in
such theories and calculated
\begin{equation}
\label{E:A(HGr)}
A(HGr(r,n)) = A(\pt)[b_{1},\dots,b_{r}]/(h_{n-r+1},\dots,h_{n})
\end{equation}
where the $b_{i}$ are the Borel classes of the tautological bundle on $HGr(r,n)$, and the
$h_{i}$ are the polynomials in the $b_{i}$ corresponding to the complete symmetric polynomials.
This is the same formula as the one which describes the cohomology of an ordinary Grassmannian
in terms of the Chern classes of an oriented cohomology theory.

In this paper we begin to apply those results to the hermitian $K$-theory of regular noetherian
separated schemes $X$ of finite Krull dimension with $\frac 12 \in \Gamma(X,\OO_{X})$.
We write $KO^{[n]}(X,U)$ for Schlichting's hermitian $K$-theory space for bounded complexes of
vector bundles on $X$ which are acyclic on the open subscheme $U \subset X$ and which are symmetric
with respect to the shift by $n$ of the usual duality.  We write $KO^{[n]}_{i}(X,U)$ for its homotopy
groups (for $i \geq 0$) or for Balmer's Witt groups $W^{n-i}(X,U)$ (for $i < 0$).

One of our main results is the following.

\begin{thm}
\label{T:SLc.ring}
For a regular separated noetherian scheme $S$ of finite Krull dimension with
$\frac 12 \in \Gamma(S,\OO_{S})$ Schlichting's hermitian $K$-theory is a
ring cohomology theory with an $SL^{c}$ Thom classes theory.
\end{thm}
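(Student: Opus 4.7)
The plan is to produce simultaneously the multiplicative structure and a family of $SL^c$-Thom classes on $KO^{[*]}_*$, and then to verify the compatibility axioms listed in \cite{Panin:2010fk}.

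For the ring structure I would start from the derived tensor product $\otimes^{L}_{\OO_X}$ of bounded complexes of vector bundles. Applied to a pair of symmetric complexes $(P,\varphi)$, $(Q,\psi)$ of degrees $m$ and $n$, supported on $X\setminus U$ and $Y\setminus V$ respectively, it produces a symmetric complex $(P\otimes Q,\varphi\otimes\psi)$ of degree $m+n$ supported on $(X\setminus U)\times(Y\setminus V)$. The standard machinery of bi-exact bifunctors on Waldhausen categories with duality, in the form developed by Schlichting, then yields external pairings
$$KO^{[m]}_p(X,U)\otimes KO^{[n]}_q(Y,V)\lra KO^{[m+n]}_{p+q}\bigl(X\times Y,\ U\times Y\cup X\times V\bigr),$$
and composing with pullback along the diagonal gives an internal product on $KO^{[*]}_*(X,U)$. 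Associativity, unitality (unit $[(\OO_S,1)]\in KO^{[0]}_0(S)$) and graded commutativity (with the sign coming from the symmetry of $\otimes$) are all inherited from the corresponding properties of $\otimes$. Converting to the bigraded indexing $(p,q)\mapsto KO^{[q]}_{2q-p}$ gives a ring cohomology theory in the sense of \cite{Panin:2010fk}.

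For the Thom classes I would associate, to a rank-$r$ $SL^c$-bundle $(E,L,\lambda\colon\det E\xra{\sim} L^{\otimes 2})$ on $X$, a class in $KO^{[r]}_0(E,E\setminus 0_X)$ as follows. On the total space of $E$ form the Koszul complex
$$\K(E)=\bigl(\Lambda^{r}E^{\vee}\to\Lambda^{r-1}E^{\vee}\to\cdots\to E^{\vee}\to\OO\bigr),$$
whose differentials are contraction with the tautological section of $E$; this is a complex of vector bundles acyclic away from the zero section. Its natural Koszul self-duality is an $[r]$-symmetric pairing with values in $\det E^{\vee}$. Pulling back $\lambda^{-1}$ identifies these values with $L^{\otimes -2}$, and then tensoring the pairing with the identity of $L$ converts it into a genuinely $\OO$-valued $[r]$-symmetric structure. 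The resulting class $\thom(E,\lambda,L)\in KO^{[r]}_0(E,E\setminus 0_X)$ is the candidate Thom class. Functoriality in pullback, the normalization $\thom(\OO_X,1,\OO_X)=1$, and the multiplicativity $\thom(E_1\oplus E_2)=\thom(E_1)\cdot\thom(E_2)$ are all formal from the multiplicativity of the Koszul complex.

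The non-formal axiom left to verify, and the point which I expect to be the main obstacle, is the Thom isomorphism: that multiplication by $\thom(E,\lambda,L)$ is an isomorphism
$$KO^{[*]}_*(X)\xra{\sim}KO^{[*+r]}_*(E,E\setminus 0_X).$$
My plan is a two-step reduction. First, via the Jouanolou trick together with the projective bundle formula for hermitian $K$-theory, reduce to the case of a rank-one line bundle $E=L$ with $L\cong M^{\otimes 2}$. Second, in rank one the Koszul complex is simply $L^{-1}\to\OO$; the required isomorphism then follows by combining Schlichting's localization long exact sequence for the pair $(L,L\setminus 0_X)$ with the $\Aff^{1}$-homotopy invariance $KO^{[n]}_i(X)\cong KO^{[n]}_i(L)$, which together force multiplication by the rank-one Thom class to be invertible. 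Assembling these inputs, the data above endows Schlichting's hermitian $K$-theory with a ring cohomology theory structure and a compatible $SL^c$-Thom classes theory in the precise sense used in \cite{Panin:2010fk}.
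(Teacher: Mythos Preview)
Your construction of the Thom classes via the Koszul complex is the same as the paper's (formula \eqref{E:KO.Thom}, following Nenashev). But your route to the ring structure and to the Thom isomorphism are both quite different from the paper's, and the second of these has a gap.

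\textbf{Ring structure.} You invoke a fully bilinear pairing
\[
KO^{[m]}_{p}(X,U)\times KO^{[n]}_{q}(Y,V)\lra KO^{[m+n]}_{p+q}(X\times Y,\,U\times Y\cup X\times V)
\]
coming from bi-exact duality-preserving bifunctors on Schlichting's Waldhausen model. The paper does \emph{not} have this. It only constructs the Gille--Nenashev \emph{partial} pairings \eqref{E:KO.pairing.1}--\eqref{E:KO.pairing.2}, in which one of the two factors must lie in $KO^{[*]}_{0}$. From this partial structure the paper computes the cohomology of quaternionic Grassmannians, builds the $T$-spectrum $\BO$, proves $\ZZ\times HGr\simeq \KSp$, produces an almost-commutative-monoid structure on $\BO$, shows the relevant $\varprojlim^{1}$ groups vanish over $\Spec\ZZ[\tfrac12]$ via the auxiliary spectra $\BO^{\finite}$ and $\BO^{\geom}$, and finally obtains the full ring structure by base change (Theorems \ref{monoidBO}, \ref{T:lim1}, \ref{T:commutative.monoid}, \ref{T:unique.2}). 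The paper even remarks, in the discussion around Theorem \ref{T:compatible}, that Schlichting had described to the authors (orally, unpublished at the time) exactly the pairing you are using, and that this would yield the result directly; but the paper proceeds without it. So your approach is legitimate and far shorter, provided you can point to a rigorous reference for the bi-exact pairing on hermitian $K$-theory spaces; at the time of writing the paper could not.

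\textbf{Thom isomorphism.} Here there is a genuine gap. Your proposed reduction ``via Jouanolou together with the projective bundle formula'' to the rank-one case is not a valid move: the projective bundle theorem in hermitian $K$-theory does not let you split an arbitrary $SL^{c}$-bundle into line-bundle pieces, and even for a line bundle the localization sequence plus $\Aff^{1}$-invariance only tells you that $KO^{[*]}_{*}(L,L\smallsetminus 0_{X})$ has the right size, not that cup product with your Koszul class is the isomorphism. The paper's argument (sketch of proof of Theorem \ref{T:SLc.oriented}) is cleaner and avoids any rank reduction: the duality-preserving functor $\mathcal F\mapsto \pi^{*}\mathcal F\otimes_{\OO_{E}}K(E)$ lands in complexes quasi-isomorphic to $z_{*}\mathcal F$, so the induced map on hermitian $K$-theory is a d\'evissage isomorphism, because it is one on Quillen--Waldhausen $K$-theory and on Balmer's Witt groups (Gille's d\'evissage), and Theorem \ref{T:equivalence} then promotes this to hermitian $K$-theory. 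You should replace your reduction argument with this d\'evissage step.
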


Here \emph{ring cohomology theory} is used in the sense of \cite[Definitions 2.1 and 2.13]{Panin:2003rz}.
An \emph{$SL^{c}$ Thom classes theory} specifies a Thom class
$\thom(E,L,\lambda) \in KO_{0}^{[n]}(E,E-X)$ for every \emph{$SL_{n}^{c}$-bundle},
by which we mean a
triple $(E,L,\lambda)$ with
$E$ a vector bundle of rank $n$ over $X$, $L$ a line bundle and $\lambda \colon L \otimes L \to \det E$
an isomorphism.  These classes are functorial, multiplicative, and induces isomorphisms
$\cup \thom(E,L,\lambda) \colon KO_{i}^{[m]}(X) \to KO_{i}^{[m+n]}(E,E-X)$ for all $i$ and $m$.
The Thom classes restrict to Euler classes $e(E,L,\lambda) \in KO_{0}^{[n]}(X)$.
An $SL^{c}$ Thom classes theory gives Thom classes for all special linear, special orthogonal and
symplectic bundles.
So by the theory of \cite{Panin:2010fk} there are
Borel classes $b_{i}(E,\phi) \in KO_{0}^{[2i]}(X)$
for symplectic bundles.  The $b_{1}(E,\phi)$ of
a symplectic bundle
of rank $2r$
is the class corresponding to $[E,\phi] - r[\HH] \in KSp_{0}(X,U) = GW^{-}(X)$ under the
natural isomorphism $KSp \cong KO^{[2]}$.  Here $\HH$ is the trivial symplectic bundle of rank $2$.
The higher Borel classes will be calculated elsewhere.

We also construct several motivic spectra representing hermitian $K$-theory.
The first construction is a $T$-spectrum whose spaces
$(\KO^{[0]},\KO^{[1]},\KO^{[2]},\dots)$
are fibrant replacements
of presheaves composed of Schlichting's Waldhausen-like
hermitian $K$-theory spaces for bounded complexes of vector
bundles with shifted dualities \cite{Schlichting:2010uq}.
The structure maps $\KO^{[n]} \wedge T \to \KO^{[n+1]}$
are adjoint to the maps $\KO^{[n]}({-}) \to \KO^{[n+1]}({-} \wedge T)$
which are essentially multiplication by the Thom class
$\thom \in \KO_{0}^{[1]}(T)$ of the trivial line bundle.
Note the use of the appearance of the Thom classes in the very structure of the spectrum.
For $(X,U)$ in $\SmOp/S$ there are functorial isomorphisms
\begin{equation}
\label{E:isomorphisms}
KO_{i}^{[n]}(X,U) \cong \KO^{[n]}_{i}(X_{+}/U_{+}) \cong
\BO^{2n-i,n}(\Sigma_{T}^{\infty}(X_{+}/U_{+})),
\end{equation}
and the boundary maps $\partial \colon KO_{i}^{[n]}(U) \to KO_{i-1}^{[n]}(X,U)$
and
\[
\partial \colon \BO^{2n-i,n}(\Sigma_{T}^{\infty}(U_{+})) \to \BO^{2n-i+1,n}
(\Sigma_{T}^{\infty} (X_{+}/U_{+}))
\]
correspond.
%
Because it is based on the hermitian $K$-theory of chain complexes,
this spectrum has advantages in certain situations
over the one  constructed several years ago by Hornbostel
\cite{Hornbostel:2005ph}.
It treats all
shifts/weights uniformly instead of dealing in one way with the $K^{h}$-theory of the even
weights and in another way with the $U$-theory and $V$-theory of the odd weights.
It naturally handles non-affine schemes $X$ and even pairs
$(X,U)$ with $U \subset X$ open.  Finally we can easily identify the Thom, Euler
and Borel classes in the Grothendieck-Witt groups of chain complexes
$GW^{[n]}(X\ on\ X-U)$.

We show that the Morel and Voevodsky's theorem on Grassmannians and algebraic $K$-theory extends
to the symplectic context.  There are in truth only a few things to verify for symplectic groups
beyond what is in Morel and Voevodsky's paper.
(Orthogonal groups are much more problematic. {\bf However it has been done recently by M.Schlichting
and Shanker Tripathi.})

\begin{thm}
\label{T:MV.intro}
Let $HGr = \colim HGr(n,2n)$ be the infinite quaternionic Grassmannian.  Then
$\ZZ \times HGr$ and
$KSp$ are isomorphic in the motivic unstable homotopy category $H_{\bullet}(S)$.
\end{thm}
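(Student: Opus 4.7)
The plan is to follow Morel--Voevodsky's strategy for the linear case $\ZZ \times \Gr \simeq K$, replacing the general linear group by the symplectic group and ordinary Grassmannians by quaternionic ones throughout. Their argument factors into three ingredients: (i) an identification $\colim_N \Gr(r,N) \simeq \BGL_r$ in $H_{\bullet}(S)$, stabilising in $r$ to $\colim_n \Gr(n,2n) \simeq \BGL_{\infty}$; (ii) the fact that $\BGL_r$ classifies rank $r$ vector bundles Nisnevich-locally up to $\Aff^1$-homotopy; (iii) a motivic group-completion/additivity statement identifying $K$ with $\ZZ \times \BGL_{\infty}$.

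For (i) in the symplectic context, a standard Witt-type argument shows that $\Sp_{2N}$ acts transitively on $HGr(r,N)$ with stabiliser $\Sp_{2r} \times \Sp_{2(N-r)}$. Combined with a cellular (symplectic Schubert) decomposition of $\Sp_{2N}$, this gives fibration sequences
\[
HGr(r,N) \; \lra \; B\Sp_{2r} \times B\Sp_{2(N-r)} \; \lra \; B\Sp_{2N}
\]
whose last term becomes increasingly $\Aff^1$-connected as $N$ grows. Taking the colimit along the hyperbolic stabilisation $HGr(n,2n) \hookrightarrow HGr(n+1,2n+2)$ (add a standard hyperbolic plane to both subspace and ambient space) then identifies $\colim_n HGr(n,2n)$ with $B\Sp_{\infty}$ in $H_{\bullet}(S)$. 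For (ii), symplectic bundles on smooth schemes over $S$ are Nisnevich-locally trivial because $\Sp_{2r}$ is a special group in Serre's sense; hence $[X_+, B\Sp_{2r}]_{H_{\bullet}(S)}$ classifies rank $2r$ symplectic bundles on $X$ up to $\Aff^1$-homotopy.

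For (iii), I would invoke $\Aff^1$-invariance of Schlichting's $KSp \cong KO^{[2]}$ on regular schemes, together with the group-completion theorem applied to the symmetric monoidal category of symplectic bundles under orthogonal sum, whose nerve is $\coprod_r B\Sp_{2r}$ and whose group completion is $\ZZ \times B\Sp_{\infty}$ and represents $KSp_0$. Combining all three steps produces the desired equivalence. The main obstacle, and what needs genuine verification beyond transcribing Morel--Voevodsky, is the connectivity estimate for $\Sp_{2N}$ in (i): one must check that the symplectic big-cell decomposition provides a cellular filtration whose cell dimensions go to infinity, so that $B\Sp_{2N}$ becomes arbitrarily $\Aff^1$-connective as $N \to \infty$, exactly mirroring Morel--Voevodsky's argument for $BGL_N$. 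Once this is in place, everything else transfers essentially verbatim from their paper, as the authors themselves suggest.
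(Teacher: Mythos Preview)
Your overall three-step outline matches the paper's, and steps (ii) and (iii) are fine: $\Sp_{2r}$ is special, and the comparison between group-completion and Schlichting's construction is handled exactly as you say (the paper cites Schlichting's comparison results). The gap is in (i), where you have mis-identified both what Morel--Voevodsky actually do and what the genuine obstacle is in the symplectic case.

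Morel--Voevodsky do \emph{not} argue via connectivity of $BGL_{N}$ or a Schubert-type cell decomposition of the group. Their argument for $\colim_{n}\Gr(r,n)\simeq BGL_{r}$ goes through the contractibility of $\colim_{n}U_{r,n}$, the Stiefel scheme of $r$-frames in $n$-space, which they establish by showing the $U_{r,n}$ form an \emph{admissible gadget}. There is no ``symplectic Schubert decomposition of $\Sp_{2N}$'' in the sense you invoke (Bruhat cells $BwB$ are not affine spaces), and no connectivity estimate for $B\Sp_{2N}$ enters the picture. What one needs is the symplectic Stiefel scheme $\USp(r,n)=\Sp_{2n}/\Sp_{2n-2r}$, the principal $\Sp_{2r}$-bundle over $HGr(r,n)$, and one needs $\hocolim_{n}\USp(r,n)\simeq\pt$.

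The paper observes that the $\USp(r,n)$ do \emph{not} appear to form an admissible gadget in Morel--Voevodsky's sense, nor can one substitute the $GL$-Stiefel schemes since $U_{2r,2n}/\Sp_{2r}$ is not $HGr(r,n)$. The paper's solution is to isolate the weaker property actually used in Morel--Voevodsky's Proposition~4.2.3: an \emph{acceptable gadget}, meaning that any map $\partial\Delta^{n}_{B}\to\USp(r,i)$ over a map $\Delta^{n}_{B}\to X$ (with $B$ henselian regular local) extends to $\Delta^{n}_{B}\to\USp(r,j)$ for some $j\geq i$. This lifting property is then verified by an explicit algebraic argument (Lemma~\ref{L:gadget}): given linear forms $u_{1},\dots,u_{2n}$ on a symplectic module $(E,\phi)$ with $\sum\overline{u}_{2i-1}\wedge\overline{u}_{2i}=\overline{\phi}$ modulo $g$, one can perturb the $u_{i}$ by multiples of $g$ and adjoin further forms $gw_{j}$ so that the identity holds exactly. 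This is the one genuinely new ingredient, and your proposal does not contain it or anything that would substitute for it.
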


Next we define the $\times$ product.
The final group of theorems in the paper concerns the product structure.
The groups $KO_{0}^{[n]}(X,U)$, which are the
Grothendieck-Witt groups of bounded chain complexes of vector bundles which are symmetric
with respect to shifted
but untwisted dualities, have a naive product induced by the tensor product of chain complexes
\begin{equation}
\label{E:naive}
KO^{[m]}_0(X,U) \times KO^{[n]}_0(Y,V) \to
KO^{[m+n]}_0(X \times Y, X \times V \cup U \times Y).
\end{equation}
Let $\angles{1}$ and $\angles{-1}$ in $KO_{0}^{[0]}(\pt)$  denote the Grothendieck-Witt classes
of the rank one symmetric bilinear forms.
The product is \emph{$\angles{-1}$-commutative} meaning that
for $\alpha \in \BO^{p,q}(A)$ and $\beta \in \BO^{p',q'}(B)$
we have $\alpha \times \beta = (-1)^{pp'}\angles{-1}^{qq'} \sigma^{*}(\beta \times \alpha)$
where $\sigma \colon A\times B \to B \times A$ switches the factors.
Recall that a motivic space $A$ is called \emph{small}
if $Hom_{SH(S)}(\Sigma_{T}^{\infty}A, {-})$ commutes with arbitrary coproducts.

\begin{thm}
\label{uniq2}
The cohomology theory
$(\BO^{*,*}, \partial)$
on the category
$\M^{\ft}_{\bullet}(S)$
of small motivic spaces  over $S$
has a product $\times$
which is associative and $\angles{-1}$-commutative
with the unit $1 = \angles{1} \in \BO^{0,0}(\pt_{+})$,
which has $\alpha \times \Sigma_{\GG_{m}}1 = \Sigma_{\GG_{m}}\alpha$ and
$\alpha \times \Sigma_{S^{1}_{s}}1
= \Sigma_{S^{1}_{s}}\alpha$
for all $\alpha$, and
which restricts via the isomorphism
\eqref{E:isomorphisms}
to the naive ring structure
\eqref{E:naive}
on the groups
$KO^{[2n]}_{0}(X)$  for $X \in \Sm/S$.
It is the unique product with these properties.
\end{thm}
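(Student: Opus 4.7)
The plan has two halves: construct the product $\times$ from the commutative monoid structure on $\BO$ established earlier in the paper (via Theorem~\ref{T:MV.intro} and the Grothendieck--Witt pairing on symplectic $K$-theory), then prove uniqueness by reducing any two candidate products to the standard bidegree and extending cellularly from smooth schemes to all small motivic spaces.

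\emph{Existence.}  Given classes $\alpha \in \BO^{p,q}(A)$ and $\beta \in \BO^{p',q'}(B)$, represented by maps $\Sigma_T^\infty A \to \Sigma^{p,q}\BO$ and $\Sigma_T^\infty B \to \Sigma^{p',q'}\BO$, I define $\alpha \times \beta$ as the composite
\[
\Sigma_T^\infty A \wedge \Sigma_T^\infty B \xrightarrow{\alpha \wedge \beta} \Sigma^{p+p',q+q'}(\BO \wedge \BO) \xrightarrow{\mu} \Sigma^{p+p',q+q'}\BO,
\]
where $\mu\colon \BO \wedge \BO \to \BO$ is the monoid multiplication.  Associativity, the unit property with $1 = \angles{1} \in \BO^{0,0}(\pt_{+})$, and the suspension-compatibility axioms follow immediately from the monoid and unit axioms together with naturality.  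The $\angles{-1}$-commutativity combines the homotopy commutativity of $\mu$ in $\SH(S)$ with the fact (flagged in the abstract and established separately in the paper) that the switch on $T \wedge T$ acts as multiplication by $\angles{-1}$ in $\BO^{*,*}$, together with the usual simplicial sign $(-1)^{pp'}$.  The substantive existence check is that $\times$ restricts to the naive tensor product on $KO_0^{[2m]}(X) \times KO_0^{[2n]}(Y)$, and I would prove it by tracing through the construction of $\mu$ on the level of the model presheaves $\KO^{[n]}$ of Schlichting: since $\mu$ is constructed to realize the Grothendieck--Witt tensor pairing on $\pi_0$, its restriction to $KO_0^{[2m]} \times KO_0^{[2n]}$ is by design the tensor product of symmetric chain complexes.

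\emph{Uniqueness.}  Let $\times'$ be a second product satisfying all the stated properties.  The axioms $\alpha \times \Sigma_{\GG_m} 1 = \Sigma_{\GG_m} \alpha$ and $\alpha \times \Sigma_{S^1_s} 1 = \Sigma_{S^1_s} \alpha$, combined with associativity and the $(8,4)$-periodicity of $\BO$, let me shift the bidegree of $\alpha$ (resp.\ $\beta$) to the standard bidegree $(4m,2m)$ (resp.\ $(4n,2n)$) corresponding to $KO_0^{[2m]}$ (resp.\ $KO_0^{[2n]}$), at the cost of replacing $A$ (resp.\ $B$) by an appropriate bigraded suspension.  In that standard bidegree and for $X,Y \in \Sm/S$, both products equal the naive tensor product by hypothesis, hence agree.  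To extend from the smooth schemes $X_{+}$ to all of $\M^{\ft}_{\bullet}(S)$, I use that $\M^{\ft}_{\bullet}(S)$ is the smallest thick subcategory of $\SH(S)$ containing the bigraded suspensions of $\Sigma_T^\infty X_{+}$ for $X \in \Sm/S$; a bi-natural pairing into a cohomology theory which vanishes on these generators and respects cofiber sequences vanishes everywhere by induction on cells.  Applied to $\times - \times'$ this yields $\times = \times'$.

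The main obstacle is the identification of the spectrum-level pairing $\mu$ with the tensor product of symmetric chain complexes on $KO_0^{[2m]} \times KO_0^{[2n]}$.  This requires working with the explicit Waldhausen-type models of the $\KO^{[n]}$ and strictifying $\mu$ to a map of presheaves of spaces that induces the $GW$-tensor product on $\pi_0$ on the nose.  Once this foundational identification is in hand, the remaining axioms and the uniqueness argument are formal manipulations with suspension isomorphisms, $(8,4)$-periodicity, and cellular induction on $\SH(S)^{\ft}$.
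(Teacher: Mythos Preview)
Your existence argument presupposes a commutative monoid structure $(\BO,\mu,e)$ in $SH(S)$ that the paper has not established at this point.  In the paper's logical order, Theorem~\ref{uniq2} is proven \emph{before} the monoid structure; indeed for a general base $S$ the paper never proves that $\BO$ is a commutative monoid, only an \emph{almost} commutative monoid (Definition~\ref{D:almost.monoid}), meaning associativity and commutativity of $m$ hold only modulo $\varprojlim^{1}$ groups.  The paper constructs $m$ by hand: it uses the Morel--Voevodsky isomorphism $\ZZ\times HGr\cong\KSp$ to define maps $\mu_{8k+4}\colon\KO^{[4k+2]}\wedge\KO^{[4k+2]}\to\KO^{[8k+4]}$ from the naive tensor product on the universal classes (Lemma~\ref{L:mu}), checks their compatibility, and lifts the resulting element $\bar m$ of $\varprojlim\BO^{16k+8,8k+4}(\KO^{[4k+2]}\wedge\KO^{[4k+2]})$ to $m\in\Hom_{SH(S)}(\BO\wedge\BO,\BO)$.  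Associativity and $\angles{-1}$-commutativity for the induced $\times$ on small spaces then follow not from monoid axioms but from Theorem~\ref{T:almost.commutative.product}: one factors any class on a small space through some $\KO^{[4k+2]}$ via Theorem~\ref{T:small.colim}, and the required identities hold there because the naive product on $KO^{[2n]}_{0}$ is associative and commutative.  The identification with the naive product (Theorem~\ref{T:coincide}) likewise goes through the universal property of $\ZZ\times HGr$ and Jouanolou's trick, not through an a priori strictification of $\mu$ at the level of Waldhausen models.

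Your uniqueness argument has a gap as well.  Cellular induction on the difference $\times-\times'$ requires that both products be compatible with cofiber sequences in each variable, i.e.\ satisfy $\partial(\alpha\times\beta)=\partial\alpha\times\beta$.  That compatibility is not among the hypotheses of the theorem, and it is not a formal consequence of associativity, $\angles{-1}$-commutativity, unit, and the two suspension axioms.  The paper's proof (Theorem~\ref{T:unique.prod}) avoids this entirely: for $A$ small and $\alpha\in\BO^{2i,i}(A)$ it suspends into degree $(8m+4,4m+2)$, uses Theorem~\ref{T:small.colim} and the weak equivalence $\ZZ\times HGr\simeq\KO^{[4m+2]}$ to factor $\Sigma_{T}^{4m+2-i}\alpha$ through a finite Grassmannian $[-n,n]\times HGr(n,2n)$, and concludes by functoriality since both products agree on the restrictions of the universal class $\tau_{4m+2}$.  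No $\partial$-compatibility or cell decomposition is needed; the Morel--Voevodsky theorem does the work that your cellular induction was meant to do.
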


This is Theorems \ref{monoidBO} and \ref {T:unique.prod}.
Restricting to pairs $(X,U)$ with $U \subset X$ an open subscheme of a scheme smooth over $X$,
we get the following result.

\begin{thm}
\label{uniq1}
There is a canonical ring structure on the cohomology theory
$(KO^{[*]}_*, \partial)$
on $\SmOp/S$
which is associative and $\angles{-1}$-commutative
with unit
$\angles{1} \in KO^{[0]}_0(\pt)$
and which restricts
to the naive product on the Grothendieck-Witt groups
of chain complexes
$KO^{[2n]}_0(X)$.   This product and the
Thom classes of $SL^{c}$-bundles
make
$(KO_{*}^{[*]},\partial)$
ring cohomology theory with an $SL^{c}$ Thom classes theory.
\end{thm}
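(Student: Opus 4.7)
The plan is to deduce Theorem~\ref{uniq1} from Theorem~\ref{uniq2} by transporting the product structure on $\BO^{*,*}$ to $KO^{[*]}_*$ via the natural isomorphisms \eqref{E:isomorphisms}. For pairs $(X,U), (Y,V) \in \SmOp/S$ the pointed motivic spaces $X_{+}/U_{+}$ and $Y_{+}/V_{+}$ are small, and there is a canonical identification
\[
(X \times Y)_{+}/(X \times V \cup U \times Y)_{+} \simeq (X_{+}/U_{+}) \wedge (Y_{+}/V_{+}).
\]
Consequently the $\times$-product supplied by Theorem~\ref{uniq2} induces, via \eqref{E:isomorphisms}, a pairing
\[
KO_i^{[m]}(X,U) \otimes KO_j^{[n]}(Y,V) \to KO_{i+j}^{[m+n]}(X \times Y, X \times V \cup U \times Y),
\]
and I would take this as the definition of the product on $KO^{[*]}_*$.

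The next step is to verify that this pairing satisfies the axioms of a ring cohomology theory in the sense of \cite[Definitions 2.1 and 2.13]{Panin:2003rz}. Most of these axioms (associativity, $\angles{-1}$-commutativity, the unit property of $\angles{1}\in KO^{[0]}_0(\pt)$, compatibility with pullbacks along morphisms in $\SmOp/S$) transport formally from the corresponding assertions of Theorem~\ref{uniq2} because \eqref{E:isomorphisms} is a natural isomorphism of bigraded cohomology theories. The subtle point is the Leibniz-type identity relating $\partial(\alpha \times \beta)$ to $(\partial \alpha) \times \beta$ and $\alpha \times (\partial \beta)$ on long exact sequences of triples; since \eqref{E:isomorphisms} also identifies the boundary maps of $KO^{[*]}_*$ with those of $\BO^{*,*}$, this reduces again to the analogous identity on the $\BO$-side, which I would extract from the construction underlying Theorem~\ref{uniq2}. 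Compatibility with the naive tensor-product pairing on $KO^{[2n]}_0(X)$ for $X \in \Sm/S$ is then the corresponding clause of Theorem~\ref{uniq2}.

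The third step is to combine this ring structure with the $SL^{c}$ Thom classes supplied by Theorem~\ref{T:SLc.ring}. Functoriality of $\thom(E,L,\lambda)$ and the fact that cup product with it is an isomorphism are part of Theorem~\ref{T:SLc.ring}; what remains to verify is \emph{multiplicativity},
\[
\thom(E \oplus E', L \otimes L', \lambda \otimes \lambda') = \thom(E,L,\lambda) \times \thom(E',L',\lambda'),
\]
and this follows because both sides are characterized by the same restriction to the zero section together with the $SL^{c}$-normalization, using the commutative monoid structure on $\BO$ constructed earlier in the paper.

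The main obstacle will be the Leibniz rule in the second step: the definition of ring cohomology theory in \cite{Panin:2003rz} imposes a specific sign- and $\angles{-1}$-twist compatibility between $\times$ and the connecting homomorphisms of triples that is not automatic from the purely stable-homotopical construction of $\times$ on $\BO^{*,*}$, and requires careful bookkeeping of the twist through the various $S^{1}_{s}$- and $\GG_{m}$-suspension isomorphisms entering \eqref{E:isomorphisms}. Once this compatibility is established, the remainder of Theorem~\ref{uniq1} is a formal transport of Theorems~\ref{uniq2} and \ref{T:SLc.ring} along the isomorphisms already in hand.
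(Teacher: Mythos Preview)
Your overall strategy---transport the product of Theorem~\ref{uniq2} along the isomorphism $\gamma\colon KO^{[*]}_{*}\cong \BO^{*,*}|_{\SmOp/S}$ of Corollary~\ref{KO**andBO**}---is exactly what the paper does (its proof reads simply ``This follows from Theorems~\ref{monoidBO} and~\ref{T:unique.prod}''). But you have misidentified where the work lies, and one of your citations is circular.

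First, the Leibniz rule $\partial(\alpha\times\beta)=\partial\alpha\times\beta$ is \emph{not} the subtle point. The product is induced by a morphism $m\colon\BO\wedge\BO\to\BO$ in $SH(S)$, and the boundary maps $\partial$ come from cofiber sequences in $SH(S)$; compatibility of $\times$ with $\partial$ is then a formal consequence of working in a triangulated category and is already packaged into the statement of Theorem~\ref{T:almost.commutative.product}. No sign bookkeeping beyond what is built into the $\veps$-commutativity is needed.

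Second, and more seriously, you invoke Theorem~\ref{T:SLc.ring} to supply the Thom classes and the fact that ${-}\cup\thom$ is an isomorphism. But Theorem~\ref{T:SLc.ring} \emph{is} the assertion that hermitian $K$-theory is a ring cohomology theory with an $SL^{c}$ Thom classes theory; it is proven in the paper \emph{after} and \emph{using} the ring structure you are trying to establish. The correct input is Theorem~\ref{T:SLc.oriented}, which constructs the classes $\thom(E,L,\lambda)$ of~\eqref{E:KO.Thom} and verifies the axioms of Definition~\ref{D:SLc.orientation} for the \emph{partial} multiplication. What must then be checked is that the Thom isomorphism and multiplicativity hold for the \emph{new} $\cup$. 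For the Thom isomorphism one observes (as in the proof of Theorem~\ref{monoidBO}) that $\thom(\OO)\in KO_{0}^{[1]}(\Aff^{1},\Aff^{1}-0)$ corresponds to $\Sigma_{T}1\in\BO^{2,1}(T)$, so ${-}\cup\thom(\OO)=\Sigma_{T}$ by the suspension clause of Theorem~\ref{uniq2}; the general case follows by Mayer--Vietoris. Your proposed argument for multiplicativity (``both sides characterized by restriction to the zero section'') does not work: restriction to the zero section loses information, and in fact the compatibility of the full product with the naive products on pairs is the content of Theorem~\ref{T:compatible}, which the paper explicitly says it cannot prove by its own construction and attributes to Schlichting.
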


Our strongest result on the product is the following theorem.

\begin{thm}
\label{T:unique}
There exist morphisms $m \colon \BO \wedge \BO
\to \BO$
and $e \colon \Sigma_{T}^{\infty}\boldsymbol{1} \to \BO$ in $SH(S)$
which make $(\BO,m,e)$ a commutative monoid in $SH(S)$ and which are compatible
with the naive product in the following sense.
\begin{enumerate}
\item For all $X$ and $Y$ in $\Sm/S$ and all even integers $2p$ and $2q$ the naive products
\[
KO_{0}^{[2p]}(X) \times KO_{0}^{[2q]}(Y) \to KO_{0}^{[2p+2q]}(X \times Y)
\]
and the product
\[
\BO^{4p,2p}(X_{+}) \times \BO^{4q,2q}(Y_{+}) \to
\BO^{4p+4q,2p+2q}(X_{+} \wedge Y_{+})
\]
induced by $m$ correspond under the isomorphisms
\eqref{E:isomorphisms}.

\item The elements $\angles{1} \in KO_{0}^{[0]}(\pt)$ and $e \in \BO^{0,0}(S^{0,0})$ correspond under
the isomorphisms \eqref{E:isomorphisms}.
\end{enumerate}

Moreover, if for the base scheme $S$ the groups $KO_{1}(S)$ and $KSp_{1}(S)$ are finite
\parens{for example
$S = \Spec \ZZ[\frac 12]$},
then the monoid structure
$(m,e)$ with these properties is unique.
\end{thm}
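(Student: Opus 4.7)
The plan is to construct $(m,e)$ at the presheaf level from the tensor product of symmetric chain complexes — which already realizes the naive product \eqref{E:naive} at $\pi_{0}$ — and then to derive uniqueness in $SH(S)$ by computing $[\BO \wedge \BO, \BO]_{SH(S)} = \BO^{0,0}(\BO \wedge \BO)$ via the quaternionic-Grassmannian description of $\BO$ furnished by Theorem \ref{T:MV.intro} together with the Pontryagin-class presentation \eqref{E:A(HGr)}.

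For existence I would first construct presheaf-level pairings $\KO^{[m]} \wedge \KO^{[n]} \to \KO^{[m+n]}$ using Schlichting's Waldhausen-type hermitian $K$-theory of the tensor product of symmetric complexes; by design these induce \eqref{E:naive} on $\pi_{0}$. The essential verification is that they commute with the bonding maps of the $\BO$-spectrum, which are multiplication by the Thom class $\thom \in \KO^{[1]}_{0}(T)$: this compatibility is precisely the external multiplicativity of the $SL^{c}$ Thom classes theory of Theorem \ref{T:SLc.ring}. After fibrant replacement and stabilization this assembles into $m \colon \BO \wedge \BO \to \BO$ in $SH(S)$, and $e$ is defined as the image of $\angles{1} \in KO^{[0]}_{0}(\pt)$ under \eqref{E:isomorphisms}. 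Conditions (1) and (2) hold tautologically, and the monoid axioms (associativity, commutativity, unitality) follow from the associativity, signed-symmetry and unitality of the tensor product of symmetric complexes; alternatively they may be deduced \emph{a posteriori} from the uniqueness statement applied to the pair of candidate maps $\BO \wedge \BO \wedge \BO \to \BO$ that one wishes to identify.

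For uniqueness let $(m', e')$ be a second pair with the same properties. The difference $d = m - m'$ lies in $\BO^{0,0}(\BO \wedge \BO)$ and, by hypothesis, kills every external product of classes pulled back from $KO^{[2p]}_{0}(X) \times KO^{[2q]}_{0}(Y)$ for $X,Y \in \Sm/S$. Using Theorem \ref{T:MV.intro} one represents $\BO$ as a homotopy colimit of suspensions of $\Sigma_{T}^{\infty}(HGr(r,n)_{+})$, and \eqref{E:A(HGr)} computes the cohomology of each finite stage as a polynomial algebra in the Pontryagin classes of the tautological symplectic bundle over $\BO^{*,*}(\pt)$. The associated Milnor exact sequence presents $\BO^{0,0}(\BO \wedge \BO)$ as an extension of $\lim$ and $\lim^{1}$ terms of bi-polynomial algebras in two families of Pontryagin classes. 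The constraint on $d$ eliminates the coefficients sitting in bidegrees $(4p,2p)+(4q,2q)$, and the remaining coefficients, after exploiting the $(8,4)$-periodicity of $\BO$ and the identification $\KO^{[2]} \simeq KSp$ from Theorem \ref{T:MV.intro}, are pinned down by $KO_{1}(S)$, $KSp_{1}(S)$ and Witt groups of the base, the latter vanishing in the relevant parities. Finiteness of $KO_{1}(S)$ and $KSp_{1}(S)$ forces the Mittag-Leffler condition and hence kills $\lim^{1}$, yielding $d = 0$; the equality $e = e'$ is the special case at $X = Y = \pt$.

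The main obstacle is the uniqueness step, and within it the control of the $\lim^{1}$-term in the Milnor sequence computing $\BO^{0,0}(\BO \wedge \BO)$ from the $HGr(r,n) \times HGr(r',n')$-filtration: one must verify that every coefficient group that can host a non-trivial contribution to $d$ is ultimately a subquotient of $KO_{1}(S)$ or $KSp_{1}(S)$. This bookkeeping is delicate because it requires tracking, through \eqref{E:A(HGr)} and the $(8,4)$-periodicity, which odd-parity pieces of $\BO^{*,*}(\pt)$ survive the vanishing imposed by the naive-product compatibility, and invoking the vanishing of $W^{\mathrm{odd}}(S)$ to rule out the rest.
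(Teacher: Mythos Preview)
Your existence argument and the paper's diverge substantially. The paper does \emph{not} build $m$ from presheaf-level pairings on the $\KO^{[n]}$ induced by the tensor product of symmetric complexes; it explicitly remarks (around Theorem~\ref{T:compatible}) that such a construction is due to Schlichting (oral communication) and is not the paper's own route. Instead the paper uses only the \emph{partial} multiplicative structure of Gille--Nenashev type to define Pontryagin classes, computes $\BO^{*,*}$ of quaternionic Grassmannians, and then uses the isomorphism $\ZZ\times HGr \simeq \KSp$ to produce universal classes $\tau_{4k+2}$. The products $\tau_{4k+2}\boxtimes\tau_{4k+2}$ assemble into an element $\bar m$ of a $\varprojlim$, and $m$ is any lift to $\BO^{0,0}(\BO\wedge\BO)$. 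The monoid axioms are then only known to hold modulo $\varprojlim^{1}$ (``almost commutative monoid''). Your appeal to Theorem~\ref{T:SLc.ring} here is circular: that theorem is a \emph{consequence} of the monoid structure, not an input.

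There are two genuine gaps in your sketch. First, for general $S$ you have not explained why the monoid axioms hold in $SH(S)$, only why they hold on $\pi_{0}$. The paper's device is to prove everything over $S'=\Spec\ZZ[\tfrac12]$, where $KO_{1}$ and $KSp_{1}$ are finite so the $\varprojlim^{1}$ obstructions vanish, and then to transport the monoid structure along $\mathbf{L}u^{*}$ for $u\colon S\to S'$ using the closed motivic model structure. Second, your $\varprojlim^{1}$ analysis is missing the key construction: the paper builds an $HP^{1+}$-spectrum $\BO^{\finite}$ whose spaces are \emph{finite} unions of \emph{finite-dimensional} real and quaternionic Grassmannians, stably equivalent to $\BO$. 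Only for $\BO^{\finite}$ are the groups in the inverse system genuine finite direct sums of copies of $KO_{1}(S)$ and $KSp_{1}(S)$ (via \eqref{E:A(HGr)}), so that finiteness of these groups forces $\varprojlim^{1}=0$. Without this finite model the spaces $\KO^{[i]}$ or $\ZZ\times HGr$ give infinite products, and Mittag--Leffler does not follow from finiteness of $KO_{1}(S)$, $KSp_{1}(S)$ alone. The invocation of $W^{\mathrm{odd}}(S)=0$ plays no role in the paper's argument.
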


For the proof of the theorem see Theorem \ref{T:unique.2}.

We explain the basic ideas in the proofs of these three theorems.
Gille and Nenashev's method \cite{Gille:2003ad} for constructing pairings in Witt groups of triangulated categories can be used
in hermitian $K$-theory to construct pairings between hermitian $K$-theory groups and
Gro\-then\-dieck-Witt groups
\begin{gather}
\label{E:partial.1}
KO^{[i]}_r(X,U) \times KO^{[j]}_0(Y,V) \to KO^{[i+j]}_r(X \times Y, X \times V \cup U \times Y)
\\
\label{E:partial.2}
KO^{[i]}_0(X,U) \times KO^{[j]}_r(Y,V) \to KO^{[i+j]}_r(X \times Y, X \times V \cup U \times Y)
\end{gather}
(see \eqref{E:KO.pairing.1}--\eqref{E:KO.pairing.2}).  They respect the boundary maps of the cohomology theorem $KO_{*}^{[*]}$.
This gives the hermitian $K$-theory groups the structure of a
cohomology theory with a partial multiplicative structure with Thom classes for all $SL^{c}$ bundles
including symplectic bundles.  Although this is less structure than we assumed
while writing \cite{Panin:2010fk}, it is enough to prove the quaternionic projective bundle
theorem and the symplectic splitting principle and to calculate the cohomology of quaternionic
Grassmannians.  Thus formula \eqref{E:A(HGr)} holds for $A = KO_{*}^{[*]}$ with the
$b_{i} \in KO_{0}^{[2i]}(HGr(r,n))$.
Because of the isomorphism \eqref{E:isomorphisms} it also holds for $A = \BO^{*,*}$
with the $b_{i} \in \BO^{4i,2i}(HGr(r,n))$.

The isomorphism \eqref{E:isomorphisms}
also transplants these pairings to $\BO$, giving in particular pairings
\begin{equation}
\label{BO2*BO**toBO**1}
\boxtimes \colon
\BO^{2i-r,i}(X_{+}) \times \BO^{2j,j}(Y_{+}) \to \BO^{2i+2j-r, i+j}(X_{+} \wedge Y_{+})
\end{equation}

Theorem \ref{T:MV.intro} and the isomorphisms $KSp \cong KO^{[4k+2]}$
gives us a canonical elements $\tau_{4k+2} \in \BO^{8k+4,4k+2}(\ZZ \times HGr)$.
Write $[-n,n] = \{ m \in \ZZ \mid -n \leq m \leq n \}$ and set
\[
HGr_{n} = [-n,n] \times HGr(n,2n).
\]
We have $\ZZ \times HGr = \colim HGr_{n}$.  A standard formula for homotopy colimits in triangulated categories
gives us an exact sequence
\[
0 \to \varprojlim\nolimits^{1} \BO^{8k+3,4k+2}(HGr_{n})
\to \BO^{8k+4,4k+2}(\ZZ \times HGr) \to
\varprojlim \BO^{8k+4,4k+2}(HGr_{n})
\to 0
\]
The inclusions $HGr_{n} \to HGr_{n+1}$ induce surjections on cohomology for any symplectically oriented
theory.  So the $\varprojlim\nolimits^{1}$ vanish, and we have
\begin{equation}
\label{E:ZHGr.lim}
\BO^{8k+4,4k+2}(\ZZ \times HGr) \cong
\varprojlim \BO^{8k+4,4k+2}(HGr_{n}).
\end{equation}
For essentially the same reasons we have isomorphisms
\begin{equation}
\label{E:ZHGr.ZHGr.lim}
\BO^{16k+8,8k+4}((\ZZ \times HGr) \wedge (\ZZ \times HGr))
\cong \varprojlim \BO^{16k+8,8k+4}(HGr_{n} \wedge HGr_{n}).
\end{equation}
Our class $\tau_{4k+2} \in \BO^{8k+4,4k+2}(\ZZ \times HGr)$ and the pairing
\eqref{BO2*BO**toBO**1} gives us a system of classes
\begin{equation}
\label{E:HGrn.HGrn}
\tau_{4k+2} |_{HGr_{n}} \boxtimes \tau_{4k+2} |_{HGr_{n}} \in \BO^{16k+8,8k+4}(HGr_{n} \wedge HGr_{n}).
\end{equation}
and therefore a class
\begin{equation*}
\tau_{4k+2} \boxtimes \tau_{4k+2} \in
\BO^{16k+8,8k+4}(\KO^{[4k+2]} \wedge \KO^{[4k+2]}).
\end{equation*}
There is also an exact sequence of the form
\[
0 \to \varprojlim\nolimits^{1} \BO^{4i-1,2i}(\KO^{[i]} \wedge \KO^{[i]}) \to
\BO^{0,0}(\BO \wedge \BO) \to
\varprojlim \BO^{4i,2i}(\KO^{[i]} \wedge \KO^{[i]}) \to 0.
\]
The elements $\tau_{4k+2} \boxtimes \tau_{4k+2}$ define an element
$\bar m \in \varprojlim \BO^{4i,2i}(\KO^{[i]} \wedge \KO^{[i]})$.
This $\bar m$ is the unique element compatible with the naive product \eqref{E:HGrn.HGrn}
and the isomorphisms \eqref{E:ZHGr.lim} and \eqref{E:ZHGr.ZHGr.lim}.
Lifting this element to $m \in \Hom_{SH(S)}(\BO \wedge\BO,\BO)$ gives an element we
can use to define the product $\times$.  On small motivic spaces $\times$ depends only on
$\bar m$ and not on the choice of $m$.
We deduce the associativity and bigraded commutativity
of $\times$ on small motivic spaces from the associativity and commutativity of the naive product on the
$KO_{0}^{[2r]}(HGr_{n})$.  This gives us Theorem \ref{uniq2}.

Theorem \ref{T:unique} is more subtle.  The obstructions to the uniqueness of $m$
and to the associativity, commutativity and unit property of the monoid it defines all live in certain
$\varprojlim^{1}$ groups.
We show in \S\ref{S:vanishing} that when $S = \Spec R$
with $\frac 12 \in R$ and with $KO_{1}(R)$ and $KSp_{1}(R)$ finite groups,
the $\varprojlim\nolimits^{1}$ vanish.  This uses the construction in \S\ref{S:finite} of three new
spectra using a new motivic sphere.

The geometry used to prove the quaternionic projective bundle theorem in \cite{Panin:2010fk}
also shows that the pointed quaternionic projective line
$(HP^{1},x_{0})$ is isomorphic to $T^{\wedge 2}$ in the motivic homotopy category
$H_{\bullet}(S)$.  The pointed scheme $HP^{1+}$ which is the $\Aff^{1}$ mapping cone
of the pointing $x_{0} \colon \pt \to HP^{1}$ is therefore also homotopy equivalent to $T^{\wedge 2}$.
It is the union of $HP^{1}$ and $\Aff^{1}$
with $x_{0} \in HP^{1}$ identified with $0 \in \Aff^{1}$, pointed at $1 \in \Aff^{1}$.
Therefore the motivic stable homotopy categories of $T$-spectra and of
$HP^{1+}$-spectra are equivalent.

We construct three $HP^{1+}$-spectra $\BO_{HP^{1+}}$, $\BO^{\geom}$
and $\BO^{\finite}$
representing hermitian $K$-theory.  The spaces of $\BO_{HP^{1+}}$ are the even-indexed
spaces $(\KO^{[0]}, \KO^{[2]}, \KO^{[4]}, \dots)$ of the $T$-spectrum.
The spaces of $\BO^{\geom}$ are alternately
$\ZZ \times RGr$ and $\ZZ \times HGr$.
The spaces of $\BO^{\finite}$ are finite unions of finite-dimensional real and quaternionic Grassmannians.
Here a \emph{real Grassmannian} $RGr(r,2n)$ is the open subscheme of the
ordinary Grassmannian $Gr(r,2n)$
where the hyperbolic quadratic form on $\OO^{\oplus 2n}$ is nondegenerate, while
$RGr = \colim RGr(n,2n)$.  For the details of $\BO^{\finite}$ see Theorem \ref{T:finite}.
The structure maps of the spectra are all essentially multiplication with the Euler class
$-b_{1}(\shf U)$ of the tautological rank $2$ symplectic subbundle on $HP^{1}$.
The bonding maps $\BO^{*}_{2i} \wedge HP^{1+} \to \BO_{2i+2}^{*}$ of the two geometric spectra
are morphisms of schemes or ind-schemes which are constant on the wedge $\BO^{*}_{2i} \vee HP^{1+}$.
The inclusion $\BO^{\finite} \to \BO^{\geom}$ is a motivic stable weak equivalence,
while the isomorphism $\BO^{\geom} \cong \BO_{HP^{1}}$ in $SH(S)$
is constructed from
\emph{classifying maps}
\begin{align*}
\tau_{4k} \colon \ZZ \times RGr \to \KO^{[4k]},
&&
\tau_{4k+2} \colon \ZZ \times HGr \xra{\sim} \KO^{[4k+2]},
\end{align*}
in $H_{\bullet}(S)$.  The $\tau_{4k+2}$ are the isomorphisms of Theorem \ref{T:MV.intro},
while the $\tau_{4k}$ are constructed from the $\tau_{4k+2}$.  We do not know if the
$\tau_{4k}$ are isomorphisms, although stably they have a right inverse (Proposition \ref{P:right.inverse}).

The $\varprojlim^{1}$ calculated with $\BO$ and with $\BO^{\finite}$ are the same.
Calculations based on the quaternionic projective bundle theorem show that for $\BO^{\finite}$
the $\varprojlim^{1}$ are over inverse systems of groups which are finite direct sums of
copies of $KO_{1}(S)$ and $KSp_{1}(S)$.  When those groups are finite, the $\varprojlim^{1}$ vanishes.
This gives Theorem \ref{T:unique} for $S = \Spec \ZZ[\frac 12]$.  For other $S$ one pulls the structure
back from $\Spec \ZZ[\frac 12]$ using the closed motivic model structure of \cite{Panin:2009aa}.

\begin{thm}
\label{T:compatible}
The products of Theorems \ref{uniq2}, \ref{uniq1} and \ref{T:unique} are compatible with all the
naive products of \eqref{E:naive} and with the partial multiplicative structure
of \eqref{E:partial.1} and \eqref{E:partial.2}.
\end{thm}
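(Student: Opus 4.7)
The plan is to reduce all three compatibility assertions to a single identification of the product $\times$ of Theorem~\ref{uniq2} with the Gille-Nenashev partial pairing $\boxtimes$ on their common domain; from this, the compatibilities for Theorems~\ref{uniq1} and~\ref{T:unique} follow because the products they supply agree with $\times$ wherever they are jointly defined.

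First I would transplant everything to the $\BO$-side using \eqref{E:isomorphisms}. The naive products \eqref{E:naive} then become pairings on the even Grothendieck-Witt bidegrees $\BO^{2n,n}(X_+)$, while the partial pairings \eqref{E:partial.1}-\eqref{E:partial.2} become the pairings $\boxtimes$ of \eqref{BO2*BO**toBO**1}. The compatibility of $\times$ with \eqref{E:naive} is part of the statement of Theorem~\ref{uniq2}, and analogously for Theorem~\ref{T:unique}(1). So what remains is to show that for $\alpha \in \BO^{p,q}(A)$ and $\beta \in \BO^{2j,j}(B)$ with $A$ and $B$ small motivic spaces, $\alpha \times \beta = \alpha \boxtimes \beta$, together with its left-handed analogue.

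For this, I would exploit the explicit construction of $\times$: on small motivic spaces it is determined by the class $\bar m \in \varprojlim \BO^{4i,2i}(\KO^{[i]} \wedge \KO^{[i]})$, and this $\bar m$ is itself defined by the inverse system \eqref{E:HGrn.HGrn} of classes $\tau_{4k+2} \boxtimes \tau_{4k+2}$ built from the Gille-Nenashev pairing. A class $\beta$ corresponds via \eqref{E:isomorphisms} to a morphism $B \to \KO^{[j]}$ in $H_{\bullet}(S)$, which for even $j$ factors through the classifying map $\tau_j$ and its finite approximations on $\ZZ \times HGr(n,2n)$. Substituting into the universal formula defining $\bar m$ and invoking the naturality of $\boxtimes$ in both variables then yields $\alpha \times \beta = \alpha \boxtimes \beta$; the symmetric case gives the compatibility with \eqref{E:partial.2}.

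Finally, the ring structure of Theorem~\ref{uniq1} is by construction the restriction of $\times$ through \eqref{E:isomorphisms} to $\SmOp/S$, so its compatibility with \eqref{E:naive}, \eqref{E:partial.1} and \eqref{E:partial.2} is immediate. The monoid $m$ of Theorem~\ref{T:unique} was chosen as a lift of the same $\bar m$ and therefore induces the same product $\times$ on small motivic spaces, so its compatibility reduces to the same argument. I expect the hard part to be the naturality claim in the previous paragraph: one must check that the Gille-Nenashev construction of $\boxtimes$ really is compatible with the classifying maps $\tau_{4k+2}$ and with the Grassmannian approximations $HGr_n \to \ZZ \times HGr$ in a way that lets one replace $\beta$ by its classifying map without altering the value of $\alpha \boxtimes \beta$. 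Once this naturality is secured, the theorem is a formal unwinding of the constructions.
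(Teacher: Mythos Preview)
Your approach differs fundamentally from the paper's, and in fact the paper explicitly states that it does \emph{not} know how to prove Theorem~\ref{T:compatible} from its own construction of the product. The paper's argument is indirect: it invokes Schlichting's (oral communication) construction of a pairing on his hermitian $K$-theory spaces coming from pairings of complicial exact categories with duality. That product is compatible with \eqref{E:naive}, \eqref{E:partial.1} and \eqref{E:partial.2} by construction, and by the uniqueness clause of Theorem~\ref{uniq2} it agrees with the paper's product on small motivic spaces; hence the paper's product inherits the compatibility.

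Your direct approach has a genuine gap at exactly the step you flag as ``the hard part,'' but the difficulty is more serious than a naturality check. The class $\bar m$ is built from the $\pi_{0}$-level data $\tau_{4k+2}\boxtimes\tau_{4k+2}$, and Theorem~\ref{T:coincide} only identifies $\times$ with the naive product on $KO^{[2p]}_{0}\times KO^{[2q]}_{0}$. The Gille--Nenashev pairing $\alpha\boxtimes\beta$ for $\alpha\in KO^{[m]}_{i}$ with $i>0$ is the effect on $\pi_{i}$ of a map of \emph{spaces} induced by a duality-preserving functor, whereas $\alpha\times\beta$ comes from the \emph{spectrum}-level morphism $m$. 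Knowing that these agree on $\pi_{0}$ (which is all that $\bar m$ records) does not determine what happens on higher homotopy groups: the lift from $\bar m$ to $m$ is only unique up to the $\varprojlim^{1}$ ambiguity, and even when that vanishes you would still need an unstable comparison between the functorially-defined map of spaces and the adjoint of $m$. Your ``substitute and invoke naturality'' step is asserting precisely this unstable comparison without argument, and the authors could not supply one either---hence their recourse to Schlichting's external construction.
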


We do not know how to prove this theorem using our construction of the
hermitian $K$-theory product.  But Marco Schlichting has described to us (oral communication) how
to put a pairing on his hermitian $K$-theory spaces when one has a pairing
of complicial exact categories with weak equivalences and duality.  When applied to our situation
his product is isomorphic to ours on small motivic spaces by Theorem \ref{uniq2}.
Since Schlichting's product
is compatible with all the naive products of \eqref{E:naive} and with the partial multiplicative structure
of \eqref{E:partial.1} and \eqref{E:partial.2}, therefore ours is as well.

We finish the paper in \S \ref{S:K.theory}
by giving the analogue for algebraic $K$-theory of the spectra
$\BO^{\finite}$ and $\BO^{\geom}$ of hermitian $K$-theory of \S \ref{S:finite}.
The $\BGL^{\finite}$ seems to be completely new.  We write
$CGr(r,n)$ for the affine Grassmannian, $CP^{1} = CGr(1,2)$ for the affine version of
$\mathbf{P^{1}}$, and $CP^{1+}$ for the $\Aff^{1}$ mapping cone of the pointing map of $CP^{1}$.

\begin{thm}
There are $CP^{1+}$-spectra
$\BGL^{\finite}$ and $\BGL^{\geom}$ isomorphic to
$\BGL_{CP^{1+}}$ in $SH_{CP^{1+}}(S)$
with spaces
\begin{align*}
\BGL^{\finite}_{n} & =
[-4^{n},4^{n}] \times CGr(4^{n}, 2 \cdot 4^{n}),
&
\BGL^{\geom}_{n} & = \mathbb Z \times CGr,
\end{align*}
which are unions of affine Grassmannians.
The bonding maps $\BGL^{*}_{n} \wedge CP^{1+} \to \BGL_{n+1}^{*}$
of the two spectra
are morphisms of schemes or ind-schemes which are constant on the wedge
$\BGL^{*}_{n} \vee CP^{1+}$.
\end{thm}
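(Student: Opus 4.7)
The plan is to transplant the construction of $\BO^{\finite}$ and $\BO^{\geom}$ from \S\ref{S:finite} into the algebraic $K$-theory setting by systematically replacing symplectic data with its linear analogue. The infinite symplectic Grassmannian $\ZZ \times HGr$ is replaced by $\ZZ \times CGr$, whose representability of algebraic $K$-theory in $H_{\bullet}(S)$ is the original Morel--Voevodsky theorem; the pointed scheme $HP^{1+}$ is replaced by $CP^{1+}$; and the Euler class $-p_{1}(\shf{U})$ of the rank-$2$ tautological symplectic subbundle is replaced by the Bott class $[\shf{U}] - [\shf{O}]$ of the rank-$1$ tautological subbundle on $CP^{1}$. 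The spectra $\BGL_{CP^{1+}}$, $\BGL^{\geom}$ and $\BGL^{\finite}$ play the roles of $\BO_{HP^{1+}}$, $\BO^{\geom}$ and $\BO^{\finite}$ respectively.

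First I would produce classifying maps $\tau_{n} \colon \ZZ \times CGr \to \BGL_{n}^{CP^{1+}}$ in $H_{\bullet}(S)$ for all $n \ge 0$, obtained directly from the Morel--Voevodsky equivalence; unlike in the hermitian case, there is no parity issue to navigate, so each $\tau_{n}$ is itself a motivic weak equivalence. Second I would construct the geometric bonding maps $\mu_{n} \colon \BGL^{\geom}_{n} \wedge CP^{1+} \to \BGL^{\geom}_{n+1}$ as morphisms of ind-schemes. Given a direct summand $V \hookrightarrow \shf{O}^{\oplus 2r}$ with integer index $m$ and a line $L \hookrightarrow \shf{O}^{\oplus 2}$, send them to the direct summand $(V \otimes L) \oplus \shf{O}^{\oplus 3r} \hookrightarrow \shf{O}^{\oplus 4r} \oplus \shf{O}^{\oplus 4r} = \shf{O}^{\oplus 8r}$, with index suitably shifted. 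Extending over the $\Aff^{1}$-cone portion of $CP^{1+}$ by an $\Aff^{1}$-homotopy to the basepoint then makes $\mu_{n}$ constant on $\BGL^{\geom}_{n} \vee CP^{1+}$.

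Third, the finite subspectrum $\BGL^{\finite}_{n} = [-4^{n},4^{n}] \times CGr(4^{n}, 2 \cdot 4^{n})$ is precisely the piece of $\BGL^{\geom}_{n}$ stabilized by this bonding construction: starting from $r = 4^{n}$, the formula above yields a rank $4^{n} + 3 \cdot 4^{n} = 4^{n+1}$ direct summand of $\shf{O}^{\oplus 2 \cdot 4^{n+1}}$, placing us in $CGr(4^{n+1}, 2 \cdot 4^{n+1})$, and the integer window $[-4^{n}, 4^{n}]$ doubles to $[-4^{n+1}, 4^{n+1}]$ under the same bookkeeping. The inclusion $\BGL^{\finite} \hookrightarrow \BGL^{\geom}$ is then a termwise weak equivalence in $H_{\bullet}(S)$ because $\colim_{n}[-4^{n},4^{n}] \times CGr(4^{n}, 2 \cdot 4^{n}) \simeq \ZZ \times CGr$, and hence a stable equivalence.

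Finally I would assemble the $\tau_{n}$ into an isomorphism $\BGL^{\geom} \cong \BGL_{CP^{1+}}$ in $SH_{CP^{1+}}(S)$. Commutativity of the bonding squares in $H_{\bullet}(S)$ reduces to the fact that both the geometric bonding map and the intrinsic $\BGL_{CP^{1+}}$ bonding map realize multiplication by the same Bott class on $CP^{1}$, which is formally the same argument as for $\BO^{\geom} \cong \BO_{HP^{1+}}$. The principal obstacle is Step~2: arranging a single scheme-theoretic morphism $\mu_{n}$ that is genuinely constant on the wedge, restricts to the finite approximations with ranks exactly $4^{n}$ and $2\cdot 4^{n}$, and whose homotopy class implements multiplication by the Bott element. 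This is the direct analogue of the delicate bookkeeping carried out for $\BO^{\geom}$ and $\BO^{\finite}$ in \S\ref{S:finite}, and the details should follow the same template, simplified by the absence of symplectic or bilinear-form constraints.
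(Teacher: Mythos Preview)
Your overall strategy matches the paper's exactly: transplant the construction of \S\ref{S:finite}, using Lemma~\ref{L:Gr.P1.to.Gr} in place of Lemmas~\ref{L:HGr.HP1.to.RGr} and~\ref{L:RGr.HP1.to.HGr}. However, two steps in your proposal are genuinely wrong.

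First, your bonding formula $(V\otimes L)\oplus \OO^{\oplus 3r}$ does not give a map constant on $\pt\times CP^{1}$. When $V=\OO^{\oplus r}\oplus 0$ is the basepoint summand, the subbundle $(\OO^{\oplus r}\oplus 0)\otimes L\subset \OO^{\oplus 2r}\otimes \OO^{\oplus 2}$ still moves with $L$, so the classifying map to $CGr(4r,8r)$ is nonconstant along $CP^{1}$. By Proposition~\ref{P:A1.homotopy} this constancy must hold \emph{exactly}, not up to homotopy, for the extension over the $\Aff^{1}$-cone to produce a map that kills the wedge. The paper's formula in Lemma~\ref{L:Gr.P1.to.Gr} fixes this by using both $U'_{1}$ and its complement $U''_{1}$: the subbundle is $(U'_{n}\boxtimes U'_{1})\oplus(\OO^{\oplus n-i}\boxtimes U''_{1})\oplus(U''_{n}\boxtimes \OO)\oplus \OO^{\oplus n+i}$, and on the basepoint $(i=0,\ U'_{n}=\OO^{\oplus n}\oplus 0)$ the first two summands combine to $(\OO^{\oplus n}\oplus 0)\boxtimes(U'_{1}\oplus U''_{1})=(\OO^{\oplus n}\oplus 0)\boxtimes\OO^{\oplus 2}$, which is constant. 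Note also the explicit $i$-dependence, which your formula lacks.

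Second, the inclusion $\BGL^{\finite}\hookrightarrow\BGL^{\geom}$ is \emph{not} a termwise weak equivalence: $[-4^{n},4^{n}]\times CGr(4^{n},2\cdot 4^{n})$ is not equivalent to $\ZZ\times CGr$ in $H_{\bullet}(S)$. The fact that the colimit over $n$ recovers $\ZZ\times CGr$ does not help level by level. The paper's argument (in the proof of Theorem~\ref{T:finite}) shows it is a \emph{stable} equivalence by reorganising the stabilisation as a colimit over an $\NN^{2}$-indexed system $E_{n,i}$ and applying cofinality; you need the same argument here.
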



The spectum $\BGL^{\finite}$ can be used to give alternate proofs of the uniqueness results
of \cite{Panin:2009aa} concerning the $\mathbf{P}^{1}$-spectrum representing algebraic $K$-theory and the
commutative monoid structure on that spectrum.  These proofs avoid the use of topological realization
and apply to any noetherian base scheme $S$ of finite Krull dimension with finite $K_{1}(S)$.

\section{Cohomology theories}
\label{S:cohom}

We fix a base scheme $S$ which is regular noetherian separated of finite Krull dimension and
with $\frac 12 \in \Gamma(S,\OO_{S})$.
The hermitian $K$-theory of such schemes is simpler than for other schemes, and we wish to
avoid the complications of negative hermitian $K$-theory and of characteristic $2$.

Let $\Sm/S$ be the category of smooth $S$-schemes
of finite type.
Let $\SmOp/S$ be the category whose objects are pairs $(X,U)$ with
$X$ in $\Sm/S$ and $U \subset X$ an open subscheme
and whose morphisms $f \colon (X,U) \to (Y,V)$ are morphisms $f \colon X \to Y$ of $S$-schemes
having $f(U) \subset V$. We write $X$ for $(X,\varnothing)$.
The base scheme itself will often be written as $S = \pt$.

A \emph{cohomology theory} on $\SmOp/S$
\cite[Definition 2.1]{Panin:2003rz}
is a pair
$(A,\partial)$ with $A$ a contravariant functor from $\Sm/S$ to the category of abelian groups
having localization exact sequences and satisfying \'etale excision and homotopy invariance.  The
$\partial$ is a morphism of functors with components
$\partial_{X,U} \colon A(U) \to A(X,U)$ which are the boundary maps of the localization
exact sequences.
A \emph{ring cohomology theory} in the sense of \cite[Definition 2.13]
{Panin:2003rz}
has products
\[
\times \colon A(X,U) \times A(Y,V) \to A(X\times Y, (X \times V) \cup (U \times Y))
\]
which are functorial, bilinear and associative and and which have a two-sided unit $1_{A} \in A(\pt)$ and
satisfy $\partial(\alpha \times \beta) = \partial \alpha \times \beta$.

A cohomology theory also defines groups $A(X,x)$ for pointed smooth schemes and their smash products
such as
\begin{equation}
\label{E:A(smash)}
A\bigl( (X_{1},x_{1}) \wedge (X_{2},x_{2}) \bigr)
= \ker\Bigl(A(X_{1}\times X_{2}) \xra{(x_{1}^{*} \times 1, 1 \times x_{2}^{*})}
A(\pt \times X_{2}) \oplus A(X_{1} \times \pt)
\Bigr).
\end{equation}
%

A \emph{bigraded} cohomology theory $(A^{*,*},\partial)$ is one in which the groups are
bigraded, that is
$A^{*,*}(X,U)= \bigoplus_{p,q \in \ZZ}
A^{p,q}(X,U)$, the pullback maps are homogeneous of bidegree $(0,0)$ and the boundary maps
$\partial_{X,U}$ are homogeneous of bidegree $(1, 0)$.
%
In a \emph{bigraded ring cohomology theory} $(A,\partial, \times,{1})$ the
$\times$ products respects the
bigrading and we have ${1} \in A^{0,0}(\pt)$.

\begin{defn}
\label{D:commutative}
Let $(A, \partial, \times ,{1})$ bigraded ring cohomology theory, and suppose
$\veps \in A^{0,0}(\pt) = A^{0,0}(\pt)$ satisfies $\veps^{2} ={1}$.
Then $(A, \partial, \times, {1})$ is  \emph{$\veps$-commutative} if for
$\alpha \in A^{p,q}(X,U)$ and $\beta \in A^{r,s}(Y,V)$ one has
$\sigma^{*}(\alpha \times \beta) = \beta \times \alpha \times (-1)^{pr}\veps^{qs}$
where $\sigma \colon Y \times X \to X \times Y$ switches the factors.
\end{defn}

Equivalently a bigraded ring cohomology is $\veps$-commutative if the associated cup product
satisfies $\alpha \cup \beta = (-1)^{pr}\veps^{qs} \, \beta \cup \alpha$ for $\alpha \in A^{p,q}(X,U)$
and $\beta \in A^{r,s}(X,V)$.
For such a cohomology theory the $A^{*,*}(X)$ are bigraded-commutative rings, and for any $(X,U)$
the $A^{*,*}(X,U)$ and $A^{*,*}(U)$ are right and left bigraded $A^{*,*}(X)$-modules.
The $\partial_{X,U}$ are morphisms of right $A^{*,*}(X)$-modules.
%

Sometimes it is easier to define certain products than others.
For a bigraded cohomology theory $(A^{*,*},\partial)$ set
$A^{0}(X,U) = \bigoplus_{p\in \ZZ} A^{2p,p}(X,U)$.
We need the following notion.

\begin{defn}
\label{multiplicative}
Let $(A^{*,*},\partial)$ be a bigraded cohomology theory as above. An \emph{$\veps$-commutative
partial multiplication} on $(A^{*,*},\partial)$ is given by
\begin{enumerate}

\item
pairings $\times \colon A^{p,q}(X,U) \times A^{2r,r}(Y,V) \to A^{p+2r,q+r}((X,U) \wedge (Y,V))$
which are bilinear and functorial, and
\item
elements ${\boldsymbol 1}$ and $\veps$ in $A^{0,0}(\pt)$
\end{enumerate}
satisfying
\begin{enumerate}
{\renewcommand{\theenumi}{\alph{enumi}}

\item
$\alpha \times (b \times c)= (\alpha \times b) \times c$ for $\alpha \in A^{p,q}(X,U)$,
$b \in A^{2r,r}(Y,V)$, $c \in A^{2s,s}(Z,W)$;
\item
$\alpha \times {\boldsymbol 1} = \alpha$ for  $\alpha \in A^{p,q}(Y,V)$,
\item
$\veps \times \veps = \boldsymbol{1}$,
\item
$a \times b = \sigma^*(b \times a) \times \veps^{rs}$ for  $a \in A^{2r,r}(X,U)$, $b \in A^{2s,s}(Y,V)$
where $\sigma\colon X \times Y \to Y \times X$
    switches the factors;
\item
$\partial_{Y \times X, V \times X}(\alpha \times b) = \partial_{Y,V}(\alpha) \times b$ for
$\alpha \in A^{p,q}(V)$, $b \in A^{2r,r}(X)$.
}\end{enumerate}
\end{defn}

If $(A^{*,*},\partial)$ has such a partial multiplication, then for
$\alpha \in A^{p,q}(X,V)$ and $b \in A^{2r,r}(X,U)$
one has a \emph{cup product}
\[
\alpha \cup b =
\Delta^{*}(\alpha \times b) \in A^{p+2r,q+r}(X,U \cup V).
\]

%
%
%
%
%

\label{A*istwosidedA0module}
If $(A,\partial)$ is equipped with a partial multiplicative structure
$(\times, \boldsymbol{1}, \veps)$, then the functor
$(X,U) \mapsto A^{0}(X,U)$ is an $\veps$-commutative graded ring functor in the sense that
the properties
(a), (b), (c) and (d) hold for $\alpha \in A^{0}(Y,V)$.

Moreover, $A$ is a bigraded
right $A^{0}$-module in the same sense with $\partial$ a morphism of bigraded right $A^{0}$-modules
which is homogeneous of bidegree $(1,0)$.

The switch $\sigma \colon X \times Y \to Y \times X$ allows us to define pairings
$\times \colon A^{2r,r}(X,U) \times A^{p,q}(Y,V) \to A^{p+2r, q+r}((X,U) \times (Y,V))$
by $b \times \alpha = \sigma^{*}(\alpha \times b) \times \veps^{qr}$.
There are also cup products $b \cup \alpha = \Delta^{*}(b \times \alpha)$.
The two pairings are compatible by (d).
Thus $A$ is a bigraded left and right $A^{0}$-module, with $\partial$ a morphism of right
$A^{0}$-modules.

\section{$SL$ and $SL^{c}$ orientations}
\label{S:SL.orientation}

We discuss $SL$ oriented cohomology theories.  Hermitian $K$-theory will turn out to be one.
We also include a discussion of Thom classes for vector bundles whose structural group is the double
cover $SL_{n}^{c}$ of $GL_{n}$.  It contains $SL_{n}$.  We believe this is the true level at which
Witt groups and hermitian $K$-theory are oriented.

An \emph{$SL$ bundle} on $X$ is a pair $(E,\lambda)$ with $E$ a vector bundle over $X$ and
$\lambda \colon \OO_{X} \cong \det E$ an isomorphism.  An \emph{isomorphism of $SL$ bundles}
$f \colon (E,\lambda) \cong (E_{1},\lambda_{1})$ is an isomorphism $f \colon E \cong E_{1}$ such
that $\lambda_{1} = \det f \circ \lambda$.

\begin{defn}
\label{D:SL.orientation}
An \emph{$SL$ orientation} on a bigraded
cohomology theory $A^{*,*}$ with an $\veps$-commu\-tative partial multiplication or ring structure
is an assignment to every $SL$ bundle $(E,\lambda)$ over every $X$ in $\Sm/S$
of a class $\thom(E,\lambda) \in A^{2n,n}(E,E-X)$ for $n = \rk E$
satisfying the following conditions:

\begin{enumerate}
\item For an isomorphism $f \colon (E,\lambda) \cong (E_{1},\lambda_{1})$
we have $\thom(E,\lambda) = f^{*}\thom(E_{1},\lambda_{1})$.

\item For $u \colon Y \to X$
we have $u^{*}\thom(E,\lambda) = \thom(u^{*}E,u^{*}\lambda)$ in $A^{2n,n}(u^{*}E,u^{*}E - Y)$.

\item The maps ${-} \cup \thom(E,\lambda) \colon A^{*,*}(X) \to A^{*+2n,*+n}(E,E-X)$ are isomorphisms.

\item We have
\[
\thom (E_{1} \oplus E_{2}, \lambda_{1} \otimes \lambda_{2})
= q_{1}^{*}\thom(E_{1},\lambda_{1}) \cup q_{2}^{*}\thom(E_{2},\lambda_{2}),
\]
where $q_{1},q_{2}$ are the projections from $E_{1} \oplus E_{2}$
onto its factors.
\end{enumerate}
The class $\thom(E,\lambda)$ is the \emph{Thom class} of the $SL$ bundle, and
$e(E,\lambda) = z^{*} \thom(E,\lambda) \in A^{2n,n}(X)$ is its \emph{Euler class}.
\end{defn}

This definition is analogous to the Thom classes theory version of the definition of an orientation
\cite[Definition 3.32]{Panin:2003rz}
or of a symplectic orientation
\cite[Definition 14.2]{Panin:2010fk}.

The Thom and Euler classes of $SL$ bundles are not necessarily central in contrast with
the classes in the oriented and symplectically oriented theories of
\cite{Panin:2003rz} and \cite{Panin:2010fk}.
But for an $SL$ bundle
of rank $n$ the Thom and Euler classes are in bidegree $(2n,n)$, and such classes need not be central when
$n$ is odd and $\veps \neq 1$.
Centrality occcurs for oriented theories because they have $\veps = 1$ and for
symplectically oriented theories because the Thom and Borel
classes of symplectic bundles are in bieven bidegrees
$(4r,2r)$.


Twisted versions of cohomology groups with coefficients in a line bundle
can be defined for any $SL$ oriented theory by
\begin{equation*}
A^{p,q}(X;L) = A^{p+2,q+1}(L,L-X)
\end{equation*}
and more generally by
$A^{p,q}(X,X-Z;L) = A^{p+2,q+1}(L,L-Z)$ for closed subsets $Z \subset X$.

\begin{thm}
\label{T:line.bundle}
Let $E$ be a vector of rank $n$ over $X$.  Suppose that $A^{*,*}$ is an
$SL$ oriented bigraded cohomology theory.
Then there are canonical isomorphisms of bigraded right $A^{0}(X)$ or
$A^{*,*}(X)$-modules  $A^{*+2n,*+n}(E,E-X) \cong A^{*,*}(X;\det E)$.
\end{thm}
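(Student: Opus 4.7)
Let $L := \det E$. The plan is to derive the desired isomorphism from the Thom isomorphism axiom applied to two auxiliary $SL$-bundles, combined with an untwisting step.

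Both Whitney sums $F := E \oplus L^{\vee}$ (of rank $n+1$) and $F' := L \oplus L^{\vee}$ (of rank $2$) have canonically trivialized determinant $L \otimes L^{\vee} = \OO_X$, so both are $SL$-bundles. Axiom (3) of Definition~\ref{D:SL.orientation} therefore provides Thom isomorphisms
\[
A^{*,*}(X) \;\xrightarrow{\cup\,\thom(F)}\; A^{*+2n+2,\,*+n+1}(F, F-X)
\quad \text{and} \quad
A^{*,*}(X) \;\xrightarrow{\cup\,\thom(F')}\; A^{*+4,\,*+2}(F', F'-X).
\]
Since $F$ is the total space of the pulled-back line bundle $\pi_E^{*}L^{\vee}$ over $E$, and similarly $F' = \pi_L^{*}L^{\vee}$ over $L$, the definition of twisted cohomology for pairs stated just before Theorem~\ref{T:line.bundle} rewrites these as the twisted cohomology groups $A^{*+2n,\,*+n}(E, E-X;\, L^{\vee})$ and $A^{*+2,\,*+1}(L, L-X;\, L^{\vee})$ respectively. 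Combining the two $SL$-Thom isomorphisms yields a canonical iso
\[
A^{*+2n,\,*+n}(E, E-X;\, L^{\vee}) \;\cong\; A^{*+2,\,*+1}(L, L-X;\, L^{\vee}). \qquad (\natural)
\]

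It remains to cancel the common $L^{\vee}$-twist on both sides of $(\natural)$. The $SL$-Thom class $\thom(L \oplus L^{\vee})$ corresponds under the $SL$-Thom isomorphism for $L \oplus L^{\vee}$ to $1 \in A^{0,0}(X)$; viewed through the twisted reinterpretation above it is a class in $A^{0,0}(X;\, L \otimes L^{\vee}) = A^{0,0}(X;\, \OO_X) = A^{0,0}(X)$ encoding the canonical trivialization $L \otimes L^{\vee} \cong \OO_X$. Using the partial multiplicative structure of Definition~\ref{multiplicative}, cup product with this ``twisting class'' defines a natural isomorphism that converts $L^{\vee}$-twisted cohomology into untwisted cohomology; applied symmetrically to both sides of $(\natural)$ it yields the desired iso $A^{*+2n,\,*+n}(E, E-X) \cong A^{*+2,\,*+1}(L, L-X)$. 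Because every map is a pullback or a cup product with a fixed class on $X$, the composed isomorphism is a morphism of bigraded right $A^{0}(X)$- and $A^{*,*}(X)$-modules, as required.

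The main obstacle will be the untwisting step: one must verify that line-bundle twists combine multiplicatively, i.e.\ that there are natural pairings $A^{*,*}({-};\, L_1) \otimes A^{*,*}({-};\, L_2) \to A^{*,*}({-};\, L_1 \otimes L_2)$ with $A^{*,*}({-};\, \OO_X) = A^{*,*}({-})$, in a way compatible with the partial multiplication of Definition~\ref{multiplicative} and the $SL$-Thom axiom. Once this is set up, the cancellation $L^{\vee} \otimes L \cong \OO_X$ effected by $\thom(L \oplus L^{\vee})$ transforms $(\natural)$ into the statement of the theorem.
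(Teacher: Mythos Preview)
Your identification of the two key $SL$-bundles $F = E \oplus L^{\vee}$ and $F' = L \oplus L^{\vee}$ is exactly right, and the paper uses the same pair. But you apply their Thom isomorphisms over $X$, landing in $A^{*+2n+2,*+n+1}(F, F-X)$ and $A^{*+4,*+2}(F', F'-X)$ --- neither of which is the group $A^{*+2n,*+n}(E,E-X)$ or $A^{*+2,*+1}(L,L-X)$ appearing in the statement. Your $(\natural)$ is then simply the observation that both of those auxiliary groups are isomorphic to $A^{*,*}(X)$; it is not the theorem with an extra twist on each side.

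The untwisting step you propose cannot be repaired. To convert $A^{*+2n,*+n}(E,E-X;L^{\vee})$ into $A^{*+2n,*+n}(E,E-X)$ via a pairing of twisted groups you would need a canonical class in $A^{0,0}(X;L)$, not in $A^{0,0}(X;L\otimes L^{\vee}) \cong A^{0,0}(X)$. The class you describe lies in the latter group, and cupping with it leaves the $L^{\vee}$-twist unchanged. There is no canonical element of $A^{0,0}(X;L) = A^{2,1}(L,L-X)$ in a merely $SL$-oriented theory: having one for every $L$ would amount to a Thom class for an arbitrary line bundle, i.e.\ to a full $GL$-orientation, which is a strictly stronger hypothesis.

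The paper avoids untwisting altogether. It pulls back the $SL$-bundle $L \oplus L^{\vee}$ along $p\colon E \to X$ and the $SL$-bundle $E \oplus L^{\vee}$ along $q\colon L \to X$; both pullbacks have total space $E \oplus L \oplus L^{\vee}$ (after a permutation of summands, which contributes the sign $\veps^{n}$). The Thom isomorphisms for these pulled-back $SL$-bundles then give directly
\[
A^{*+2n,*+n}(E,E-X) \;\xrightarrow{\ \cong\ }\; A^{*+2n+4,*+n+2}\bigl(E\oplus L\oplus L^{\vee},\, E\oplus L\oplus L^{\vee}-X\bigr) \;\xleftarrow{\ \cong\ }\; A^{*+2,*+1}(L,L-X),
\]
and the right-hand group is $A^{*,*}(X;\det E)$ by definition. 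The point is to go \emph{up} from the two groups you actually care about to a common roof, rather than \emph{down} to $A^{*,*}(X)$ from two groups that are not the ones in the theorem.
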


%


\begin{proof}
Write $L_{E} = \det E$.
There are canonical isomorphisms
\begin{align*}
\lambda_{1} \colon \OO_{X} & \cong \det (E \oplus L_{E}^{\vee}), &
\lambda_{2} \colon \OO_{X} & \cong \det(L_{E} \oplus L_{E}^{\vee}).
\end{align*}
This gives us $SL$ bundles $(E \oplus L_{E}^{\vee},\lambda_{1})$ and
$(L_{E}\oplus L_{E}^{\vee}, \lambda_{2})$ over $X$.  The pullback of the first bundle
along $q \colon L_{E} \to X$ gives an $SL$ bundle whose structural map is the first projection
$L_{E} \oplus E \oplus L_{E}^{\vee} \onto L_{E}$.  The pullback of the second bundle along
$p \colon E \to X$ and permutation of the summands
gives an $SL$ bundle whose structural map is the second projection
$L_{E} \oplus E \oplus L_{E}^{\vee}
\onto E$.
We now have canonical isomorphisms
\[
\xymatrix @M=5pt @C=75pt {
A^{*+2n,*+n}(E,E-X) \ar[r]_-{\cong}^-{p^{*}\thom(L_{E}\oplus L_{E}^{\vee},\lambda_{2}) \cup}
&
A^{*+2n+4,*+n+2}(E \oplus L_{E} \oplus L_{E}^{\vee} ,E \oplus L_{E} \oplus L_{E}^{\vee} -X)
\ar[d]_-{\cong}^-{\veps^{n}}
\\
A^{*+2,*+1}(L_{E},L_{E}-X),
\ar[r]^-{\cong}_-{q^{*}\thom(E \oplus L_{E}^{\vee}, \lambda_{1}) \cup}
&
A^{*+2n+4,*+n+2}(L_{E} \oplus E \oplus L_{E}^{\vee} ,L_{E} \oplus E \oplus L_{E}^{\vee} -X)
}
\]
and the bottom left module is $A^{*,*}(X;\det E)$ by definition.  The sign $\veps^{n}$ is appropriate
when one permutes the rank $1$ bundles and the rank $n$ bundle.
\end{proof}

Hermitian $K$-theory and Witt groups have more Thom classes than just those for $SL$ bundles
because of what are often called periodicity isomorphisms such as
$W^{*}(X;L) \cong W^{*}(X;L \otimes L_{1}^{\otimes 2})$.  However, these periodicity isomorphisms
depend on choices.  A good way to structure these choices is to talk about $SL^{c}$ bundles,
using extra structure analogous to the $Spin^{c}$ structures frequently used in
differential geometry.

An \emph{$SL^{c}$ vector bundle} on $X$ is a triple $(E,L,\lambda)$ with $E$ a vector bundle,
$L$ a line bundle, and $\lambda \colon L \otimes L \cong \det E$ an isomorphism.  The structural
group of an $SL^{c}$ bundle of rank $n$ is $SL_{n}^{c}$ which is the kernel of
\[
GL_{n} \times \GG_{m} \xrightarrow{(\det^{-1}, \, t \,\mapsto \, t^{2})} \GG_{m}.
\]
There is a natural exact sequence $1 \to \mu_{2} \to SL_{n}^{c} \to GL_{n} \to 1$.   The notation
$SL^{c}$ is in imitation of $Spin^{c}$.  The role of this double cover of $GL_{n}$ in direct images
in real topological $K$-theory merited a mention by Atiyah
\cite[p.~55]{Atiyah:1971zr}
nearly forty years ago.

\begin{defn}
\label{D:SLc.orientation}
An \emph{$SL^{c}$ orientation} on a
cohomology theory $A^{*,*}$ with a $\veps$-commutative ring structure or partial multiplication
is an assignment to every $SL^{c}$ bundle $(E,L,\lambda)$ over every scheme $X$ in $\Sm/S$
of a class $\thom(E,L,\lambda) \in A^{2n,n}(E,E-X)$ where $n = \rk E$
satisfying the conditions (1)--(4) of Definition \ref{D:SL.orientation}.
%
%
%
%
\end{defn}

\section{Schlichting's hermitian K-theory and the Gille-Nenashev pairing}

In
\cite[\S 2.7]{Schlichting:2010uq}
Schlichting defines the hermitian $K$-theory space of a complicial exact category with
weak equivalences and duality in the style of Waldhausen's $K$-theory.  We will denote
his space by $KO(C,w,\sharp,\eta)$.  More generally we write
\[
KO^{[n]}(C,w,\sharp,\eta) = KO\bigl( (C,w,\sharp,\eta)[n] \bigr)
\]
for the hermitian $K$-theory space for the $n^{\text{th}}$ shifted duality,
and $KO_{i}^{[n]}(C,w,\sharp,\eta)$ for its homotopy groups.
A \emph{symmetric object of degree $n$} in $(C,w,\sharp,\eta)$ is a pair $(X,\phi)$ with
$\phi \colon X \to X^{\sharp}[n]$ a weak equivalence which is symmetric $\phi = \phi^{t}$
for the shifted duality.  There is a natural definition of a Grothendieck-Witt group
of symmetric objects of degree $n$, and
The $\pi_{0}$ of the hermitian $K$-theory space
is the Grothendieck-Witt group of degree $n$ symmetric objects
\[
KO^{[n]}_{0}(C,w,\sharp,\eta) = GW^{n}(C,w,\sharp,\eta).
\]
When $C$ is $\ZZ[\frac 12]$-linear,
that is the same as the triangulated Grothendieck-Witt group of the homotopy category
$Ho(C,w) = C[w^{-1}]$ for the duality $(\sharp,\eta)$, defined \`a la Balmer.

For a duality-preserving exact functor $(F,f) \colon (C,w,\sharp,\eta) \to (D,v,\natural,\varpi)$
there are induced maps of spaces $KO^{[n]}(C,w,\sharp,\eta) \to KO^{[n]}(D,v,\natural,\varpi)$.
A weak equivalence between duality-preserving exact functors $(F,f) \simeq (G,g)$ produces
a homotopy between the maps.

There are natural periodicity isomorphisms
$KO^{[n]}(C,w,\sharp,\eta) \simeq KO^{[n+4k]}(C,w,\sharp,\eta)$.
Moreover, we may write
\[
KSp^{[n]}(C,w,\sharp, \eta) = KO^{[n]}(C,w,\sharp,-\eta)
\]
because the effect of changing the sign is to interchange symmetric and skew-symmetric forms.
Then
there are
isomorphisms $KSp^{[n]}(C,w,\sharp,\eta) \simeq KO^{[n+4k+2]}(C,w,\sharp,\eta)$
induced by the duality preserving functor $X \mapsto X[2k{+}1]$.  However, it is more useful
to use the identifications
\begin{equation}
\label{E:KSp}
\begin{array}{ccc}
KSb_{i}^{[n]}(C,w,\sharp, \eta) & \lra & KO_{i}^{[n+4k+2]}(C,w,\sharp,\eta)
\\
\xi & \longmapsto & - \,\xi[2k{+}1]
\end{array}
\end{equation}
because these commute with the forgetful maps to Waldhausen's $K$-theory $K_{i}(C,w)$.

Among the many important results Schlichting proves is localization.  Suppose that
$\overline C_{1} \subset Ho(C,w)$ is a thick triangulated subcategory
which is stable under the duality.  Let $C_{1} \subset C$ be the full exact subcategory with the
same objects as $\overline C_{1}$.  Let $w_{1}$ be the set of all morphisms in $C$ whose mapping cone is
in $C_{1}$.

\begin{thm}
[\protect{\cite[Theorem 6]{Schlichting:2010uq}}]
\label{T:localization}
If $(C,w,\sharp,\eta)$ is a complicial exact category with weak equivalences and duality, and
$C_{1}$ is as above, then
\[
KO(C_{1},w,\sharp,\eta) \to KO(C,w,\sharp,\eta) \to KO(C,w_{1},\sharp,\eta)
\]
is a fibration sequence up to homotopy.
\end{thm}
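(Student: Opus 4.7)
The plan is to mimic Waldhausen's proof of his $K$-theory fibration theorem in the hermitian setting. The statement reduces to three ingredients: (i) a hermitian version of Waldhausen's fibration theorem asserting that, if $w \subset w'$ are two classes of weak equivalences on a complicial exact category with duality and both are stable under $\sharp$, then
\[
KO(C^{w'}, w, \sharp, \eta) \to KO(C,w,\sharp,\eta) \to KO(C, w', \sharp, \eta)
\]
is a homotopy fibration, where $C^{w'} \subset C$ is the full subcategory of objects $X$ with $0 \to X$ in $w'$; (ii) an approximation/cofinality statement identifying $KO(C_1, w, \sharp, \eta)$ with $KO(C^{w_1}, w, \sharp, \eta)$; and (iii) verification that the data $(w \subset w_1,\ C_1,\ C^{w_1})$ satisfy the hypotheses of (i) and (ii).

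First I would check duality stability of the setup. Since $C_1$ is stable under $\sharp$ and the mapping cone of $f$ is carried by $\sharp$ to a shift of the cone of $f^{\sharp}$, the class $w_1$ of morphisms with cone in $C_1$ is stable under $\sharp$; the same reasoning shows $C^{w_1}$ is stable under $\sharp$, so all the hermitian $K$-theory spaces above are defined. For (ii), the approximation theorem needs a duality-equivariant upgrade: given a symmetric object $(X,\phi)$ in $C^{w_1}$, one must produce a symmetric $w$-equivalence to a symmetric object in $C_1$; the trick is first to choose a $w$-equivalence $X \to Y$ with $Y \in C_1$ (ordinary approximation), then to symmetrize using the hyperbolic construction on $Y \oplus Y^{\sharp}[n]$ and invoke $\tfrac12$-linearity to extract a symmetric $w$-equivalence in $C_1$. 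For (i), the argument follows Waldhausen's bisimplicial $S_\bullet$ proof, with the hermitian additivity theorem playing the role played by the ordinary additivity theorem, and with all bisimplicial constructions carrying a compatible involution.

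The main obstacle is step (i), the hermitian fibration theorem itself. The difficulty is that Waldhausen's proof proceeds via two applications of additivity to cofiber sequences of $S_\bullet$-diagrams, and to lift this into the hermitian $S_\bullet$-construction one must track the induced involution across edgewise subdivisions and ensure that the relevant cofiber sequences split symmetrically. This is precisely the point at which the assumption $\tfrac12 \in \Gamma(S,\OO_S)$ (i.e.\ $\ZZ[\tfrac12]$-linearity of $C$) becomes essential: it is needed both for hermitian additivity and for the averaging step that upgrades ordinary splittings to symmetric ones. Once these two technical points are in place, the localization sequence drops out formally from the fibration theorem combined with the approximation identification.
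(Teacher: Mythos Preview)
The paper does not prove this theorem at all; it is quoted verbatim from Schlichting \cite[Theorem 6]{Schlichting:2010uq} and used as a black box. So there is no ``paper's own proof'' to compare against. Your outline is broadly in the right spirit---Schlichting's argument does go through a hermitian additivity theorem and a fibration theorem---but two points in your sketch are off.

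First, your step (ii) is vacuous. With the definitions as stated, $C_{1}$ is the full subcategory of $C$ on objects lying in $\overline{C}_{1} \subset Ho(C,w)$, and $w_{1}$ is the class of maps whose cone lies in $C_{1}$. Since the cone of $0 \to X$ is $X$ itself, one has $C^{w_{1}} = C_{1}$ on the nose. There is nothing to approximate, and no need for a hyperbolic-plus-averaging trick to symmetrize a $w$-equivalence into $C_{1}$.

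Second, and more seriously, your claim that $\ZZ[\tfrac12]$-linearity is ``essential'' for hermitian additivity and hence for the fibration theorem is wrong in this setting. Schlichting's additivity theorem \cite[Theorem 5]{Schlichting:2010uq} and the localization theorem derived from it hold for arbitrary complicial exact categories with weak equivalences and duality, with no hypothesis on $2$. The $\tfrac12$ assumption in the present paper enters elsewhere (the identification of negative homotopy groups with Balmer's Witt groups, Karoubi's fundamental theorem, Theorem \ref{T:equivalence}), not in the localization sequence. If your strategy genuinely needs $\tfrac12$ to make the symmetric splittings work, then it is not the right strategy: Schlichting avoids averaging by working with his hermitian $S_{\bullet}$-construction and proving additivity directly for it.
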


Gille and Nenashev
\cite{Gille:2003ad}
have defined pairings for Witt groups of triangulated categories.
We explain how their construction can be applied to hermitian $K$-theory to give
pairings in the spirit of the partial multiplicative
structure of Definition \ref{multiplicative}.
A \emph{pairing}
\[
(\boxtimes, t_{1},t_{2},\lambda) \colon (C,w,\sharp,\eta) \times (D,v,\flat,\theta) \to
(E,u,\natural,\varpi)
\]
of complicial exact categories with weak equivalences and duality is an additive bifunctor
$\boxtimes \colon C \times D \to E$ which commutes with the translations up to specified functorial isomorphisms
$t_{1,X,Y} \colon X[1] \boxtimes Y \cong (X \boxtimes Y)[1]$ and $t_{2,X,Y} \colon X \boxtimes Y[1] \cong
(X \boxtimes Y)[1]$
plus
functorial weak equivalences
$\lambda_{X,Y} \colon X^{\sharp} \boxtimes Y^{\flat} \to (X \boxtimes Y)^{\natural}$
such that for any $X$ in $C$ and any $Y$ in $D$ the functors $X \boxtimes {-}$ and ${-} \boxtimes Y$
are exact and preserve weak equivalences and such that all the conditions of
\cite[Definitions 1.2 and 1.11]{Gille:2003ad}
hold.

Suppose given a symmetric object $(M,\phi)$ of degree $r$ in $(C,w,\sharp,\eta)$ and a symmetric object
$(N,\psi)$ of degree $s$ in $(D,v,\flat,\theta)$.  Gille and Nenashev show how to
define duality-preserving exact functors
\cite[Lemma 1.14]{Gille:2003ad}
%
\begin{subequations}
\begin{align*}
\bigl( {-} \boxtimes (N,\psi), \mathfrak R(N,\psi) \bigr)
\colon & (C,w,\sharp,\eta) \to (E,u,\natural, \varpi)[s]
\\
\bigl( (M,\phi) \boxtimes {-}, \mathfrak L(M,\phi) \bigr)
\colon & (D,v,\flat,\theta) \to (E,u,\natural,\varpi)[r]
\end{align*}
\end{subequations}
It follows that these induce maps of $KO$ spaces, which we will write as
\begin{subequations}
\begin{align}
\label{E:boxtimes.1}
( {-} \boxtimes (N,\psi))_{*}
\colon & KO^{[n]}(C,w,\sharp,\eta) \to KO^{[n]}(E,u,\natural, \varpi)[s]
\\
\label{E:boxtimes.2}
((M,\phi) \boxtimes {-} )_{*}
\colon & KO^{[n]}(D,v,\flat,\theta) \to KO^{[n]}(E,u,\natural,\varpi)[r]
\end{align}
\end{subequations}

The duality-preserving functor $(1_{E},-1)$,
which acts on symmetric objects by $(Z,\xi) \mapsto (Z,-\xi)$, induces a map
\begin{equation}
\label{E:involution}
\veps \colon KO^{[n]}(E,u,\natural, \varpi) \to KO^{[n]}(E,u,\natural, \varpi).
\end{equation}
These \emph{sign involutions} exist for the hermitian $K$-theory of any complicial exact category
with weak equivalences and duality, and they satisfy $\veps^{2}=1$ exactly.
(In general $\veps$ is not the same as the $-1$ which is the inverse map for the $H$-space structure
induced by the orthogonal direct sum.)
The methods of Gille and Nenashev show
\cite[Lemma 1.15]{Gille:2003ad}
that the
effect of the two functors on the Grothendieck-Witt classes $[M,\phi] \in GW^{r}(C,w,\sharp,\eta)$
and $[N,\psi] \in GW^{s}(D,v,\flat,\theta)$ is
\begin{equation}
\label{E:commute.category}
( {-} \boxtimes (N,\psi))_{*} [M,\phi]
= \veps^{rs}
( (M,\phi) \boxtimes {-})_{*}[N,\psi].
\end{equation}

\begin{prop}
The homotopy classes of the maps
\eqref{E:boxtimes.1} and \eqref{E:boxtimes.2}
on hermitian $K$-theory spaces
depend only on the classes $[N,\psi] \in GW^{s}(D,v,\flat,\theta)$
and $[M,\phi] \in GW^{r}(C,w,\sharp,\eta)$
respectively.
\end{prop}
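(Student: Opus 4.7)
The plan is to show that the assignment sending a symmetric object $(N,\psi)$ of degree $s$ to the homotopy class of \eqref{E:boxtimes.1} factors through the surjection from isometry classes of symmetric objects onto $GW^{s}(D,v,\flat,\theta) = KO_{0}^{[s]}(D,v,\flat,\theta)$; the argument for \eqref{E:boxtimes.2} is entirely parallel. The target set of homotopy classes of maps $KO^{[n]}(C,w,\sharp,\eta) \to KO^{[n]}(E,u,\natural,\varpi)[s]$ inherits an abelian group structure from the $H$-space structure on the target hermitian $K$-theory space, so it suffices to verify the three families of relations that cut out $GW^{s}$ from the free abelian monoid on isometry classes of symmetric objects.

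First I would handle isometries. An isomorphism $f \colon (N,\psi) \cong (N',\psi')$ of symmetric objects produces, by functoriality of $\boxtimes$ and of the construction $\mathfrak{R}$ in the second variable \cite[Lemma 1.14]{Gille:2003ad}, a natural weak equivalence between the duality-preserving exact functors $({-}\boxtimes(N,\psi),\mathfrak{R}(N,\psi))$ and $({-}\boxtimes(N',\psi'),\mathfrak{R}(N',\psi'))$. As recalled in the excerpt, weakly equivalent duality-preserving functors induce homotopic maps on hermitian $K$-theory, so the homotopy class of \eqref{E:boxtimes.1} depends only on the isometry class of $(N,\psi)$.

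Additivity is the next step. For $(N,\psi) = (N_{1},\psi_{1}) \perp (N_{2},\psi_{2})$, the exactness of $\boxtimes$ in the second variable and the direct-sum compatibility of the Gille--Nenashev formula for $\mathfrak{R}$ yield an orthogonal decomposition of $({-}\boxtimes(N,\psi),\mathfrak{R}(N,\psi))$ into the two pieces $({-}\boxtimes(N_{i},\psi_{i}),\mathfrak{R}(N_{i},\psi_{i}))$. Schlichting's additivity theorem for hermitian $K$-theory \cite{Schlichting:2010uq} then identifies the induced map with the sum of the two pieces in the $H$-space structure, so the homotopy class is additive on orthogonal sums.

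The remaining and most delicate step is the metabolic relation. The kernel of the surjection from the free abelian monoid on isometry classes onto $GW^{s}$ is generated by the differences $[(M,\mu)] - [L] - [L^{\flat}[s]]$ arising from short exact sequences $0 \to L \to (M,\mu) \to L^{\flat}[s] \to 0$ equipped with a Lagrangian $L$. Since $\boxtimes$ is exact in the second variable and preserves duality up to the coherent weak equivalence $\lambda$, such a Lagrangian sequence is transported to a short exact sequence of duality-preserving exact functors in which ${-}\boxtimes L$ is a Lagrangian for $({-}\boxtimes(M,\mu),\mathfrak{R}(M,\mu))$. A second invocation of Schlichting's additivity then yields the desired relation on the induced maps of hermitian $K$-theory spaces. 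The main obstacle is precisely here: one must trace the formula for $\mathfrak{R}$ through the Lagrangian data and verify that the symmetric form transported onto ${-}\boxtimes L^{\flat}[s]$ coincides up to coherent weak equivalence with $\mathfrak{R}(L^{\flat}[s])$. The first two steps are formal consequences of the Gille--Nenashev construction, but this coherence bookkeeping is where the real work of the proof is concentrated.
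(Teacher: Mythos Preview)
Your overall strategy---verify the three defining relations of $GW^{s}$ and invoke Schlichting's Additivity Theorem for the Lagrangian relation---is exactly the paper's approach. Two points deserve correction.

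First, a minor one: for the orthogonal-sum relation the paper does not appeal to Schlichting's Additivity Theorem. The monoidal structure on $KO^{[n]}$ coming from orthogonal direct sum already gives ``naive additivity'': if a duality-preserving functor splits as an orthogonal direct sum, the induced map on $KO$ is the $H$-space sum of the pieces. Your use of the Additivity Theorem here is valid but heavier than needed.

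Second, and more substantively, your formulation of the third relation is wrong. You write the Lagrangian relation as $[(M,\mu)] - [L] - [L^{\flat}[s]]$, but $L$ and $L^{\flat}[s]$ carry no symmetric form and therefore have no class in $GW^{s}$. Schlichting's third relation is $[(M,\mu)] = [H(L)]$, where $H(L) = L \oplus L^{\flat}[s]$ equipped with the hyperbolic form. Correspondingly, the ``main obstacle'' you isolate---matching a form on ${-}\boxtimes L^{\flat}[s]$ with a nonexistent $\mathfrak R(L^{\flat}[s])$---is not a meaningful check. What one actually needs is that ${-}\boxtimes L$ is a Lagrangian subfunctor of $({-}\boxtimes(M,\mu),\mathfrak R(M,\mu))$ (which you do say correctly) and that the hyperbolic functor $H({-}\boxtimes L)$ agrees with $({-}\boxtimes H(L),\mathfrak R(H(L)))$; Schlichting's Additivity Theorem \cite[Theorem 5]{Schlichting:2010uq} then gives the required homotopy. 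Once the relation is stated correctly, the coherence verification is routine and the paper accordingly dispatches it in a single sentence.
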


\begin{proof}
There are three relations in the definition of the Grothendieck-Witt groups
\cite[Definition 1]{Schlichting:2010uq}.
The maps on homotopy groups are compatible with the relations
$[N,\psi] + [N_{1},\psi_{1}] = [N\oplus N_{1},\psi\oplus \psi_{1}]$ because
the orthogonal direct sum of symmetric objects induces a monoidal structure on the
$KO^{[n]}(E,u,\natural,\varpi)$ giving a naive additivity for orthogonal direct sums of
duality-preserving functors.

The maps are compatible with the relations $[N,\psi] = [N_{2},\sigma^{\flat}\psi \sigma]$ for
a weak equivalence $\sigma \colon N_{2}\to N$ because $\sigma$ induces a natural weak equivalence of
duality-preserving functors.

The maps are compatible with the third relation related to lagrangians
because of Schlichting's Additivity Theorem
\cite[Theorem 5]{Schlichting:2010uq}.
\end{proof}

We thus get pairings
\begin{subequations}
\begin{gather}
KO_{j}^{[m]}(C,w,\sharp,\eta) \times KO_{0}^{[s]}(D,v,\flat,\theta) \to KO_{j}^{[m+s]}(E,u,\natural,\varpi)
\\
KO_{0}^{[r]}(C,w,\sharp,\eta) \times KO_{i}^{[n]}(D,v,\flat,\theta) \to KO_{i}^{[n+r]}(E,u,\natural,\varpi)
\end{gather}
\end{subequations}
which we call the \emph{right pairing} and the \emph{corrected left pairing}.  They coincide on
$KO_{0}^{[m]} \times KO_{0}^{[n]}$.


The Gille-Nenashev pairings
have a number of other properties such as functoriality, associativity, compatibility
with localization sequences.  Using the right pairing means that the boundary map on the homotopy
groups in the localization sequences satisfies $\partial (\alpha \cup \xi) = \partial \alpha \cup \xi$
for $\alpha \in KO^{[n]}_{i}(C,w_{1},\sharp,\eta)$ and $\xi \in KO^{[r]}_{0}(D,v,\flat,\theta)$.

In
\cite{Schlichting:2010uq}
Schlichting constructs a hermitian $K$-theory space for $(C,w,\sharp,\eta)$ but not a spectrum.
So in principle what we have discussed so far does not yield any negative homotopy groups.
In practice it is known that
if the categories are $\ZZ[\frac 12]$-linear, then
Balmer's triangulated Witt groups for the homotopy category $Ho(C,w) = C[w^{-1}]$ can function
as negative homotopy groups
\[
KO^{[n]}_{i}(C,w,\sharp,\eta) = W^{n-i}(C[w^{-1}],\sharp,\eta)
\qquad \qquad \text{for $i < 0$}.
\]
This is explained in
\cite{Schlichting:2006aa}.
The localization sequences for the homotopy groups of the hermitian
$K$-theory spaces and Balmer's localization sequence for triangulated Witt groups
\cite[Theorem 6.2]{Balmer:2000hb}
attach to each other
because the $\pi_{0}$ are triangulated Grothendieck-Witt groups.

Two other important theorems
are the following.

\begin{thm}
[Fundamental Theorem \protect{\cite{Schlichting:2006aa}}]
\label{T:fundamental}
Let $(C,w,\sharp,\eta)$ be a $\ZZ[\frac 12]$-linear complicial exact category with weak
equivalences and duality. Then for all $n$
\[
KO^{[n-1]}(C,w,\sharp,\eta) \xra{F} K(C,w) \xra{H} KO^{[n]}(C,w,\sharp,\eta)
\]
is a homotopy fiber sequence, where $F$ is the forgetful map and $H$ the hyperbolic map.
\end{thm}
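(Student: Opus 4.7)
The plan is to follow Schlichting's strategy, which proceeds via the Additivity Theorem (\cite[Theorem 5]{Schlichting:2010uq}) and the category of ``formations'' (symmetric objects together with a Lagrangian).

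First I would exhibit a canonical null-homotopy of the composite $H \circ F$. Given a symmetric object $(M,\phi)$ of degree $n-1$, the hyperbolic form $H(M) = M \oplus M^\sharp[n]$ in degree $n$ admits $M$ as a canonical Lagrangian: the symmetry $\phi \colon M \xra{\sim} M^\sharp[n-1]$ provides the identification needed to see $M$ as a half of $H(M)$ on which the hyperbolic form vanishes at the correct degree. Schlichting's additivity theorem then says that any symmetric object with a Lagrangian is zero in the hermitian $K$-theory space, and moreover this vanishing is compatible with the $S_\bullet$-construction, producing a natural comparison map
\[
\alpha \colon KO^{[n-1]}(C,w,\sharp,\eta) \lra \operatorname{hofib}(H).
\]

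Second I would construct an explicit fiber model out of formations. Let $\mathcal{F}^{[n]}(C,w,\sharp,\eta)$ denote the complicial exact category with weak equivalences and duality whose objects are triples $\bigl((X,\psi), L, j\bigr)$, where $(X,\psi)$ is symmetric of degree $n$, $L$ is an object of $C$, and $j \colon L \to X$ is a morphism realising $L$ as a Lagrangian of $(X,\psi)$. Forgetting the form yields $U \colon \mathcal{F}^{[n]} \to C$, while sending $X \mapsto (H(X), X, \text{inclusion})$ gives a duality-respecting functor $K(C,w) \to \mathcal{F}^{[n]}$. The composite $K(C,w) \to \mathcal{F}^{[n]} \to KO^{[n]}$ is, by construction, the hyperbolic map $H$.

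Third, I would apply the Additivity Theorem to the short exact sequence of duality-preserving functors
\[
K(C,w) \lra KO\bigl(\mathcal{F}^{[n]}\bigr) \lra KO^{[n-1]}(C,w,\sharp,\eta),
\]
where the right-hand map sends $\bigl((X,\psi), L, j\bigr)$ to the quotient $(X/L, \bar\psi)$; the assumption $\tfrac 12 \in \Gamma(S,\OO_S)$ and the $\ZZ[\tfrac 12]$-linearity of $C$ are essential here, since they allow the canonical splitting of the hyperbolic extension $L \to X \to L^\sharp[n]$ into its symmetric and skew-symmetric parts, so that the residual form on $X/L$ genuinely lives in shifted degree $n-1$. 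Additivity identifies $KO(\mathcal{F}^{[n]})$ with the homotopy fibre product, and together with the null-homotopy constructed in the first step, this shows that $\alpha$ is a weak equivalence and hence that $F$--$H$ is a homotopy fibre sequence.

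The main obstacle is the third step: one must verify that the formation category's hermitian $K$-theory really splits as an extension with exactly $KO^{[n-1]}$ on one side, and not some larger invariant measuring the failure of symmetric/skew splittings. This is where the $\ZZ[\tfrac 12]$-linearity hypothesis is indispensable; in its absence one would pick up $2$-torsion corrections coming from the gap between symmetric and quadratic forms, breaking the clean identification of the fibre with $KO^{[n-1]}$.
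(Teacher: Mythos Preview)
The paper does not prove this theorem. It is stated with attribution to Schlichting \cite{Schlichting:2006aa} and then used as a black box; no argument appears anywhere in the paper. So there is no ``paper's own proof'' against which to compare your proposal.

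On the proposal itself: the overall architecture---a null-homotopy of $H\circ F$ coming from the tautological Lagrangian in a hyperbolic object, a formation-type model for the fibre, and then an appeal to Additivity---is in the right spirit and does name the ingredients Schlichting uses. But your step~3 as written does not work. If $L$ is a Lagrangian in a degree-$n$ symmetric object $(X,\psi)$, then $X/L \simeq L^{\sharp}[n]$ and carries no natural nondegenerate form; the functor $\bigl((X,\psi),L,j\bigr)\mapsto (X/L,\bar\psi)$ you describe simply does not land in degree-$(n{-}1)$ symmetric objects. Relatedly, the displayed ``short exact sequence of duality-preserving functors'' mixes spaces and categories and is not well-typed as stated. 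Also, in step~1 the null-homotopy of $H\circ F$ does not use $\phi$ at all: for any object $M$ the inclusion $M\hookrightarrow M\oplus M^{\sharp}[n]$ is already a Lagrangian of the hyperbolic form, independently of any symmetry on $M$; the datum $\phi$ only becomes relevant when one tries to \emph{identify} the fibre. Finally, the $\ZZ[\tfrac12]$-linearity is needed to pass between quadratic and symmetric forms, not to ``split a hyperbolic extension into symmetric and skew parts'' as you suggest. So while your outline points at the right tools, the decisive third step has a genuine gap.
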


\begin{thm}
[\protect{\cite{Schlichting:2006aa}}]
\label{T:equivalence}
Let $(F,f) \colon (C,w,\sharp,\eta) \to (E,u,\natural, \varpi)$ be a duality-preserving
exact functor between $\ZZ[\frac 12]$-linear complicial exact categories with weak
equivalences and duality.  If $(F,f)$ induces a homotopy equivalence $K(C,w) \simeq K(E,u)$ of
Waldhausen $K$-theory spaces and isomorphisms
$W^{i}(C[w^{-1}],\sharp,\eta) \cong W^{i}(E[u^{-1}],\natural,\varpi)$ of Balmer's triangulated Witt groups,
then $(F,f)$ induces homotopy equivalences
$KO^{[n]}(C,w,\sharp,\eta) \simeq KO^{[n]}(E,u,\natural,\varpi)$ for all $n$.
\end{thm}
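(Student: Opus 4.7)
The plan is to use the Fundamental Theorem (Theorem \ref{T:fundamental}) to convert the given isomorphisms on Waldhausen $K$-theory and on Balmer's triangulated Witt groups into an isomorphism on every homotopy group of the hermitian $K$-theory spaces.  For each shift $n$ and each of the two categories we have a homotopy fibration sequence
\[
KO^{[n-1]} \xra{F} K \xra{H} KO^{[n]},
\]
and the duality-preserving exact functor $(F,f)$ induces a morphism between these fibration sequences and hence between the associated long exact sequences of homotopy groups.

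Two of the three columns of the resulting map of long exact sequences are already known to consist of isomorphisms.  The middle column is iso in every degree by the $K$-theory hypothesis.  For the outer columns, in negative degrees $i < 0$ the $\ZZ[\frac{1}{2}]$-linearity gives $KO^{[n]}_{i} = W^{n-i}(C[w^{-1}],\sharp,\eta)$, which is iso by the Witt group hypothesis (and similarly on the target).

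I would then induct on $i$, simultaneously for all shifts $n$, to establish that the map $KO^{[n]}_{i}(C,w,\sharp,\eta) \to KO^{[n]}_{i}(E,u,\natural,\varpi)$ is an isomorphism for every $i \geq 0$.  The base case is handled by the identification with Balmer-Witt just above (any $i < 0$ suffices).  For the inductive step, consider the segment
\[
K_{i} \to KO^{[n]}_{i} \to KO^{[n-1]}_{i-1} \to K_{i-1} \to KO^{[n]}_{i-1}
\]
of the long exact sequence.  The four outer terms are isomorphisms: $K_{i}$ and $K_{i-1}$ by the $K$-theory hypothesis, and $KO^{[n-1]}_{i-1}$, $KO^{[n]}_{i-1}$ by the inductive hypothesis, since these sit at level $i-1$.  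The five lemma delivers the desired isomorphism at $KO^{[n]}_{i}$.  Once every homotopy group map is an isomorphism, the conclusion follows: the $KO^{[n]}$ spaces are group-complete $H$-spaces with the homotopy type of CW complexes, so a weak equivalence on homotopy groups upgrades to a genuine homotopy equivalence.

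The main subtlety, and the point at which the $\ZZ[\frac{1}{2}]$-linearity hypothesis is indispensable, is the compatibility of the negative-degree picture: one needs not merely an abstract identification $KO^{[n]}_{i} \cong W^{n-i}$ for $i < 0$, but compatibility of the connecting maps of the Schlichting fibration sequence with those furnished by Balmer's machinery, so that the induction has a genuine base that feeds correctly into the long exact sequence coming from Theorem \ref{T:fundamental}.  This is precisely what is established in \cite{Schlichting:2006aa}; once it is in place the remainder of the proof is a routine diagram chase with the five lemma.
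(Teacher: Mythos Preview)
The paper does not prove this theorem; it is quoted from \cite{Schlichting:2006aa} without argument.  So there is no ``paper's proof'' to compare against, and your proposal has to stand on its own.

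Your overall strategy---compare the long exact sequences coming from the Fundamental Theorem fibration $KO^{[n-1]}\to K\to KO^{[n]}$ and induct upward on $i$ starting from the Balmer--Witt identification in negative degrees---is the right one.  But the inductive step as written is broken.  In your five-term segment
\[
K_{i} \to KO^{[n]}_{i} \to KO^{[n-1]}_{i-1} \to K_{i-1} \to KO^{[n]}_{i-1}
\]
the \emph{middle} term is $KO^{[n-1]}_{i-1}$, not $KO^{[n]}_{i}$.  The five lemma concludes the middle map is an isomorphism from the four outer ones; it says nothing about position two.  Your ``four outer terms'' list includes $KO^{[n-1]}_{i-1}$, which is the very term the five lemma would deliver, and omits $KO^{[n]}_{i}$, which is what you actually need.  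If instead you shift the window one step left so that $KO^{[n]}_{i}$ sits in the middle,
\[
KO^{[n-1]}_{i} \to K_{i} \to KO^{[n]}_{i} \to KO^{[n-1]}_{i-1} \to K_{i-1},
\]
then the leftmost term $KO^{[n-1]}_{i}$ is at level $i$ for a different shift, so the induction on $i$ (simultaneously in $n$) becomes circular.

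The repair is to split the induction into two passes using the four lemma.  First, from the segment $K_{i}\to KO^{[n]}_{i}\to KO^{[n-1]}_{i-1}\to K_{i-1}$ you get that the map on $KO^{[n]}_{i}$ is \emph{surjective} for every $n$ once the maps on $K_{i}$, $KO^{[n-1]}_{i-1}$, $K_{i-1}$ are known (the first epi, the last two iso).  This establishes surjectivity at level $i$ for all shifts simultaneously.  Second, from $KO^{[n-1]}_{i}\to K_{i}\to KO^{[n]}_{i}\to KO^{[n-1]}_{i-1}$ you get \emph{injectivity} on $KO^{[n]}_{i}$: you need the map on $KO^{[n-1]}_{i}$ to be epi, but that is exactly what the first pass just gave you.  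With both passes done you have the isomorphism at level $i$ for all $n$, and the induction proceeds.  (At $i=0$ the term $K_{-1}$ is zero for the connective Waldhausen construction, so the base step goes through; this is where the attachment to Balmer's sequence that you flagged is used.)
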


In particular if $(F,f)$ induces an equivalence of  $\ZZ[\frac 12]$-linear triangulated categories with
duality $(C[w^{-1}], \sharp, \eta) \simeq (E[u^{-1}],\natural,\varpi)$, then it induces a homotopy
equivalence $KO(C,w,\sharp,\eta) \simeq KO(E,u,\natural,\varpi)$ by Thomason's theorem
and by the fact that Balmer's Witt groups are a functorial over the category with objects
$\ZZ[\frac 12]$-linear
triangulated categories
with duality and arrows isomorphism classes of duality-preserving triangulated functors
\cite[Lemma 4.1]{Balmer:2002rp}.

\section{The cohomology theory KO on the category $\SmOp/k$ }

Let $S$ be a regular noetherian separated scheme of finite Krull dimension
with $\frac 12 \in \Gamma(S,\OO_{S})$. For every $S$-scheme $X$ consider the
category $VBX$
of big vector bundles over $X$ in the sense of  \cite[Appendix C.4]{Friedlander:2002aa}.
The assignments $X \mapsto VBX$
and $(f\colon Y \to X) \mapsto f^\ast\colon VBX \to VBY$ then form a
strict functor $(\Sm/S)^{\op} \to \mathcal{C}at$ because one has
 equalities $(f\circ g)^{*} = g^{*}\circ f^{*}$
instead of simply isomorphisms.

For any $X \in \Sm/S$, let $Ch^b(VBX)$ denote the additive category
of bounded complexes of big vector bundles on X. We will
consider $Ch^b(VBX)$ as a complicial exact category with weak
equivalences, the conflations being the degreewise-split short exact
sequences, and the weak equivalences $w_{X}$ being the quasi-isomorphisms.
When we further endow $Ch^b(VBX)$ with the duality consisting of the
functor ${}^\vee = \mathcal Hom_{\mathcal O_X}({-},\mathcal O_X)$ and
the natural biduality maps $\eta_{X} \colon 1 \cong {}^{\vee\vee}$, we will write it
as $Ch^b(VBX)$.
\[
Ch^{b}(VBX) = (Ch^{b}(VBX), w_{X}, {}^{\vee}, \eta_{X})
\]

Now suppose $U \subset X$ is an open subscheme and $Z = X -U$.
Let $w_{U}$ be the set of chain maps whose restriction to $U$ is a quasi-isomorphism.
Let $Ch^{b}(VBX)^{w_{U}}$ be the full additive subcategory of complexes which are
acyclic on $U$.  We have two new families of complicial exact categories with
weak equivalences and duality
\begin{align*}
Ch^{b}(VBX\ on \  Z) & = (Ch^{b}(VBX)^{w_{U}}, w_{X}, {}^{\vee}, \eta_{X}),
\\
Ch^{b}(VBX\ on \ U) & = (Ch^{b}(VBX),w_{U}, {}^{\vee}, \eta_{X}).
\end{align*}
We then have hermitian $K$-theory spaces
\begin{align*}
KO^{[n]}(X) & = KO^{[n]}(Ch^{b}(VBX) )
\\
KO^{[n]}(X,U) & = KO^{[n]}(Ch^{b}(VBX \ on \ Z)).
\end{align*}
with $KO^{[n]}(X) = KO^{[n]}(X,\varnothing)$.
Let
$D^{b}(VBX \ on \ Z)$ be the homotopy category equipped with the
triangulated duality $({}^{\vee},\eta_{X})$.
We define the hermitian
$K$-theory groups as
%
\begin{equation}
\label{E:KO.groups}
KO_{i}^{[n]}(X,U) =
\begin{cases}
\pi_{i}KO^{[n]}(X,U) & \text{for $i \geq 0$,}
\\
W^{n-i}(D^{b}VBX \ on \ Z) & \text{for $i < 0$}.
\end{cases}
\end{equation}

For $f \colon (X,U) \to (Y,V)$ a morphism in $\SmOp/S$ write $W = Y -V$.  Then the functor
$f^{*} \colon Ch^{b}(VBX \ on \ Z) \to Ch^{b}(VBY\ on \ W)$ can by made
duality-preserving by equipping it with the natural isomorphism
$f^{*}\mathcal{H}om_{\OO_{X}}({-},\OO_{X}) \cong \mathcal{H}om_{\OO_{Y}}(f^{*}{-},\OO_{Y})$.
This gives us maps
\begin{subequations}
\begin{gather}
\label{E:KO.pullback.1}
f^{*}\colon KO^{[n]}(X,U) \to KO^{[n]}(Y,V),
\\
\label{E:KO.pullback.2}
f^{*} \colon KO_{i}^{[n]}(X,U) \to KO_{i}^{[n]}(Y,V).
\end{gather}
\end{subequations}

By Schlichting's localization theorem (Theorem \ref{T:localization}) the sequences
\[
KO^{[n]}(X,U) \to KO^{[n]}(X) \to KO^{[n]}(Ch^{b}(VBX \ on \ U))
\]
are fibration sequences up to homotopy.
The restriction map $Ch^{b}(VBX \ on \  U) \to Ch^{b}(VBU)$ is a duality-preserving functor
which induces an equivalence on the homotopy categories $D^{b}(VBX \ on \ U) \simeq D^{b}(VBU)$.
(Schlichting actually gives a different argument in
\cite[\S 9]{Schlichting:2010uq}
which is valid with fewer restrictions on the schemes.)
So we get fibration sequences up to homotopy
\begin{subequations}
\begin{equation}
\label{E:localization}
KO^{[n]}(X,U) \to KO^{[n]}(X) \to KO^{[n]}(U)
\end{equation}
and therefore long exact sequences
\begin{equation}
\label{E:KO.localization}
\cdots \to KO_{i}^{[n]}(X,U) \to KO^{[n]}_{i}(X) \to KO^{[n]}_{i}(U)
\xrightarrow{\partial} KO_{i-1}^{[n]}(X,U) \to \cdots
\end{equation}
\end{subequations}

Now suppose $(Y,V)$ is in $\SmOp/S$ and write $W = Y - V$.
There is then a pairing of complicial exact categories with
weak equivalences and duality
\[
\boxtimes \colon
Ch^{b}(VBX \ on \ Z) \times Ch^{b}(VBY \ on \ W) \to Ch^{b}(VB(X \times Y) \ on \ Z \times W).
\]
For a degree $r$ symmetric complex $(N,\psi)$ on $Y$ which is acyclic on $V$ and for a
degree $s$ symmetric complex $(M,\phi)$ on $X$ which is acyclic on $U$ we have maps of
spaces
\begin{subequations}
\begin{align}
({-}\boxtimes (N,\psi))_{*} \colon &
KO^{[n]}(X,U) \to KO^{[n+r]}(X \times Y, (X \times V) \cup (U \times Y))
\\
\veps^{ns}((M,\phi) \boxtimes {-})_{*} \colon &
KO^{[n]}(Y,V) \to KO^{[n+s]}(X \times Y, (X \times V) \cup (U \times Y))
\end{align}
This leads to a right pairing and a corrected left pairing
\begin{gather}
\label{E:KO.pairing.1}
KO_{i}^{[n]}(X,U) \times KO_{0}^{[r]}(Y,V) \to KO_{i}^{[n+r]}(X \times Y, (X \times V) \cup (U \times Y)),
\\
\label{E:KO.pairing.2}
KO_{0}^{[n]}(X,U) \times KO_{i}^{[r]}(Y,V) \to KO_{i}^{[n+r]}(X \times Y, (X \times V) \cup (U \times Y)).
\end{gather}
\end{subequations}

Now suppose $(E,L,\lambda)$ is an $SL^{c}$ bundle of rank $n$ over $X$.  Let $p \colon E \to X$ be the
structural map.  We may construct Thom isomorphisms for hermitian $K$-theory using the
same method that Nenashev used for Witt groups \cite[\S 2]{Nenashev:2007rm}.
Namely, the pullback $p^{*}E = E \oplus E \to E$ has a canonical section $s$, the diagonal.
There is a Koszul complex
\[
K(E) = \bigl( 0 \to \Lambda^{n} p^{*}E^{\vee} \to \Lambda^{n-1} p^{*}E^{\vee} \to \cdots \to
\Lambda^{2} p^{*}E^{\vee} \to E^{\vee} \to \OO_{E} \to 0 \bigr)
\]
(considered as a chain complex in homological degrees $n$ to $0$)
in which each boundary map the contraction with $s$.    It is a locally free
resolution of the coherent sheaf $z_{*}\OO_{X}$.  There is a canonical isomorphism
$\varTheta(E) \colon K(E) \to K(E)^{\vee} \otimes \det p^{*}E^{\vee}[n]$
which is symmetric for the $(\det p^{*}E^{\vee})$-twisted shifted duality.
The composition
\[
\varTheta(E,L,\lambda) \colon K(E) \otimes p^{*}L \xra{\varTheta(E)}
K(E)^{\vee} \otimes \det p^{*}E^{\vee} \otimes p^{*}L [n]
\xra{1 \otimes p^{*}(\lambda^{\vee} \otimes L)}
K(E)^{\vee} \otimes p^{*}L^{\vee}[n]
\]
is symmetric for the untwisted shifted duality.  We consider
the Grothendieck-Witt class
\begin{equation}
\label{E:KO.Thom}
\thom(E,L,\lambda) = [K(E) \otimes p^{*}L, \varTheta(E,L,\lambda)] \in KO_{0}^{[n]}(E,E-X).
\end{equation}
When $L$ is trivial this is an $SL$ Thom class $\thom(E,\lambda) = [K(E), \varTheta(E,\lambda)]$.
Finally for $g$ a nowhere vanishing function on $X$ we let
$\angles{g} =  [\OO_{X},g] \in KO_{0}^{[0]}(X)$ be the Grothendieck-Witt class
of the rank one symmetric bilinear bundle.

\begin{thm}
\label{T:SLc.oriented}
Let $S$ be a regular noetherian separated scheme of finite Krull dimension with
$\frac 12 \in \Gamma(S,\OO_{S})$.  Then the groups $KO_{i}^{[n]}(X,U)$ of \eqref{E:KO.groups},
the maps $f^{*}$ of \eqref{E:KO.pullback.2} and $\partial$ of \eqref{E:KO.localization}, the pairings of
\eqref{E:KO.pairing.1} and \eqref{E:KO.pairing.2}, the classes $1 = \angles{1}$ and $\veps = \angles{-1}$
in $KO_{0}^{[0]}(\pt)$ and the classes $\thom(E,L,\lambda)$ of \eqref{E:KO.Thom} form
an $SL^{c}$ oriented cohomology theory with an $\veps$-commutative partial multiplication.
\end{thm}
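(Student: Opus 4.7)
The plan is to verify the three layers of structure separately: first that $(KO_*^{[*]},\partial)$ is a cohomology theory on $\SmOp/S$, then that the pairings, unit $\angles{1}$, and sign $\angles{-1}$ satisfy all the axioms of Definition \ref{multiplicative}, and finally that the Koszul classes $\thom(E,L,\lambda)$ satisfy conditions (1)--(4) of Definition \ref{D:SLc.orientation}. Throughout I would lean heavily on the machinery recalled in \S 4: Schlichting's localization (Theorem \ref{T:localization}), the Fundamental Theorem (Theorem \ref{T:fundamental}), Theorem \ref{T:equivalence}, Balmer's triangulated Witt-group formalism for negative degrees, and the Gille--Nenashev construction of pairings.

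For the cohomology-theory axioms, functoriality of $f^{*}$ and the localization long exact sequences \eqref{E:KO.localization} are already given. Homotopy invariance amounts to showing $p^{*} \colon KO_i^{[n]}(X) \to KO_i^{[n]}(X \times \Aff^{1})$ is an isomorphism; by the Fundamental Theorem this reduces to the analogous (known) statement for Waldhausen $K$-theory of big vector bundles on regular schemes together with $\Aff^{1}$-invariance of Balmer's triangulated Witt groups on regular schemes with $\tfrac12$. \'Etale excision is handled the same way: for an \'etale $f \colon (X',U') \to (X,U)$ inducing an isomorphism on $Z \setminus W$, the induced functor of complicial categories with duality $Ch^b(VBX\ on\ Z) \to Ch^b(VBX'\ on\ Z')$ induces a triangulated duality equivalence on homotopy categories by the usual arguments, so Theorem \ref{T:equivalence} applies in nonnegative degrees and Balmer's invariance under equivalences of triangulated categories with duality applies in negative degrees.

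For the partial multiplication, bilinearity, functoriality and associativity of $\boxtimes$ on the spaces follow from the corresponding properties of tensor products of bounded complexes of vector bundles together with Gille--Nenashev \cite[Lemma 1.14]{Gille:2003ad} which gives the required duality-preserving functor structure. The unit $\angles{1}$ is the class of $(\OO_{X},1)$ and acts as the identity because $\OO \otimes {-}$ is naturally isomorphic to the identity functor as a duality-preserving functor. The identity $\veps \times \veps = 1$ follows from $(-1)\cdot(-1)=1$. $\veps$-commutativity in bidegree $(2r,r)\times(2s,s)$ is exactly the content of \eqref{E:commute.category} applied to symmetric objects of degree $r$ in the first slot and degree $s$ in the second, transcribed into the $\SmOp/S$ setting via the pairing of $Ch^b(VB\,{-})$'s. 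The boundary-map compatibility $\partial(\alpha\times b)=\partial\alpha\times b$ is exactly the statement that the right pairing respects localization sequences, which is the Gille--Nenashev/Schlichting compatibility recalled in \S 4.

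The $SL^c$ orientation is the main obstacle. Conditions (1) and (2) are formal from the functoriality of the Koszul construction: an isomorphism of $SL^c$ bundles or a pullback yields an isomorphism of the corresponding Koszul complexes intertwining the symmetric forms $\varTheta(E,L,\lambda)$. Multiplicativity (4) follows from the canonical isomorphism $K(E_{1}\oplus E_{2}) \simeq q_{1}^{*}K(E_{1}) \otimes q_{2}^{*}K(E_{2})$ of Koszul complexes together with the fact that this isomorphism is compatible with both the symmetric pairings and the identifications $\det(E_{1}\oplus E_{2}) \cong \det E_{1}\otimes \det E_{2}$ and $(L_{1}\otimes L_{2})^{\otimes 2} \cong L_{1}^{\otimes 2}\otimes L_{2}^{\otimes 2}$. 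The Thom isomorphism (3) is the real work. The strategy is: (a) by homotopy invariance reduce to the zero section, so that it suffices to show that cup product with $\thom(E,L,\lambda)$ followed by restriction to $X$ produces an Euler class whose multiplication is an isomorphism after deformation; (b) use a Mayer--Vietoris argument on a Zariski trivializing cover of $(E,L)$, together with étale excision already established, to reduce to the case of a trivial $SL^c_{n}$-bundle $E = \Aff^{n}_{X}$ with $L = \OO_{X}$; (c) for the trivial bundle apply the projective bundle / Koszul resolution computation — equivalently invoke the fact that the Koszul class generates $KO_{0}^{[n]}(\Aff^{n}_{X},\Aff^{n}_{X}\setminus X)$ as a free rank-one module over $KO_{*}^{[*]}(X)$, which for $n=1$ is the definition of the suspension/desuspension on hermitian $K$-theory and for general $n$ follows by induction using the splitting $\Aff^{n} = \Aff^{1}\times \Aff^{n-1}$ and the multiplicativity (4) already established. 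The delicate point here is checking that the symmetric form $\varTheta(E,L,\lambda)$ carried on $K(E)\otimes p^{*}L$ matches up under these reductions with the iterated tensor product of rank-one Thom classes; I would handle this by a direct inspection of the sign/twist conventions using the explicit Koszul differential, recording the $\veps^{n}$ that appears in the proof of Theorem \ref{T:line.bundle} when one commutes line-bundle and rank-$n$-bundle factors.
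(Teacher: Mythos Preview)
Your treatment of the cohomology-theory axioms and the partial multiplicative structure matches the paper's sketch closely: both reduce homotopy invariance and \'etale excision to the corresponding statements for Quillen $K$-theory and Balmer's Witt groups via duality-preserving functors (the paper invokes Theorem~\ref{T:equivalence} rather than the Fundamental Theorem, but the effect is the same), and both extract $\veps$-commutativity from \eqref{E:commute.category}.

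The divergence is in the Thom isomorphism. The paper observes that cup with $\thom(E,L,\lambda)$ is induced by the duality-preserving functor $\mathcal F \mapsto \pi^{*}\mathcal F \otimes_{\OO_{E}} K(E)$, whose image is quasi-isomorphic to $z_{*}\mathcal F$; this yields d\'evissage isomorphisms in Waldhausen $K$-theory (classical) and in Balmer's Witt groups (Gille \cite{Gille:2007hb}), hence in hermitian $K$-theory by Theorem~\ref{T:equivalence}. That handles all $SL^{c}$ bundles at once without induction.

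Your Mayer--Vietoris plus rank-induction route is a legitimate alternative strategy, but two of your steps need repair. Step~(a) is confused: restricting along the zero section yields the Euler class, and cup with the Euler class is \emph{not} an isomorphism (it is zero for the trivial bundle of positive rank, for instance); there is no ``deformation'' to invoke since $E$ is already a vector bundle. Simply delete (a) and go straight to (b). More seriously, your base case $n=1$ in (c) is circular: the bonding maps of $\BO$ in \S\ref{S:BO} are \emph{defined} as cup with the rank-one Thom class, and the fact that they are equivalences is deduced there \emph{from} the theorem being proved, not the other way around. To close the argument you must show that $\mathcal F \mapsto p^{*}\mathcal F \otimes K(\OO_{\Aff^{1}})$ induces an equivalence $D^{b}(VBX) \simeq D^{b}(VB\Aff^{1}_{X}\ \text{on}\ X)$ of triangulated categories with shifted duality and then apply Theorem~\ref{T:equivalence} --- but that is exactly d\'evissage in rank one. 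So your route does not avoid d\'evissage; it merely postpones it to the base case, and the paper's direct appeal to Gille's general d\'evissage theorem is both shorter and what actually fills your gap.
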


In particular the products with the Thom classes are isomorphisms
\begin{equation}
\label{E:thom.iso.1}
{-} \cup \thom(E,L,\lambda) \colon KO_{i}^{[m]}(X) \xra{\cong}
KO_{i}^{[m+n]}(E, E-X)
\end{equation}

\begin{proof}
[Sketch of the proof]
The verifications are all straightforward.  For instance \'etale excision and homotopy invariance amount
to having certain pullback maps be isomorphisms, and pullback maps are induced by duality-preserving
functors.  Since these duality-preserving functors give isomorphisms for Quillen's $K$-theory and
Balmer's Witt groups, they give isomorphisms for hermitian $K$-theory as well under our hypotheses.

The Thom maps come from duality-preserving functors.   The functor part
$Ch^{b}(VBX) \to Ch^{b}(VBE)$ is given by
$\shf F \mapsto \pi^{*} \shf F\otimes_{\OO_{E}} K(E)$ with the target quasi-isomorphic to
the coherent sheaf
$z_{*}\shf F$.  These produce d\'evissage isomorphisms in both Quillen-Waldhausen $K$-theory
and Balmer's Witt groups \cite{Gille:2007hb}.

For the $\veps$-commutativity
of the partial multiplicative structure, $\sigma^{*}(b \times a)$ is calculated by applying the right
pairing for $a$ with respect to $b$ and then switching.  That is equivalent to applying the uncorrected
left pairing for $a$ with respect to $b$. But that satisfies \eqref{E:commute.category}.
\end{proof}

We will discuss later the Borel classes associated to the Thom classes.

It is sometimes inconvenient that the $KO_{i}^{[n]}(X,U)$ for $i < 0$ are not defined as homotopy groups.
But actually they are naturally isomorphic to direct summands of homotopy groups.
For we have the $S$-scheme $\GG_{m} = \Aff^{1}-0$
(pointed by $1$) and groups
$KO^{[n]}_i(\GG^{\wedge r}_m \times X,\GG^{\wedge r}_m \times U)$
defined as in \eqref{E:A(smash)}.


\begin{lem}
For all $i$ and $n$ and all $r \geq 1$ and all $(X,U)$ in $\SmOp/S$ there are natural isomorphisms
$KO_{i+r}^{[n+r]}(\GG^{\wedge r}_m \times X,\GG^{\wedge r}_m \times U)
\cong KO^{[n]}_{i}(X,U)$.
\end{lem}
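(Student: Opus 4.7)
The plan is to reduce to the case $r=1$ by induction on $r$, and to derive the base case from the Thom isomorphism for the trivial line bundle combined with the localization exact sequence and the splitting induced by the basepoint $1 \in \GG_m$.

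For the absolute case $r=1$, $U = \emptyset$, apply the localization sequence \eqref{E:KO.localization} to the open pair $(\Aff^1 \times X, \GG_m \times X)$, whose closed complement is the zero section $X \hookrightarrow \Aff^1 \times X$. Homotopy invariance $KO^{[n+1]}_*(\Aff^1 \times X) \cong KO^{[n+1]}_*(X)$ and the Thom isomorphism \eqref{E:thom.iso.1} applied to the trivial $SL^c$-line bundle $(\OO_X, \OO_X, \mathrm{id})$ rewrite the sequence as
\[
\cdots \to KO^{[n]}_{i+1}(X) \xrightarrow{\alpha} KO^{[n+1]}_{i+1}(X) \xrightarrow{\pi_2^*} KO^{[n+1]}_{i+1}(\GG_m \times X) \xrightarrow{\partial} KO^{[n]}_{i}(X) \xrightarrow{\alpha} KO^{[n+1]}_i(X) \to \cdots,
\]
where the middle arrow is identified with the pullback $\pi_2^*$ along the projection $\pi_2\colon \GG_m \times X \to X$, since $\pi_2$ factors as the open inclusion $\GG_m \times X \hookrightarrow \Aff^1 \times X$ followed by the structural map $\Aff^1 \times X \to X$. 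The basepoint section $s_1\colon X \to \GG_m \times X$, $x \mapsto (1,x)$, satisfies $\pi_2 \circ s_1 = \mathrm{id}_X$, so $s_1^*$ splits $\pi_2^*$; hence $\pi_2^*$ is injective, $\alpha = 0$, and the sequence breaks into split short exact sequences yielding $\ker(s_1^*) \cong KO^{[n]}_i(X)$.

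For the relative case $r=1$, $\pi_2$ and $s_1$ are morphisms of pairs $(\GG_m \times X, \GG_m \times U) \leftrightarrows (X,U)$ in $\SmOp/S$ and all the identifications above are natural in $X$. Thus the localization sequence for $(\GG_m \times X, \GG_m \times U)$ decomposes, via $\pi_2^*$ and $\ker(s_1^*)$, as a direct sum of the localization sequence for $(X,U)$ in bidegree $(n+1, *)$ and the one in bidegree $(n, *-1)$, giving
\[
KO^{[n+1]}_{i+1}(\GG_m \times X, \GG_m \times U) \cong KO^{[n+1]}_{i+1}(X, U) \oplus KO^{[n]}_i(X, U).
\]
The second summand equals $\ker(s_1^*)$, which by \eqref{E:A(smash)} is the reduced group $KO^{[n+1]}_{i+1}(\GG_m \wedge X, \GG_m \wedge U)$.

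For general $r$, iterate: apply the $r=1$ statement with weight $n+r-1$, degree $i+r-1$, to the pair $(\GG_m^{r-1} \times X, \GG_m^{r-1} \times U)$ in $\SmOp/S$ to get the isomorphism for the reduction with respect to the first $\GG_m$-factor; compatibility with the remaining $r-1$ basepoint restrictions (by naturality of the argument) sends the fully reduced group on the left to the $(r{-}1)$-fold reduced group on $(\GG_m^{r-1} \times X, \GG_m^{r-1} \times U)$, which the inductive hypothesis identifies with $KO^{[n]}_i(X, U)$. The main subtle point is the identification of the middle arrow in the rewritten localization sequence with $\pi_2^*$, together with the naturality used to transport the splitting to pairs and to commute with further basepoint restrictions; both are routine once the homotopy-invariance and Thom isomorphisms are traced through the natural maps.
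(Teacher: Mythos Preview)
Your proof is correct and follows essentially the same approach as the paper: localization for the open inclusion $\GG_m \times X \hookrightarrow \Aff^1 \times X$, homotopy invariance, the Thom isomorphism for the trivial line bundle, the splitting via the basepoint $1 \in \GG_m$, and induction on $r$. The only organizational difference is that the paper writes down the split localization sequence directly for the pair $(X,U)$ (using a relative Thom isomorphism $KO_i^{[n]}(X,U) \cong KO_i^{[n+1]}(\Aff^1 \times X, \Aff^1 \times U \cup \GG_m \times X)$), whereas you first treat the absolute case and then deduce the relative case by splitting the localization sequence for $(\GG_m \times X, \GG_m \times U)$ via naturality; this is a harmless variation.
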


\begin{proof}
For $r = 1$
we have a localization sequence which splits and a Thom isomorphism
\[
\xymatrix @M=5pt @C=12pt {
KO_{i+1}^{[n+1]}(\Aff^{1} \times X,\Aff^{1} \times U) \ar@{>->}[r]
& KO_{i+1}^{[n+1]}(\GG_{m} \times X,\GG_{m} \times U) \ar@{->>}[d]^-{\partial} \ar[ld]^-{(in_{1} \times 1_{X})^{*}}
&
\\
KO_{i+1}^{[n+1]}(X,U) \ar[u]_-{\cong}
&
KO_{i}^{[n+1]}(\Aff^{1} \times X, \Aff^{1} \times U \cup \GG_{m} \times X)
& KO_{i}^{[n]}(X,U) \ar[l]^-{\cong}_-{\thom \times}
}
\]
with $\thom \in KO_{0}^{[1]}(\Aff^{1},\Aff^{1} - 0)$
the Thom class of the trivial rank one $SL$ bundle.  Hence we have a natural
isomorphism
\[
KO_{i+1}^{[n+1]}(\GG_{m} \times X, \GG_{m} \times U) \cong
KO_{i+1}^{[n+1]}(X,U) \oplus KO_{i}^{[n]}(X,U).
\]
By induction $KO_{i+r}^{[n+r]}(\GG_{m}^{\times r} \times X, \GG_{m}^{\times r} \times U)$
is a direct sum of $2^{r}$ terms of which exactly one is
$KO^{[n]}_{i}(X,U)$.
\end{proof}

\begin{defn}
\label{D:periodicity}
The \emph{periodicity element} $\beta_{8} \in KO_{0}^{[4]}(\pt)$ is the element corresponding to
$1 \in KO_{0}^{[0]}(\pt)$ under the periodicity isomorphisms $KO_{i}^{[n]} \cong KO_{i}^{[n+4]}$
of the hermitian $K$-theory of chain complexes.
\end{defn}

Then for all $X$ and $n$ the periodicity isomorphisms $KO_{i}^{[n]}(X) \cong KO_{i}^{[n+4]}(X)$
coincides with ${-} \times \beta_{8}$ up to homotopy.

\section{$\KO^{[*]}_*$ of motivic spaces and $\KO^{[*]}_*(\ZZ \times HGr)$ }
\label{S:KO.motivic.spaces}

In this section we recall what the category of pointed motivic spaces is and
extend the functor $KO^{[*]}_*$ to a functor $\KO^{[*]}_*$ on that category.


The basic definitions, constructions and model structures we use
are given in \cite{Voevodsky:2007aa}.
A \emph{motivic space over $S$} is a simplicial presheaf on
the site $\Sm/S$ of smooth $S$-schemes of finite type.
A \emph{pointed motivic space over $S$} is a pointed simplicial presheaf on
the site $\Sm/S$.
We write
$\M_\bullet(S)$
for the category of pointed motivic spaces over $S$.


We equip the category $\M_\bullet(S)$ with the \emph{local injective model structure}
\cite[p.~181]{Voevodsky:2007aa} and with
the \emph{motivic model structure}
\cite[p.~194]{Voevodsky:2007aa}.  In both model structures the cofibrations are the monomorphisms.
The weak equivalences and fibrations of the local injective model structure are called
local weak equivalences and global fibrations.  Those of the motivic model structure are
called motivic weak equivalences and motivic fibrations.

We write $H_{\bullet}(S)$ for the pointed motivic
unstable homotopy category obtained by inverting the motivic weak equivalences.
%
The homotopy category $H_\bullet(S)$ is equivalent to the motivic homotopy category
of  \cite{Morel:1999ab}.
For a morphism
$f\colon A \to B$
of pointed motivic spaces
we will write
$[f]$
for the class of $f$ in
$H_\bullet (S)$.

\begin{notation}
\label{fibrantreplacement}
There is a global fibrant model functor
$G \colon Id_{\M_\bullet(S)} \to ({-})^{f}$ functor in $\M_\bullet(S)$.
The natural transformation $G$ is a local weak equivalence,
but we do not require it to be injective.
\end{notation}


\begin{lem}
\label{BbbKOfmotivicfibrant}
Let $\KO^{[i]}=(KO^{[i]})^{f}$.
Then the map
$G\colon KO^{[i]} \to \KO^{[i]}$
is a schemewise weak equivalence, and
the space $\KO^{[i]}$ is motivically fibrant.
\end{lem}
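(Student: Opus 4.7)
The plan is to deduce both assertions from two key input properties of the presheaf $KO^{[i]}$: Nisnevich descent and $\mathbb{A}^1$-invariance. Both hold because the base $S$ is regular noetherian of finite Krull dimension with $\tfrac12 \in \Gamma(S,\OO_S)$.

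First I would establish that $KO^{[i]}$ satisfies Nisnevich descent on $\Sm/S$. Schlichting's localization theorem (Theorem~\ref{T:localization}), applied to the filtration of a vector bundle presheaf by supports along an elementary Nisnevich square, gives the usual Mayer--Vietoris style homotopy cartesian squares of hermitian $K$-theory spaces; by a standard Brown--Gersten--Jardine argument this is equivalent to Nisnevich descent for $KO^{[i]}$ as a simplicial presheaf. By a theorem of Jardine (compare the local injective model structure recalled in \cite{Voevodsky:2007aa}), a local weak equivalence from a presheaf satisfying Nisnevich descent to a globally fibrant presheaf is automatically a schemewise weak equivalence. Applied to $G\colon KO^{[i]} \to \KO^{[i]}$, this proves the first assertion.

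Next, $\KO^{[i]}$ is globally fibrant by construction (Notation~\ref{fibrantreplacement}). To upgrade global fibrancy to motivic fibrancy, the only remaining condition is $\mathbb{A}^1$-invariance: for every $X \in \Sm/S$ the projection should induce a weak equivalence $\KO^{[i]}(X) \to \KO^{[i]}(X \times \mathbb{A}^1)$. By the first assertion already proved, this map is weakly equivalent to $KO^{[i]}(X) \to KO^{[i]}(X \times \mathbb{A}^1)$. But $KO^{[i]}$ is homotopy invariant on regular noetherian schemes with $\tfrac12$ invertible: the functor part $\shf F \mapsto p^*\shf F$ gives a duality-preserving exact functor $Ch^b(VBX) \to Ch^b(VB(X \times \mathbb{A}^1))$ inducing an equivalence of associated bounded derived categories with duality, hence a weak equivalence on $KO^{[i]}$ by Theorem~\ref{T:equivalence} (combined with the standard homotopy invariance of Quillen $K$-theory and Balmer's triangulated Witt groups on regular schemes).

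The hardest conceptual step is the descent property underlying the first assertion; once Nisnevich descent is in hand, the rest is a formal consequence of Jardine's characterization of schemewise equivalences out of descent-satisfying presheaves plus the homotopy invariance of Schlichting's $KO^{[i]}$.
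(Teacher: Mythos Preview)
Your proof is correct and follows essentially the same approach as the paper: Nisnevich descent for $KO^{[i]}$ yields that $G$ is a schemewise weak equivalence, and then $\Aff^1$-invariance of $KO^{[i]}$ transfers to $\KO^{[i]}$ to give motivic fibrancy. The paper's version is terser---it simply invokes \'etale excision for the homotopy groups to get Nisnevich descent and cites \cite[Theorem~5.21 and p.~195]{Voevodsky:2007aa} for the two implications---while you unpack the descent and homotopy-invariance inputs more explicitly, but the logic is the same.
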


\begin{proof}
The space $KO^{[i]}$ satisfies Nisnevich descent
because the homotopy groups $KO^{[i]}_{r}$ satisfy \'etale excision.
Therefore
\cite[Theorem 5.21]{Voevodsky:2007aa}
the morphism
$G \colon KO^{[i]} \to \KO^{[i]}$
is a schemewise weak equivalence.
For every $X \in \Sm/S$ the projection
$\Aff^1_X \to X$ induces a weak equivalence of simplicial sets
$\KO^{[i]}(X) \to \KO^{[i]}(\Aff^1_X)$,
since this the case for the space $KO^{[i]}$
and
$G: KO^{[i]} \to \KO^{[i]}$
is a schemewise weak equivalence.
This proves \cite[p.~195]{Voevodsky:2007aa} that the globally fibrant space
$\KO^{[i]}$ is motivically fibrant.
\end{proof}

We write $S^{r}_{s}$ for the $r$-sphere $\Delta[r]/\partial \Delta[r]$ in $\mathbf{sSet}$
and in the homotopy category
$H_{\bullet}$ of pointed simplicial sets and for the corresponding
constant simplicial presheaf in $\M_{\bullet}(S)$.
We write $\GG_{m}$ for the pointed scheme $(\Aff^{1}-0,1)$.

\begin{defn}
\label{BbbKO(A)}
For any pointed motivic space $A$ and any $i$
define
\[
\KO^{[i]}_{r}(A) =
\begin{cases}
Hom_{H_{\bullet}(S)}(A \wedge S^r_{s}, \KO^{[i]}) & \text{for $r \geq 0$},
\\
Hom_{H_{\bullet}(S)}(A \wedge \GG^{\wedge (-r)}_m,\KO^{[i-r]})
& \text{for $r < 0$.}
\end{cases}
\]
\end{defn}

\begin{lem}
\label{NewAndOld}
For any $r$ and $X \in \Sm/S$ one has functorial isomorphisms
$$\alpha_X: KO^{[i]}_r(X) \xra{\cong} \KO^{[i]}_r(X_+).$$
\end{lem}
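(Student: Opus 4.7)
The plan is to treat the cases $r \geq 0$ and $r < 0$ separately, reducing each to facts already established.

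For $r \geq 0$, I would invoke motivic fibrancy. Lemma \ref{BbbKOfmotivicfibrant} says $\KO^{[i]}$ is motivically fibrant, and every pointed motivic space is cofibrant in the injective model structure. Hence the morphism set in $H_\bullet(S)$ coincides with $\pi_0$ of the simplicial mapping space:
$$
\Hom_{H_\bullet(S)}(X_+ \wedge S^r_s, \KO^{[i]}) \;=\; \pi_0\,\mathrm{Map}(X_+ \wedge S^r_s, \KO^{[i]}) \;=\; \pi_r\,\KO^{[i]}(X),
$$
where I use the Yoneda-type identification $\mathrm{Map}(X_+, \KO^{[i]}) = \KO^{[i]}(X)$ together with the smash/loop adjunction. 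The schemewise weak equivalence $G \colon KO^{[i]}(X) \to \KO^{[i]}(X)$ from Lemma \ref{BbbKOfmotivicfibrant} then gives $\pi_r\,\KO^{[i]}(X) = \pi_r\,KO^{[i]}(X) = KO^{[i]}_r(X)$. I define $\alpha_X$ as this composite; functoriality in $X$ is manifest since every step is natural in $X$.

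For $r < 0$, set $s = -r$. By definition
$$
\KO^{[i]}_r(X_+) = \Hom_{H_\bullet(S)}\bigl(X_+ \wedge \GG_m^{\wedge s},\, \KO^{[i+s]}\bigr).
$$
The strategy is to exhibit both sides of the desired isomorphism as the same iterated kernel and then invoke the preceding $\GG_m$-stabilization lemma. Consider the motivic cofiber sequence $X_+ \to (X \times \GG_m)_+ \to X_+ \wedge \GG_m$ induced by the pointed inclusion $S^0 \to \GG_m^+$. The retraction $\GG_m^+ \to S^0$ splits the associated Puppe sequence for $[-,\KO^{[i+s]}]$, so Case 1 identifies $\Hom_{H_\bullet(S)}(X_+ \wedge \GG_m, \KO^{[i+s]})$ with $\ker\bigl(KO^{[i+s]}_0(X \times \GG_m) \to KO^{[i+s]}_0(X)\bigr)$. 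Iterating this for each of the $s$ smash factors yields
$$
\Hom_{H_\bullet(S)}\bigl(X_+ \wedge \GG_m^{\wedge s},\, \KO^{[i+s]}\bigr) \;\cong\; KO^{[i+s]}_0\bigl(\GG_m^{\wedge s} \times X,\, \GG_m^{\wedge s} \times \varnothing\bigr),
$$
the right-hand side being the iterated kernel defined in \eqref{E:A(smash)}. The preceding $\GG_m$-stabilization lemma (with $n=i$, $r=s$ and lower index $-s$) identifies this kernel with $KO^{[i]}_{-s}(X) = KO^{[i]}_r(X)$, and I take $\alpha_X$ to be the composite.

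The only real obstacle is the bookkeeping in Case 2: at each of the $s$ stages one must verify that the Puppe sequence genuinely splits via the retractions $\GG_m^+ \to S^0$, so that after all iterations one lands in the multi-variable smash-kernel of \eqref{E:A(smash)} and not in some proper sub-quotient. No new computation beyond Case 1 and the $\GG_m$-stabilization lemma is required, and naturality of $\alpha_X$ in $X$ is automatic because each building block of the construction (mapping spaces, cofiber sequences, retractions, localization kernels) is natural in $X$.
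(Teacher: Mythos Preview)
Your proof is correct and follows essentially the same approach as the paper. For $r \geq 0$ the arguments are identical (motivic fibrancy of $\KO^{[i]}$ plus the schemewise weak equivalence $G$ and the adjunction). For $r < 0$ the paper simply writes down the chain
\[
\pi_0\bigl(KO^{[i-r]}(X \times \GG_m^{\times -r})\bigr) \xra{\cong} \pi_0\bigl(\KO^{[i-r]}(X \times \GG_m^{\times -r})\bigr) \xra{adj} \Hom_{H_\bullet(S)}\bigl((X \times \GG_m^{\times -r})_+,\KO^{[i-r]}\bigr)
\]
and leaves implicit the passage from products to smash-kernels on both sides; your proposal spells out exactly that step via the split Puppe sequence and the $\GG_m$-stabilization lemma, so your write-up is in fact the more complete of the two.
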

\begin{proof} In fact, the following chain of isomorphisms give the desired one
$$Hom_{H_{\bullet}}(S^r_{s},KO^{[i]}(X)) \xra{\cong} Hom_{H_{\bullet}}(S^r_{s},\KO^{[i]}(X))
\xra{adj} Hom_{H_{\bullet}(S)}(X_+ \wedge S^r_s ,\KO^{[i]})$$
$$\pi_0(KO^{[i-r]}(X \times \GG^{\times -r}_{m})) \xra{\cong} \pi_0(\KO^{[i-r]}(X \times \GG^{\times -r}_{m})) \xra{adj} Hom_{H_{\bullet}(S)}((X \times \GG^{\times -r}_{m})_+,\KO^{[i-r]})$$
All the arrows are indeed isomorphisms by Lemma
\ref{BbbKOfmotivicfibrant}.
\end{proof}

Now suppose $(X,U) \in \SmOp/S$ with $j \colon U \hra X$ the inclusion and $Z = X -U$.
Schlichting's localization sequence \eqref{E:localization}
\[
KO^{[i]}(X,U) \xrightarrow{e^{*}} KO^{[i]}(X) \xrightarrow{j^{*}} KO^{[i]}(U)
\]
can be described more precisely than we have done so far.
The composite map $j^{*}e^{*}$ is induced by the functor
$Ch^{b}(VBX \ on \ Z) \to Ch^{b}(VBU)$ such that the morphism of functors
$j^{*}e^{*} \to 0$ is a natural weak equivalence.
This natural weak equivalence gives a homotopy from the map of spaces $j^{*}e^{*}$ to the trivial map,
and that gives a factorization
of $e^{*}$ as
\begin{equation}
\label{E:h.fiber}
KO^{[i]}(X,U) \xrightarrow{e_{X,U}} hofib(j^{*}) \xrightarrow{can_{X,U}} KO^{[i]}(X).
\end{equation}
Schlichting's theorem is that $e_{X,U}$ a homotopy equivalence.  This factorization is functorial
in $(X,U)$.

For a pointed motivic space $A$ write $\KO^{[i]}(A) = \fhom(A,\KO^{[i]})$
for the pointed mapping space, which is the
pointed simplicial set $[n] \mapsto Hom_{\M_{\bullet}(S)}(A \wedge \Delta[n]_{+},\KO^{[i]})$.
For $A = X_{+}$ with $X$ a smooth scheme we have
$\KO^{[i]}(A) = \KO^{[i]}(X)$.  For $j \colon U \hra X$ as above we can fill out
a commutative diagram of pointed simplicial sets
\[
\xymatrix{
  KO^{[i]}(X,U) \ar[d]_{e_{X,U}} \\
  hofib(j^{*})\ar[rr]^-{can_{X,U}} \ar [d]_{G_{1}} &&
  KO^{[i]}(X) \ar[r]^-{j^*}  \ar[d]_-{G(X)} & KO^{[i]}(U)  \ar[d]^-{G(U)} \\
  \KO^{[i]}(Cone(j_+))= hofib(j_{+}^{*})  \ar[rr]^-{nat_{X,U}} &&
  \KO^{[i]}(X_{+})   \ar[r]^-{j_{+}^*}  &  \KO^{[i]}(U_{+}) \\
  \KO^{[i]}(X_{+}/U_{+}) \ar[u]^-{q_{X,U}}}
\]

We claim that all the vertical arrows are homotopy equivalences.  This is true for
$G(X)$ and $G(U)$ by Lemma \ref{BbbKOfmotivicfibrant} and therefore for $G_{1}$ because
it is the map between the homotopy fibers.   The map $e_{X,U}$ is a homotopy equivalence
by Schlichting's theorem.  The map $q_{X,U}$ is a homotopy equivalence because it is
obtained by applying $\fhom({-},\KO^{[i]})$ with $\KO^{[i]}$
fibrant to the schemewise weak equivalence between cofibrant objects
$Cone(j_{+}) \to X_{+}/U_{+}$.

The diagram is functorial in $(X,U)$.  We conclude:

\begin{thm}
\label{T:zigzag}
For $(X,U)$ in $\SmOp/S$ there is a functorial zigzag of homotopy equivalences
$KO^{[i]}(X,U) \to \KO^{[i]}(Cone(U_{+} \to X_{+})) \leftarrow \KO^{[i]}(X_{+}/U_{+})$
which
for $(X,\varnothing)$ reduces to the $G(X) \colon KO^{[i]}(X) \to \KO^{[i]}(X)$
of Lemma \ref{BbbKOfmotivicfibrant}.  These induce functorial isomorphisms of groups
$KO^{[i]}_{r}(X,U) \cong \KO^{[i]}_{r}(X_{+}/U_{+})$ for all integers $i$ and $r$.
\end{thm}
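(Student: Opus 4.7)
The plan is to verify that each of the four vertical maps in the diagram just displayed is a homotopy equivalence, check compatibility with the case $U = \varnothing$, and then pass to homotopy groups to obtain the stated isomorphisms.

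First I would observe that the maps $G(X)$ and $G(U)$ are schemewise weak equivalences by Lemma~\ref{BbbKOfmotivicfibrant}. Since $j^*$ and $j_+^*$ are compatible with them, the induced map $G_1$ between the homotopy fibers is then automatically a weak equivalence (equivalent horizontal maps have equivalent homotopy fibers). The map $e_{X,U}$ is a homotopy equivalence by Schlichting's localization theorem (Theorem~\ref{T:localization}) applied to the thick subcategory $D^{b}(VBX \text{ on } Z) \subset D^{b}(VBX)$, together with the explicit description of the localization sequence as the fibration sequence \eqref{E:h.fiber}. The map $q_{X,U}$ requires observing that $Cone(j_+) \to X_+/U_+$ is a schemewise weak equivalence between cofibrant pointed motivic spaces (the inclusion $U_+ \hookrightarrow X_+$ is a monomorphism, hence a cofibration); since $\KO^{[i]}$ is motivically fibrant by Lemma~\ref{BbbKOfmotivicfibrant}, the functor $\fhom(-, \KO^{[i]})$ carries this to a weak equivalence of simplicial sets.

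Functoriality in $(X,U)$ of the entire diagram is built into the construction, since Schlichting's factorization \eqref{E:h.fiber}, the global fibrant replacement $G$, and the cone construction are all functorial. For $(X,\varnothing)$ the map $j$ is the inclusion of the empty scheme, $Cone(\varnothing_+ \to X_+) \cong X_+$ and $X_+/\varnothing_+ \cong X_+$, and after collapsing the identifications the zigzag reduces to $G(X)\colon KO^{[i]}(X) \to \KO^{[i]}(X)$.

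To deduce the isomorphism of groups $KO^{[i]}_r(X,U) \cong \KO^{[i]}_r(X_+/U_+)$, I split into cases. For $r \geq 0$ the zigzag of homotopy equivalences yields an isomorphism on $\pi_r$, which on the right-hand side is exactly $\KO^{[i]}_r(X_+/U_+) = \Hom_{H_\bullet(S)}(X_+/U_+ \wedge S^r_s, \KO^{[i]})$ by adjunction and motivic fibrancy of $\KO^{[i]}$ (as in the proof of Lemma~\ref{NewAndOld}). For $r < 0$, I would use the earlier Lemma exhibiting $KO^{[i]}_r(X,U)$ as a direct summand of $KO^{[i-r]}_0(\GG_m^{\times(-r)} \times X, \GG_m^{\times(-r)} \times U)$ via successive Thom/localization splittings. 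Applying the already-established case $r = 0$ to the pair $(\GG_m^{\times(-r)} \times X, \GG_m^{\times(-r)} \times U \cup \text{degenerate faces})$ and matching direct summands with the smash-product definition \eqref{E:A(smash)} on the $\KO$ side identifies the two groups. Functoriality of the zigzag ensures compatibility of the splittings on both sides.

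The main obstacle I anticipate is the bookkeeping for $r < 0$: one must match the splitting used to define $KO^{[i]}_r$ from the $\GG_m$-suspended group with the kernel description of $\KO^{[i]}_r(A)$ on smash products, and verify that the zigzag is compatible with $\GG_m$-suspension on the nose (up to natural homotopy). The other steps are essentially formal consequences of Schlichting's localization theorem, motivic fibrancy of $\KO^{[i]}$, and standard homotopy-fiber arguments.
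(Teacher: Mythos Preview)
Your proposal is correct and follows essentially the same approach as the paper: the paper establishes the zigzag by verifying that $G(X)$, $G(U)$ (hence $G_1$), $e_{X,U}$, and $q_{X,U}$ are each homotopy equivalences using exactly the reasons you give, and then invokes functoriality. You supply more detail than the paper on the passage to groups for $r<0$ via the $\GG_m$-suspension lemma; the paper leaves this step implicit.
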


\begin{notation}
\label{KOtoBbbKO}
Denote by $\alpha: KO^{[i]}_{*}(-,-) \to \KO^{[i]}_{*}(-_+/-_+)$
the functor isomorphism described in Theorem \ref{T:zigzag}.
\end{notation}

%

\section{The $T$-spectrum $\BO$ and the cohomology theory $\BO^{*,*}$ }
\label{S:BO}

Let $T = \Aff^{1}/(\Aff^{1}-0)$ be the Morel-Voevodsky object.
A \emph{
$T$-spectrum} $E$ over $S$ consists
of a sequence $(E_0,E_1,\ldots)$ of pointed motivic spaces over $S$
plus \emph{structure maps} $\sigma_n\colon E_n \wedge T \to E_{n+1}$.
Let $SH(S)$
denote the stable homotopy category of
$T$-spectra as described in
\cite{Jardine:2000aa}.
It is canonically equivalent
to the motivic stable homotopy category
constructed in
\cite{Voevodsky:1998kx}.

Here we define a $T$-spectrum $\BO$.
Its spaces are $(\KO^{[0]}, \KO^{[1]}, \KO^{[2]}, \KO^{[3]},\dots)$.
We now define the structure maps.

Let
$\Aff^{1} \to \pt$ be the trivial rank one $SL$ bundle, and let
$\thom \in KO^{[1]}_{0}(\Aff^{1},\Aff^{1} -0)$ be its Thom class as defined by \eqref{E:KO.Thom}.
Because $KO_{*}^{[*]}$ is $SL^{c}$ oriented, the maps
\begin{equation}
\label{E:adjoint}
{-} \times \thom \colon KO^{[n]}_{r}(X) \to KO^{[n+1]}_{r}(X \times \Aff^{1}, X \times (\Aff^{1} - 0))
\end{equation}
are isomorphisms.
Recall that $\thom$ is defined by \eqref{E:KO.Thom}
as the class of the symmetric complex
\begin{equation}
\label{E:thom.A1}
\vcenter{
\xymatrix @M=5pt @C=30pt {
K(\OO) \ar[d]^-{\cong}_-{\varTheta(\OO)}
&& 0 \ar[r]
& \OO_{\Aff^{1}} \ar[r]^-{x} \ar[d]_{-1}
& \OO_{\Aff^{1}} \ar[r] \ar[d]^-{1}
& 0
\\
K(\OO)^{\vee}[1]
&& 0 \ar[r]
& \OO_{\Aff^{1}} \ar[r]^-{-x}
& \OO_{\Aff^{1}} \ar[r]
& 0
}}
\end{equation}
of degree $1$ in $Ch^{b}(VB\Aff^{1}\ on \ 0)$.  The maps ${-} \times \thom$ are induced by
the maps of spaces
\begin{equation*}
({-}\boxtimes (K(\OO),\varTheta(\OO)))_{*} \colon
KO^{[n]}(X) \to KO^{[n+1]}(X \times \Aff^{1}, X \times (\Aff^{1} - 0))
\end{equation*}
These maps are thus homotopy equivalences, and
$KO^{[n]} \to KO^{[n+1]}({-} \times \Aff^{1}, {-} \times (\Aff^{1} - 0))$
is a schemewise weak equivalence.

From Lemma \ref{BbbKOfmotivicfibrant} and Theorem \ref{T:zigzag} we now have
a zigzag
\[
\xymatrix @M=5pt @C=20pt {
\KO^{[n]} \ar@{<-}[r]^-{G}_-{\sim}
& KO^{[n]} \ar[r]^-{\times \thom}_-{\sim}
& KO^{[n+1]}({-} \times \Aff^{1}, {-} \times (\Aff^{1} - 0)) \ar[d]^-{\sim}
\\
&& \KO^{[n+1]}(Cone({-} \wedge (\Aff^{1}-0)_{+} \to {-} \wedge \Aff^{1}_{+})) \ar@{<-}[r]^-{\sim}
& \KO^{[n+1]}({-} \wedge T)
}
\]
of schemewise weak equivalences in $\M_{\bullet}(S)$.  Their composition is an isomorphism
$\KO^{[n]} \cong \KO^{[n+1]}({-} \wedge T)$
in the homotopy category.

There is a Quillen adjunction with left adjoint ${-} \wedge T$ and right adjoint
$F({-}) \mapsto F({-} \wedge T)$.   It follows that $\KO^{[n+1]} ({-} \wedge T) $ is
\textbf{fibrant}, while $\KO^{[n]}$ is cofibrant, so there exists a morphism
$\sigma_{n}^{*} \colon \KO^{[n]} \to \KO^{[n+1]} ({-} \wedge T) $ in
$\M_{\bullet}(S)$ representing the same
isomorphism in the homotopy category as the zigzag.
Let $\sigma_{n} \colon \KO^{[n]} \wedge T
\to \KO^{[n+1]}$ be the adjoint morphism.
Since the $KO^{[n]}$ and $\KO^{[n]}$ are periodic modulo $4$, we may choose the
$\sigma_{n}^{*}$ and $\sigma _n$ so they are also periodic.

\begin{defn}
\label{BO}
The $T$-spectrum $\BO$ consists of the sequence of pointed motivic spaces
\begin{equation}
(\KO^{[0]}, \KO^{[1]}, \KO^{[2]}, \KO^{[3]}, \dots)
\end{equation}
together with the structure maps
$\sigma _n:  \KO^{[n]} \wedge T \to \KO^{[n+1]}$ just described.

\end{defn}

The spaces
$\KO^{[n]}$ are motivically fibrant and the adjoints $\sigma_{n}^{*}$ of the structure maps
are schemewise weak equivalences.  So we have \cite[Lemma 2.7]{Jardine:2000aa}:

\begin{thm}
\label{OmegaSpectrumBO}
The $T$-spectrum
$\BO$
is stably motivically fibrant.
\end{thm}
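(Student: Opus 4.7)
The plan is to invoke the standard criterion for stable fibrancy of a $T$-spectrum, namely Jardine's Lemma 2.7 of \cite{Jardine:2000aa}: a $T$-spectrum $(E_n,\sigma_n)$ is stably motivically fibrant if and only if each $E_n$ is a motivically fibrant pointed motivic space and each adjoint structure map $\sigma_n^{*} \colon E_n \to E_{n+1}({-} \wedge T)$ is a motivic weak equivalence (equivalently, the spectrum is an $\Omega_T$-spectrum with motivically fibrant levels). So the proof reduces to verifying these two conditions for the specific spectrum $\BO$ of Definition \ref{BO}.

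For the first condition I would simply cite Lemma \ref{BbbKOfmotivicfibrant}, which already asserts that each $\KO^{[n]} = (KO^{[n]})^f$ is motivically fibrant. For the second condition I would unwind the construction of $\sigma_n$ given immediately before Definition \ref{BO}: by design, $\sigma_n^{*} \colon \KO^{[n]} \to \KO^{[n+1]}({-}\wedge T)$ was chosen as a morphism in $\M_\bullet(S)$ representing the same class in $H_\bullet(S)$ as the displayed zigzag
\[
\KO^{[n]} \xleftarrow{G}_{\sim} KO^{[n]} \xrightarrow{\times \thom}_{\sim} KO^{[n+1]}({-}\times \Aff^1,{-}\times(\Aff^1-0)) \xleftarrow{\sim} \KO^{[n+1]}({-}\wedge T),
\]
each arrow of which is a schemewise (hence motivic) weak equivalence: the leftmost by Lemma \ref{BbbKOfmotivicfibrant}, the middle by Thom isomorphism (the $SL^c$-orientation, Theorem \ref{T:SLc.oriented}, making ${-} \times \thom$ an isomorphism levelwise since the trivial rank one $SL$-bundle is $SL$-oriented), and the rightmost by Theorem \ref{T:zigzag} applied to the pair $(\Aff^1,\Aff^1-0)$ together with fibrancy of $\KO^{[n+1]}$. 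Therefore $\sigma_n^{*}$ represents an isomorphism in $H_\bullet(S)$, which is exactly the definition of a motivic weak equivalence.

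With both hypotheses of Jardine's lemma verified, the conclusion that $\BO$ is stably motivically fibrant follows immediately. The only minor point worth being careful about is the identification of the map ${-}\times\thom$ with the map of spaces $({-}\boxtimes(K(\OO),\varTheta(\OO)))_{*}$, so that it is genuinely a morphism of motivic spaces (not merely of homotopy groups) and thus a schemewise weak equivalence rather than just a weak equivalence on each $\pi_i$; this is already noted in the paragraph preceding Definition \ref{BO}, so no further work is required.

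I expect essentially no obstacles: this theorem is a bookkeeping assembly of facts that have been set up carefully in the preceding sections. The only mildly delicate point is ensuring that the chosen lift $\sigma_n^{*}$ (produced by the Quillen adjunction argument with $\KO^{[n]}$ cofibrant and $\KO^{[n+1]}({-}\wedge T)$ fibrant) really does represent the zigzag in $H_\bullet(S)$; but this is exactly what the standard cofibrant-fibrant replacement yoga guarantees, and it is already built into the construction of $\sigma_n$ in Definition \ref{BO}.
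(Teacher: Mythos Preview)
Your proposal is correct and follows essentially the same route as the paper: verify the two hypotheses of Jardine's Lemma 2.7 by citing Lemma \ref{BbbKOfmotivicfibrant} for motivic fibrancy of the levels and unwinding the construction of $\sigma_n^{*}$ before Definition \ref{BO} to see that the adjoint structure maps are weak equivalences. The paper states slightly more, namely that the $\sigma_n^{*}$ are \emph{schemewise} weak equivalences (which follows since a motivic weak equivalence between motivically fibrant objects is already a local, hence schemewise, weak equivalence), but your ``motivic weak equivalence'' is exactly what Jardine's criterion requires.
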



As explained in
\cite{Voevodsky:1998kx}
any $T$-spectrum $E$ defines a bigraded cohomology theory
$(E^{*,*}, \partial)$ on the category $\M_\bullet(S)$ with
\[
E^{p,q}(A) = Hom_{SH_{\bullet}(S)}(A, E \wedge S_{s}^{p-q} \wedge \GG_{m}^{\wedge q} ).
\]
The differential $\partial$ increases the bidegree
by $(1,0)$.

For any $A \in \M_{\bullet}(S)$
the adjunction map induces isomorphisms
\[
\KO^{[n]}_{i}(A) = Hom_{H_{\bullet}(S)}(A \wedge S_{s}^{i}, \KO^{[n]})
\cong Hom_{SH(S)}(A \wedge S_{s}^{i}, \BO \wedge T^{\wedge n}) =
\BO^{2n-i,n}(A),
\]
\begin{multline*}
\KO^{[n]}_{i}(A) =
Hom_{H_{\bullet}(S)}(A \wedge \GG_{m}^{\wedge -i}, \KO^{[n-i]})
\cong Hom_{SH(S)}(A \wedge \GG_{m}^{\wedge -i}, \BO \wedge T^{\wedge n-i})
\\= \BO^{2n-i,n}(A),
\end{multline*}
%
for $i \geq 0$ and $i < 0$ respectively.  This gives us an isomorphism of functors on $\M_{\bullet}(S)$
\[
\beta_A: \KO^{[n]}_{i} \xra{\cong} \BO^{2n-i,n}.
\]

\begin{cor}
\label{KO**andBO**}
The composition isomorphism
$\beta|_{\SmOp/k} \circ \alpha: KO^{[*]}_* \to \BO^{*,*}|_{\SmOp/S}$
respects the boundary homomorphisms in both cohomology theories on $\SmOp/S$.
So it is an isomorphism
$$\gamma: KO^{[*]}_* \to \BO^{*,*}|_{\SmOp/S}$$
of  cohomology theories in the sense of
\cite{Panin:2003rz}.
\end{cor}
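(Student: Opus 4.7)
The plan is to factor $\gamma = \beta \circ \alpha$ and verify that each factor intertwines the boundary homomorphisms on its domain and codomain. Fix $(X,U) \in \SmOp/S$ with $j \colon U \hookrightarrow X$, set $(p,q) = (2n-r,n)$, and compare the boundary $\partial^{KO} \colon KO^{[n]}_r(U) \to KO^{[n]}_{r-1}(X,U)$ coming from Schlichting's localization sequence \eqref{E:KO.localization} with the boundary $\partial^{\BO} \colon \BO^{p,q}(U_+) \to \BO^{p+1,q}(X_+/U_+)$ coming from the motivic cofiber sequence $U_+ \to X_+ \to X_+/U_+$.

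For $\alpha$, I would invoke the commutative diagram constructed in the proof of Theorem \ref{T:zigzag}. That diagram exhibits $\alpha$ as a zigzag of schemewise weak equivalences comparing two homotopy fiber sequences of pointed simplicial sets: Schlichting's localization sequence (realized as $hofib(j^{*})$) on the $KO^{[n]}$-side, and the sequence obtained by applying $\fhom(-,\KO^{[n]})$ to the motivic cofiber sequence $U_+ \to X_+ \to X_+/U_+$ (equivalently, $hofib(j_+^{*})$) on the $\KO^{[n]}$-side. Since the long exact boundary attached to a homotopy fiber sequence of pointed simplicial sets is natural for morphisms of fiber sequences, it follows that $\alpha$ commutes with $\partial$. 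The main technical point is recognizing the diagram in Theorem \ref{T:zigzag} as a genuine morphism of fiber sequences, which rests on Schlichting's localization theorem \ref{T:localization} and the identification of $Cone(j_+)$ with $X_+/U_+$ up to schemewise weak equivalence between cofibrant objects.

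For $\beta$, the boundary $\partial^{\BO}$ is by construction induced by the Puppe connecting morphism $X_+/U_+ \to U_+ \wedge S^{1}_s$ in $SH(S)$ applied to $\Hom_{SH(S)}(-,\BO \wedge T^{\wedge q} \wedge S^{p-q}_s)$. Because $\BO$ is stably motivically fibrant (Theorem \ref{OmegaSpectrumBO}), the suspension--loop adjunction between $H_\bullet(S)$ and $SH(S)$ identifies this long exact sequence with the one obtained by taking homotopy groups of the fiber sequence $\KO^{[n]}(X_+/U_+) \to \KO^{[n]}(X_+) \to \KO^{[n]}(U_+)$. Thus $\beta$ tautologically intertwines the two boundaries, and combining this with the statement for $\alpha$ gives the corollary. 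The hardest step is the first one, since it requires promoting the homotopy-commutative diagram of Theorem \ref{T:zigzag} into a comparison of genuine fiber sequences whose long exact sequences of homotopy groups match termwise.
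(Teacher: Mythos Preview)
Your proposal is correct and matches what the paper intends. The paper states this corollary without proof, treating it as immediate from the preceding constructions; your argument spells out precisely those constructions --- the zigzag of Theorem \ref{T:zigzag} as a comparison of fiber sequences for $\alpha$, and the stable fibrancy of $\BO$ (Theorem \ref{OmegaSpectrumBO}) together with the suspension--loop adjunction for $\beta$.

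One small clarification: you describe the diagram preceding Theorem \ref{T:zigzag} as ``homotopy-commutative'' and worry about promoting it to a genuine morphism of fiber sequences, but in fact that diagram is a strictly commutative diagram of pointed simplicial sets. The rows are already honest fiber sequences (the top via \eqref{E:h.fiber}, the middle because $\KO^{[i]}$ is globally fibrant so $\fhom(-,\KO^{[i]})$ sends cofibrations to fibrations), and the vertical maps are actual maps of spaces. So the ``hardest step'' you flag is in fact automatic; the naturality of the long exact sequence of a fibration applies directly. You should also note, for completeness, that the negative-degree case ($r<0$) is handled by the $\GG_m$-suspension definition of $\KO^{[i]}_r$ together with the lemma immediately preceding Definition \ref{D:periodicity}, which embeds the Witt-group range into positive homotopy groups of a suspended space; the boundary compatibility then reduces to the $r\geq 0$ case already treated.
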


\begin{defn}
\label{partialmultonBO**}
Using the cohomology isomorphism $\gamma$ we transplant
to $\BO|_{\SmOp/S}$
the partial multiplicative
structure of $(KO^{[*]}_*, \partial)$ and the Thom and Borel classes of its
$SL^{c}$ orientation described in Theorem \ref{T:SLc.oriented}.
The unit of this partial multiplicative structure is the element
$e= \gamma(\angles{1}) \in \BO^{0,0}(S^0)$.
\end{defn}

\begin{thm}[Bott periodicity]
The adjoints of the structure maps and the categorical periodicity isomorphisms
give levelwise weak equivalences
\[
\BO \xra{\sim \,level} \Omega_{T}^{4}\BO(4)
\xra{\cong} \Omega_{T}^{4}\BO.
\]
\end{thm}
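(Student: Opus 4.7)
The plan is to recognize the theorem as a direct consequence of three facts already established in the paper: that $\BO$ is stably motivically fibrant (Theorem~\ref{OmegaSpectrumBO}), that the underlying hermitian $K$-theory spaces $KO^{[n]}$ are $4$-periodic (Definition~\ref{D:periodicity}), and that the structure maps $\sigma_n$ were chosen to be compatible with this $4$-periodicity in Definition~\ref{BO}. Concretely I would treat the two arrows separately: the first as an iterated stable fibrancy statement at each level, and the second as a levelwise incarnation of Bott periodicity for Schlichting's hermitian $K$-theory.

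For the first arrow $\BO \xra{\sim} \Omega_{T}^{4}\BO(4)$, at level $n$ the map is the $4$-fold adjoint
\[
\KO^{[n]} \xra{\sigma_n^*} \Omega_T \KO^{[n+1]} \xra{\Omega_T\sigma_{n+1}^*} \Omega_T^2 \KO^{[n+2]} \xra{\Omega_T^2\sigma_{n+2}^*} \Omega_T^3 \KO^{[n+3]} \xra{\Omega_T^3\sigma_{n+3}^*} \Omega_T^4 \KO^{[n+4]}.
\]
By the very construction of $\BO$ in Definition~\ref{BO} each $\sigma_k^*$ represents an isomorphism in $H_\bullet(S)$ between motivically fibrant objects (the target is fibrant because $\Omega_T$ is right Quillen and $\KO^{[k+1]}$ is fibrant by Lemma~\ref{BbbKOfmotivicfibrant}), hence is itself a motivic weak equivalence. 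Since $\Omega_T$ preserves weak equivalences between fibrant objects, applying $\Omega_T^{j}$ to a later factor preserves its being a weak equivalence, and the $4$-fold composition is therefore a levelwise motivic weak equivalence. This is precisely the iterated version of the stably fibrant condition recorded in Theorem~\ref{OmegaSpectrumBO}.

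For the second arrow $\Omega_T^{4}\BO(4) \xra{\cong} \Omega_T^{4}\BO$ I would use the categorical $4$-periodicity of hermitian $K$-theory of chain complexes. The periodicity element $\beta_{8} \in KO^{[4]}_{0}(\pt)$ of Definition~\ref{D:periodicity} produces functorial weak equivalences $KO^{[n+4]}(X) \simeq KO^{[n]}(X)$ of spaces coming from a duality-preserving functor at the level of complicial categories; applying the global fibrant model of Notation~\ref{fibrantreplacement} gives levelwise motivic weak equivalences $\KO^{[n+4]} \simeq \KO^{[n]}$, and applying $\Omega_T^{4}$ preserves them. Assembling these into a morphism of spectra is where the $4$-periodic choice of the $\sigma_n$ made in Definition~\ref{BO} does its job: because $\sigma_{n+4} = \sigma_n$ under the identifications $\KO^{[n+4]} \simeq \KO^{[n]}$, the levelwise periodicity commutes with the structure maps of $\Omega_T^{4}\BO(4)$ and $\Omega_T^{4}\BO$ and thus upgrades to an isomorphism of $T$-spectra.

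The main obstacle, modest as it is, is this compatibility of the $4$-periodicity with the structure maps at the spectrum level rather than merely at each individual space; the two sides must be reconciled as $T$-spectra, not just as sequences of pointed motivic spaces. Once that is in hand by appealing to the periodic choice of $\sigma_n$, composing the two arrows yields the Bott periodicity equivalence $\BO \xra{\sim} \Omega_T^{4}\BO$ asserted by the theorem.
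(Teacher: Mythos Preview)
Your proposal is correct and matches the paper's (implicit) approach: the theorem is stated without proof precisely because it is immediate from Theorem~\ref{OmegaSpectrumBO} and the $4$-periodic choice of the $\sigma_n$ in Definition~\ref{BO}, exactly as you unpack. One small expository point: the second arrow is a genuine isomorphism of $T$-spectra coming directly from the categorical periodicity isomorphisms $\KO^{[n+4]}\cong\KO^{[n]}$ and the periodic choice of structure maps, so invoking $\beta_8$ is unnecessary (and slightly backwards---$\beta_8$ is defined \emph{via} the periodicity, not the other way around).
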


\section{A symplectic version of the Morel-Voevodsky theorem}
\label{S:Morel-Voevodsky}

In
\cite[Theorem 4.3.13]{Morel:1999ab}
Morel and Voevodsky showed that
$\ZZ \times Gr$ represents algebraic $K$-theory in the motivic unstable homotopy category.
If one replaces the ordinary Grassmannians by quaternionic Grassmannians, the same
holds for symplectic $K$-theory.

We write $\HH$ for the trivial rank $2$ symplectic bundle
$\left( \OO^{\oplus 2}, \bigl( \begin{smallmatrix} 0 & 1 \\ -1 & 0 \end{smallmatrix} \bigr) \right)$.
The orthogonal direct sum $\HH^{\oplus n}$ is the trivial symplectic bundle of rank $2n$.
We will sometimes write $\HH^{\oplus n}_{X}$ to designate the trivial symplectic bundle over the
scheme $X$.

The \emph{quaternionic Grassmannian}
$HGr(r,n) = HGr(r,\HH^{\oplus n})$ is defined as the open subscheme of
$Gr(2r,2n) = Gr(2r,\HH^{\oplus n})$ parametrizing subspaces of dimension $2r$ of the fibers of
$\HH^{\oplus n}$
on which the symplectic form of $\HH^{\oplus n}$ is nondegenerate.  We write $\shf U_{r,n}$ for
the restriction to $HGr(r,n)$ of the tautological subbundle of $Gr(2r,2n)$.  The symplectic form of
$\HH^{\oplus n}$ restricts to a symplectic form on $\shf U_{r,n}$ which we
denote by $\phi_{r,n}$.  The pair $(\shf U_{r,n},\phi_{r,n})$ is the
\emph{tautological symplectic subbundle} of rank $2r$ on $HGr(r,n)$.
Morphisms $X \to HGr(r,n)$ are \emph{classified} by subbundles $E \subset \HH_{X}^{\oplus n}$
of rank $2r$ such that the symplectic form of $\HH_{X}^{\oplus n}$
is nondegenerate on every fiber.

More generally, given a symplectic bundle $(E,\phi)$ of rank $2n$ over $X$, the
\emph{quaternionic Grassmannian bundle} $HGr(r,E,\phi)$ is the open subscheme of
the Grassmannian bundle $Gr(2r,E)$ parametrizing subspaces of dimension $2r$
of the fibers of $E$ on which $\phi$
is nondegenerate.

For $r=1$ we have \emph{quaternionic projective spaces} and \emph{bundles}
$HP^{n} = HGr(1,n+1)$ and $HP(E,\phi) = HGr(1,E,\phi)$.

There are commuting morphisms
\begin{equation}
\label{E:alpha.beta}
\vcenter{
\xymatrix @M=5pt @C=40pt {
HGr(r,n) \ar[r]^-{\alpha_{r,n}} \ar[d]_-{\beta_{r,n}}
& HGr(r,n+1) \ar[d]^-{\beta_{r,n+1}}
\\
HGr(r+1,n+1) \ar[r]^-{\alpha_{r+1,n+1}}
&HGr(r+1,n+2)
}}
\end{equation}
with $\alpha_{r,n}$ classified by the rank $2r$ subbundle
$\shf U_{r,n}\oplus 0 \subset \HH^{\oplus n} \oplus \HH$
and $\beta_{r+1,n+1}$ classified by the rank $2r+2$ subbundle
$\HH \oplus \shf U_{r,n}\subset \HH \oplus \HH^{\oplus n}$.  Composition gives us
maps $HGr(n,2n) \to HGr(n+1,2n+2)$.  We define $HGr = \colim HGr(n,2n)$.
We consider $\ZZ \times HGr$ pointed by $(0,HGr(0,0))$.
It has a universal property.

\begin{thm}
\label{T:universal}
Suppose $X \in \Sm/S$ is affine.  Then for every $\xi \in GW^{-}(X)$ there is a morphism
of ind-schemes $f \colon X \to \ZZ \times HGr$
such that $\xi = f^{*}\tau$.  Moreover $f$ is unique up to naive $\Aff^{1}$-homotopy.
\end{thm}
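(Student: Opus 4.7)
The plan is to mimic Morel--Voevodsky's argument for the analogous statement $\ZZ\times Gr\simeq K$ but in the symplectic setting, replacing subbundles by symplectic subbundles of trivial symplectic bundles $\HH^{\oplus N}$. The three steps are: (i) reduce a class $\xi\in GW^-(X)$ to an actual symplectic bundle minus a trivial one; (ii) realise that symplectic bundle as a symplectic subbundle of some $\HH^{\oplus N}_X$, which then classifies a morphism to $HGr(n,N)$; (iii) prove uniqueness by a naive linear $\Aff^1$-interpolation of symplectic subbundles after stabilisation.

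For existence, write $\xi=[E,\phi]-m[\HH]$ for some symplectic bundle $(E,\phi)$ of rank $2n$ on $X$; this is possible because $X$ is smooth affine, so every element of $KSp_0(X)=GW^-(X)$ is represented by a formal difference of symplectic bundles, and stabilising by hyperbolic summands lets us arrange the negative part to be trivial. Next, since $X$ is affine and $\tfrac12\in\Gamma(X,\OO_X)$, $(E,\phi)$ sits inside some trivial symplectic bundle as a symplectic direct summand: pick a surjection $\OO_X^M\twoheadrightarrow E$, dualise and use $\phi\colon E\cong E^\vee$ to obtain an embedding $E\hookrightarrow \OO_X^M$; combine this with the composition $E\xra{\phi}E^\vee\hookrightarrow\OO_X^M$ to define $E\to \OO_X^M\oplus\OO_X^M=\HH^{\oplus M}_X$ whose pullback of the standard symplectic form reproduces $2\phi$, and rescale using $\tfrac12$. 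The resulting symplectic subbundle $E\subset\HH^{\oplus M}_X$ classifies a morphism $f_{HGr}\colon X\to HGr(n,M)$. Composing with the stabilisation morphisms \eqref{E:alpha.beta} sends us into $HGr(N,2N)$ for $N\gg 0$, hence into the colimit $HGr$. Together with the constant map to $n-m\in\ZZ$, this gives $f\colon X\to\ZZ\times HGr$ with $f^*\tau=\xi$ by construction of $\tau$ as the universal symplectic difference class.

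For uniqueness, suppose $f_0,f_1\colon X\to\ZZ\times HGr$ both pull $\tau$ back to $\xi$. Each factors through some $\{k\}\times HGr(n_i,N_i)$; passing to a common component via $\alpha_{r,n}$ and $\beta_{r,n}$ we may assume both land in $\{k\}\times HGr(n,N)$. They correspond to symplectic subbundles $E_0,E_1\subset\HH^{\oplus N}_X$ with $[E_j,\phi_j]-k[\HH]=\xi$, so after further stabilisation $(E_0,\phi_0)\cong(E_1,\phi_1)$. Embed both of these symplectic subbundles diagonally into $\HH^{\oplus N}_X\oplus\HH^{\oplus N}_X=\HH^{\oplus 2N}_X$ by sending $E_0$ to $E_0\oplus 0$ and $E_1$ to $0\oplus E_1$; then the family
\[
E_t \defeq \{\,((1-t)v_0\oplus t\,\sigma(v_0))\mid v_0\in E_0\,\}\subset \HH^{\oplus 2N}_{X\times\Aff^1}
\]
for $\sigma\colon E_0\cong E_1$ a fixed symplectic isomorphism, together with a small correction to ensure nondegeneracy of the restricted symplectic form for all $t\in\Aff^1$, defines an $\Aff^1$-family of symplectic subbundles interpolating between (stabilisations of) $f_0$ and $f_1$. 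This yields a morphism $X\times\Aff^1\to\{k\}\times HGr(n,2N)\subset\ZZ\times HGr$ which is the required naive homotopy.

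The main obstacle is the last step: making the interpolating family $E_t$ into a genuine symplectic subbundle for every $t\in\Aff^1$, i.e.\ ensuring the restriction of the ambient symplectic form stays nondegenerate along the whole line. The analogous issue for ordinary Grassmannians is handled by a graph construction that I expect to adapt here: replace the naive linear combination by something like $v_0\mapsto (v_0,t\sigma(v_0))\in\HH^{\oplus N}\oplus\HH^{\oplus N}$, whose image is always the graph of $t\sigma$ and therefore a symplectic subbundle isomorphic to $(E_0,\phi_0)$. At $t=0$ and $t=1$ one obtains, after applying symplectic isometries of $\HH^{\oplus 2N}_X$ realised by $\Aff^1$-paths in the symplectic group (which is $\Aff^1$-connected because of elementary symplectic transvections), the two original embeddings. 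The care needed is to package these transvection homotopies together with the graph homotopy into a single naive $\Aff^1$-homotopy, possibly at the cost of further stabilising $N$; this is the essentially Morel--Voevodsky-type manipulation.
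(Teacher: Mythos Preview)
The paper does not give an argument here at all: immediately after the statement it simply says the result is the equivalence $\pi_0\mathcal L_6\cong\pi_0\mathrm{GW}^-$ of Barge--Lannes \cite[Proposition 6.2.1.5]{Barge:2008it}, together with $\pi_0\mathrm{GW}^-=\mathrm{GW}^-$ for regular schemes with $\tfrac12$. So your direct approach is genuinely different from the paper's, which outsources the work.

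Your existence argument is morally right but the embedding construction is garbled: as written, the two maps $E\to\OO_X^M$ you ``combine'' are the same map $p^\vee\circ\phi$. What you presumably intend is to pair a section $s\colon E\hookrightarrow\OO_X^M$ of a surjection with $p^\vee\circ\phi$; but it is cleaner to note that $(E,\phi)\perp(E,-\phi)$ is hyperbolic on $E$, hence an orthogonal summand of $\HH^{\oplus M}$ for $M\gg0$, which gives the symplectic embedding directly.

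The real gap is in uniqueness. Your graph family $\Gamma_t=\{(v,t\sigma(v))\}\subset\HH^{\oplus N}\oplus\HH^{\oplus N}$ carries the restricted form $(1+t^2)\phi_0$. Over a general base with $\tfrac12$, the function $1+t^2$ is \emph{not} a unit on $X\times\Aff^1$ (it vanishes wherever $-1$ is a square), so $\Gamma_t$ is not a symplectic subbundle and the family does not land in $HGr$. The earlier linear interpolation has the same problem with $(1-t)^2+t^2$. Your hedge about elementary symplectic transvections is the correct idea, but it should replace the graph construction, not decorate it: after stabilising so that both embeddings have trivial orthogonal complement, the two embeddings differ by an automorphism in $Sp_{2N}(\Gamma(X,\OO_X))$, and for large $N$ this lies in the elementary subgroup $ESp_{2N}$; each elementary transvection $x_{ij}(a)$ is connected to the identity by the \emph{linear} path $t\mapsto x_{ij}(ta)$, and concatenating these (with further stabilisation to glue) gives the naive $\Aff^1$-homotopy. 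That reduction to elementary symplectic matrices and the requisite stability/cancellation is exactly what Barge--Lannes prove, which is why the paper is content to cite them.
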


This is the equivalence $\pi_{0}\mathcal L_{6} \cong \pi_{0}\mathrm{GW}^{-}$ of
\cite[Proposition 6.2.1.5]{Barge:2008it} plus the isomorphism $\pi_{0}\mathrm{GW}^{-} =
\mathrm{GW}^{-}$
which happens for our schemes which are regular with $\frac 12$.

For a smooth $S$-scheme $X$  let $KSp(X) = KO(Ch^{b}(VBX),w_{X},
{}^{\vee},-\eta)$ be its symplectic $K$-theory space.
There are natural isomorphisms ${K}Sp \cong {K}O^{[4n+2]}$.

\begin{thm}
\label{T:MV.symp}
The objects $\ZZ \times HGr$ and ${K}Sp$ are isomorphic in the motivic unstable homotopy category
$H_{\bullet}(S)$.
\end{thm}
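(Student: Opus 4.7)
The strategy is to adapt Morel and Voevodsky's proof of the analogous theorem for ordinary Grassmannians and algebraic $K$-theory \cite[Theorem 4.3.13]{Morel:1999ab} to the symplectic setting. There are three steps: construct a classifying map, check that it induces a bijection on Hom-sets out of affine smooth test schemes, and promote this to an isomorphism in $H_\bullet(S)$ via Nisnevich descent and Jouanolou's device.

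First I would build a morphism $\tau \colon \ZZ \times HGr \to KSp$ in $H_\bullet(S)$ from the tautological symplectic subbundles. On $HGr(n,2n)$ the pair $(\shf U_{n,2n}, \phi_{n,2n})$ represents the class $[\shf U_{n,2n},\phi_{n,2n}] - n[\HH] \in KSp_0(HGr(n,2n))$; these classes are compatible under the stabilization maps of \eqref{E:alpha.beta}, and shifted by the $\ZZ$ factor they assemble into a single class $\tau \in KSp_0(\ZZ \times HGr)$. Since hermitian $K$-theory of regular schemes containing $\tfrac12$ satisfies Nisnevich descent and $\Aff^1$-invariance, a suitable fibrant replacement of $KSp$ is motivically fibrant, and $\tau$ then represents a morphism in $H_\bullet(S)$.

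Second, I would show $\tau$ is an isomorphism by checking that the induced map
\[
\tau_{*} \colon \Hom_{H_\bullet(S)}(X_+, \ZZ \times HGr) \lra \Hom_{H_\bullet(S)}(X_+, KSp)
\]
is a bijection for every smooth affine $X$ over $S$; the extension to arbitrary smooth $X$ follows from Jouanolou's trick applied to both sides. The right-hand side equals $KSp_0(X) = GW^{-}(X)$ by motivic fibrancy and the definition of $KSp$. For the left-hand side, Theorem \ref{T:universal} provides a canonical bijection between $GW^{-}(X)$ and naive $\Aff^1$-homotopy classes of morphisms $X \to \ZZ \times HGr$. By construction of $\tau$ the map $\tau_{*}$ is then the identity on $GW^{-}(X)$.

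The main obstacle is the source side of step two: one must know that maps $X_+ \to \ZZ \times HGr$ in $H_\bullet(S)$ from a smooth affine $X$ are computed by naive $\Aff^1$-homotopy classes of scheme morphisms into the ind-scheme $\ZZ \times HGr$. In the linear case this is the heart of the Morel-Voevodsky argument and combines a singular-complex analysis of $BGL_\infty$ with the $\Aff^1$-contractibility of $GL_{n+1}/GL_n$ and Nisnevich descent. The symplectic analogues --- that $HGr$ is presented as a colimit of classifying spaces for $Sp_{2n}$, that $Sp_{2n+2}/Sp_{2n}$ is $\Aff^1$-connected (indeed an affine-bundle torsor over an affine open of affine space once one fixes a symplectically dual pair of vectors), and that the resulting ind-object satisfies the relevant Nisnevich descent --- are the ``few things to verify for symplectic groups beyond what is in Morel and Voevodsky's paper'' alluded to in the introduction. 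The geometric facts about $Sp_{2n}$ parallel those for $GL_n$, so the verifications are largely routine but must be carried out in detail.
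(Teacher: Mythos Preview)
Your outline has a genuine gap, and it also misidentifies the technical point the paper must supply.

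The gap: checking that $\tau_{*} \colon \Hom_{H_\bullet(S)}(X_+, \ZZ \times HGr) \to \Hom_{H_\bullet(S)}(X_+, KSp)$ is a bijection for every smooth (affine) $X$ is not enough to conclude that $\tau$ is an isomorphism in $H_\bullet(S)$. An isomorphism in the homotopy category must induce bijections on $\Hom_{H_\bullet(S)}(C,-)$ for \emph{all} $C$, in particular for $C = X_+ \wedge S^n_s$ with $n \geq 1$; on the target side those are the higher groups $KSp_n(X)$, and Theorem~\ref{T:universal} says nothing about them. Jouanolou's trick only reduces smooth $X$ to affine $X$; it does not reduce $X_+ \wedge S^n_s$ to anything your argument handles. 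Morel and Voevodsky do not prove their theorem by a $\pi_0$-bijection either: they exhibit a chain of motivic weak equivalences linking both objects to a common model, so the comparison is at the level of spaces, not just their $\pi_0$.

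The paper follows Morel--Voevodsky's proof structurally, establishing $\hocolim_n \USp(r,n) \simeq \pt$ and hence $BSp_{2r} \simeq B_{et}Sp_{2r} \simeq HGr(r,\infty)$ in $H(S)$, and then invoking the group-completion input already present in \cite{Morel:1999ab}. The new ingredient is \emph{not} the $\Aff^1$-connectedness of $Sp_{2n+2}/Sp_{2n}$ you mention, and it is not routine. The issue is that the principal $Sp_{2r}$-bundles $\USp(r,n) \to HGr(r,n)$ do not obviously form an \emph{admissible gadget} in the sense of \cite[Definition 4.2.1]{Morel:1999ab}, while the admissible gadget $U_{2r,2n}$ that Morel--Voevodsky use for $GL$ has the wrong quotient by $Sp_{2r}$. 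The paper therefore introduces a weaker notion, an \emph{acceptable gadget} (Definition~\ref{D:gadget}), which is a simplex-lifting property over henselian regular local bases, and verifies it for the $\USp(E,\phi;n)$ via an explicit extension lemma (Lemma~\ref{L:gadget}): a presentation $\sum \overline u_{2i-1}\wedge \overline u_{2i} = \overline\phi$ modulo an element $g$ can always be lifted to an exact presentation of $\phi$ after perturbing the $u_i$ by multiples of $g$ and adjoining finitely many extra $g$-divisible terms. Proposition~\ref{P:USp} then replaces the last paragraph of \cite[Proposition 4.2.3]{Morel:1999ab} and \cite[Lemma 4.2.8]{Morel:1999ab}, after which the rest of Morel--Voevodsky's argument runs unchanged.
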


The proof of this theorem is identical to Morel and Voevodsky's proof for
ordinary $K$-theory except for one point.
Let $\USp(r,n) \to \HGr(r,n)$ be the principal $\Sp_{2r}$-bundle associated to the
tautological rank $2r$ symplectic subbundle on $\HGr(r,n)$.
There is an isomorphism $\USp(r,n) \cong Sp_{2n}/Sp_{2n-2r}$.
To establish Theorem \ref{T:MV.symp} by
following the proof of Morel and Voevodsky exactly we would need  the
$\USp(r,n)$ to form
an \emph{admissible gadget} in the sense of
\cite[Definition 4.2.1]{Morel:1999ab}.
It does not seem as if they do.  Nor can we use the admissible gadget used by
Morel and Voevodsky, for that would substitute for
$\USp(r,n)$ the principal $GL(2r)$-bundle associated to
the tautological subbundle on $Gr(2r,2n)$, called $U_{2r,2n}$ in
\cite{Morel:1999ab}.
(This $U_{2r,2n}$ is
the space of $2r \times 2n$ matrices of maximal rank.)
But the quotient $U_{2r,2n}/Sp_{2r}$ is not the quaternionic Grassmannian $HGr(r,n)$,
and its cohomology risks being much less tractable.

So we give a new definition based on the property actually
used in the proof of
\cite[Proposition 4.2.3]{Morel:1999ab}.
For a commutative ring $B$ let
\begin{align*}
\Delta^{n}_{B} & = \Spec B[t_{1},\dots,t_{n}],
&
\partial \Delta^{n}_{B} & = \Spec B[t_{1},\dots,t_{n}]/(t_{1}t_{2}\cdots t_{n}(1-{\textstyle\sum} t_{i}))
\subset \Delta^{n}_{B}
\end{align*}

\begin{defn}
\label{D:gadget}
An \emph{acceptable gadget} over an $S$-scheme $X$ is a sequence of smooth quasi-projective
$X$-schemes $(U_{i})_{i\in\NN}$ and closed embeddings $U_{i} \to U_{i+1}$ of $X$-schemes
such that for any henselian regular local ring $B$ and any
commutative square
\[
\xymatrix @M=5pt {
\partial \Delta^{n}_{B} \ar[r] \ar[d]_-{\text{inclusion}}
& U_{i} \ar[d]^-{\text{projection}}
\\
\Delta^{n}_{B} \ar[r]
& X
}
\]
there exists a $j \geq i$
and a map $\Delta_{B}^{n} \to U_{j}$ making the following diagram commute.
\[
\xymatrix @M=5pt @C=50pt {
\partial \Delta^{n}_{B} \ar[r] \ar[d]_-{\text{inclusion}}
& U_{i} \ar[r]^-{\text{gadget map}}
& U_{j} \ar[d]^-{\text{projection}}
\\
\Delta^{n}_{B} \ar[rr] \ar[rru]
&& X
}
\]
\end{defn}

Sections of the principal bundle $\USp(r,n) \to HGr(r,n)$ are given by symplectic frames
(i.e.\ symplectic bases)
of the tautological symplectic subbundle.
Therefore giving a morphism $V \to \USp(r,n)$ is equivalent to giving
an embedding $\HH_{V}^{\oplus r} \subset \HH_{V}^{\oplus n}$ such that the symplectic form on
$\HH_{V}^{\oplus r}$ is the restriction of the symplectic form on $\HH_{V}^{\oplus n}$.
This is also equivalent by duality to giving $2n$ sections $s_{1},\dots,s_{2n}$
of the bundle of linear forms $\HH_{V}^{\oplus r \vee}$ such that
$\sum_{i=1}^{n} s_{2i-1} \wedge s_{2i}$ is equal to the symplectic form of $\HH_{V}^{\oplus r}$.

We need the $\USp(r,n)$ to form an acceptable gadget
in the relative case as well (cf.\
\cite[Lemma 4.2.8]{Morel:1999ab}).
Given a symplectic bundle
$(E,\phi)$ of rank $2r$ over $X$, the relative $\USp(E,\phi;n)$ is constructed by taking the
$\Sp_{2r}$-torsor $P \to X$ associated to $(E,\phi)$ and forming the quotient
$(P \times \USp(r,n))/\Sp_{2r}$ by the diagonal action of $\Sp_{2r}$ on the two torsors.
This gives us a fiber bundle $\USp(E,\phi;n) \to X \times \HGr(r,n)$ with fibers
$\Sp_{2r}$ and structural group $\Sp_{2r} \times \Sp_{2r}$
acting on the fibers by left and right translation.
Giving a morphism $V \to \USp(E,\phi;n)$
is equivalent to giving
a triple $(f,g,\iota)$ with $f \colon V \to X$ and $g \colon V \to \HGr(r,n)$ morphisms
of schemes and $\iota \colon f^{*}(E,\phi) \cong g^{*}(\shf {U}_{r,n},\phi_{r,n})$
an isometry
of symplectic bundles.
This is equivalent to giving
$(f,u_{1},\dots,u_{2n})$ with
$f \colon V \to X$ a map and the $u_{i}$ sections
of $f^{*}E^{\vee}$ such
that $\sum_{i=1}^{n} u_{2i-1} \wedge u_{2i} = f^{*}\phi$.

\begin{lem}
\label{L:gadget}
Let $R$ be a commutative ring, let $g\in R$ and let $\overline R = R/(g)$.
Let $(E,\phi)$ a symplectic $R$-module, and let $(\overline E, \overline \phi)$ be the associated
symplectic $\overline R$-module.  Let $u_{1},\dots, u_{2n} \in E^{\vee}$ be linear forms such that
$\sum_{i=1}^{n} \overline u_{2i-1} \wedge \overline u_{2i} =
\overline \phi$ in $\Lambda^{2} \overline{E}^{\vee}$.
Then there
exist linear forms $v_{1},\dots,v_{2n}$ and $w_{1},\dots,w_{2m}$ in $E^{\vee}$ such that
\[
\sum_{i=1}^{n}(u_{2i-1}+gv_{2i-1}) \wedge (u_{2i}+gv_{2i}) + \sum_{j=1}^{m} g w_{2j-1}\wedge g w_{2j} = \phi.
\]
\end{lem}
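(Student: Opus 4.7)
The plan is to peel off one factor of $g$ from the desired identity, reducing to a congruence mod $g$ that can be solved by linear algebra on $\overline E^{\vee}$, and then to absorb the residual into the $w$-summands. Concretely, set $\rho := \phi - \sum_{i=1}^n u_{2i-1}\wedge u_{2i}$. The hypothesis says $\overline\rho=0$ in $\Lambda^{2}\overline E^{\vee}$, and since a symplectic module $E$ is finitely generated projective the canonical map $\Lambda^{2}E^{\vee}\otimes_R\overline R \to \Lambda^{2}\overline E^{\vee}$ is an isomorphism, so $\rho = g\psi$ for some $\psi \in \Lambda^{2}E^{\vee}$. Expanding $(u_{2i-1}+gv_{2i-1})\wedge(u_{2i}+gv_{2i})$ and matching terms, the target identity becomes
\[
g\psi \;=\; g\sum_{i}(u_{2i-1}\wedge v_{2i}+v_{2i-1}\wedge u_{2i})+g^{2}\sum_{i}v_{2i-1}\wedge v_{2i}+g^{2}\sum_{j}w_{2j-1}\wedge w_{2j}.
\]

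The crux is therefore to solve $\psi \equiv \sum_{i}(u_{2i-1}\wedge v_{2i}+v_{2i-1}\wedge u_{2i}) \pmod{g}$ in $\Lambda^{2}E^{\vee}$. This reduces to showing that the $\overline R$-linear map
\[
\mu\colon (\overline E^{\vee})^{2n}\lra \Lambda^{2}\overline E^{\vee}, \qquad (x_1,\dots,x_{2n}) \mapsto \sum_i\bigl(\overline u_{2i-1}\wedge x_{2i}+x_{2i-1}\wedge \overline u_{2i}\bigr),
\]
is surjective. The key input is that $\overline u_1,\dots,\overline u_{2n}$ span $\overline E^{\vee}$: because $\overline\phi$ is nondegenerate, the adjoint $x\mapsto\overline\phi(x,{-})$ is a surjection $\overline E\onto \overline E^{\vee}$, and the identity $\overline\phi = \sum \overline u_{2i-1}\wedge\overline u_{2i}$ writes $\overline\phi(x,{-})=\sum_i \bigl(\overline u_{2i-1}(x)\,\overline u_{2i}-\overline u_{2i}(x)\,\overline u_{2i-1}\bigr)$ as a linear combination of the $\overline u_i$. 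Once the $\overline u_i$ span, the image of $\mu$ contains every $\overline u_k\wedge\xi$, and since $\Lambda^{2}\overline E^{\vee}$ is generated by such pure wedges, $\mu$ is surjective.

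Applying this to $\overline\psi$ yields $\overline v_1,\dots,\overline v_{2n}\in \overline E^{\vee}$ solving the congruence; lift each to some $v_j\in E^{\vee}$, which is possible because $E^{\vee}\onto\overline E^{\vee}$ by projectivity of $E$. Writing the defect as $\psi-\sum_i(u_{2i-1}\wedge v_{2i}+v_{2i-1}\wedge u_{2i})=g\chi$ with $\chi\in\Lambda^{2}E^{\vee}$, and using that $\Lambda^{2}E^{\vee}$ is generated additively by decomposable $2$-forms, pick $w_1,\dots,w_{2m}\in E^{\vee}$ with $\sum_{j=1}^m w_{2j-1}\wedge w_{2j} = \chi - \sum_i v_{2i-1}\wedge v_{2i}$. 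Multiplying by $g^{2}$ and combining with $g\psi-g\sum_i(u_{2i-1}\wedge v_{2i}+v_{2i-1}\wedge u_{2i})=g^{2}\chi$ gives exactly the sought equality. The only genuine obstacle is the spanning statement for the $\overline u_i$, which falls out from the nondegeneracy of $\overline\phi$ built into the hypothesis; everything else is linear bookkeeping.
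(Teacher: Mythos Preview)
Your proof is correct and follows essentially the same approach as the paper's: both hinge on the observation that the $\overline u_j$ span $\overline E^{\vee}$ (which you derive explicitly from the nondegeneracy of $\overline\phi$), use this to solve the congruence modulo $g^2$, and then absorb the remaining $g^2$-multiple into the decomposable $w$-terms. The paper's proof is a terse two sentences that leave the spanning argument and the handling of the $w$-terms implicit; your version spells out exactly these points.
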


\begin{proof}
The $\overline u_{1},\dots,\overline u_{2n}$ generate $\overline E^{\vee}$.  So there exist
$v_{1},\dots,v_{2n} \in E^{\vee}$ such that
\[
\phi - \sum_{i=1}^{n} u_{2i-1} \wedge u_{2i} \equiv g \sum_{j=1}^{2n} u_{j} \wedge (-1)^{j}v_{j}
\pmod {g^{2}}.
\qedhere
\]
\end{proof}

\begin{prop}
\label{P:USp}
For any symplectic bundle $(E,\phi)$ of rank $2r$ over a scheme $X$, the
schemes
$(\USp(E,\phi;n))_{n\geq r}$ together with the closed embeddings $\USp(E,\phi;n) \to \USp(E,\phi;n+1)$
induced by the inclusions $\HH^{\oplus n} \subset \HH^{\oplus n} \oplus \HH$
form an acceptable gadget.
\end{prop}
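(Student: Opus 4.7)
The plan is to verify the lifting property of Definition \ref{D:gadget} for the sequence $(\USp(E,\phi;n))_{n \geq r}$ by translating the square into a statement about sections of $f^{*}E^{\vee}$, and then applying Lemma \ref{L:gadget} as the single nontrivial ingredient.

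First I would rewrite the diagram of Definition \ref{D:gadget} using the description of maps to $\USp(E,\phi;n)$ recalled just above the lemma. A morphism $\partial\Delta^{n}_{B} \to \USp(E,\phi;i)$ commuting with the projections to $X$ amounts to giving the composite $\bar f \colon \partial\Delta^{n}_{B} \to X$ together with sections $\bar u_{1}, \dots, \bar u_{2i}$ of $\bar f^{*}E^{\vee}$ satisfying $\sum_{k=1}^{i} \bar u_{2k-1} \wedge \bar u_{2k} = \bar f^{*}\phi$. Commutativity of the square forces $\bar f$ to be the restriction of the map $f \colon \Delta^{n}_{B} \to X$ given in the square.

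Next, since $\Delta^{n}_{B} = \Spec R$ with $R = B[t_{1},\dots,t_{n}]$ is affine, and $f^{*}E^{\vee}$ is a finitely generated projective $R$-module, the restriction map
\[
H^{0}(\Delta^{n}_{B}, f^{*}E^{\vee}) \lra H^{0}(\partial\Delta^{n}_{B}, \bar f^{*}E^{\vee})
\]
is surjective, so I may choose sections $u_{1},\dots,u_{2i}$ of $f^{*}E^{\vee}$ over $\Delta^{n}_{B}$ restricting to the $\bar u_{k}$. With $g = t_{1}\cdots t_{n}(1-\sum t_{k})$ the equation of $\partial\Delta^{n}_{B}$, they satisfy
\[
\sum_{k=1}^{i} u_{2k-1} \wedge u_{2k} \equiv f^{*}\phi \pmod{g}.
\]
Now I apply Lemma \ref{L:gadget} to $R$, $g$, the symplectic $R$-module $(f^{*}E, f^{*}\phi)$, and the $u_{k}$. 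It produces $v_{1},\dots,v_{2i}$ and $w_{1},\dots,w_{2m}$ in $f^{*}E^{\vee}$ such that the modified sections $u'_{k} := u_{k} + gv_{k}$ together with the extra sections $gw_{1},\dots,gw_{2m}$ satisfy
\[
\sum_{k=1}^{i} u'_{2k-1} \wedge u'_{2k} + \sum_{l=1}^{m} (gw_{2l-1}) \wedge (gw_{2l}) = f^{*}\phi.
\]
The tuple $(f, u'_{1},\dots,u'_{2i}, gw_{1},\dots,gw_{2m})$ thus defines a morphism $\Delta^{n}_{B} \to \USp(E,\phi;\, i+m)$, so I take $j = i+m$. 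On $\partial\Delta^{n}_{B}$ the function $g$ vanishes, so $u'_{k} \equiv \bar u_{k}$ and the extra sections $gw_{l}$ vanish identically; hence this lift restricts along $\partial\Delta^{n}_{B} \hookrightarrow \Delta^{n}_{B}$ to the composite of the given map $\partial\Delta^{n}_{B} \to \USp(E,\phi;i)$ with the closed embedding $\USp(E,\phi;i) \hookrightarrow \USp(E,\phi;i+m)$, which is exactly what Definition \ref{D:gadget} demands.

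The real content of the argument lives in Lemma \ref{L:gadget}: one uses the nondegeneracy of $\bar\phi$ to see that the $\bar u_{k}$ generate $\bar f^{*}E^{\vee}$, which lets the discrepancy $f^{*}\phi - \sum u_{2k-1}\wedge u_{2k}$ (vanishing mod $g$) be written as $g$ times a two-form in the $u_{k}$ modulo $g^{2}$; the $u_{k} \mapsto u_{k} + gv_{k}$ correction kills the discrepancy mod $g^{2}$, and any remaining error, being divisible by $g^{2}$, is absorbed into a sum of wedges of the form $(gw_{2l-1})\wedge(gw_{2l})$. The henselian regular local hypothesis on $B$ plays no role here beyond ensuring that $\Delta^{n}_{B}$ is affine, which is all that is needed for the extension step and for the lemma to apply.
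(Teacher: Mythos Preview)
Your proof is correct and follows the same approach as the paper's: translate the lifting problem into the data $(f,u_{1},\dots,u_{2i})$, apply Lemma \ref{L:gadget}, and read off the lift as $(u_{k}+gv_{k},\,gw_{l})$. You are simply more explicit than the paper about the surjectivity of the restriction map $H^{0}(\Delta^{n}_{B},f^{*}E^{\vee}) \to H^{0}(\partial\Delta^{n}_{B},\bar f^{*}E^{\vee})$ used to choose the lifts $u_{k}$ of the $\bar u_{k}$, a step the paper absorbs into the phrase ``equivalent to giving the $(E,\phi)$ and $(\overline u_{1},\dots,\overline u_{2n})$ of Lemma \ref{L:gadget}.''
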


\begin{proof}
Let $R = B[t_{1},\dots,t_{n}]$ and $g = t_{1}t_{2}\cdots t_{n}(1-{\textstyle\sum} t_{i}) \in R$.
Giving the first diagram of Definition \ref{D:gadget} is then equivalent to giving the $(E,\phi)$ and
$(\overline u_{1},\dots, \overline u_{2n})$ of Lemma \ref{L:gadget}.
The map $\Delta_{B}^{n}\to U_{j}$ is then given by
$(u_{1}+gv_{1},\dots,u_{2n}+gv_{2n},gw_{1},\dots,gw_{2m})$.  The top triangle of the second
diagram commutes because modulo $g$ this last vector is
$(\overline u_{1},\dots, \overline u_{2n},0,\dots,0)$.  The lower triangle commutes because of the
equation involving $\phi$.
\end{proof}

Substituting the acceptable gadgets
$\USp(r,n) \to \USp(r,n+1) \to \cdots$ and the quaternionic Grassmannians $HGr(i,n)$
for the admissible gadgets $U_{n,i} \to U_{n,i+1} \to \cdots$ and Grassmannians $Gr(i,n)$
of Morel and Voevodsky,
the proof of
\cite[Theorem 4.3.13]{Morel:1999ab}
can be used to
prove Theorem \ref{T:MV.symp}.
Proposition \ref{P:USp} substitutes for the last paragraph of the proof of Proposition 4.2.3
and for Lemma 4.2.8.
We get $\hocolim_{n} \USp(r,n) \cong \pt$ and
\[
BSp_{2r} \cong B_{et}Sp_{2r} \cong HGr(r,\infty)
\]
in $H(S)$,
and we get the theorem.
At the end one needs the equivalence of hermitian $K$-theories
based on group completion and of Schlichting's Waldhausen-like
construction.  But Schlichting has shown that each is equivalent to the hermitian $Q$-construction:
see
\cite[Theorem 4.2]{Schlichting:2004aa} and \cite[Proposition 6]{Schlichting:2010uq}.

Similar but less definitive results can be proven for the orthogonal group.
Let $\HH_{+}^{\oplus n}$ denote the trivial orthogonal bundle
$(\OO_{S}^{\oplus 2n},q_{2n})$ with the split quadratic form $q_{2n} = \sum_{i=1}^{n}x_{2i-1}x_{2i}$.
Let $RGr(r,2n) = RGr(r,\HH^{\oplus n}_{+})$
be the open subscheme of $Gr(r,2n)$ parametrizing subspaces of rank $r$ on which
$q_{2n}$ is nondegenerate.   Then over $RGr(r,2n)$ we have a tautological rank $r$ orthogonal subbundle
$(U |_{RGr}, q_{2n} |_{RGr})$ whose structural group scheme is the orthogonal group scheme
$O(r,q_{2n}|_{RGr}) \to RGr(r,2n)$.
The associated principal bundle is $RU(r,2n) \to RGr(r,2n)$.  To give a morphism $V \to RU(r,2n)$
one gives a quadratic bundle $(E,q)$ of rank $r$ over $V$ and $2n$ sections $s_{1},\dots,s_{2n}$
of $E^{\vee}$ such that $q = \sum_{i=1}^{n}s_{2i-1}s_{2i}$.
The data $(E,q,s_{1},\dots,s_{2n})$ and $(E_{1},q_{1},t_{1},\dots,t_{2n})$ define the same morphism
if and only if there is an isomorphism $\phi \colon E \cong E_{1}$ such that $q = \phi^{*}q_{1}$
and $s_{i} = \phi^{*}t_{i}$ for all $i$.
The relative case is like the relative symplectic case described earlier.

\begin{prop}
\label{P:RU}
\parens{a}
For any quadratic bundle $(E,q)$ the
$RU(E,q,2n)$ and the inclusions $RU(E,q,2n) \to RU(E,q,2n+2)$ corresponding to
$(E,q,s_{1},\dots,s_{2n}) \mapsto (E,q,s_{1},\dots,s_{2n},0,0)$ form an acceptable gadget, as do their
relative versions.

\parens{b}
For any quadratic bundle $(E,q)$ of rank $r$ over $X$
the fiber bundles $RU(E,q,2n) \to X$ have sections for $n \geq r$ over any open subscheme over which
the vector bundle $E$ trivializes.
\end{prop}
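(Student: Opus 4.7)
The plan is to mirror the proof of the symplectic Proposition \ref{P:USp}. Part (a) rests on an orthogonal analogue of Lemma \ref{L:gadget}: given a commutative ring $R$ with $\tfrac 12 \in R$, an element $g \in R$ and $\overline R = R/(g)$, a quadratic $R$-module $(E, q)$ whose reduction $(\overline E, \overline q)$ is nondegenerate, and linear forms $s_1, \ldots, s_{2n} \in E^\vee$ satisfying $\sum_{i=1}^{n} \overline s_{2i-1}\overline s_{2i} = \overline q$, there exist $v_1, \ldots, v_{2n}$ and $w_1, \ldots, w_{2m}$ in $E^\vee$ with
\[
\sum_{i=1}^{n} (s_{2i-1}+gv_{2i-1})(s_{2i}+gv_{2i}) + \sum_{j=1}^{m} (gw_{2j-1})(gw_{2j}) = q.
\]
To prove this lemma, observe first that the nondegeneracy of $\overline q$ forces the $\overline s_i$ to span $\overline E^\vee$: any common zero would lie in the radical of the symmetric bilinear form associated to $\overline q$, using that $\tfrac 12 \in R$. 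Write $q - \sum s_{2i-1}s_{2i} = g\psi$ with $\psi \in \mathrm{Sym}^2 E^\vee$, and consider the linear map $\Phi\colon (\overline E^\vee)^{2n} \to \mathrm{Sym}^2 \overline E^\vee$ sending $(\overline v_1, \ldots, \overline v_{2n})$ to $\sum_i (\overline s_{2i-1}\overline v_{2i} + \overline s_{2i}\overline v_{2i-1})$. It is surjective, since for each $k$ and each $u \in \overline E^\vee$ the element $\overline s_k \cdot u$ lies in its image (set the parity-neighbor of $\overline v_k$ equal to $u$, the rest to $0$), and such products span $\mathrm{Sym}^2 \overline E^\vee$ because the $\overline s_k$ span $\overline E^\vee$. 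Choose preimages $\overline v_j$ of $\overline \psi$ and lift them arbitrarily to $v_j \in E^\vee$; then $q - \sum (s_{2i-1}+gv_{2i-1})(s_{2i}+gv_{2i})$ is divisible by $g^2$, say equal to $g^2 \chi$, and expanding $\chi$ in any generating set of $E^\vee$ writes it as $\sum_{j=1}^m w_{2j-1}w_{2j}$.

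Part (a) then follows exactly as in Proposition \ref{P:USp}: a commutative square as in Definition \ref{D:gadget} with target $RU(E, q, 2n_i)$ provides, after pullback of $(E, q)$ to $R = B[t_1, \ldots, t_n]$ with $g = t_1 t_2 \cdots t_n(1 - \sum t_j)$, the input for the orthogonal lemma once we lift the given $\overline s_j \in \overline E^\vee$ to $s_j \in E^\vee$. The lemma's output $(s_1+gv_1, \ldots, s_{2n_i}+gv_{2n_i}, gw_1, \ldots, gw_{2m})$ gives the desired factoring map $\Delta_B^n \to RU(E, q, 2n_i + 2m)$; both triangles in Definition \ref{D:gadget} commute because modulo $g$ the new data reduces to the old data padded with zeros. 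The relative version is identical after pulling back through an orthogonal torsor trivialization, mirroring the symplectic relative case.

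For part (b), let $W \subset X$ be an open subscheme on which $E$ trivializes. Because $\tfrac 12 \in \OO_S$ and $q|_W$ is nondegenerate, the quadratic form $q|_W$ diagonalizes Zariski-locally on $W$ by the standard recursive argument (pick a vector of unit $q$-norm in a sufficiently small neighborhood, split off its orthogonal complement, induct). On any such smaller open $W' \subset W$ on which $q|_{W'} = \sum_{i=1}^r d_i x_i^2$ in an orthogonal basis, rewriting each square as $x_i \cdot (d_i x_i)$ produces a section of $RU(E, q, 2n) \to X$ over $W'$ for every $n \geq r$ (padding with trivial pairs $(0,0)$ if $n > r$). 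The main obstacle is verifying the surjectivity of $\Phi$ in the orthogonal lemma, where the parity-pairing structure of $\overline s_i$ with $\overline v_i$ must be used carefully; this is also why we are forced to enlarge $n$ to $n+m$ in the conclusion rather than simply modifying the original $s_i$'s, and why the orthogonal result is only \emph{less definitive} than its symplectic counterpart.
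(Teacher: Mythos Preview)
Your argument for part (a) is correct and follows the paper's implicit approach: adapt Lemma~\ref{L:gadget} from the alternating to the symmetric setting and then argue exactly as in Proposition~\ref{P:USp}.

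For part (b), however, there is a genuine gap. The proposition asserts a section of $RU(E,q,2n)\to X$ over \emph{any} open $W$ on which the underlying vector bundle $E$ trivializes. Your argument first passes to a smaller open $W'\subset W$ on which the quadratic form diagonalizes, and only there do you produce a section. That establishes the existence of Zariski-local sections, which may suffice for the downstream Morel--Voevodsky argument, but it does not prove the statement as written: you have not produced a section over all of $W$.

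The paper's proof avoids this by a one-line direct construction that requires no diagonalization. If $E|_{W}$ is free with coordinates $x_{1},\dots,x_{r}$, write $q=\sum_{i\leq j}a_{ij}x_{i}x_{j}$ and set
\[
s_{2i-1}=x_{i},\qquad s_{2i}=\sum_{j=i}^{r}a_{ij}x_{j}\quad(1\leq i\leq r),\qquad s_{k}=0\ \text{for }k>2r.
\]
Then $\sum_{i=1}^{n}s_{2i-1}s_{2i}=q$ on the nose over all of $W$. This is the point of the formulation in (b): trivializing $E$ as a vector bundle is already enough, without any local work on the quadratic structure. Incidentally, the phrase ``less definitive'' in the paper refers to the failure of $\ZZ\times RGr\to\KO^{[0]}$ to be known as an isomorphism in $H_{\bullet}(S)$, not to any weakness in the gadget lemma.
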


Part (b) of the proposition is of concern
\cite[Proposition 4.1.20 and Definition 4.2.4.3]{Morel:1999ab}.
It is not
an issue for symplectic bundles because symplectic bundles are locally trivial in the Zariski topology.  It
holds because to give
a section of $RU(E,q,2n) \to X$
is to give sections $s_{1},\dots,s_{2n}$ of $E^{\vee}$
such that $q = \sum s_{2i-1}s_{2i}$.  If $E$ trivializes over
$U$ with coordinates $x_{1},\dots,x_{r}$ and $q = \sum_{i \leq j} a_{ij}x_{i}x_{j}$, then one
can give a local section over $U$ by
$s_{2i-1} = x_{i}$ and $s_{2i} = \sum_{j=i}^{n} a_{ij}x_{j}$ for $1 \leq i \leq r$ and $s_{i}=0$ for $i > 2r$.

The results of Proposition \ref{P:RU} and of the arguments of Morel and Voevodsky are isomorphisms
$B_{et}O_{r} \cong RGr(r,\infty)$ and $B_{et}O \cong RGr$
in $H_{\bullet}(S)$.  However, neither of the natural maps
$\ZZ \times BO \to \ZZ \times B_{et}O$ or $\ZZ \times BO \to \KO^{[0]}$
are isomorphisms in $H_{\bullet}(S)$
because orthogonal bundles
are not always locally trivial in the Nisnevich topology.

We also do not know how to calculate $KO_{*}^{[*]}(RGr)$.

\section{The cohomology of quaternionic Grassmannians}
\label{S:symp.Thom}

We review the calculation of the cohomology of quaternionic Grassmannians of \cite{Panin:2010fk}.
We reformulate the definitions and some of the theorems for a bigraded $\veps$-commutative partial multiplication.  In \cite{Panin:2010fk} we assumed we had a full ring structure.
We do not redo the proofs because no change is needed:
all the needed products
are with the Thom and Borel classes of symplectic bundles
or with pullbacks of such classes, and those lie in the $A^{4i,2i}$.

\begin{defn}
[\protect{\cite[Definition 7.1]{Panin:2010fk}}]
\label{Thom}
A \emph{symplectic Thom structure} on a bigraded cohomology theory
$(A^{*,*}, \partial)$ with an $\veps$-commutative partial multiplication or ring structure
is a rule assigning
to every rank $2$ symplectic bundle
$(E, \phi)$
over an $X$ in $\Sm/S$ an element
$\thom(E, \phi) \in A^{4,2}(E,E-X)$
with the following properties:
\begin{enumerate}
\item
For an isomorphism $u: (E, \phi)\cong (E_1, \phi_1)$ one has $\thom(E, \phi) = u^* \thom(E_1, \phi_1)$.
\item
For a morphism $f : Y \to X$ with pullback map $f_E : f^*E \to E$ one has
$f_{E}^* \thom(E, \phi)= \thom(f^*E, f^*\phi)$.
\item
For the trivial rank $2$ bundle $\HH$ over $\pt$
the map
\[
{-} \times \thom(\HH) \colon A^{*,*}(X) \to A^{*, *}(X  \times \Aff^2, X  \times (\Aff^2-\{0\}))
\]
is an isomorphism for all $X$.
\end{enumerate}
The \emph{Borel class} of $(E,\phi)$ is $b(E,\phi) = -z^{*} \, \thom(E,\phi) \in A^{4,2}(X)$ where
$z \colon X \to E$ is the zero section.
\end{defn}

%

From Mayer-Vietoris one sees that for any rank $2$ symplectic bundle
\begin{equation*}
{}\cup \thom(E,\phi) \colon A^{*,*}(X) \xra{\cong} A^{*,*}(E,E-X)
\end{equation*}
is an isomorphism.
The sign in the Borel class is simply conventional.  It is chosen so that if $A^{*,*}$ is an oriented
cohomology theory with an additive formal group law, then the Chern and Borel classes satisfy the
traditional formula $b_{i}(E,\phi) = (-1)^{i}c_{2i}(E)$.
%
%
The following is \cite[Theorem 8.2]{Panin:2010fk}.

\begin{thm}[Quaternionic projective bundle theorem]
\label{QPBTH}
Let $(A^{*,*},\partial)$ be a bigraded cohomology theory with an $\veps$-commutative
partial multiplication or ring structure and
a symplectic Thom structure.  Let
$(\mathcal U, \phi|_{\mathcal U})$ be the tautological rank $2$
symplectic subbundle over
the quaternionic projective bundle $HP(E, \phi)$, and let
$t = b(\mathcal U, \phi|_{\mathcal U})$
be its Borel class. Then
we have an isomorphism of graded $A^{0}(X)$-modules
$$(1, t, \dots , t^{n-1}) \colon
 A^{*,*}(X) \oplus A^{*,*}(X) \oplus \dots \oplus
A^{*,*}(X) \to A^{*,*}(HP_X(E, \phi))$$
and an isomorphism
of graded modules over $A^{0}(X) = \bigoplus_{p}A^{2p,p}(X)$\textup{:}
$$(1, t, \dots , t^{n-1}) \colon
 A^{0}(X) \oplus A^{0}(X) \oplus \dots \oplus A^{0}(X) \to A^{0}(HP_X(E, \phi)).$$
\end{thm}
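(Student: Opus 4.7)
The plan is to follow the proof of \cite[Theorem 8.2]{Panin:2010fk} verbatim, with one preliminary remark. The passage from a full bigraded ring structure to the weaker partial multiplicative structure of Definition \ref{multiplicative} requires no change to that proof: every product that enters is either multiplication by the Pontryagin class $t \in A^{4,2}$ of a symplectic bundle, or a cross product of a class on the base $X$ with a Thom or Pontryagin class of a symplectic bundle; both lie in the bidegrees $A^{4i,2i}$ forming $A^0$ and are covered by the partial multiplication. The second isomorphism of graded $A^0(X)$-modules follows from the first by extracting $(2p,p)$-summands.

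First I would reduce to the case $(E,\phi) = \HH^{\oplus n}_X$. Symplectic bundles are Zariski-locally trivial, so $X$ admits a finite open cover over which $E$ trivializes, and a Mayer--Vietoris argument in $A^{*,*}$ (available thanks to the localization and \'etale-excision axioms) reduces the claim to the pieces. Then I would induct on $n$; the base case $n = 1$ is immediate because $HP(E,\phi) = X$ and the map in the theorem is the identity on $A^{*,*}(X)$.

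For the inductive step I would use the closed embedding $j\colon HP(F,\phi|_F) \hookrightarrow HP(E,\phi)$ associated to a rank-$2(n-1)$ symplectic subbundle $F \subset E$, with open complement $U$. The argument of \cite{Panin:2010fk} combines the localization long exact sequence for this pair with the symplectic Thom isomorphism for the normal bundle of $HP(F)$, $\Aff^1$-homotopy invariance applied to the affine-bundle structure $U \to X$, and explicit quaternionic-geometry computations, in order to identify $(1, t, \ldots, t^{n-1})$ as a basis of $A^{*,*}(HP(E,\phi))$ over $A^{*,*}(X)$.

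The hard part will be the Thom-class calculation for the normal bundle of $HP(F)$ in $HP(E)$, which must be matched inductively with powers of the Pontryagin class $t$ of the tautological subbundle on $HP(E)$ so that the basis $(1, t, \ldots, t^{n-1})$ emerges correctly from the splitting of the localization sequence. This computation is carried out in \cite{Panin:2010fk} using only products by symplectic Thom and Pontryagin classes, hence only within the class of products allowed by Definition \ref{multiplicative}, so the argument transplants to our setting without modification.
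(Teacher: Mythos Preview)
Your proposal is correct and matches the paper's approach exactly. The paper does not reprove the theorem but simply cites \cite[Theorem 8.2]{Panin:2010fk} and makes precisely your preliminary remark: the proofs from \cite{Panin:2010fk} go through unchanged under the weaker partial multiplicative structure because every product that appears is with a Thom or Pontryagin class of a symplectic bundle, hence lies in $A^{4i,2i}$.
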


\begin{defn}
\label{PontriaginClasses}
Under the hypotheses of Theorem
\ref{QPBTH}
there are unique elements
$b_i(E, \phi) \in A^{4i,2i}(X)$ for $i=1,2, \dots , n$
such that
$$t^{n}-b_1(E, \phi)\cup t^{{n-1}} + b_2(E, \phi)\cup t^{{n-2}} - \dots + (-1)^n b_n(E, \phi)=0.$$
The classes
$b_i(E, \phi)$
are called the \emph{Borel classes}
of $(E, \phi)$ with respect to the symplectic Thom structure of the cohomology theory $(A, \partial)$.
For $i > n$ and $i < 0$ one sets $b_i(E, \phi)$ = 0, and one sets $b_0(E, \phi) = 1$.
\end{defn}


The quaternionic projective bundle theorem has consequences the symplectic splitting principle and
the Cartan sum formula for Borel classes \cite[Theorems 10.2, 10.5]{Panin:2010fk}.
With them one can compute the cohomology of quaternionic Grassmannians.
Let $e_{i}$ denote the $i^{\text{th}}$ elementary symmetric polynomial, and let $h_{i}$ be the
$i^{\text{th}}$ complete symmetric polynomial, the sum of all monomials of degree $i$.
There are formulas relating them, including the recurrence relation
$h_{k} + \sum_{i\geq 1} (-1)^{i}e_{i}h_{k-i} = 0$.  Let $\Pi_{r,n-r}$ be the set of all partitions whose
Young diagrams fit inside an $r \times (n-r)$ box.  (More formally these are partitions
$\lambda$ with $l(\lambda) =\lambda'_{1}\leq r$ and $\lambda_{1} \leq n-r$.)
Associated to any such partition is a
\emph{Schur polynomial}, which can be written in terms of the $e_{i}$.

\begin{thm}
\label{T:Grass}
For any bigraded ring
cohomology theory $A^{*,*}$ with an $\varepsilon$-commutative partial multiplication or ring structure and
a symplectic Thom structure and any $X$ the map
\begin{equation}
\label{E:HGr.cohom.1}
A^{*,*}(X)[e_{1},\dots,e_{r}] / (h_{n-r+1},\dots,h_{n}) \xrightarrow{\cong} A^{*,*}(\HGr(r,n) \times X)
\end{equation}
sending $e_{i} \mapsto b_{i}(\shf U_{r,n},\phi_{r,n})$ for all $i$
is an isomorphism, as is the map
\begin{equation}
\label{E:HGr.cohom.2}
A^{*,*}(X)^{\oplus \binom{n}{r}} \xrightarrow{(s_{\lambda}(\shf U_{r,n},\phi_{r,n}))_{\lambda\in\Pi_{r,n-r}}}
A^{*,*}(\HGr(r,n) \times X).
\end{equation}
%
\end{thm}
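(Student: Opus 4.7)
The plan is to transplant the proof of the analogous formula from \cite{Panin:2010fk}, where $(A^{*,*},\partial)$ carried a full ring structure, to the present setting of a partial multiplication. Every product that appears in that earlier proof has the form $\alpha \times b$ with $b$ a Pontryagin class or a Schur polynomial in Pontryagin classes of a symplectic bundle, or the Thom class of such a bundle; all of these lie in $A^{4i,2i}$, which is exactly what the partial multiplication of Definition \ref{multiplicative} accepts as its right-hand factor. So the verification reduces to tracing the steps and noting their compatibility with the weaker hypothesis.

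First I would apply the symplectic splitting principle \cite[Theorem 10.2]{Panin:2010fk} to $(\shf U_{r,n},\phi_{r,n})$, obtaining a tower of quaternionic projective bundles $\pi \colon F \to HGr(r,n)\times X$ along which $\pi^{*}$ is injective and the tautological symplectic bundle splits as an orthogonal direct sum of rank two symplectic bundles $(L_{i},\psi_{i})$ with Pontryagin classes $t_{i} = p(L_{i},\psi_{i}) \in A^{4,2}(F)$. Iterating Theorem \ref{QPBTH} makes $A^{*,*}(F)$ into a free $A^{*,*}(HGr(r,n)\times X)$-module on monomials in the $t_{i}$, every step being multiplication by some power of a $t_{i} \in A^{4,2}$, and by the Cartan sum formula \cite[Theorem 10.5]{Panin:2010fk} one has $\pi^{*} p_{i}(\shf U_{r,n},\phi_{r,n}) = e_{i}(t_{1},\dots,t_{r})$. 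The orthogonal symplectic complement $\shf Q_{r,n}$ of rank $2(n-r)$ satisfies $p(\shf U_{r,n})\cup p(\shf Q_{r,n})=1$ by the Cartan sum formula, and the vanishing $p_{k}(\shf Q_{r,n})=0$ for $k > n-r$ then forces $h_{n-r+1} = \cdots = h_{n} = 0$ when evaluated at the $p_{i}(\shf U_{r,n})$. This produces the well-defined homomorphism of \eqref{E:HGr.cohom.1}; its equivalence with \eqref{E:HGr.cohom.2} is the classical symmetric function fact that $\{s_{\lambda}\}_{\lambda\in\Pi_{r,n-r}}$ is a free $\ZZ$-basis of $\ZZ[e_{1},\dots,e_{r}]/(h_{n-r+1},\dots,h_{n})$.

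To prove the map is an isomorphism I would induct on $n$ using the open--closed decomposition of $HGr(r,n)$ with respect to the flag $\HH^{\oplus (n-1)} \subset \HH^{\oplus n}$, whose closed stratum is $HGr(r-1,n-1)$ and whose open stratum is an affine bundle over $HGr(r,n-1)$; the associated Gysin long exact sequence reduces the calculation to strictly smaller quaternionic Grassmannians already known inductively, and the Schur classes $s_{\lambda}$ match the Schubert-cell duals under these sequences, exactly as in the classical Grassmannian proof. The main obstacle, more bookkeeping than substance, is verifying at each step that the right-hand factor of every product lies in the diagonal bidegree $A^{2r,r}$. Since the whole computation takes place inside the subring $A^{0} = \bigoplus_{p}A^{2p,p}$, where the partial multiplication restricts to a genuine $\veps$-commutative graded ring structure, no essentially new argument beyond \cite{Panin:2010fk} is required.
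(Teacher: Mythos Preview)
Your proposal is correct and takes essentially the same approach as the paper: both defer to the proof in \cite{Panin:2010fk} and observe that it carries over verbatim because every product appearing there has a Thom or Pontryagin class (or a pullback thereof) as one factor, and these lie in the bieven bidegrees $A^{4i,2i}$ where the partial multiplication is defined. In fact the paper gives no proof at all beyond this remark, so your sketch of the splitting principle, Cartan sum formula, and Gysin induction is more detailed than what appears here.
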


\begin{thm}
[\protect{\cite[Theorem 11.4]{Panin:2010fk}}]
\label{T:HGr.lim.cohom}

For any bigraded ring
cohomology theory $A^{*,*}$ with an $\varepsilon$-commutative partial multiplication or ring structure and
a symplectic Thom structure and any $X$ the $\alpha_{r,n}$ and
$\beta_{r,n}$ of \eqref{E:alpha.beta} induces split surjections
\begin{gather*}
(\alpha_{r,n} \times 1_{X})^{*} \colon A(\HGr(r,n+1) \times X) \onto A(\HGr(r,n) \times X)
\\
(\beta_{r,n} \times 1_{X})^{*} \colon A(\HGr(r+1,n+1) \times X) \onto A(\HGr(r,n) \times X)
\end{gather*}
which the isomorphisms \eqref {E:HGr.cohom.2} identify with the surjections
$A(X)^{\oplus \binom{n+1}{r}} \onto A(X)^{\oplus \binom{n}{r}}$ and
$A(X)^{\oplus \binom{n+1}{r+1}} \onto A(X)^{\oplus \binom{n}{r}}$
which are the identity on the summands
corresponding to $\lambda \in \Pi_{n,r}$ and
which vanish on the summands
corresponding to $\lambda \not \in \Pi_{n,r}$.
We have isomorphisms
\begin{gather}
\label{E:HGr(r,infty)}
A^{*,*}(X) [[b_{1},\dots,b_{r}]]^{\homog} \xrightarrow{\cong}
\varprojlim\limits_{n \to \infty} A^{*,*}(\HGr(r,n) \times X)
\\
\label{E:HGr(infty,2.infty)}
A^{*,*}(X)[[b_{1},b_{2},  b_{3}, \dots ]]^{\homog}
\xrightarrow{\cong}
\varprojlim
\limits_{n\to \infty}
A^{*,*}(\HGr(n,2n) \times X)
\end{gather}
with each variable $b_{i}$ sent to the inverse system of $i^{\text{th}}$ Borel classes
$(b_{i}(\shf U_{r,n}))_{n \geq r}$ or
$(b_{i}(\shf U_{n,2n}))_{n\in \NN}$.
\end{thm}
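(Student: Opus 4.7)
The plan is to reduce everything to the computation of pullbacks of Pontryagin classes under $\alpha_{r,n}$ and $\beta_{r,n}$, and to exploit the Schur basis of Theorem \ref{T:Grass} to identify the resulting maps on cohomology with the natural projections of free modules.

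First I would compute the pullbacks of the tautological symplectic subbundles. By the very definition of the classifying maps in \eqref{E:alpha.beta}, we have $\alpha_{r,n}^{*}(\shf U_{r,n+1},\phi_{r,n+1}) = (\shf U_{r,n},\phi_{r,n})$ and $\beta_{r,n}^{*}(\shf U_{r+1,n+1},\phi_{r+1,n+1}) = (\HH,\phi_{\HH}) \oplus (\shf U_{r,n},\phi_{r,n})$. By naturality of Pontryagin classes and by the Cartan sum formula \cite[Theorem~10.5]{Panin:2010fk} applied together with the fact that $p_{i}(\HH) = 0$ for $i \geq 1$, this gives $\alpha_{r,n}^{*}p_{i}(\shf U_{r,n+1}) = p_{i}(\shf U_{r,n})$ for all $i$, and $\beta_{r,n}^{*}p_{i}(\shf U_{r+1,n+1}) = p_{i}(\shf U_{r,n})$ for $1 \leq i \leq r$, with $\beta_{r,n}^{*}p_{r+1}(\shf U_{r+1,n+1}) = p_{r+1}(\shf U_{r,n}) = 0$ since $\shf U_{r,n}$ has rank~$2r$.

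Next I would translate this into a statement on the Schur basis. By the symplectic splitting principle \cite[Theorem~10.2]{Panin:2010fk} we may formally write $p_{i} = e_{i}(\xi_{1},\dots,\xi_{r})$ in Pontryagin roots; then $s_{\lambda}(\shf U_{r,n})$ is the image of the Schur polynomial $s_{\lambda}(\xi_{1},\dots,\xi_{r})$. A classical identity in symmetric functions gives $s_{\lambda}(\xi_{1},\dots,\xi_{r},0) = s_{\lambda}(\xi_{1},\dots,\xi_{r})$, which vanishes as soon as $l(\lambda) > r$. Combined with Jacobi--Trudi $s_{\lambda} = \det(h_{\lambda_{i}-i+j})$ and the fact that the ideal $(h_{n-r+1},\dots,h_{n})$ in \eqref{E:HGr.cohom.1} forces $h_{k} = 0$ for every $k \geq n-r+1$ (by the Newton-type recurrence relating the $h_k$ and the $e_i$), this shows $s_{\lambda}(\shf U_{r,n}) = 0$ whenever $\lambda \notin \Pi_{r,n-r}$. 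Applying these two vanishing principles to $\alpha_{r,n}^{*}$ and $\beta_{r,n}^{*}$ on each basis element of the codomain yields the identification with the projection of free $A^{*,*}(X)$-modules stated in the theorem, and in particular shows both maps are split surjective (split by inclusion of the corresponding basis subset).

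Finally I would pass to the inverse limit. Since the transition maps are surjective, the inverse systems are Mittag--Leffler, so $\varprojlim^{1}$ vanishes and $\varprojlim A^{*,*}(\HGr(r,n) \times X) = \varprojlim A^{*,*}(X)^{\oplus |\Pi_{r,n-r}|}$. In each fixed bidegree only finitely many Schur polynomials contribute and the systems eventually stabilize; summing over bidegrees yields the module of bidegree-homogeneous formal power series in the $p_{i}$, giving \eqref{E:HGr(r,infty)}. For \eqref{E:HGr(infty,2.infty)} one uses the analogous argument with both $\alpha_{n,2n}$ and $\beta_{n,2n}$ alternately, observing that in any fixed bidegree the class $p_{i}(\shf U_{n,2n})$ contributes only once the index satisfies $i \leq n$, so the projective system again stabilizes.

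The main obstacle is the combinatorial identification of the pullback maps with projection of free modules onto sub-bases; this rests on the two vanishing statements $s_{\lambda} = 0$ for $\lambda \notin \Pi_{r,n-r}$ inside $A^{*,*}(\HGr(r,n) \times X)$ and $s_{\lambda}(\xi_{1},\dots,\xi_{r},0) = 0$ for $l(\lambda) > r$. Both are standard for polynomial rings of symmetric functions, but must be justified here inside the cohomology quotient using only the presentation \eqref{E:HGr.cohom.1} and the elementary Jacobi--Trudi formula, which is routine but needs to be spelled out carefully because the $A^{*,*}(X)$-coefficients may be non-central.
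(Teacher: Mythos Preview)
The paper does not supply its own proof of this theorem; it is quoted verbatim as \cite[Theorem 11.4]{Panin:2010fk}, and the opening paragraph of \S\ref{S:symp.Thom} explains that the arguments of \cite{Panin:2010fk} go through unchanged in the partial-multiplication setting because every product that occurs involves a Pontryagin or Thom class in bieven bidegree. So there is no in-paper argument to compare against.

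Your proof is correct and is the natural one: pull back the tautological subbundles along $\alpha_{r,n}$ and $\beta_{r,n}$, use the Cartan sum formula (with $p_{i}(\HH)=0$) to see that each $p_{i}$ maps to $p_{i}$ with $p_{r+1}\mapsto 0$ under $\beta$, and then read off the effect on the Schur basis via Jacobi--Trudi together with the vanishing of $h_{k}$ for $k>n-r$ in the presentation \eqref{E:HGr.cohom.1} and the identity $s_{\lambda}(\xi_{1},\dots,\xi_{r},0)=0$ for $l(\lambda)>r$. The passage to the inverse limits is also fine: in any fixed bidegree only finitely many Schur monomials occur, and the transition maps are eventually the identity on those.

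One small point: your closing caveat about non-central $A^{*,*}(X)$-coefficients is unnecessary. As the paper notes after Definition~\ref{D:SL.orientation}, classes in bieven bidegree $(4i,2i)$ are central in an $\veps$-commutative theory, and all the Pontryagin classes $p_{i}$ live there. So the symmetric-function identities you invoke hold over $A^{*,*}(X)$ exactly as over a commutative ring, and no extra care is needed.
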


The notation on the left in \eqref {E:HGr(r,infty)}--\eqref {E:HGr(infty,2.infty)} is
the bigraded ring of homogeneous power series.
We have a simple lemma.

\begin{lem}
\label{BO(AwedgeB)andBO(AtimesB)}
If $A$ is a $T$-spectrum, then for any pointed motivic spaces $X$ and $Y$
the canonical map $X \times Y \to X\wedge Y$
induces a split injection
$A^{r,s}(X \wedge Y) \hra A^{r,s}(X \times Y)$.
The image of the injection coincides with the kernel of the map
$$[(id_X \times y)^*, (x \times id_Y^*)] \colon A^{r,s}(X  \times Y) \to
A^{r,s}(X  \times y) \oplus A^{r,s}(x  \times Y).$$
\end{lem}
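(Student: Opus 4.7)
The plan is to use the standard cofiber sequence
\[
X \vee Y \hra X \times Y \lra X \wedge Y
\]
of pointed motivic spaces, which exhibits $X \wedge Y$ as the cofiber (in fact the pointwise quotient) of the inclusion $X \vee Y \hra X \times Y$. Applying the cohomology theory $A^{*,*}$ represented by the $T$-spectrum yields a long exact sequence
\[
\cdots \to A^{r,s}(X \wedge Y) \to A^{r,s}(X \times Y) \xra{i^{*}} A^{r,s}(X \vee Y) \xra{\partial} A^{r+1,s}(X \wedge Y) \to \cdots
\]
where $i \colon X \vee Y \hra X \times Y$ is the inclusion.

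Next, I would identify $A^{r,s}(X \vee Y) \cong A^{r,s}(X) \oplus A^{r,s}(Y)$, which holds for any cohomology theory represented by a spectrum since the two summand inclusions $X \hra X \vee Y$ and $Y \hra X \vee Y$ are retracted by the natural projections to $X$ and $Y$. Under this identification the restriction map $i^{*}$ becomes precisely the map
\[
\bigl[(\id_{X}\times y)^{*},\,(x \times \id_{Y})^{*}\bigr] \colon A^{r,s}(X \times Y) \to A^{r,s}(X \times y) \oplus A^{r,s}(x \times Y),
\]
because the inclusion $X \vee Y \hra X \times Y$ is by definition $\id_{X} \times y$ on the first summand and $x \times \id_{Y}$ on the second.

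The key observation is that this map has a section. Let $\pi_{X} \colon X \times Y \to X$ and $\pi_{Y} \colon X \times Y \to Y$ be the two projections. Since $\pi_{X} \circ (\id_{X}\times y) = \id_{X}$, $\pi_{Y} \circ (x \times \id_{Y}) = \id_{Y}$, while $\pi_{X} \circ (x \times \id_{Y})$ and $\pi_{Y} \circ (\id_{X} \times y)$ both factor through the base point, the map
\[
\pi_{X}^{*} + \pi_{Y}^{*} \colon A^{r,s}(X \times y) \oplus A^{r,s}(x \times Y) \to A^{r,s}(X \times Y)
\]
is a section of $i^{*}$ on reduced cohomology. Consequently the connecting maps $\partial$ in the long exact sequence vanish, the long exact sequence collapses to a split short exact sequence
\[
0 \to A^{r,s}(X \wedge Y) \to A^{r,s}(X \times Y) \xra{i^{*}} A^{r,s}(X \vee Y) \to 0,
\]
and the image of $A^{r,s}(X \wedge Y) \hra A^{r,s}(X \times Y)$ coincides with $\ker(i^{*})$, which is the claimed kernel. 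There is no real obstacle here; the content of the lemma is entirely the existence of the splitting coming from the two projections, and everything else follows from the six-term cofiber sequence in the triangulated category $SH(S)$.
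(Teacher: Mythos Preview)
Your proof is correct and is precisely the standard argument: the cofiber sequence $X \vee Y \hra X \times Y \to X \wedge Y$ gives a long exact sequence, the restriction $i^{*}$ is split by $\pi_{X}^{*}+\pi_{Y}^{*}$ because the cross-composites factor through the basepoint and hence vanish on reduced cohomology, and the splitting collapses the long exact sequence into the asserted split short exact sequence. The paper itself gives no proof at all for this lemma---it is stated as ``a simple lemma'' and left to the reader---so there is nothing to compare; your argument supplies exactly what was omitted.
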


We write $[-n,n] = \{ i \in \ZZ \mid -n \leq i \leq n\}$.  We have a sequential colimit of pointed spaces
\[
(\ZZ \times HGr, (0,x_{0})) = \colim ( ([-n,n] \times HGr(n,2n), (0,x_{0}))
\]
to which Theorem \ref {T:varprojlim.a} applies.  Theorem \ref{T:HGr.lim.cohom} and
Lemma \ref{BO(AwedgeB)andBO(AtimesB)} now give the following result.

\begin{thm}
\label{T:A(ZxHGr)}
Let $A$ be a $T$-spectrum whose associated cohomology theory $(A^{*,*},\partial)$ has an
$\varepsilon$-commutative partial multiplication or ring structure and a symplectic Thom structure.
Then the natural map
\[
A^{*,*}((\ZZ \times HGr),(0,x_{0})) \to \varprojlim A^{*,*}([-n,n] \times HGr(n,2n),(0,x_{0}))
\]
is an isomorphism.  More generally for any $r$ and $s$ the natural map
\[
A^{*,*}((\ZZ \times HGr,(0,x_{0}))^{\wedge r} \wedge (HP^{1},x_{0})^{\wedge s})
\to \varprojlim A^{*,*}(([-n,n] \times HGr(n,2n),(0,x_{0}))^{\wedge r}\wedge (HP^{1},x_{0})^{\wedge s})
\]
is an isomorphism.
\end{thm}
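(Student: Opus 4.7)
The plan is to apply the Milnor short exact sequence of Theorem \ref{T:varprojlim.a}, invoked repeatedly in the introduction: for any $T$-spectrum $A$ and any sequential colimit $Y = \colim Y_{n}$ of pointed motivic spaces there is a natural short exact sequence
\[
0 \to \varprojlim\nolimits^{1} A^{p-1,q}(Y_{n}) \to A^{p,q}(Y) \to \varprojlim A^{p,q}(Y_{n}) \to 0.
\]
In each case it therefore suffices to show that the transition maps in the inverse system $\{A^{*,*}(Y_{n})\}$ are surjective, whence the $\varprojlim^{1}$ term vanishes and the second map is an isomorphism.

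For the first statement, set $Y_{n} = ([-n,n] \times HGr(n,2n),(0,x_{0}))$; the transition map is induced by the inclusion $Y_{n} \hookrightarrow Y_{n+1}$. Restriction along the discrete factor $[-n,n] \hookrightarrow [-n-1,n+1]$ is a coordinate projection of a direct sum of $2n+3$ copies of $A^{*,*}(HGr(n+1,2n+2))$ onto $2n+1$ of its summands, hence a split surjection. Restriction along $HGr(n,2n) \hookrightarrow HGr(n+1,2n+2)$ factors as $\alpha_{n,2n+1}^{*}\circ \beta_{n,2n}^{*}$, each of which is a split surjection by Theorem \ref{T:HGr.lim.cohom}. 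These splittings respect basepoints, so they descend to the pointed (reduced) cohomology and yield the required surjectivity.

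For the general statement, finite smash products of well-pointed motivic spaces commute with sequential colimits, so the left-hand side is again the cohomology of a sequential colimit. Lemma \ref{BO(AwedgeB)andBO(AtimesB)} identifies $A^{*,*}$ of each smash-product term as a natural direct summand of $A^{*,*}$ of the corresponding cartesian product. Combining Theorem \ref{T:HGr.lim.cohom}, the quaternionic projective bundle theorem \ref{QPBTH} applied to each $HP^{1}$-factor, and a K\"unneth-style decomposition, the cohomology of the product decomposes as a free $A^{*,*}(S)$-module with basis indexed by tuples of Schur polynomials in the Pontryagin classes of the tautological symplectic bundles on the Grassmannian factors together with powers of $p(\shf U_{1,2})$ on the $HP^{1}$-factors. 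The transition maps are projections onto those summands coming from partitions that fit inside the smaller boxes, hence split surjective, and passing to the kernels defining reduced smash-product cohomology preserves this split surjectivity. The main technical obstacle is the bookkeeping in this last step --- verifying that the splittings from Theorems \ref{T:HGr.lim.cohom} and \ref{QPBTH} are compatible with the restriction maps of Lemma \ref{BO(AwedgeB)andBO(AtimesB)} --- but it is essentially formal once the basis of Schur monomials has been written down.
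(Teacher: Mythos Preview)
Your proposal is correct and follows essentially the same approach as the paper, which simply cites Theorem~\ref{T:varprojlim.a}, Theorem~\ref{T:HGr.lim.cohom}, and Lemma~\ref{BO(AwedgeB)andBO(AtimesB)} without further elaboration. You have correctly filled in the details: the Milnor sequence reduces the question to surjectivity of the transition maps, the surjectivity for the Grassmannian factors comes from Theorem~\ref{T:HGr.lim.cohom} (applied iteratively, using that it is stated relative to an arbitrary base $X$), and Lemma~\ref{BO(AwedgeB)andBO(AtimesB)} handles the passage between smash products and cartesian products.
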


We complete our review of parts of \cite{Panin:2010fk} by looking at the geometry of
$HP^{1} = HGr(1,\HH^{\oplus 2})$.

\begin{thm}
\label{T:HP1.T2}
In $H_{\bullet}(S)$ we have a canonical isomorphism
$\eta \colon (HP^{1}_{+},\pt) \cong T^{\wedge 2}$.
\end{thm}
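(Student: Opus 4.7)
The plan is to identify $(HP^{1},x_{0})$ with the Thom space of a rank-$2$ trivial bundle over a motivically contractible base, via Morel--Voevodsky homotopy purity. The main geometric input is an explicit closed smooth subvariety $Z\subset HP^{1}$ of codimension $2$, coming from the symplectic decomposition of $\HH^{\oplus 2}$ used in the proof of the quaternionic projective bundle theorem in \cite{Panin:2010fk}.

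First I would use the decomposition $\HH^{\oplus 2}=\HH \oplus \HH$ with distinguished points $x_{0}=\HH \oplus 0$ and $x_{\infty}=0 \oplus \HH$ to set up a closed subvariety $Z\hookrightarrow HP^{1}$ of codimension $2$ which is motivically contractible (so $Z\simeq \pt$ in $H_{\bullet}(S)$) and has trivial rank-$2$ normal bundle $N_{Z/HP^{1}} \cong \OO_{Z}^{\oplus 2}$.  Simultaneously I would verify that the open complement $U=HP^{1}\setminus Z$ is $\Aff^{1}$-contractible to $x_{0}\in U$ in $H_{\bullet}(S)$.

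Second, with $U$ an $\Aff^{1}$-contractible open containing the basepoint $x_{0}$, the cofiber sequence $U\hookrightarrow HP^{1}\to HP^{1}/U$ yields the canonical pointed equivalence $(HP^{1},x_{0})\simeq HP^{1}/U$ in $H_{\bullet}(S)$.  Morel--Voevodsky homotopy purity applied to the closed smooth immersion $Z\hookrightarrow HP^{1}$ then gives $HP^{1}/U = HP^{1}/(HP^{1}\setminus Z)\simeq \Th(N_{Z/HP^{1}})$.  Using the triviality of $N_{Z}$ and the contractibility of $Z$, this Thom space reduces to $\Th(\OO_{Z}^{\oplus 2})\simeq Z_{+}\wedge T^{\wedge 2}\simeq T^{\wedge 2}$, yielding the asserted canonical isomorphism $\eta\colon (HP^{1},x_{0})\cong T^{\wedge 2}$, equivalently $\eta\colon (HP^{1}_{+},\pt)\cong T^{\wedge 2}$ via the $\Aff^{1}$-mapping cone description of the pointing.

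The main obstacle is the construction of $Z$ and the verification of its required properties (codimension $2$, $\Aff^{1}$-contractibility, trivial rank-$2$ normal bundle), together with the $\Aff^{1}$-contractibility of the complement $U$.  The naive affine charts of $HP^{1}$ obtained from the Plücker charts of $Gr(2,4)$ are not themselves $\Aff^{1}$-contractible, because the symplectic nondegeneracy condition $1+\det A\neq 0$ cuts out a non-trivial smooth hypersurface inside each chart $\Aff^{4}$.  So the naive straight-line homotopy $A\mapsto tA$ does not stay in the chart, and a finer symplectic-adapted $\Aff^{1}$-homotopy, as developed in \cite{Panin:2010fk}, is required to produce both the contractible piece $U$ and the subvariety $Z$ with the stated properties.
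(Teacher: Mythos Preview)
Your proposal is correct and follows essentially the same strategy as the paper. Both arguments hinge on a closed codimension-$2$ subvariety $Z \subset HP^{1}$ isomorphic to $\Aff^{2}$ whose complement is $\Aff^{1}$-contractible onto $x_{0}$; the paper makes this explicit by working in the standard $\Aff^{4}$-chart of $Gr(2,4)$ around $x_{\infty}$ and taking $Z = N^{+} = \{y_{2}=y_{4}=0\}$, citing \cite[Theorem 3.4]{Panin:2010fk} for the contractibility of $HP^{1}\setminus N^{+}$ (it is $\Aff^{5}/\GG_{a}$). The only notable difference is that where you invoke Morel--Voevodsky homotopy purity abstractly, the paper instead uses the explicit transversal slice $N^{-} = \{y_{1}=y_{3}=0\} \cong \Aff^{2}$ to write down a concrete zigzag of excision and $\Aff^{1}$-equivalences identifying $HP^{1}/(HP^{1}\setminus N^{+})$ with $N^{-}/(N^{-}\setminus 0) \cong T^{\wedge 2}$; this is of course the same content as purity, unwound by hand.
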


\begin{proof}

By definition $HP^{1}$ is by the open subscheme of $Gr(2,4)$ parametrizing
$2$-dimensional symplectic subspaces of the $4$-dimensional trivial symplectic bundle.
It contains two distinguished points
$x_{0} = [\HH \oplus 0 ]$ and
$x_{\infty} = [0 \oplus \HH]$.
The $x_{\infty}$ is the origin of an open cell $\Aff^{4} \subset Gr(2,4)$ of the usual Grassmannian
consisting of subspaces with basis of the form $(y_{1},y_{2},1,0)$ and $(y_{3},y_{4},0,1)$.
Within the $\Aff^{4}$ there are two transversal loci $N^{+} \cong \Aff^{2}$ defined by $y_{2}=y_{4}=0$
and $N^{-} \cong \Aff^{2}$ defined by $y_{1} = y_{3}= 0$.  They are closed in $HP^{1}$.

%
The complement $HP^{1}-N^{+}$ is the quotient of $\Aff^{5}$ by a free action of $\GG_{a}$
\cite[Theorem 3.4]{Panin:2010fk}.
Consequently the structural map $HP^{1}-N^{+} \to \pt$ and its section
$x_{0} \colon \pt \to HP^{1}-N^{+}$ are motivic weak equivalences.
%
This gives us motivic weak equivalences
\begin{equation}
\label{E:T2=HP1}
\vcenter{
\xymatrix @M=5pt @C=17.5pt {
T^{\wedge 2} \cong N^{-}/(N^{-}-0) \ar[d]_-{\Aff}
\ar[r]^-{\text{2 out of 3}} \ar[rd]|-{\text{2 out of 3}}
&
HP^{1}/(HP^{1}-N^{+})
&
(HP^{1},x_{0}) \ar[l]_-{/\Aff}
\\
\Aff^{4}/(\Aff^{4}-N^{+}) \ar@{<-}[r]_-{\text{excision}}
&
(\Aff^{4}\cap HP^{1})/((\Aff^{4}\cap HP^{1})-N^{+}) \ar[u]_-{\text{excision}}
&
}}
\end{equation}
The zigzag on the top line is the canonical isomorphism in $H_{\bullet}(S)$.
\end{proof}


A symplectic bundle $(E,\phi)$ is naturally a special linear bundle $(E,\lambda_{\phi})$ with
$\lambda_{\phi}$ the inverse of the Pfaffian of $\phi$.
Hence special linear Thom classes of hermitian $K$-theory (Theorem \ref{T:SLc.oriented})
give $\BO^{*,*}$ a symplectic Thom structure.
So there are Borel classes for symplectic bundles in hermitian $K$-theory,
and all the formulas of this section are valid for them.

We may compute the Borel classes induced by the Thom classes of this particular
symplectic Thom structure.  We need the isomorphism of \eqref {E:KSp}, which becomes the isomorphism
%
\begin{equation}
\label{E:GW-.isom}
\begin{array}{ccc}
GW^{-}(X) & \overset{\cong}{\lra} & KO_{0}^{[2]}(X)
\\
{}[X,\phi] & \longmapsto & -\bigl[ (X,\phi)[1] \bigr].
\end{array}
\end{equation}
The sign makes the isomorphism commute
with the forgetful maps to $K_{0}(X)$.
For a rank $2$ symplectic bundle $(E,\phi)$
has Borel class $b_{1}(E,\phi) = -z^{*}\thom(E,\OO_{X},\lambda_{\phi})$, which is the image under
the isomorphism of $[E,\phi]-[\HH] \in GW^{-}(X)$.  The symplectic splitting principle
\cite[Theorem 10.2]{Panin:2010fk}
now gives the following.

\begin{prop}
\label{P:p1.formula}
Let $(F,\psi)$ be a symplectic bundle of rank $2r$ on $X$.  Its first Borel class
$b_{1}(F,\psi) \in KO_{0}^{[2]}(X)$
is the image under the isomorphism \eqref{E:GW-.isom} of $[F,\psi]-r[\HH] \in GW^{-}(X)$.
\end{prop}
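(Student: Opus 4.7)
The plan is to reduce from the general rank $2r$ case to the rank $2$ case via the symplectic splitting principle. Throughout, let $\iota \colon GW^{-}(X) \xrightarrow{\cong} KO_{0}^{[2]}(X)$ denote the isomorphism \eqref{E:GW-.isom}. The assertion is that $p_{1}(F,\psi) = \iota([F,\psi] - r[\HH])$.

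First I would dispose of the rank $2$ case. For a rank $2$ symplectic bundle $(E,\phi)$ the Pfaffian identifies $(E,\phi)$ with the $SL^{c}$ bundle $(E,\OO_{X},\lambda_{\phi})$, so by Definition \ref{Thom} its Pontryagin class is by definition $p_{1}(E,\phi) = -z^{*}\thom(E,\OO_{X},\lambda_{\phi})$. Unpacking \eqref{E:KO.Thom}, the Koszul class $\thom(E,\OO_{X},\lambda_{\phi}) \in KO_{0}^{[2]}(E, E{-}X)$ restricts under $z^{*}$ to the class in $KO_{0}^{[2]}(X)$ which corresponds under $\iota$ to the Grothendieck-Witt class $-([E,\phi]-[\HH])$; the sign and the subtraction of $[\HH]$ are pinned down by the normalization $p_{1}(\HH)=0$, which follows from $\thom(\HH,\OO,\lambda_{\HH})$ being the pullback of the standard Thom class on $\Aff^{2}$ whose zero-section restriction vanishes. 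This identifies $p_{1}(E,\phi) = \iota([E,\phi]-[\HH])$ in the rank $2$ case, which is precisely the assertion preceding the proposition.

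Next I would observe that both sides of the desired formula are additive under orthogonal direct sums of symplectic bundles. On the left, the Cartan sum formula for Pontryagin classes \cite[Theorem 10.5]{Panin:2010fk} specializes in degree one to
\[
p_{1}\bigl((F_{1},\psi_{1}) \perp (F_{2},\psi_{2})\bigr) = p_{1}(F_{1},\psi_{1}) + p_{1}(F_{2},\psi_{2}).
\]
On the right, $\iota$ is an isomorphism of abelian groups, the Grothendieck-Witt class is additive in orthogonal sums, and the ranks of symplectic bundles add under $\perp$, so $\iota([F_{1}\perp F_{2}] - (r_{1}{+}r_{2})[\HH])$ equals the sum of the corresponding expressions for $(F_{i},\psi_{i})$.

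Finally I would invoke the symplectic splitting principle \cite[Theorem 10.2]{Panin:2010fk}: there exists a smooth morphism $f \colon Y \to X$ such that the pullback $f^{*} \colon KO_{0}^{[2]}(X) \to KO_{0}^{[2]}(Y)$ is injective and $(f^{*}F, f^{*}\psi)$ splits as an orthogonal direct sum of $r$ rank $2$ symplectic subbundles. By functoriality of both $p_{1}$ and $\iota$, the rank $2$ case together with additivity yields the claimed equality on $Y$, and injectivity of $f^{*}$ then pulls it back to an equality on $X$. The only nontrivial point in this plan is the rank $2$ base case, and there the essential content—matching the Koszul representative of the Thom class with its Grothendieck-Witt description—has effectively been set up in the paragraph above the statement; everything else is formal.
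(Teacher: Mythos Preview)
Your argument is correct and follows the same route as the paper: establish the rank $2$ case (which is exactly the computation recorded in the paragraph preceding the proposition) and then invoke the symplectic splitting principle \cite[Theorem 10.2]{Panin:2010fk} to pass to general rank. You have made explicit the additivity step via the degree-one part of the Cartan sum formula, which the paper leaves implicit in its one-line appeal to the splitting principle.
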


Formulas for higher Borel classes in terms of exterior powers of $(F,\psi)$ will
be given in \cite{Walter:2010ab}.

\section{The strategy for putting a ring structure on $A^{*,*}$ }

We explain our strategy for turning our partial multiplicative structure on $\BO^{*,*}$
into a full ring structure.

We will need the following standard facts about spectra.
Recall that a motivic space $X$ is \emph{small} if $Hom_{SH(S)}(\Sigma_{T}^{\infty}X,{-})$
commutes with arbitrary coproducts.  We write $\M^{\ft}_{\bullet}(S)$ for the full subcategory of
small motivic spaces.

\begin{thm}
[\protect{\cite[Lemma A.34]{Panin:2009aa}}]
\label{T:varprojlim.a}
Let $D^{(0)} \to  D^{(1)} \to D^{(2)} \to \cdots$ be a sequence of morphisms in $SH(S)$ with
$\hocolim D^{(i)} = D$, let $X$ be a small motivic space, and let $A$ be a $T$-spectrum.  Then
there is a canonical isomorphism
\[
Hom_{SH(S)}(\Sigma_{T}^{\infty}X,D) = \colim Hom_{H_{\bullet}(S)}(X,D^{(n)}).
\]
and a canonical short exact sequence:
\[
0 \to \varprojlim\nolimits^{1} A^{p-1,q}(D^{(i)}) \to A^{p,q}(D) \to
\varprojlim A^{p,q}(D^{(i)}) \to 0.
\]
\end{thm}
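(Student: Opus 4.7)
The plan is to model $D = \hocolim_{i} D^{(i)}$ as a mapping telescope in the triangulated category $SH(S)$. Concretely, one has a distinguished triangle
\[
\bigvee_{i\geq 0} D^{(i)} \xra{1-s} \bigvee_{i\geq 0} D^{(i)} \to D \to \Sigma \bigvee_{i\geq 0} D^{(i)},
\]
where the shift $s$ sends the $i^{\text{th}}$ summand to the $(i{+}1)^{\text{st}}$ via the given transition morphism. Both assertions will be extracted from this single triangle by applying two different cohomological functors and exploiting the fact that $1-s$ is, in a suitable sense, injective on direct sums indexed by $\NN$.

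For the Milnor-type short exact sequence, apply $A^{p,q}({-}) = Hom_{SH(S)}({-}, A \wedge S_{s}^{p-q} \wedge \GG_{m}^{\wedge q})$, which turns the wedge of spectra into a product of abelian groups. The resulting cohomological long exact sequence reads
\[
\cdots \to \prod_i A^{p-1,q}(D^{(i)}) \xra{1-s^*} \prod_i A^{p-1,q}(D^{(i)}) \to A^{p,q}(D) \to \prod_i A^{p,q}(D^{(i)}) \xra{1-s^*} \prod_i A^{p,q}(D^{(i)}) \to \cdots.
\]
The kernel of $1-s^*$ on $\prod_i A^{p,q}(D^{(i)})$ is by definition $\varprojlim A^{p,q}(D^{(i)})$, and its cokernel on the shifted copy is $\varprojlim\nolimits^{1} A^{p-1,q}(D^{(i)})$; this is the standard two-term Roos presentation of the $\varprojlim$ and $\varprojlim\nolimits^{1}$ of a sequential inverse system. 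Splicing these identifications into the long exact sequence yields the stated short exact sequence.

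For the colimit formula, smallness of $X$ says precisely that $Hom_{SH(S)}(\Sigma_{T}^{\infty}X,{-})$ preserves arbitrary coproducts, so the wedge in the telescope triangle becomes a direct sum $\bigoplus_{i} Hom_{SH(S)}(\Sigma_{T}^{\infty}X, D^{(i)})$. The key observation is that the map $1-s_*$ on such a direct sum over a sequential system is always injective: a finitely supported sequence $(\xi_0, \xi_1, \ldots)$ in the kernel would force $\xi_0 = 0$, then $\xi_1 = s_*(\xi_0) = 0$, and so on by induction. Applying this to both the $D^{(i)}$-term and the $\Sigma D^{(i)}$-term in the long exact sequence collapses it to
\[
0 \to \bigoplus_i Hom_{SH(S)}(\Sigma_{T}^{\infty}X, D^{(i)}) \xra{1-s_*} \bigoplus_i Hom_{SH(S)}(\Sigma_{T}^{\infty}X, D^{(i)}) \to Hom_{SH(S)}(\Sigma_{T}^{\infty}X, D) \to 0,
\]
whose cokernel is by definition $\colim_i Hom_{SH(S)}(\Sigma_{T}^{\infty}X, D^{(i)})$; the $\Sigma_{T}^{\infty} \dashv \Omega_{T}^{\infty}$ adjunction rewrites this as $\colim_n Hom_{H_{\bullet}(S)}(X, D^{(n)})$. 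The main obstacle is a setup issue rather than a conceptual one: one must verify that the mapping telescope presentation is available in the model of $SH(S)$ used in \cite{Jardine:2000aa}, for example by replacing the transition morphisms by cofibrations between cofibrant objects and identifying the ordinary sequential colimit with the homotopy colimit. Once that is in hand, both claims reduce to routine manipulation of the long exact sequence.
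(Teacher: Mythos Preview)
The paper does not supply its own proof of this statement; it is quoted as a standard fact and attributed to \cite[Lemma A.34]{Panin:2009aa}. Your argument via the mapping telescope distinguished triangle is exactly the standard one used to establish such results and is correct; this is essentially the argument one would find in the cited reference. One minor notational point: in your last step the adjunction $\Sigma_{T}^{\infty} \dashv \Omega_{T}^{\infty}$ strictly yields $\colim_{n} Hom_{H_{\bullet}(S)}(X,\Omega_{T}^{\infty}D^{(n)})$ rather than $Hom_{H_{\bullet}(S)}(X,D^{(n)})$, but the paper itself writes the latter as an abbreviation, so your formulation matches.
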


Particular cases of this are the following.

\begin{thm}
[\protect{\cite[Theorem 5.2]{Panin:2010fk}}]
\label{T:small.colim}
Let $X$ be a small motivic space and $A$ a $T$-spectrum.  Then we have
\[
Hom_{SH(S)}(\Sigma_{T}^{\infty}X,A) = \colim Hom_{H_{\bullet}(S)}(X \wedge T^{\wedge n}, A_{n}).
\]
\end{thm}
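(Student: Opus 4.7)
The plan is to deduce this as a special case of standard Jardine-style manipulations with the stable fibrant replacement of a $T$-spectrum, using smallness of $X$ to pass the relevant sequential colimit through $\Hom_{H_\bullet(S)}(X,-)$.

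First I would construct an explicit stable fibrant replacement of $A$. Starting from a levelwise motivic fibrant replacement $A \to A^f$ (which is a stable equivalence), one forms the spectrum $QA$ with
\[
(QA)_{n} = \colim_{k \to \infty} \Omega_{T}^{\,k} A^{f}_{n+k},
\]
where the transition maps in the colimit are induced by the adjoints $A^{f}_{n+k} \to \Omega_{T} A^{f}_{n+k+1}$ of the structure maps. This is a standard Jardine construction: each $(QA)_{n}$ is motivically fibrant, the adjoint structure maps $(QA)_{n} \to \Omega_{T}(QA)_{n+1}$ are weak equivalences by construction, and $A \to QA$ is a stable weak equivalence. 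Hence $QA$ is stably fibrant and represents $A$ in $SH(S)$.

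Next I would invoke the Quillen adjunction $\Sigma_{T}^{\infty} \dashv (-)_{0}$ between $\M_{\bullet}(S)$ (with the motivic model structure) and $T$-spectra (with the stable motivic model structure). Since $X$ is cofibrant (all motivic spaces are) and $QA$ is stably fibrant,
\[
\Hom_{SH(S)}(\Sigma_{T}^{\infty}X, A) \;=\; \Hom_{SH(S)}(\Sigma_{T}^{\infty}X, QA) \;=\; \Hom_{H_\bullet(S)}\bigl(X,(QA)_{0}\bigr) \;=\; \Hom_{H_\bullet(S)}\Bigl(X,\colim_{k} \Omega_{T}^{\,k} A^{f}_{k}\Bigr).
\]
Finally, the $T\wedge(-) \dashv \Omega_{T}$ adjunction rewrites $\Hom_{H_\bullet(S)}(X, \Omega_{T}^{\,k} A_{k}^{f}) = \Hom_{H_\bullet(S)}(X \wedge T^{\wedge k}, A_{k}^{f}) = \Hom_{H_\bullet(S)}(X \wedge T^{\wedge k}, A_{k})$, the last equality because $A_{k}^{f} \to A_{k}$ is an isomorphism in $H_{\bullet}(S)$. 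So everything comes down to commuting $\Hom_{H_\bullet(S)}(X,-)$ past the sequential colimit $\colim_{k} \Omega_{T}^{\,k} A_{k}^{f}$.

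The only genuine hard step is this commutation, which is where smallness of $X$ is needed. In the paper's definition, smallness is phrased stably: $\Hom_{SH(S)}(\Sigma_{T}^{\infty}X,-)$ commutes with arbitrary coproducts, hence with arbitrary homotopy colimits in the triangulated category $SH(S)$. To feed this into the unstable colimit above, I would apply Theorem~\ref{T:varprojlim.a} to the sequence of spectra $D^{(k)} = F_{k}(A^{f}_{k})$ (the free $T$-spectrum on $A^{f}_{k}$ starting at level $k$, i.e.\ the $k$-fold desuspension of $\Sigma_{T}^{\infty}A^{f}_{k}$), whose stable homotopy colimit is $A$ itself. This identifies $\Hom_{SH(S)}(\Sigma_{T}^{\infty}X, A) = \colim_{k} \Hom_{SH(S)}(\Sigma_{T}^{\infty}X, F_{k}A^{f}_{k})$, and the $k$-th term is $\Hom_{SH(S)}(\Sigma_{T}^{\infty}(X\wedge T^{\wedge k}), \Sigma_{T}^{\infty}A_{k}^{f})$. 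A cofinality/diagonal argument on the double colimit that arises when one further expands $\Hom_{SH}$ between suspension spectra as the stabilization $\colim_{m}\Hom_{H_\bullet}(X\wedge T^{\wedge k+m}, A^{f}_{k}\wedge T^{\wedge m})$, using the weak equivalences $A^{f}_{k}\wedge T^{\wedge m}\to A^{f}_{k+m}$ from Bott-type adjoint structure maps of $QA$, collapses the whole thing to the single colimit $\colim_{k}\Hom_{H_\bullet(S)}(X\wedge T^{\wedge k}, A_{k})$. This diagonal collapse is the only step requiring real care, but it is formal once the stable fibrant replacement $QA$ is in hand.
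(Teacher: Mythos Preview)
The paper does not give its own proof here: the statement is quoted from \cite[Theorem 5.2]{Panin:2010fk} and presented, alongside Theorem~\ref{T:varprojlim.b}, as a ``particular case'' of Theorem~\ref{T:varprojlim.a}. So there is no detailed argument in the paper to compare against; the implicit claim is that applying Theorem~\ref{T:varprojlim.a} to the layer filtration $A \simeq \hocolim_n F_n A_n$ (with $F_n A_n = \Sigma_T^\infty A_n(-n)$) does the job.

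Your outline through that reduction, arriving at
\[
\Hom_{SH(S)}(\Sigma_T^\infty X, A)\;=\;\colim_{k}\,\Hom_{SH(S)}\bigl(\Sigma_{T}^{\infty}(X\wedge T^{\wedge k}),\,\Sigma_{T}^{\infty}A_{k}\bigr),
\]
is fine and matches the paper's indicated route. The gap is in the final ``diagonal collapse''. You say one ``further expands $\Hom_{SH}$ between suspension spectra as the stabilization $\colim_{m}\Hom_{H_\bullet}(X\wedge T^{\wedge k+m}, A^{f}_{k}\wedge T^{\wedge m})$''. But that expansion,
\[
\Hom_{SH(S)}(\Sigma_T^\infty Y,\Sigma_T^\infty Z)\;=\;\colim_m \Hom_{H_\bullet(S)}(Y\wedge T^{\wedge m}, Z\wedge T^{\wedge m}),
\]
is precisely Theorem~\ref{T:small.colim} applied to the spectrum $\Sigma_T^\infty Z$ (with $Y$ small). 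So the argument is circular: you are invoking the theorem for suspension spectra in order to prove it for general $A$. A related slip reinforces this: you call the structure maps $A^{f}_{k}\wedge T^{\wedge m}\to A^{f}_{k+m}$ ``weak equivalences from Bott-type adjoint structure maps of $QA$'', but those are just the bonding maps of $A^f$ and are \emph{not} weak equivalences for a general $T$-spectrum. It is the adjoints $(QA)_k \to \Omega_T(QA)_{k+1}$ of the already-stabilized spectrum that are equivalences, and those do not collapse your double colimit in the way you describe.

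The honest content of the argument --- passing from $\colim_k \Hom_{SH}(\Sigma_T^\infty X, F_k A_k)$ to $\colim_k \Hom_{H_\bullet}(X\wedge T^{\wedge k}, A_k)$ --- is exactly what is deferred to the cited reference, and your proposal does not supply it.
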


\begin{thm}
[\protect{\cite[Corollaries 3.4, 3.5]{Panin:2009aa}}]
\label{T:varprojlim.b}
Let $A$ and $E$ be $T$-spectra.  Then for any $r$
there is a canonical short exact sequences
\begin{gather*}
0 \to \varprojlim\nolimits^{1} A^{*-2rn-1,*-rn}(E_{n}^{\wedge r}) \to A^{*,*}(E^{\wedge r}) \to
\varprojlim A^{*-2rn,*-rn}(E_{n}^{\wedge r}) \to 0.
\end{gather*}
\end{thm}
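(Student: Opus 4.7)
The strategy is to present $E^{\wedge r}$ as a sequential homotopy colimit in $SH(S)$ of desuspended suspension spectra built from the spaces $E_n^{\wedge r}$, and then to invoke Theorem~\ref{T:varprojlim.a}.

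First, any $T$-spectrum $E$ admits a canonical equivalence
$$
E \;\simeq\; \hocolim_n \Sigma_T^{\infty} E_n \wedge S^{-2n,-n}
$$
in $SH(S)$, where the transition maps are induced by the adjoints $E_n \to \Omega_T E_{n+1}$ of the structure maps via $T \simeq S^{2,1}$. This is a standard feature of the stable model structure on $T$-spectra: $E$ is recovered as the homotopy colimit of its shifted free spectra, and one checks the presentation on a suspension spectrum $E = \Sigma_T^{\infty} X$, where the transition maps become equivalences.

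Next, the smash product in $SH(S)$ commutes with sequential homotopy colimits in each variable and satisfies $\Sigma_T^{\infty} X \wedge \Sigma_T^{\infty} Y \simeq \Sigma_T^{\infty} (X \wedge Y)$. Iterating the above presentation $r$ times in a product-indexed hocolim and then restricting to the diagonal gives
$$
E^{\wedge r} \;\simeq\; \hocolim_n \Sigma_T^{\infty} (E_n^{\wedge r}) \wedge S^{-2rn,-rn}.
$$
Setting $D^{(n)} = \Sigma_T^{\infty}(E_n^{\wedge r}) \wedge S^{-2rn,-rn}$ and applying Theorem~\ref{T:varprojlim.a} to this tower with $D = E^{\wedge r}$ yields a short exact sequence
$$
0 \to \varprojlim\nolimits^{1} A^{*-1,*}(D^{(n)}) \to A^{*,*}(E^{\wedge r}) \to \varprojlim A^{*,*}(D^{(n)}) \to 0.
$$
The suspension--desuspension identity $A^{p,q}(Y \wedge S^{-2k,-k}) = A^{p+2k,q+k}(Y)$, applied with $k = rn$ and $Y = \Sigma_T^{\infty}(E_n^{\wedge r})$, identifies $A^{*,*}(D^{(n)})$ with the cohomology of $E_n^{\wedge r}$ in the appropriately shifted bidegree, which is exactly the asserted sequence (after reading the shift according to the paper's bigrading convention).

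The main obstacle is the justification of the second step: iterating the hocolim description of $E$ in $r$ variables and showing that the resulting product-indexed hocolim reduces to a single sequential hocolim along the diagonal. Concretely one needs a cofinality argument for the diagonal $\mathbb{N} \hookrightarrow \mathbb{N}^r$ together with the fact that each $\Sigma_T^{\infty} E_n \wedge S^{-2n,-n}$ is sufficiently cofibrant that its $r$-fold smash is the homotopy smash. Both are standard in the stable model category of $T$-spectra, so once they are recorded the remainder is a purely mechanical application of the Milnor $\varprojlim^1$ sequence of Theorem~\ref{T:varprojlim.a}.
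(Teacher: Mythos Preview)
Your argument is correct and is essentially the standard one. Note, however, that the paper does not actually prove this statement: it is quoted verbatim from \cite[Corollaries 3.4, 3.5]{Panin:2009aa} and no proof is supplied here. So there is nothing in the present paper to compare your proof against; what you have written is in fact a sketch of the argument underlying the cited result. The presentation $E \simeq \hocolim_n \Sigma_T^{\infty}E_n(-n)$, the commutation of smash with sequential hocolims, the diagonal cofinality $\NN \hookrightarrow \NN^r$, and the final appeal to the Milnor sequence of Theorem~\ref{T:varprojlim.a} are exactly the ingredients one finds in \cite{Panin:2009aa}.

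One small remark on the bigrading: with the convention $A^{p,q}(X) = \Hom_{SH(S)}(X, A \wedge S^{p,q})$ used here, smashing the source with $S^{-2rn,-rn}$ \emph{raises} the bidegree by $(2rn,rn)$, so your computation gives $A^{*+2rn,*+rn}(E_n^{\wedge r})$ rather than $A^{*-2rn,*-rn}(E_n^{\wedge r})$. You were right to flag the shift convention; the sign in the displayed statement appears to be written the wrong way round, as one sees by comparing with the actual use of the theorem in the proof of Theorem~\ref{T:lim1}, where the groups $\BO^{4mi,2mi}((\KO^{[2i]})^{\wedge m})$ appear with positive shifts. This does not affect the correctness of your argument.
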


\begin{defn}
\label{D:almost.monoid}
An \emph{almost commutative monoid} in $SH(S)$ is a triple $(A,\mu,e)$ with $A$ a $T$-spectrum and
$\mu \colon A \wedge A \to A$ and $e \colon \Sigma_{T}^{\infty}\boldsymbol{1}\to A$ morphisms
in $SH(S)$ such that
\begin{enumerate}
\item the morphism $\mu \circ (\mu \wedge 1) - \mu \circ (1 \wedge \mu) \in Hom_{SH(S)}(
A \wedge A \wedge A, A)$ lies in the subgroup
$\varprojlim^{1} A^{*-6n-1,*-3n}(A_{n}\wedge A_{n}\wedge A_{n})$,
\item for $\sigma \colon A \wedge A \to A \wedge A$ the morphism switching the two factors,
the morphism $\mu - \mu \circ \sigma \in \Hom_{SH(S)}( A \wedge A,A)$
lies in the subgroup
$\varprojlim^{1} A^{*-4n-1,*-2n}(A_{n}\wedge A_{n})$,
\item the map $1 - \mu \circ (1 \wedge e) \in \Hom_{SH(S)}(A,A)$ lies in the subgroup
$\varprojlim^{1} A^{*-2n-1,*-n}( A_{n})$.
\end{enumerate}
\end{defn}

An almost commutative monoid $(A,\mu,e)$ defines pairings
\begin{equation}
\label{E:m.pairing.1}
\times \colon A^{p,q}(X) \times A^{r,s}(Y) \to
A^{p+r,q+s}(X \wedge Y)
\end{equation}
for $X$ and $Y$ in $\M_{\bullet}(S)$ as follows.  For
$\alpha: \Sigma_{T}^{\infty}X \to A \wedge S^{p,q} $ and $\beta: \Sigma_{T}^{\infty}Y \to A \wedge S^{r,s} $ define
$\alpha \times \beta \in A^{p+r,q+s}(X \wedge Y)$
as the composition
\begin{equation*}
\Sigma_{T}^{\infty}(X \wedge Y) \cong \Sigma_{T}^{\infty}X \wedge \Sigma_{T}^{\infty}Y \xra{\alpha \wedge \beta}
A \wedge S^{p,q} \wedge A \wedge S^{r,s} \cong A \wedge A \wedge S^{p+r,q+s}
\xra{m \wedge 1 }A  \wedge S^{p+r,q+s}.
\end{equation*}
The unit $e\in Hom_{SH(S)}(\Sigma_{T}^{\infty}\pt_{+},A)$ defines an element $1\in A^{0,0}(\pt_{+})$.
There is then a unique element $\veps \in A^{0,0}(\pt_{+})$ such that
$\Sigma_{T}\veps \in Hom_{SH(S)}(T, A\wedge T)$ is the composition of the endomorphism
of $T$ induced by the endomorphism $x \mapsto -x$ of $\Aff^{1}$ with $e \wedge 1_{T}$.

\begin{thm}
\label{T:almost.commutative.product}
For an almost commutative monoid $(A,\mu,e)$ in $SH(S)$ the cohomology theory $(A^{*,*},\partial)$
with the pairing $\times$ of \eqref{E:m.pairing.1} and the element $1 \in A^{0,0}(\pt_{+})$ form
an $\veps$-commutative ring cohomology theory on $\M_{\bullet}(S)^{\ft}$ and on $\SmOp/S$.
\end{thm}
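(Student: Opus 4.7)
The plan is to verify each axiom of a ring cohomology theory for the pairing $\times$ defined by the formula $\alpha \times \beta = (\mu \wedge 1_{S^{p+r,q+s}}) \circ (\alpha \wedge \beta)$. Bilinearity, functoriality in both variables, and bigraded-homogeneity of bidegree $(0,0)$ are immediate from the definition. The compatibility $\partial(\alpha \times \beta) = \partial\alpha \times \beta$ reduces to the fact that the boundary map $\partial$ is induced by the cofiber sequence $U_{+} \to X_{+} \to X_{+}/U_{+}$ (respectively the $s$-suspension in $\M^{\ft}_{\bullet}(S)$), which is preserved by smashing on the right with a fixed map $\beta$ and by post-composing with $\mu$.

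The substantial content lies in associativity, the unit axiom, and $\veps$-commutativity, each of which holds strictly only up to a defect living in a $\varprojlim^{1}$ group. For associativity, the hypothesis places $\phi_{\text{ass}} := \mu \circ (\mu \wedge 1) - \mu \circ (1 \wedge \mu)$ in the subgroup $\varprojlim^{1} A^{*-6n-1,*-3n}(A_{n}^{\wedge 3}) \subset A^{0,0}(A^{\wedge 3})$. For classes $\alpha,\beta,\gamma$ on small motivic spaces $X,Y,Z$, the difference $(\alpha \times \beta) \times \gamma - \alpha \times (\beta \times \gamma)$ equals the pullback of $\phi_{\text{ass}}$ along $\alpha \wedge \beta \wedge \gamma$. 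The key step: $X \wedge Y \wedge Z$ is again small (small objects are closed under smash in $SH(S)$), so Theorem~\ref{T:small.colim} gives
\[
\Hom_{SH(S)}(\Sigma_{T}^{\infty}(X \wedge Y \wedge Z), A) \;=\; \colim_{n} \Hom_{H_{\bullet}(S)}\bigl((X \wedge Y \wedge Z) \wedge T^{\wedge 3n}, A_{n}\bigr).
\]
Consequently $\alpha \wedge \beta \wedge \gamma$ factors at some finite stage through $A_{n}^{\wedge 3}$, and the pullback of a $\varprojlim^{1}$-element along such a factorization vanishes at every finite level and hence in the colimit.

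For the unit axioms $\alpha \times 1 = \alpha$ and $1 \times \alpha = \alpha$, the same colimit argument applied to the defect $1_{A} - \mu \circ (1 \wedge e)$, which by hypothesis lies in $\varprojlim^{1} A^{*-2n-1,*-n}(A_{n})$, yields the conclusion on the small space $X$. For $\veps$-commutativity one writes $\sigma^{*}(\alpha \times \beta)$ as $\mu \circ \sigma_{A,A} \circ (\alpha \wedge \beta)$ modulated by the switch on spheres $S^{p,q} \wedge S^{r,s} \to S^{r,s} \wedge S^{p,q}$. The sphere switch contributes the sign $(-1)^{pr}\veps^{qs}$, where the factor $\veps^{qs}$ arises because the switch on $T \wedge T$ in $SH(S)$ coincides, after composing with $e$, with the map induced by $x \mapsto -x$ on $\Aff^{1}$—which is precisely the definition of $\veps$. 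The remaining discrepancy $\mu - \mu \circ \sigma_{A,A}$ lies in $\varprojlim^{1} A^{*-4n-1,*-2n}(A_{n}^{\wedge 2})$ and vanishes on the small space $X \wedge Y$ by the same argument.

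To cover $\SmOp/S$ one observes that for $(X,U)$ in $\SmOp/S$ the motivic space $X_{+}/U_{+}$ is small (being a finite homotopy colimit of representables), so the arguments above apply verbatim, and the isomorphism $\gamma$ of Corollary~\ref{KO**andBO**} identifies the boundary maps. The main obstacle is really just the careful tracking of the universal sign $(-1)^{pr}\veps^{qs}$ coming from the switch on bigraded spheres in $SH(S)$; the vanishing of the $\varprojlim^{1}$-obstructions against smallness is then a formal, uniform mechanism that simultaneously delivers associativity, unit, and $\veps$-commutativity.
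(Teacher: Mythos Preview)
Your proposal is correct and follows essentially the same approach as the paper: reinterpret the $\varprojlim^{1}$ conditions as saying the associativity, commutativity and unit defects vanish when restricted to each finite stage $A_{n}^{\wedge r}$, then use Theorem~\ref{T:small.colim} to factor classes on small spaces through these finite stages. One small correction: your displayed formula should apply Theorem~\ref{T:small.colim} to $\alpha$, $\beta$, $\gamma$ individually (each factoring through some $\Sigma^{p,q}a_{n}$ as in the paper) and then smash the three factorizations; as written, with $T^{\wedge 3n}$ and target $A_{n}$, it does not by itself produce a factorization through $A_{n}^{\wedge 3}$.
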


\begin{proof}
There are canonical elements $a_{n} \colon \Sigma_{T}^{\infty}A_{n}(-n) \to A$.  The definition of an
almost commutative monoid is equivalent to having $(A,\mu,e)$ such that the induced pairing satisfies
$(a_{n} \times a_{n}) \times a_{n} = a_{n} \times (a_{n} \times a_{n})$ and
$\sigma^{*}(a_{n} \times a_{n}) = \varepsilon^{n} a_{n} \times a_{n}$ and $a_{n} \times e = a_{n}$
for all $n$.  It then also satisfies
\[
(\Sigma^{p,q}a_{n} \times \Sigma^{p',q'}a_{n}) \times \Sigma^{p'',q''}a_{n} =
\Sigma^{p,q}a_{n} \times (\Sigma^{p',q'}a_{n} \times \Sigma^{p'',q''}a_{n})
\]
for all $(p,q)$, $(p',q')$ and $(p'',q'')$.
By Theorem \ref{T:small.colim} for a small motivic space $X$ any morphism
$\Sigma_{T}^{\infty}X \to A\wedge S^{p,q}$ factors as
\[
\Sigma_{T}^{\infty}X \to \Sigma_{T}^{\infty}A_{n}(-n) \wedge S^{p,q}
\xra{\Sigma^{p,q}a_{n}} A \wedge S^{p,q}
\]
for some $n$.  So on small motivic spaces the multiplication is associative.
The $\varepsilon$-commutativity and the unit property are treated similarly.
The signs in the commutativity come from permuting the spheres.
\end{proof}

\section{The universal elements}
\label{S:universal}

The isomorphism $\tau \colon (\ZZ \times HGr, (0,x_{0})) \xra{\cong} \KSp$ of Theorem
\ref{T:MV.symp} is classified by an element which has restrictions
%
%
%
\begin{equation}
\label{E:tau.formula}
\tau |_{\{i\} \times HGr(n,2n)}
= [\mathcal U_{n,2n},\phi_{n,2n}]+(i-n)[\HH] \in KSp_{0}(HGr(n,2n)).
\end{equation}
Its image under the isomorphism \eqref {E:GW-.isom} is a class
$\tau_{2} \in \KO^{[2]}_{0}(\ZZ \times HGr,(0,x_{0}))$ with
\begin{equation*}
\tau_{2} |_{\{i\} \times HGr(n,2n)}
= b_{1}(\mathcal U_{n,2n},\phi_{n,2n})+i\hh\in \KO_{0}^{[2]}(HGr(n,2n))
\end{equation*}
according to Proposition \ref{P:p1.formula}.
Here $\hh \in \KO^{[2]}_{0}(\pt) = \BO^{4,2}(\pt)$ is the class corresponding to $[\HH] \in GW^{-}(\pt)$
under the isomorphism.  By Theorem \ref{T:HGr.lim.cohom} we have an isomorphism
\[
\BO^{*,*}(\ZZ \times \HGr)
\cong \prod_{i \in \ZZ}\BO^{*,*}(\pt)[[b_{1},b_{2},  b_{3}, \dots ]]^{\homog}
\]
with the product taken in the category of graded rings.  Setting
\begin{align*}
b_{1} = (b_{1})_{i \in \ZZ} \in \BO^{*,*}(\ZZ \times HGr),
&&
\tfrac 12 \rk = (i1_{\BO})_{i \in \ZZ}\in \BO^{*,*}(\ZZ \times HGr)
\end{align*}
we see we have $\tau_{2} = b_{1} + (\frac 12 \rk) \hh$.

For any $k$ we have a composition of isomorphisms
in the homotopy category $H_{\bullet}(S)$
\[
(\ZZ \times HGr, (0,x_{0})) \xra{\tau} \KSp \xrightarrow{trans_{2k+1}}
\KO^{[4k+2]}
\xrightarrow{-1} \KO^{[4k+2]}
\]
where the $trans_{2k+1}$ comes from translation
and the $-1$
is the inverse operation of the $H$-space structure, which we add as in \eqref{E:KSp}
so that the forgetful maps to $K$-theory should commute up to homotopy.
(The inverse in the $H$-space structure comes from the
$\Omega_{T}$-spectrum structure and to the authors' knowledge not
from a duality-preserving functor.)

\begin{defn}[Universal element]
\label{D:tau}
We denote by
\begin{equation*}
\tau_{4k+2} \in \KO_{0}^{[4k+2]}(\ZZ \times HGr, (0,x_{0}))
\cong \BO^{8k+4,4k+2}(\ZZ \times HGr, (0,x_{0}))
\end{equation*}
the element corresponding to the composition.  It is given by
%
$\tau_{4k+2} = (b_{1}+ (\tfrac 12 \rk)\hh) \cup \beta_{8}^{k}$.
%
\end{defn}

%
%
%

%
%

We write $[-n,n] = \{ i \in \ZZ \mid -n \leq i\leq n\}$.  The class
$\tau \in GW^{-}(\ZZ \times HGr)$ giving the isomorphism $\ZZ \times HGr \cong \KSp$
has a universal property.


%

\begin{lem}
\label{L:mu}
There are unique $\mu$ and $\mu_{8k+4}$ in $H_{\bullet}(S)$ making the diagram commute
\[
\xymatrix @M=5pt @C=40pt {
(\ZZ \times HGr,(0,x_{0})) \wedge (\ZZ \times HGr,(0,x_{0}))
\ar[r]^-{\mu}
\ar[d]_-{\tau_{4k+2}\wedge \tau_{4k+2}}^-{\cong}
& \KO^{[0]} \ar[d]^-{\text{\textup{translation}}} _-{\cong}
\\
\KO^{[4k+2]}\wedge \KO^{[4k+2]}
\ar[r]_-{\mu_{8k+4}}
&
\KO^{[8k+4]}
}
\]
and such that for each $i$, $j$ and $n$ the restriction of $\mu$
\[
(\{i\} \times HGr(n,2n)) \times (\{j\} \times HGr(n,2n)
\to \KO^{[0]}
\]
is the class in $H_{\bullet}(S)$ of the morphisms of ind-schemes
represing the orthogonal Grothendieck-Witt class which is
\[
([\shf U_{n,2n},\phi_{n,2n}]+(i-n)[\HH]) \boxtimes ([\shf U_{n,2n},\phi_{n,2n}]+(j-n)[\HH]).
\]
\end{lem}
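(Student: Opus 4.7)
The plan is to construct $\mu$ first and then to define $\mu_{8k+4}$ by the diagram, which is forced to be unique because $\tau_{4k+2}\wedge \tau_{4k+2}$ is an isomorphism in $H_\bullet(S)$ (Theorem \ref{T:MV.symp}).

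To build $\mu$, I would identify $\Hom_{H_\bullet(S)}((\ZZ \times HGr,(0,x_0))^{\wedge 2}, \KO^{[0]})$ with $\BO^{0,0}((\ZZ \times HGr,(0,x_0))^{\wedge 2})$ using the motivic fibrancy of $\KO^{[0]}$ (Lemma \ref{BbbKOfmotivicfibrant}) together with Corollary \ref{KO**andBO**}. Theorem \ref{T:A(ZxHGr)} applied to the smash square ($r=2$, $s=0$) then gives
\[
\BO^{0,0}\bigl((\ZZ \times HGr,(0,x_0))^{\wedge 2}\bigr) \xrightarrow{\cong} \varprojlim_n \BO^{0,0}\bigl((HGr_n,(0,x_0))^{\wedge 2}\bigr),
\]
where $HGr_n = [-n,n] \times HGr(n,2n)$. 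The task reduces to producing a compatible inverse system of classes on the finite pieces.

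On each component $\{i\} \times HGr(n,2n) \times \{j\} \times HGr(n,2n)$ of $HGr_n \times HGr_n$, I would take the Gille--Nenashev box product \eqref{E:boxtimes.1}--\eqref{E:boxtimes.2} of the Grothendieck-Witt classes $[\shf U_{n,2n}, \phi_{n,2n}] + (i-n)[\HH]$ and $[\shf U_{n,2n}, \phi_{n,2n}] + (j-n)[\HH]$, obtaining elements of $KO^{[0]}_0 \cong \BO^{0,0}$. To see they assemble into a class on the smash $HGr_n \wedge HGr_n$ I would verify, using the description \eqref{E:A(smash)}, that the pullbacks to $HGr_n \times \{(0,x_0)\}$ and $\{(0,x_0)\} \times HGr_n$ vanish; this follows because $\shf U_{n,2n}|_{x_0} \cong \HH^{\oplus n}$ forces $\tau$ to vanish at $(0,x_0)$, so one factor of the $\boxtimes$ product is zero. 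Compatibility under the inclusions $HGr_n \hra HGr_{n+1}$ follows from the naturality of $\boxtimes$ under pullback combined with $\shf U_{n+1,2n+2}|_{HGr(n,2n)} \cong \shf U_{n,2n} \oplus \HH$, which likewise makes the restriction of $\tau|_{HGr_{n+1}}$ agree with $\tau|_{HGr_n}$.

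The resulting inverse-limit element gives the desired $\mu$, and uniqueness follows from the injectivity half of Theorem \ref{T:A(ZxHGr)} together with the uniqueness of a classifying map $X_+ \to \KO^{[0]}$ for a given class in $KO^{[0]}_0(X)$ (Lemma \ref{NewAndOld}). The morphism $\mu_{8k+4}$ is then obtained by pre- and post-composition of $\mu$ with the inverses of $\tau_{4k+2}\wedge \tau_{4k+2}$ and of the translation isomorphism. The main technical point to handle carefully is the functoriality of the Gille--Nenashev box product at the level of morphisms in $H_\bullet(S)$---not merely at the level of $\pi_0$---under the finite-to-finite restriction maps; this requires the naturality of the zigzag in Theorem \ref{T:zigzag} and the naturality of $\boxtimes$ in each argument for duality-preserving exact functors to be used together.
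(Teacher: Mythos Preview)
Your proposal is correct and follows essentially the same route as the paper: invoke Theorem~\ref{T:A(ZxHGr)} for $r=2$ to identify the target as an inverse limit over the finite pieces $HGr_n$, supply the compatible system of Grothendieck--Witt classes by the naive $\boxtimes$ product, check it lands in the smash summand via Lemma~\ref{BO(AwedgeB)andBO(AtimesB)} (your vanishing-at-basepoint argument), and then define $\mu_{8k+4}$ by the diagram.

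One remark: your closing caveat about needing functoriality of $\boxtimes$ ``at the level of morphisms in $H_\bullet(S)$---not merely at the level of $\pi_0$'' is unnecessary. The inverse limit in Theorem~\ref{T:A(ZxHGr)} is a limit of \emph{groups} $\BO^{0,0}((HGr_n,(0,x_0))^{\wedge 2})$, and by Lemma~\ref{BO(AwedgeB)andBO(AtimesB)} each of these is a subgroup of $KO^{[0]}_0(HGr_n \times HGr_n)$, an ordinary Grothendieck--Witt group. Compatibility of your system is therefore a statement about equality of Grothendieck--Witt classes under pullback, which is exactly what the naturality of the naive product on $GW$-groups gives you; no space-level naturality is required.
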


The $\mu$ with the asserted restrictions exists and is unique because of Theorem \ref {T:A(ZxHGr)}.
It factors through the wedge space because of Lemma \ref {BO(AwedgeB)andBO(AtimesB)}.
Then $\mu_{8k+4}$ is the map making the diagram commute.

\begin{lem}
\label{L:mu.compatible}
The following diagram commutes in $H_{\bullet}(S)$.
\[
\xymatrix @M=5pt @C=40pt {
\KO^{[4k-2]}\wedge \KO^{[4k-2]} \wedge T^{\wedge 8}
\ar[r]^-{\mu_{8k-4} \wedge 1}
\ar[d]_-{\tau_{4k+2}\wedge \tau_{4k+2}}^-{\cong}
& \KO^{[8k-4]} \wedge T^{\wedge 8}\ar[dd]^-{\text{\textup{structure maps}}}
\\
\KO^{[4k-2]}\wedge T^{\wedge 4}\wedge \KO^{[4k-2]} \wedge T^{\wedge 4}
\ar[d]^{\text{\textup{structure maps}}}
\\
\KO^{[4k+2]}\wedge \KO^{[4k+2]}
\ar[r]_-{\mu_{8k+4}}
&
\KO^{[8k+4]}
}
\]
\end{lem}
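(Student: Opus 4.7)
The plan is to reduce diagram commutativity to an equality of Grothendieck--Witt classes on finite Grassmannians, then invoke bilinearity of the Gille--Nenashev tensor product together with the relation $\tau_{4k+2}=\tau_{4k-2}\cup\beta_8$ implicit in Definition \ref{D:tau}.

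First, I would precompose both paths of the diagram with the isomorphism $\tau_{4k-2}\wedge\tau_{4k-2}\wedge 1_{T^{\wedge 8}}$ (Theorem \ref{T:MV.symp}) so that the common source becomes $(\ZZ\times HGr,(0,x_0))^{\wedge 2}\wedge T^{\wedge 8}$. By the product form of Theorem \ref{T:A(ZxHGr)}, any morphism from this ind-space into $\KO^{[8k+4]}$ is determined by its restrictions to the finite pieces $([-n,n]\times HGr(n,2n))^{\wedge 2}\wedge T^{\wedge 8}$, since the relevant $\varprojlim^{1}$ vanishes by the split surjectivity established in Theorem \ref{T:HGr.lim.cohom}. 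Hence it suffices to check the equality of the two restrictions for each $n$.

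Next, I would unwind the structure maps. By Definition \ref{BO}, the structure map $\sigma_n\colon\KO^{[n]}\wedge T\to\KO^{[n+1]}$ is, up to the fibrant-replacement zigzag, cup product with the Thom class $\thom\in KO^{[1]}_0(\Aff^1,\Aff^1-0)$ of the trivial line bundle. By the multiplicativity axiom (4) of an $SL$-orientation (Definition \ref{D:SL.orientation}) applied via Theorem \ref{T:SLc.oriented}, the four-fold composite $\KO^{[n]}\wedge T^{\wedge 4}\to\KO^{[n+4]}$ becomes cup product with the Thom class of the trivial rank-four $SL$-bundle, which via the Koszul-resolution representative is identified with the periodicity element $\beta_8$ of Definition \ref{D:periodicity}. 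Combining this with Lemma \ref{L:mu} and the formula $\tau_{4k+2}=\tau_{4k-2}\cup\beta_8$ (immediate from Definition \ref{D:tau}), each of the two restrictions to $([-n,n]\times HGr(n,2n))^{\wedge 2}\wedge T^{\wedge 8}$ represents the Gille--Nenashev tensor product of $([\shf U_{n,2n},\phi_{n,2n}]+(i-n)[\HH])\cup\beta_8$ with $([\shf U_{n,2n},\phi_{n,2n}]+(j-n)[\HH])\cup\beta_8$. Equality then follows from the bilinearity of the Gille--Nenashev pairing together with the centrality of $\beta_8$, which lies in bidegree $(8,4)$ where the $\varepsilon$-sign is trivial.

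The main obstacle is the identification in the second step of the iterated structure map $\KO^{[n]}\wedge T^{\wedge 4}\to\KO^{[n+4]}$ with cup product by $\beta_8$: one is built out of the $SL^c$-Thom class of the trivial rank-four bundle, while $\beta_8$ is defined categorically via shifted duality. Reconciling them requires tracking the explicit Koszul symmetric complex representing the Thom class through the zigzag of Definition \ref{BO} and checking that the resulting element of $KO^{[4]}_0(\pt)$ is indeed $\beta_8$ (and not, say, $-\beta_8$). Once this identification is in hand, the remainder of the argument is routine bookkeeping with bilinearity and functoriality of tensor products of symmetric complexes.
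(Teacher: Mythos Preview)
Your overall strategy matches the paper's: precompose with $\tau_{4k-2}\wedge\tau_{4k-2}$, invoke Theorem~\ref{T:A(ZxHGr)} to reduce to finite pieces, and check the two paths agree there. The difficulty lies in your second step.

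The identification of the four-fold structure map with cup product by $\beta_{8}$ is not merely a sign question; it is a type error. The iterated structure map is cup with the Thom class $\thom^{\times 4}\in KO_{0}^{[4]}(T^{\wedge 4})=KO_{0}^{[4]}(\Aff^{4},\Aff^{4}-0)$, whereas $\beta_{8}\in KO_{0}^{[4]}(\pt)$. These live in different groups and there is no canonical identification between them that you can invoke here. Consequently your computation of the two restrictions as tensor products of classes of the form $(\cdots)\cup\beta_{8}$ is not justified: after the reduction, the test objects are $([-n,n]\times HGr(n,2n))^{\wedge 2}\wedge T^{\wedge 8}$, and to evaluate the left path you must apply $\mu_{8k+4}$ to inputs coming from $HGr(n,2n)\wedge T^{\wedge 4}$. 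Lemma~\ref{L:mu} only describes $\mu_{8k+4}$ on the finite quaternionic Grassmannians themselves, and you have no direct handle on how it interacts with the Thom suspension.

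The paper avoids this by precomposing not with $1_{T^{\wedge 8}}$ but with $\eta^{\wedge 4}\colon (HP^{1},x_{0})^{\wedge 4}\xra{\cong}T^{\wedge 8}$ from Theorem~\ref{T:HP1.T2}. The test objects then become products of smooth schemes $(\{i\}\times HGr(n,2n))\times(\{j\}\times HGr(n,2n))\times (HP^{1})^{\times 4}$, and this is exactly the form covered by the $(HP^{1},x_{0})^{\wedge s}$ factor in Theorem~\ref{T:A(ZxHGr)}. Under $\eta$ the structure maps become cup with the Pontryagin class $-p_{1}(\shf U,\phi)\in KO_{0}^{[2]}(HP^{1})$, a Grothendieck--Witt class of a bundle on a smooth scheme. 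Both paths are then visibly the naive tensor product of $[\shf U_{n,2n},\phi_{n,2n}]+(i-n)[\HH]$, $[\shf U_{n,2n},\phi_{n,2n}]+(j-n)[\HH]$, and four copies of $[\shf U,\phi]-[\HH]$, and agree by associativity of that product. No comparison with $\beta_{8}$ is needed.
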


\begin{proof}
Because of Theorem \ref {T:A(ZxHGr)} it is enough to observe that the compositions
of the two paths of arrows with the composition
\[
\xymatrix @M=5pt @C=20pt {
(\{i\} \times HGr(n,2n)) \times (\{j\} \times HGr(n,2n)) \times (HP^{1},x_{0})^{\times 4}
\ar@{^{(}->}[d]
\\
(\ZZ \times HGr) \times (\ZZ \times HGr) \times (HP^{1},x_{0})^{\times 4}
\ar[r]^-{\tau_{4k-2}\wedge \tau_{4k-2}\wedge \eta^{\wedge 4}} _-{\cong}
& \KO^{[4k-2]}\wedge \KO^{[4k-2]} \wedge T^{\wedge 8}
}
\]
are the same for all $i$, $j$ and $n$.
\end{proof}

By Theorem \ref {T:varprojlim.b}
we have a surjection
\[
Hom_{SH(S)}(\BO \wedge \BO, \BO) \to
\varprojlim\BO^{16k+8,8k+4}(\KO^{[4k+2]}\wedge \KO^{[4k+2]}) \to 0
\]
the compositions
\[
\Sigma_{T}^{\infty}(\KO^{[4k+2]}\wedge \KO^{[4k+2]})(-8k-4) \xrightarrow{\mu_{4k+2}} \Sigma_{T}^{\infty}\KO^{[8k+4]}(-8k-4) \xrightarrow{\text{\textup{canonical}}} \BO
\]
form a system of elements of the groups in the inverse limit.  They are compatible with the connecting
maps of the inverse limit because the diagrams of Lemma \ref {L:mu.compatible} commute.
Let
\begin{equation}
\label{E:bar.m}
\bar m = (\mu_{8k+4})_{k \in \NN}\in \varprojlim\BO^{16k+8,8k+4}(\KO^{[4k+2]}\wedge \KO^{[4k+2]})
\end{equation}
and let
\begin{equation}
\label{E:m}
m \in Hom_{SH(S)}(\BO\wedge \BO)
\end{equation}
be an element lifting it.  As in \eqref {E:m.pairing.1} $m$ defines a pairing
\begin{equation}
\label{E:m.pairing}
\times \colon \BO^{p,q}(A) \times \BO^{r,s}(B) \to
\BO^{p+r,q+s}(A \wedge B).
\end{equation}
 %


\begin{thm}
\label{T:coincide}
Suppose $S$ satisfies the hypotheses of Theorem \ref{T:lim1}.
For $X$ and $Y$ in $\Sm/S$ and all $p$ and $q$ the pairing
\begin{align*}
\BO^{4p,2p}(X_{+}) \times \BO^{4q,2q}(Y_{+}) &
\to \BO^{4p+4q,2p+2q}(X_{+} \wedge Y_{+})
\\
\intertext{induced by $m$ is identified via the isomorphism $\gamma$ of  Corollary \ref{KO**andBO**}
with the naive pairing}
KO_{0}^{[2p]}(X) \times KO_{0}^{[2q]}(Y) & \to KO_{0}^{[2p+2q]}(X \times Y).
\end{align*}
\end{thm}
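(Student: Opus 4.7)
The strategy is to reduce the identification of the two pairings to the universal symplectic case, where all classes are pulled back from $\tau$ on $\ZZ \times HGr$, and then extend to all bidegrees by Bott periodicity and by cup product with Pontryagin classes coming from $HP^{1}$.

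First I treat the case $2p = 2q = 4k+2$, $X = Y = \ZZ \times HGr$, and $\alpha = \beta = \tau_{4k+2}$. By construction \eqref{E:bar.m}--\eqref{E:m} the morphism $m$ lifts $\bar m$, so its component on the level $(\KO^{[4k+2]}, \KO^{[4k+2]})$ recovers $\mu_{8k+4}$ (this uses the surjection in Theorem \ref{T:varprojlim.b} together with the hypothesis on $\varprojlim^{1}$). The pairing $\tau_{4k+2} \times_{m} \tau_{4k+2}$ is therefore computed by $\mu_{8k+4} \circ (\tau_{4k+2} \wedge \tau_{4k+2})$, and Lemma \ref{L:mu} identifies the restriction of this composition to each $(\{i\} \times HGr(n,2n)) \times (\{j\} \times HGr(n,2n))$ with the naive tensor-product Grothendieck-Witt class. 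Since Theorem \ref{T:A(ZxHGr)} shows that a class in $\BO^{*,*}((\ZZ \times HGr)^{\wedge 2})$ is determined by its restrictions to these pieces, the two products agree on $\tau_{4k+2} \times \tau_{4k+2}$.

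For affine $X \in \Sm/S$ and $\alpha \in KO_{0}^{[4k+2]}(X) \cong GW^{-}(X)$, Theorem \ref{T:universal} furnishes a classifying map $f_{\alpha} \colon X \to \ZZ \times HGr$ with $f_{\alpha}^{*}\tau = \alpha$, and similarly for $\beta$. The $m$-pairing is natural by its very definition in $SH(S)$; the naive pairing is natural by functoriality of the tensor product of symmetric chain complexes. Hence
\[
\alpha \times_{m} \beta = (f_{\alpha} \times f_{\beta})^{*}(\tau_{4k+2} \times_{m} \tau_{4k+2}) = (f_{\alpha} \times f_{\beta})^{*}(\tau_{4k+2} \boxtimes \tau_{4k+2}) = \alpha \boxtimes \beta.
\]
To pass from affine to arbitrary smooth $X, Y$ I invoke Jouanolou's device: an affine torsor $\tilde X \to X$ induces isomorphisms on both $KO_{*}^{[*]}$ and $\BO^{*,*}$ by homotopy invariance, so the identification propagates. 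To move from symplectic bidegrees $(4k+2, 4k+2)$ to general bidegrees $(4p, 2p) \times (4q, 2q)$, I use Bott periodicity (end of \S \ref{S:BO}), which is realized on $KO_{*}^{[*]}$ by multiplication with the central class $\beta_{8}$ and covers all cases where $p$ and $q$ are odd. For orthogonal bidegrees (with $p$ or $q$ even), I pull the classes back along the projections to $X \times HP^{1}$ and $Y \times HP^{1}$ and cup with the Pontryagin class $p_{1}$ of the tautological rank-$2$ symplectic subbundle on $HP^{1}$; the cupped classes then lie in symplectic bidegrees and fall under the earlier case. Since $\cup p_{1}$ is a split injection by the quaternionic projective bundle theorem (Theorem \ref{QPBTH}), agreement for the cupped classes descends to agreement for the original orthogonal ones.

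The main obstacle is justifying this last reduction. To rewrite $(\alpha \cup p_{1}) \times_{m} (\beta \cup p_{1}) = (\alpha \times_{m} \beta) \cup (p_{1} \boxtimes p_{1})$ one needs an associativity-type property of $\times_{m}$ with respect to external pairing by universal symplectic classes. This is not immediate from the definition of $m$ as a lift of $\bar m$, but it can be extracted from the fact that $m$ is a morphism of $T$-spectra, combined with the naturality and bilinearity of $\times_{m}$; the vanishing of the relevant $\varprojlim^{1}$ groups (the hypothesis of Theorem \ref{T:lim1}) removes any ambiguity needed to run these compatibilities. With this in hand, the split-injectivity provided by Theorem \ref{QPBTH} completes the argument.
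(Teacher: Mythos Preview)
Your approach is essentially the paper's: reduce to the universal class on the quaternionic Grassmannians, pull back to affine schemes via Theorem~\ref{T:universal}, pass to arbitrary smooth schemes by Jouanolou, and handle general bidegrees by multiplying with powers of $p_{1}(\shf U,\phi)$ on copies of $HP^{1}$. The paper does all four steps, in the same order, and more tersely: it does not split into odd/even parity cases, it simply picks $4k+2 \geq \max(2p,2q)$ and replaces $\alpha$ by $\alpha \times p_{1}(\shf U,\phi)^{\times(2k+1-p)}$ and similarly for $\beta$.

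Two places where you overcomplicate. First, in your base step you do not need the $\varprojlim^{1}$ hypothesis to know that $m$ restricts to $\mu_{8k+4}$ on level $4k+2$: this is immediate from the construction \eqref{E:m}, since $m$ was \emph{chosen} to lift $\bar m = (\mu_{8k+4})_{k}$. The surjection in Theorem~\ref{T:varprojlim.b} alone gives existence of such a lift. Second, Bott periodicity via $\beta_{8}$ is unnecessary: the single trick of multiplying by $p_{1}$ handles all bidegrees at once.

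On the obstacle you flag. You are right that one must justify why agreement on $\alpha \times p_{1}^{r}$ and $\beta \times p_{1}^{s}$ forces agreement on $\alpha$ and $\beta$. The paper leaves this implicit, but the point is not an associativity property of $\times_{m}$ and does not use the $\varprojlim^{1}$ hypothesis. Rather, under the motivic equivalence $(HP^{1},x_{0}) \simeq T^{\wedge 2}$ of Theorem~\ref{T:HP1.T2}, the class $-p_{1}(\shf U,\phi)$ corresponds to $\thom^{\times 2}$ (see the paragraph before Definition~\ref{D:BO.HP1}), so that the operation $\alpha \mapsto \alpha \times p_{1}$ is, after $\gamma$, the $T^{2}$-suspension $\Sigma_{T}^{2}$ up to sign. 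The product $\times_{m}$, being defined in $SH(S)$ by the recipe following \eqref{E:m.pairing.1}, automatically commutes with suspensions (compare the argument in the proof of Theorem~\ref{T:unique.prod}). Since suspension is bijective on cohomology groups, agreement after suspension gives agreement before. No appeal to Theorem~\ref{T:lim1} is required for this step.
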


\begin{proof}
Because of Jouanolou's trick it is enough to consider affine $X$ and $Y$.

Let $\alpha \in KO_{0}^{[2p]}(X)$ and $\beta \in KO_{0}^{[2q]}(Y)$.

When we have
$2p = 2q = 4k+2$ and $X = Y = [-n,n] \times HGr(n,2n)$ and
$\alpha = \beta$ is the restriction of the universal class $\tau_{4k+2}$, then we have the identification
by the construction of $m$.

When we have $2p=2q=4k+2$ but $X$, $Y$, $\alpha$ and $\beta$ are general then by Theorem
\ref{T:universal} there exists an $n$ and morphisms $f_{\alpha} \colon X \to [-n,n] \times HGr(n,2n)$
and $f_{\beta} \colon Y \to [-n,n] \times HGr(n,2n)$ in $\Sm/S$
such that
$f_{\alpha}^{*}(\tau_{4k+2} |_{[-n,n] \times HGr(n,2n)}) = \alpha$
and
$f_{\beta}^{*}(\tau_{4k+2} |_{[-n,n] \times HGr(n,2n)}) = \beta$.  The
identification of the two pairings for $\alpha$ and $\beta$ now follows from that for the restrictions
of the universal classes because both pairings are functorial for morphisms in $\Sm/S$.

For general $p$ and $q$ pick $4k+2 \geq \max(2p,2q)$.  The identification of the two pairings on
$\alpha$ and $\beta$ follows from the identification of the two pairings on
\begin{gather*}
\alpha \times b_{1}(U,\phi)^{\times 2k+1-p} \in KO_{0}^{[4k+2]}(X \times (HP^{1})^{\times 2k+1-p}),
\\
\beta \times b_{1}(U,\phi)^{\times 2k+1-q} \in KO_{0}^{[4k+2]}(Y \times(HP^{1})^{\times 2k+1-q}).
\qedhere
\end{gather*}
\end{proof}



\begin{thm}
\label{monoidBO}
Let $m$ be as in \eqref {E:m}, and
let $e \in \BO^{0,0}(\pt_{+})$
be the unit of the partial multiplicative structure on
$(\BO^{*,*},\partial)$
of Definition \ref{partialmultonBO**}.
Then
\( (\BO, m, e)\)
is an almost commutative monoid in  $SH(S)$, and the $\times$ and $1$ induced by $m$ and $e$
make $(\BO^{*,*},\partial,\times,1)$ a $\angles {-1}$-commutative bigraded ring cohomology theory on
$\M_{\bullet}^{\ft}(S)$ and on $\SmOp/S$.
\end{thm}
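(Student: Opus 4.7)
The plan is to verify the three axioms of Definition \ref{D:almost.monoid} for the triple $(\BO,m,e)$ and then invoke Theorem \ref{T:almost.commutative.product} to transfer the resulting structure to $\BO^{*,*}$. Apply Theorem \ref{T:varprojlim.b} to each of the smash powers $\BO^{\wedge r}$ for $r=1,2,3$: this yields a short exact sequence
\[
0 \to \varprojlim\nolimits^{1}\BO^{-2rn-1,-rn}(\BO_{n}^{\wedge r}) \to \BO^{0,0}(\BO^{\wedge r}) \to \varprojlim \BO^{-2rn,-rn}(\BO_{n}^{\wedge r}) \to 0.
\]
To place a difference such as $\mu\circ(\mu\wedge 1)-\mu\circ(1\wedge\mu)$ into the prescribed $\varprojlim\nolimits^{1}$-subgroup, it is enough to check that its image in the $\varprojlim$-term vanishes, i.e.\ that the corresponding identity holds after restriction to each $\KO^{[4k+2]\wedge r}$.

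For each $k$ and each triple of points on $HGr_{n}=[-n,n]\times HGr(n,2n)$ the morphism $\mu_{8k+4}$ was characterised in Lemma \ref{L:mu} by the naive Gille--Nenashev pairing $\boxtimes$ applied to the restrictions of $\tau_{4k+2}$. Associativity of the triple tensor product of symmetric chain complexes therefore forces $\mu_{8k+4}\circ(\mu_{8k+4}\wedge 1)$ and $\mu_{8k+4}\circ(1\wedge \mu_{8k+4})$ to coincide after pullback along $\tau_{4k+2}^{\wedge 3}$, and Theorem \ref{T:A(ZxHGr)} together with the uniqueness clause of Lemma \ref{L:mu} lifts this to an equality in $H_{\bullet}(S)$. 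The same template handles commutativity: by \eqref{E:commute.category} the pairing $\boxtimes$ on two degree-$(4k+2)$ symmetric complexes satisfies $\alpha\boxtimes\beta=\veps^{(4k+2)^{2}}\sigma^{*}(\beta\boxtimes\alpha)$, and the sign $\veps^{(4k+2)^{2}}$ is trivial because $(4k+2)^{2}$ is divisible by $4$. For the unit axiom, the class $e$ corresponds to $\angles{1}\in KO_{0}^{[0]}(\pt)$ under the isomorphism $\gamma$, and $\xi\boxtimes\angles{1}=\xi$ for every symmetric complex $\xi$. Pushing these equalities of level-$n$ morphisms through the inverse-limit description places the three relevant differences in the kernels $\varprojlim\nolimits^{1}\BO^{-6n-1,-3n}(\BO_{n}^{\wedge 3})$, $\varprojlim\nolimits^{1}\BO^{-4n-1,-2n}(\BO_{n}^{\wedge 2})$ and $\varprojlim\nolimits^{1}\BO^{-2n-1,-n}(\BO_{n})$ respectively, which is exactly the content of Definition \ref{D:almost.monoid}.

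Granting the almost commutative monoid structure, Theorem \ref{T:almost.commutative.product} produces an $\veps$-commutative bigraded ring cohomology theory $(\BO^{*,*},\partial,\times,1)$ on $\M^{\ft}_{\bullet}(S)$. The sign $\veps\in\BO^{0,0}(\pt_{+})$ determined by $e$ and the automorphism $x\mapsto -x$ of $\Aff^{1}$ agrees with $\angles{-1}$ because after pullback along $\tau_{2}$ the element $e$ already corresponds to $\angles{1}\in KO_{0}^{[0]}(\pt)$ and the $\GG_{m}$-twist by $-1$ acts on $\BO^{*,*}$ as multiplication by $\angles{-1}$ in the partial structure of Definition \ref{partialmultonBO**}. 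The restriction of the ring cohomology theory from $\M^{\ft}_{\bullet}(S)$ to $\SmOp/S$ is automatic because every $X_{+}/U_{+}$ with $(X,U)\in\SmOp/S$ is small and boundary maps are preserved.

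The main obstacle is the coherence check packaged into Lemma \ref{L:mu.compatible} and its iteration: one must certify that the system $(\mu_{8k+4})$, and each of the composites $\mu\circ(\mu\wedge 1)$, $\mu\circ(1\wedge\mu)$, $\mu\circ\sigma$, $\mu\circ(1\wedge e)$ built from it, really does assemble into an element of $\varprojlim$ rather than only of a truncated product, so that the obstructions land in the $\varprojlim\nolimits^{1}$-kernels and not only in the middle $\BO^{0,0}(\BO^{\wedge r})$. This uses no idea beyond the construction of $\bar m$ itself in \eqref{E:bar.m}, but the bookkeeping---checking compatibility of the level-$n$ identities with the structure maps of $\BO$ and with the classifying maps $\tau_{4k+2}$ for varying $k$---is where the real work lies.
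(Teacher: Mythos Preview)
Your proposal follows the same route as the paper: verify that the associativity, commutativity and unit obstructions vanish in the $\varprojlim$-term by reducing via $\tau_{4k+2}$ and Theorem~\ref{T:A(ZxHGr)} to the naive tensor-product pairing on finite quaternionic Grassmannians, then invoke Theorem~\ref{T:almost.commutative.product}. Two small points: the composite ``$\mu_{8k+4}\circ(\mu_{8k+4}\wedge 1)$'' is not well-typed since $\mu_{8k+4}$ lands in $\KO^{[8k+4]}$ rather than $\KO^{[4k+2]}$---the paper phrases this more carefully as the equality of the classes $(\tau_{4k+2}\boxtimes\tau_{4k+2})\boxtimes\tau_{4k+2}$ and $\tau_{4k+2}\boxtimes(\tau_{4k+2}\boxtimes\tau_{4k+2})$; and your identification $\veps=\angles{-1}$ is imprecise. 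The element $\veps$ is defined via the automorphism $x\mapsto -x$ of $\Aff^{1}$ acting on $T$ (not a ``$\GG_{m}$-twist''), and the paper pins it down by the concrete computation that pulling back the symmetric complex $(K(\OO),\varTheta(\OO))$ of \eqref{E:thom.A1} along $x\mapsto -x$ yields a complex isometric to $(K(\OO),-\varTheta(\OO))$, whence $\veps=\angles{-1}$. Your sentence gestures at the right answer but does not supply this computation.
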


\begin{proof}
We prove associativity.  The morphisms
\[
\xymatrix @M=5pt @C=60pt {
\BO \wedge \BO \wedge \BO
\ar@<4pt>[r]^-{m \circ (m \wedge id_{\BO} )}
\ar@<-4pt>[r]_-{m \circ (id_{\BO} \wedge m)}
&
\BO
}
\]
define two elements of $\BO^{0,0}(\BO \wedge \BO \wedge \BO)$
with images in
\[
\varprojlim \BO^{24k+12,12k+6}(\KO^{[4k+2]} \wedge \KO^{[4k+2]} \wedge
\KO^{[4k+2]}).
\]
This last group is isomorphic to
\[
\varprojlim \BO^{24k+12,12k+6}((\ZZ \times HGr,(0,x_{0}))^{\wedge 3})
\]
So it is enough to show that the corresponding elements
$(\tau_{4k+2} \boxtimes \tau_{4k+2}) \boxtimes \tau_{4k+2}$ and
$\tau_{4k+2} \boxtimes (\tau_{4k+2} \boxtimes \tau_{4k+2})$
in each group of the inverse system are
equal.  However, since we have
\[
\BO^{24k+12,12k+6}((\ZZ \times HGr,(0,x_{0}))^{\wedge 3})
\cong
\varprojlim \BO^{24k+12,12k+6}(([-n,n] \times HGr(n,2n),(0,x_{0}))^{\wedge 3})
\]
by Theorem \ref{T:A(ZxHGr)}  it is enough to show that the restrictions to the
smooth affine schemes
$[-n,n] \times HGr(n,2n)$ are equal.  But then by Theorem \ref{T:coincide} the
pairings coincide with the naive products, and those are associative.

The proofs of commutativity and of the unit property are similar.

Thus $(\BO,m,e)$ is an almost commutative monoid in $SH(S)$.  The rest of the statement
of the theorem follows from that fact by Theorem \ref{T:almost.commutative.product}
except for the value of $\veps$.  For that note that $\Sigma_{T}1 \in \BO^{2,1}(T)$ is
the class in $GW^{[1]}(\Aff^{1},\Aff^{1}-0)$ of the symmetric complex $(K(\OO),\varTheta(\OO))$
of \eqref{E:thom.A1}.  The pullback of the complex along the endomorphism $x \mapsto -x$
of $\Aff^{1}$ is isometric to $(K(\OO),-\varTheta(\OO))$.  So we have $\veps = \angles{-1}$.
%
\end{proof}

\begin{thm}
\label{T:unique.prod}
Suppose $\times$ and $\times'$ are two products on $(\BO^{*,*},\partial)$ on $\M_{\bullet}^{\ft}(S)$
which associative and $\angles{-1}$-commutative with unit $1 = \angles 1$, and such that
$\alpha \times \Sigma_{\GG_{m}}1 = \Sigma_{\GG_{m}}\alpha$ and $\alpha \times \Sigma_{S^{1}_{s}}1
= \Sigma_{S^{1}_{s}}\alpha$
for all $\alpha$.  If $\times$ and $\times'$
are both compatible
with the products $KO^{[2m]}_{0}(X) \times KO^{[2n]}_{0}(Y) \to KO^{[m+n]}_{0}(X \times Y)$
induced on Grothendieck-Witt groups of smooth schemes by the tensor product,
then we have $\times = \times '$.
\end{thm}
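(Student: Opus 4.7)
The plan is to reduce the equality $\alpha \times \beta = \alpha \times' \beta$ to the equality of the universal pairings $\tau_{4k+2} \times \tau_{4k+2}$ and $\tau_{4k+2} \times' \tau_{4k+2}$ restricted to the finite pieces $HGr_n = [-n,n] \times HGr(n,2n)$, where the hypothesis forces both to coincide with the naive Grothendieck--Witt tensor pairing. The first reduction is to ``symplectic'' bidegrees $(8k+4, 4k+2)$. Given $\alpha \in \BO^{p,q}(A)$ and $\beta \in \BO^{r,s}(B)$, pick $k$ large enough that there exist non-negative shifts $\Sigma_{S^1_s}^a \Sigma_{\GG_m}^b$ and $\Sigma_{S^1_s}^c \Sigma_{\GG_m}^d$ that raise the bidegrees to $(8k+4, 4k+2)$, and set $\tilde\alpha = \Sigma_{S^1_s}^a \Sigma_{\GG_m}^b \alpha$, $\tilde\beta = \Sigma_{S^1_s}^c \Sigma_{\GG_m}^d \beta$. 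The suspension identities $\alpha \times \Sigma_{?}1 = \Sigma_? \alpha$, together with their mirror forms extracted from $\angles{-1}$-commutativity, hold identically for $\times$ and $\times'$; combined with associativity and $\angles{-1}$-commutativity they express both $\tilde\alpha \times \tilde\beta$ and $\tilde\alpha \times' \tilde\beta$ as the same explicit suspension of $\alpha \times \beta$ (resp.\ $\alpha \times' \beta$), up to a sign $\pm \angles{-1}^{N}$ depending only on the bidegrees. Since suspensions are cohomology isomorphisms, equality of $\tilde\alpha \times \tilde\beta$ and $\tilde\alpha \times' \tilde\beta$ implies equality in the original bidegrees.

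Next I would replace $\tilde\alpha$ and $\tilde\beta$ by classifying morphisms to $\ZZ \times HGr$. Under the isomorphism $\gamma$ of Corollary \ref{KO**andBO**}, a class in $\BO^{8k+4,4k+2}$ of any space corresponds to a homotopy class of pointed maps into $\KO^{[4k+2]}$; Theorem \ref{T:MV.symp} together with Definition \ref{D:tau} identifies $\KO^{[4k+2]}$ with a fibrant replacement of $\ZZ \times HGr$ (up to translation and the inversion of \eqref{E:KSp}), under which the identity morphism pulls back to the universal element $\tau_{4k+2}$. Consequently $\tilde\alpha = f^* \tau_{4k+2}$ and $\tilde\beta = g^* \tau_{4k+2}$ for some $f, g$ in $H_\bullet(S)$, and by naturality of both $\times$ and $\times'$ the comparison reduces to that of the universal pairings $\tau_{4k+2} \times \tau_{4k+2}$ and $\tau_{4k+2} \times' \tau_{4k+2}$ in $\BO^{16k+8,8k+4}((\ZZ \times HGr)^{\wedge 2})$.

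Finally I would apply Theorem \ref{T:A(ZxHGr)} to identify this last group with $\varprojlim \BO^{16k+8,8k+4}(HGr_n^{\wedge 2})$, so that equality of the two universal pairings amounts to equality of their restrictions on each $HGr_n \wedge HGr_n$. By Lemma \ref{BO(AwedgeB)andBO(AtimesB)}, it is enough to check equality after pulling back to $HGr_n \times HGr_n$. There, $\tau_{4k+2}|_{HGr_n}$ becomes a class in $KO_0^{[4k+2]}(HGr_n)$ via $\gamma$, and by the standing hypothesis both $\times$ and $\times'$ restrict on such classes to the naive Grothendieck--Witt tensor product, forcing equality. The main obstacle I expect is the passage from stable $\BO$-cohomology classes on an arbitrary small space $A$ to geometric classifying morphisms landing in $\ZZ \times HGr$ at the level of $H_\bullet(S)$: Theorem \ref{T:universal} provides such a classification for smooth affine schemes, but for arbitrary small $A$ one must combine Theorem \ref{T:small.colim}, Lemma \ref{BbbKOfmotivicfibrant}, and the identification of Theorem \ref{T:MV.symp} to obtain the requisite factorization through $\tau_{4k+2}$.
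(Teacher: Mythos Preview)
Your proposal is correct and follows the same overall strategy as the paper: suspend into bidegree $(8k+4,4k+2)$, factor through the universal class $\tau_{4k+2}$, and invoke the hypothesis on the finite Grassmannians $HGr_{n}$ where the products become the naive tensor product; then undo the suspensions using the suspension identities and $\angles{-1}$-commutativity.

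The paper's route differs from yours in one organizational point that simplifies the argument. Rather than classifying $\tilde\alpha$ by a map into the ind-scheme $\ZZ \times HGr$ and then invoking Theorem~\ref{T:A(ZxHGr)} to pass to the inverse limit over the $HGr_{n}$, the paper applies Theorem~\ref{T:small.colim} (the colimit description of $\Hom_{SH(S)}(\Sigma_{T}^{\infty}X,A)$ for small $X$) directly: since $A'$ is small, the map $A' \to \KO^{[4k+2]} \cong \ZZ \times HGr$ already factors in $H_{\bullet}(S)$ through some finite $HGr_{n}$. This bypasses the inverse-limit step and lands immediately on a smooth scheme where the hypothesis applies. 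Your detour through $(\ZZ \times HGr)^{\wedge 2}$ and Theorem~\ref{T:A(ZxHGr)} is valid but unnecessary once you have smallness. The paper also separates the suspension reduction into two stages---first to the diagonal $(2i,i)$ via $S^{1}_{s}$ or $\GG_{m}$, then to $(8m+4,4m+2)$ via $\Sigma_{T}$---which amounts to the same thing as your single-step reduction since $T \simeq S^{1}_{s} \wedge \GG_{m}$. The ``main obstacle'' you flag is thus dissolved by Theorem~\ref{T:small.colim} rather than by the combination of results you anticipate.
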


\begin{proof}
Suppose first that we have $\alpha \in \BO^{2i,i}(A)$ and $\beta \in \BO^{2j,j}(B)$ with $A$ and $B$
small pointed motivic spaces.  By Theorems \ref{T:varprojlim.a} and \ref{T:small.colim}
there exist $m$ and $n$ such that
$\Sigma_{T}^{4m+2-i}\alpha \in \BO^{8m+4,4m+2}(A \wedge T^{\wedge 2m})$
has a factorisation
\[
A \wedge T^{\wedge 4m+2} \to ([-n,n] \times HGr(n,2n),(0,x_{0})) \hra \ZZ \times HGr \cong
\KO^{[4m+2]} \to \BO(4m+2)
\]
and such that $\Sigma_{T}^{4m+2-j}\beta$ has a similar factorization.  Since $\Sigma_{T}^{4m+2-i}\alpha$
and $\Sigma_{T}^{4m+2-j}\beta$ are thus pullbacks of classes on which $\times$ and $\times'$ agree,
the products agree on $\Sigma_{T}^{4m+2-i}\alpha$
and $\Sigma_{T}^{4m+2-j}\beta$.
The compatibility of the products with the suspension and their $\veps$-commutativity imply that
we also have
$\Sigma_{T}^{8m+4-i-j}(\alpha \times \beta)  = \Sigma_{T}^{8m+4-i-j}(\alpha \times' \beta)$.
Since the suspension operation is a bijection on cohomology groups, we have
$\alpha \times \beta  = \alpha \times' \beta$.

For $\alpha \in \BO^{p,q}(A)$ and $\beta \in \BO^{p',q'}(B)$ with for instance
$p<2q$ and $p' >2q'$ the products
agree on
$\Sigma_{S^{1}_{s}}^{2q-p}\alpha \in \BO^{2q,q}(A)$ and
$\Sigma_{\GG_{m}}^{p'-2q'}\beta \in \BO^{2p'-2q',p'-q'}(B)$ by the previous case.  By the same sort of argument they also agree on $\alpha$ and $\beta$.  The other cases
are similar.
\end{proof}

\begin{thm}
The assertions of Theorem \ref{uniq2} and \ref{uniq1} hold.
\end{thm}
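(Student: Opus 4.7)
The plan is to assemble both theorems from the pieces already developed. The product for Theorem~\ref{uniq2} is the one induced by the almost commutative monoid $(\BO, m, e)$ constructed in Theorem~\ref{monoidBO}, via the pairing \eqref{E:m.pairing.1}. That theorem already establishes associativity, $\angles{-1}$-commutativity, and the unit property $1 = \angles{1}$. The suspension compatibilities $\alpha \times \Sigma_{\GG_m}1 = \Sigma_{\GG_m}\alpha$ and $\alpha \times \Sigma_{S^1_s}1 = \Sigma_{S^1_s}\alpha$ on small motivic spaces follow from the unit clause of Definition~\ref{D:almost.monoid}: the $\varprojlim^{1}$-obstruction to $m \circ (1 \wedge e) = \mathrm{id}$ vanishes on $\M^{\ft}_{\bullet}(S)$ by Theorem~\ref{T:varprojlim.a}. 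Compatibility with the naive Grothendieck-Witt product on $KO^{[2n]}_{0}(X)$ for $X \in \Sm/S$ is exactly Theorem~\ref{T:coincide}. Uniqueness is Theorem~\ref{T:unique.prod}. These statements together prove Theorem~\ref{uniq2}.

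For Theorem~\ref{uniq1}, the plan is to pull the product of Theorem~\ref{uniq2} back to $(KO^{[*]}_{*},\partial)$ along the isomorphism of cohomology theories $\gamma \colon KO^{[*]}_{*} \xra{\cong} \BO^{*,*}|_{\SmOp/S}$ of Corollary~\ref{KO**andBO**}. Associativity, $\angles{-1}$-commutativity, and the unit $\angles{1}$ transfer formally; the restriction to the naive Grothendieck-Witt product on $KO^{[2n]}_{0}(X)$ is again Theorem~\ref{T:coincide}. For the claim that $(KO^{[*]}_{*},\partial)$ together with the $SL^{c}$ Thom classes of~\eqref{E:KO.Thom} is a ring cohomology theory with an $SL^{c}$ Thom classes theory, functoriality of the Thom classes and the Thom isomorphism come from Theorem~\ref{T:SLc.oriented}, so the remaining point is the multiplicativity identity $\thom(E_{1}\oplus E_{2},L_{1}\otimes L_{2},\lambda_{1}\otimes\lambda_{2}) = q_{1}^{*}\thom(E_{1},L_{1},\lambda_{1}) \cup q_{2}^{*}\thom(E_{2},L_{2},\lambda_{2})$. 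This follows from the Koszul factorization $K(E_{1}\oplus E_{2}) \cong K(E_{1}) \otimes K(E_{2})$ (with the line bundle twists distributing correspondingly), since both Thom classes sit in the bidegree $(2n,n)$ where the new cup product agrees with the Gille-Nenashev cup from the partial multiplicative structure, via reduction to universal classifying maps and Theorem~\ref{T:coincide}.

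The hardest step, now absorbed into the machinery invoked above, is the uniqueness assertion of Theorem~\ref{T:unique.prod}. There one reduces an arbitrary $\alpha \in \BO^{p,q}(A)$ on a small motivic space $A$ to a suspension of a class pulled back from a finite piece $[-n,n]\times HGr(n,2n)$, using Theorem~\ref{T:small.colim} and the classifying property encoded in Theorems~\ref{T:MV.symp} and~\ref{T:universal}. On these universal classes any two candidate products agree because both restrict to the naive Grothendieck-Witt tensor product (Theorem~\ref{T:coincide}); agreement is then propagated back along $T$-, $\GG_{m}$-, and $S^{1}_{s}$-suspensions using the stipulated suspension axioms and $\angles{-1}$-commutativity.
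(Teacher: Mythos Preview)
Your proposal is correct and follows the same approach as the paper, which simply cites Theorems~\ref{monoidBO} and~\ref{T:unique.prod}. You helpfully fill in details the paper's one-line proof leaves implicit: the suspension compatibilities, the role of Theorem~\ref{T:coincide} for the restriction to the naive product, the transport along $\gamma$ for Theorem~\ref{uniq1}, and the $SL^{c}$ Thom class multiplicativity (though note that Theorem~\ref{T:coincide} literally covers only bidegrees $(4p,2p)$, so for odd-rank Thom classes one should appeal to the suspension/desuspension trick already used in Theorem~\ref{T:unique.prod}).
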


This follows from Theorems \ref {monoidBO} and \ref{T:unique.prod}.

\section{Spectra of finite and infinite real and quaternionic Grassmannians}
\label{S:finite}


We now wish to switch our sphere from $T$ to the pointed $HP^{1}$.  This is possible because
the pointed $HP^{1}$ is isomorphic to $T^{\wedge 2}$ in $H_{\bullet}(S)$.  The result is a spectrum
$\BO_{HP^{1}}$ very similar to $\BO$ except that the structural maps are induced by a product
with the Borel class of a symplectic bundle on a pointed scheme rather than a Thom class
on a Thom space.  This is important because the universal property of $\ZZ \times HGr$
deals with Grothendieck-Witt classes of bundles on schemes not of chain complexes on an $X$
acyclic over $U$.

The zigzag \eqref{E:T2=HP1} also gives us an equivalence between the stable homotopy categories
$SH_{T^{\wedge 2}}(S)$ and $SH_{(HP^{1},x_{\infty})}(S)$
\cite[Proposition 2.13]{Jardine:2000aa}.
There is also a Quillen equivalences between $Spt_{T}(S)$ and $Spt_{T^{\wedge 2}}(S)$
given by the forgetful functor and its adjoints.

\begin{thm}
The stable homotopy categories of $T$-spectra and of $(HP^{1},x_{0})$-spectra are
equivalent.
\end{thm}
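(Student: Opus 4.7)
The plan is to exhibit the equivalence as a composition of two known equivalences of stable model categories, so that no new constructions beyond those already introduced are required. Write $SH_T(S)$, $SH_{T^{\wedge 2}}(S)$ and $SH_{(HP^{1},x_{0})}(S)$ for the three motivic stable homotopy categories in question.

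The first step is to compare $T$-spectra and $T^{\wedge 2}$-spectra. I would use the adjunction between $\Sp_{T}(S)$ and $\Sp_{T^{\wedge 2}}(S)$ whose right adjoint is the ``doubling'' forgetful functor
\[
(E_{0},E_{1},E_{2},E_{3},\dots ) \longmapsto (E_{0}, E_{2}, E_{4},\dots )
\]
with bonding maps obtained by composing two successive structure maps of the input. Since $T$ is cofibrant and the cyclic permutation of $T^{\wedge 3}$ becomes the identity in $SH_{T}(S)$ (a consequence of the symmetry of $T$ in the motivic stable category, exactly as in \cite{Voevodsky:1998kx} and \cite{Jardine:2000aa}), this is a Quillen equivalence between the stable model structures, and hence induces an equivalence $SH_{T}(S) \simeq SH_{T^{\wedge 2}}(S)$.

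The second step is to replace $T^{\wedge 2}$ by $(HP^{1},x_{0})$. Theorem \ref{T:HP1.T2} already provides the zigzag of motivic weak equivalences \eqref{E:T2=HP1} between $T^{\wedge 2}$ and $(HP^{1},x_{0})$ through the intermediate spaces $\Aff^{4}/(\Aff^{4}-N^{+})$, $HP^{1}/(HP^{1}-N^{+})$ and $(\Aff^{4} \cap HP^{1})/((\Aff^{4} \cap HP^{1})-N^{+})$. Each arrow in that zigzag is either a motivic weak equivalence by excision or by the ``two out of three'' property, and each space is cofibrant. Applying \cite[Proposition 2.13]{Jardine:2000aa} along each arrow of the zigzag, one gets a chain of Quillen equivalences between the corresponding stable model categories, and composing them yields $SH_{T^{\wedge 2}}(S)\simeq SH_{(HP^{1},x_{0})}(S)$.

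Composing the two equivalences proves the theorem. The only point that requires attention (and is the mildest of obstacles) is that \cite[Proposition 2.13]{Jardine:2000aa} is stated for a single suspension coordinate, so one has to apply it iteratively along the zigzag \eqref{E:T2=HP1}; at each step one must check that the pointed motivic space in question is cofibrant, which holds since all the spaces appearing are quotients of smooth schemes by smooth subschemes. Once that bookkeeping is done the result is formal.
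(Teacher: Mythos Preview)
Your proposal is correct and follows essentially the same route as the paper: the paragraph preceding the theorem already records exactly your two steps, namely the Quillen equivalence between $\Spectra_{T}(S)$ and $\Spectra_{T^{\wedge 2}}(S)$ via the forgetful functor and its adjoints, and then the equivalence $SH_{T^{\wedge 2}}(S) \simeq SH_{(HP^{1},x_{0})}(S)$ obtained from the zigzag \eqref{E:T2=HP1} via \cite[Proposition~2.13]{Jardine:2000aa}. Your additional remarks on the cyclic permutation condition and on cofibrancy of the intermediate spaces in the zigzag are appropriate elaborations of points the paper leaves implicit.
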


The class $-b_{1}(\shf U_{HP^{1}},\phi_{HP^{1}}) \in
\KO_{0}^{[2]}(HP^{1},x_{\infty})$ corresponds to
$\thom^{\times 2} \in \KO_{0}^{[2]}(\Aff^{2}/\Aff^{2}-0)$ under the identifications.
So we may define our new spectrum.

\begin{defn}
\label{D:BO.HP1}
The $HP^{1}$-spectrum $\BO_{HP^{1}}$ corresponding to the $T$-spectrum $\BO$
has spaces
$(\KO^{[0]}, \KO^{[2]}, \KO^{[4]},\dots)$ and bonding maps
adjoint to the maps
\[
{-} \times -b_{1}(\shf U_{HP^{1}},\phi_{HP^{1}}) \colon \KO^{[2n]}({-}) \to
\KO^{[2n+2]}({-} \wedge (HP^{1},x_{\infty}))
\]
\end{defn}

The purpose of this section is to prove the following result.  For parallelism with the labelling of
real and complex Grassmannians, we write
$HGr'(2r,2n) = HGr(r,n)$ so that $RGr(2r,2n)$ and $HGr'(2r,2n)$ are both open subschemes
of the ordinary Grassmannian $Gr(2r,2n)$ where a certain bilinear form is nondegenerate.
We also write
\[
[-n,n]' = \{ i \in \ZZ \mid \text{$-n \leq i \leq n$ and $i \equiv n \bmod 2$} \}.
\]
For even $n$ the scheme $[-n,n]' \times RGr(n,2n)$ is pointed in the component corresponding
to $0 \in [-n,n]'$ by the point corresponding to
$\HH_{+}^{\oplus n/2} \oplus 0 \subset \HH_{+}^{\oplus n}$.  For odd $n$
we either do not use a base point or we use a disjoint base point.  To compactify our notations
we write
\begin{align}
HGr'_{2n} & = [-n,n] \times HGr'(2n,4n),
\\
RGr_{2n} & = ([-2n,2n]' \times RGr(2n,4n)) \cup ([-2n+1,2n-1]' \times RGr(2n-1,4n-2)).
\end{align}

The first step in the proof of this theorem is the following general construction.
For $(X,x)$ a pointed scheme over $S$, let $(X,x)^{+}$ be the pushout of
\begin{equation}
\label{E:X+}
\xymatrix @M=5pt @C=30pt {
\Aff^{1}
& \pt \ar[r]^-{x} \ar[l]^-{\sim}_-{0}
& X
}
\end{equation}
pointed by $1 \in \Aff^{1}(\pt)$.  (This is essentially the $\Aff^{1}$ mapping cone of the inclusion
$x \colon \pt \to X$.)
The natural projection $(X,x)^{+} \to (X,x)$ which is the identity on $X$ and sends $\Aff^{1} \to x$
is a motivic weak equivalence.

We abbreviate $HP^{1+} = (HP^{1},x_{0})^{+}$.  We will actually consider $HP^{1+}$-spectra.
The natural functor
$SH_{(HP^{1},x_{0})}(S) \to SH_{(HP^{1+}}(S)$ is an equivalence, and let $\BO_{HP^{1+}}$
be the $(HP^{1},x_{0})^{+}$-spectrum corresponding to $\BO$.

\begin{thm}
\label{T:finite}
There are $HP^{1+}$-spectra
$\BO^{\finite}$ and $\BO^{\geom}$ which are isomorphic to
$\BO_{HP^{1+}}$ in $SH_{HP^{1+}}(S)$
with spaces
\begin{gather}
\BO^{\finite}_{2i} =
\begin{cases}
RGr_{2\cdot 8^{i}} & \text{for even $i$}, \\
HGr'_{2\cdot 8^{i}} & \text{for odd $i$},
\end{cases}
\\
\BO^{\geom}_{2i} =
\begin{cases}
\ZZ \times RGr & \text{for even $i$}, \\
\ZZ \times HGr & \text{for odd $i$},
\end{cases}
\end{gather}
which are unions of real and quaternionic Grassmannians.
The bonding maps $\BO^{*}_{2i} \wedge HP^{1+} \to \BO_{2i+2}^{*}$ with $* = \finite$ or $\geom$
are morphisms of schemes and ind-schemes, respectively, which are constant on
$\BO^{*}_{2i} \vee HP^{1+}$.
\end{thm}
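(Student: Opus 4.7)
The strategy is to realize both spectra as geometric models of $\BO_{HP^{1+}}$ via classifying maps $\tau_{n}$ that assemble into morphisms of $HP^{1+}$-spectra.

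First, I would define the bonding maps of $\BO^{\geom}$ using tensor products of bundles: a rank $2$ symplectic bundle tensored with an orthogonal bundle of rank $r$ is symplectic of rank $2r$, while tensoring two symplectic bundles of ranks $2$ and $r$ gives an orthogonal bundle of rank $2r$. Applying this to the tautological subbundle $(\shf U_{HP^{1}},\phi_{HP^{1}})$ on $HP^{1}$ paired with the tautological orthogonal (resp.\ symplectic) subbundle on $\ZZ \times RGr$ (resp.\ $\ZZ \times HGr$) yields a classifying morphism of ind-schemes into the appropriate infinite Grassmannian, with the $\ZZ$-component chosen to track the rank shift. The resulting map is constant on the wedge $\BO^{\geom}_{2i} \vee HP^{1+}$: the basepoint $1\in\Aff^{1}\subset HP^{1+}$ gives (after collapsing the contractible $\Aff^{1}$-cone of \eqref{E:X+}) a trivial bundle, while the basepoint of $\ZZ\times RGr$ or $\ZZ\times HGr$ gives trivial classifying data.

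Next, I would construct the stable isomorphism. The classifying maps $\tau_{4k+2}\colon \ZZ\times HGr \xra{\sim} \KO^{[4k+2]}$ in $H_{\bullet}(S)$ come from Theorem~\ref{T:MV.symp} and Definition~\ref{D:tau}. The maps $\tau_{4k}\colon \ZZ\times RGr \to \KO^{[4k]}$ are then forced as the unique homotopy class making
\[
\xymatrix @M=5pt @C=40pt {
(\ZZ\times RGr)\wedge HP^{1+} \ar[r]^-{\text{bond}} \ar[d]_-{\tau_{4k}\wedge \id}
& \ZZ\times HGr \ar[d]^-{\tau_{4k+2}}_-{\sim}
\\
\KO^{[4k]}\wedge HP^{1+} \ar[r]
& \KO^{[4k+2]}
}
\]
commute in $H_{\bullet}(S)$, which is possible since by Theorem~\ref{OmegaSpectrumBO} and the equivalence $HP^{1+}\simeq T^{\wedge 2}$ the bottom arrow is adjoint to a weak equivalence $\KO^{[4k]} \xra{\sim} \Omega_{HP^{1+}}\KO^{[4k+2]}$. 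Collectively the $\tau_{n}$ define a morphism $\BO^{\geom}\to\BO_{HP^{1+}}$ of $HP^{1+}$-spectra which is a level equivalence at every odd index and hence a stable equivalence.

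For $\BO^{\finite}$, the spaces are finite-dimensional approximations; the factor $8^{i}$ in the dimension is an over-generous buffer ensuring that the tensor-product bonding map of $\BO^{\geom}$ restricts to a morphism of schemes $\BO^{\finite}_{2i}\wedge HP^{1+}\to\BO^{\finite}_{2(i+1)}$ (tensoring with the rank $2$ symplectic bundle multiplies ranks and ambient ranks by $2$, so a factor of $4$ suffices in principle; the choice $8$ leaves comfortable room). The inclusion $\BO^{\finite}\hookrightarrow \BO^{\geom}$ is a stable motivic weak equivalence: by Theorem~\ref{T:varprojlim.b} the cohomology of each spectrum is computed from the inverse systems $\BO^{*,*}(\BO^{*}_{2i})$, and by Theorem~\ref{T:HGr.lim.cohom} the restriction maps along $HGr'_{2\cdot 8^{i}}\hookrightarrow \ZZ\times HGr$ (and the analogous maps for $RGr$) are compatible split surjections yielding the same inverse limit and vanishing $\varprojlim^{1}$.

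The main obstacle is that the $\tau_{4k}$ are not known to be weak equivalences on individual spaces, so the argument must proceed stably: once the components $\tau_{4k+2}$ are level equivalences at odd indices and the bonding maps of $\BO_{HP^{1+}}$ are adjoints of equivalences, the induced $\tau_{4k}$ are automatically stable equivalences of levels. A secondary delicate point is arranging the bonding maps as genuine morphisms of ind-schemes that are strictly constant on the wedge (not merely constant up to homotopy), which requires threading the tensor-product construction through the $\Aff^{1}$-cone in $(X,x)^{+}$ so that both wedge summands are sent to a common distinguished classifying point.
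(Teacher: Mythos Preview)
Your overall plan is close to the paper's, but there are two genuine gaps.

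\textbf{The wedge-constancy is not a secondary point.} You correctly flag that the bonding maps must be \emph{strict} morphisms of schemes constant on $\BO^{*}_{2i}\vee HP^{1+}$, but ``threading the tensor-product construction through the $\Aff^{1}$-cone'' is not a proof. The naive classifying map $f_{2n}\colon ([-n,n]\times HGr'(2n,4n))\times HP^{1}\to RGr(16n,32n)$ built from the tensor product is \emph{not} constant on $([-n,n]\times HGr'(2n,4n))\times \pt$: restricting $\shf U_{HP^{1}}$ to the basepoint gives a trivial bundle, but the resulting subbundle of $\HH_{+}^{\oplus 16n}$ still depends on the point of the Grassmannian. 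The paper resolves this via Proposition~\ref{P:A1.homotopy}: one only needs $f_{2n}|_{X\times \pt}$ to be \emph{pointed $\Aff^{1}$-homotopic} to a constant, and that homotopy is then absorbed into the $\Aff^{1}$-arm of $HP^{1+}$. Producing that homotopy is the content of Lemma~\ref{L:HGr.HP1.to.RGr}, and it requires an explicit factorisation of a permutation-type matrix in $Sp_{4}\cap O_{4}$ as a product of elementary matrices, yielding a polynomial path $M(t)$. This is real work that your sketch does not supply.

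\textbf{The stable equivalence $\BO^{\finite}\hookrightarrow\BO^{\geom}$ cannot be proved via cohomology of $RGr$.} You invoke ``the analogous maps for $RGr$'' and Theorem~\ref{T:HGr.lim.cohom}, but the paper states explicitly (end of \S\ref{S:Morel-Voevodsky}) that the cohomology $KO_{*}^{[*]}(RGr)$ is not computed. So your $\varprojlim/\varprojlim^{1}$ argument breaks at every even level. The paper instead uses a cofinality trick: both spectra arise from a single $\NN^{2}$-indexed filtered system $E_{n,i}$ (indexed by Grassmannian size $n$ and spectrum level $i$), and the two colimits $\colim_{i}E_{i,i}$ and $\colim_{i}(\colim_{n}E_{n,i})$ agree by cofinality. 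This avoids any computation at the $RGr$ levels.

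Your construction of the isomorphism $\BO^{\geom}\cong\BO_{HP^{1+}}$ via the $\tau_{4k+2}$ is essentially the paper's, though the paper packages it through the $\varprojlim$ exact sequence of Theorem~\ref{T:varprojlim.b} rather than assembling a strict spectrum map.
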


\begin{prop}
\label{P:A1.homotopy}
Let $(X,x)$, $(Y,y)$ and $(Z,z)$ be pointed schemes.  There is a natural bijection between
\begin{enumerate}
\item
the set of morphisms of pointed schemes
$g \colon (X,x) \times (Y,y)^{+} \to (Z,z)$ which restrict to the constant map
$\bigl( X \times  1 \bigr) \cup \bigl( x \times (Y,y)^{+} \bigr) \to z$, and
\item
the set of pairs
$(f,h)$ where
\begin{enumerate}
\item
$f \colon (X \times Y, x \times y) \to (Z,z)$ is a morphism of schemes
which restricts to the constant map $x \times Y \to z$ and
\item
$h \colon (X,x) \times \Aff^{1} \to (Z,z)$ is a pointed $\Aff^{1}$-homotopy between
$f |_{X \times y}$ and the constant map $X \to z$.
\end{enumerate}
\end{enumerate}
\end{prop}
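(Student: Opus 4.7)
The proof reduces to the universal property of $(Y,y)^+$ as a pushout together with the fact that the product with $X$ preserves this pushout. First I would recall that by construction $(Y,y)^+$ is the pushout in schemes of the diagram $\Aff^{1} \xla{0} \pt \xra{y} Y$, so as a scheme it is the gluing of $Y$ and $\Aff^{1}$ along the identification $y \sim 0$, pointed by $1 \in \Aff^{1}$. Since the two maps $\pt \to Y$ and $\pt \to \Aff^{1}$ are closed immersions of a rational point, this pushout exists in schemes, and $X \times (-)$ preserves it: one checks on functors of points that for any connected test scheme $T$ the set $(X \times (Y,y)^{+})(T)$ is the pushout of sets $(X \times Y)(T) \cup_{X(T)} (X \times \Aff^{1})(T)$, and this glues to an identification of schemes $X \times (Y,y)^{+}$ with the pushout of $X \times Y \xla{1_{X} \times y} X \xra{1_{X} \times 0} X \times \Aff^{1}$.

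Next I would apply the universal property: morphisms of schemes $g \colon X \times (Y,y)^{+} \to Z$ correspond bijectively to pairs $(f,h)$ of morphisms $f \colon X \times Y \to Z$ and $h \colon X \times \Aff^{1} \to Z$ satisfying the compatibility $f \circ (1_{X} \times y) = h \circ (1_{X} \times 0)$, i.e.\ $f |_{X \times y} = h |_{X \times 0}$. The association is $g \mapsto (g |_{X \times Y}, g |_{X \times \Aff^{1}})$.

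Finally I would translate the boundary conditions on both sides. For (1): the requirement that $g$ restrict to the constant map $z$ on $X \times 1$ says $h |_{X \times 1} = z$; the requirement that $g$ restrict to $z$ on $x \times (Y,y)^{+}$ says $f |_{x \times Y} = z$ and $h |_{x \times \Aff^{1}} = z$; and the pointedness $g(x,1) = z$ is then automatic. Combined with the compatibility $f |_{X \times y} = h |_{X \times 0}$, these are exactly the conditions in (2): $f$ is a pointed morphism with $f |_{x \times Y} = z$ (condition (2)(a)), while $h$ is a morphism with $h |_{x \times \Aff^{1}} = z$ (pointedness), $h |_{X \times 0} = f |_{X \times y}$ and $h |_{X \times 1} = z$, i.e.\ a pointed $\Aff^{1}$-homotopy between $f |_{X \times y}$ and the constant map to $z$ (condition (2)(b)). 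Naturality in $(X,x)$, $(Y,y)$, $(Z,z)$ is immediate from the construction. There is essentially no obstacle here — the only step requiring any thought is the preservation of the pushout defining $(Y,y)^{+}$ under $X \times (-)$, which is elementary because the gluing takes place along a closed point.
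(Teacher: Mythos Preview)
Your proof is correct and follows the same idea as the paper's own argument, which is stated in a single sentence: $(X,x)\times (Y,y)^{+}$ is the union of the two subschemes $X\times Y$ and $X\times\Aff^{1}$, and the restrictions of $g$ to these are $f$ and $h$. You have simply fleshed out the pushout description and the translation of the boundary conditions explicitly.
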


The idea is that
$(X,x)\times (Y,y)^{+}$ is the union of two subschemes, and the restrictions
of $g$ to these subschemes are
$g |_{X \times Y}  = f$ and $g|_{X \times \Aff^{1}} = h$.
%



Let $U_{2n}$ and $U$ be the tautological symplectic subbundles on $HGr'(2n,4n)$ and
$HP^{1} = HGr'(2,4)$ resp., and let $V_{16n}$ be the tautological orthogonal subbundle
on $RGr(16n,24n)$.  Let $\HH$ be the trivial rank $2$ symplectic bundle, and let
$\HH_{+}$ be the trivial rank $2$ orthogonal bundle for the split quadratic form
$q(x_{1},x_{2}) = x_{1}x_{2}$.

\begin{lem}
\label{L:HGr.HP1.to.RGr}
There
exist morphisms of pointed schemes
\[
f_{2n} \colon \bigl( [-n,n] \times HGr'(2n,4n) \bigr) \times HP^{1} \to RGr(16n,32n)
\]
such that the Grothendieck-Witt classes satisfy
\begin{equation}
\label{E:GW.formula}
f_{2n}^{*}([V_{16n}]-8n[\HH_{+}]) = ([U_{2n}]-(n-i)[\HH]) \boxtimes ([U]-[\HH])
\end{equation}
\parens{where $i \in [-n,n] \subset \ZZ$ is the index of the component}
and such that
$f_{2n}|_{\pt \times HP^{1}}$
is constant, and
$f_{2n}|_{([-n,n] \times HGr'(2n,4n)) \times \pt}$
is pointed $\Aff^{1}$-homotopic to a constant map.
These maps and homotopies are
compatible with the inclusions $HGr'(2n,4n) \hra HGr'(2(n+1),4(n+1))$
and $RGr(16n,32n) \hra RGr(16(n+1),32(n+1))$.
\end{lem}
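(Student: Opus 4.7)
The plan is to define $f_{2n}$ as the morphism classifying an explicit rank-$16n$ orthogonal subbundle $V$ of the trivial orthogonal bundle $\HH_+^{\oplus 16n}$ on the source, using the universal property of $RGr(16n, 32n)$. The key geometric input is the canonical functor ``symplectic $\otimes$ symplectic $=$ orthogonal''; I would fix once and for all an isometry $\HH \otimes \HH \cong \HH_+^{\oplus 2}$, from which summation produces canonical isometries $\HH^{\oplus a} \otimes \HH^{\oplus b} \cong \HH_+^{\oplus 2ab}$, used to embed our tensor products into the appropriate trivial orthogonal ambients.

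On the $i$-th component of $[-n,n] \times HGr'(2n,4n) \times HP^1$ I would take
\[
V_i = (U_{2n} \otimes U) \oplus ((n-i)\HH \otimes U^\perp) \oplus (U_{2n}^\perp \otimes \HH) \oplus \HH_+^{\oplus(2n+2i)},
\]
a subbundle of total rank $16n$, where $U^\perp$ and $U_{2n}^\perp$ denote the symplectic complements of $U$ and $U_{2n}$, and the remaining $\HH$'s and $\HH_+$'s are constant trivial bundles. A uniform embedding into $\HH_+^{\oplus 16n}$ across all $i$ is arranged by padding each ambient summand with constant hyperbolic blocks so that the total ambient rank equals $32n$ independently of $i$. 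The identity \eqref{E:GW.formula} follows by direct expansion using $[\HH \otimes F] = [H(F)]$ for a symplectic $F$, the relation $[H(W)] + [H(W^\perp)] = 2r[\HH_+]$ coming from $W \oplus W^\perp = \HH^{\oplus r}$, and $[H(\HH)] = 2[\HH_+]$, matched against $[V_i]$ computed directly from its definition. The restriction to $\pt \times HP^1$ is constant because at the basepoint $U_{2n}|_\pt = \HH^{\oplus n} \oplus 0$ coincides with the constant $n\HH$ appearing in the second summand of $V_0$ (the $i=0$ component); the $U$- and $U^\perp$-dependent contributions then combine to $\HH^{\oplus n} \otimes (U \oplus U^\perp) = \HH^{\oplus n} \otimes \HH^{\oplus 2}$, a constant orthogonal subbundle. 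Compatibility with the inclusions $HGr'(2n,4n) \hra HGr'(2(n{+}1),4(n{+}1))$ and $RGr(16n,32n) \hra RGr(16(n{+}1),32(n{+}1))$ is automatic from the functorial form of the construction.

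The hard part will be constructing the pointed $\Aff^1$-homotopy of $f_{2n}|_{HGr'(2n,4n) \times x_0}$ to a constant map. At $x_0$, where $U|_{x_0} = \HH \oplus 0$, the first and third summands of $V_i|_{HGr' \times x_0}$ become the hyperbolic bundles $H(U_{2n})$ and $H(U_{2n}^\perp)$ (each with a canonical lagrangian from the decomposition $\HH = \OO e_+ \oplus \OO e_-$), while the remaining summands are constant in the $HGr'$-direction. Since $[H(U_{2n})] + [H(U_{2n}^\perp)] = [H(\HH^{\oplus 2n})] = 4n[\HH_+]$ is independent of the point of $HGr'(2n,4n)$, the classifying map is null-homotopic in $\ZZ \times RGr \cong \KO^{[0]}$. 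To realize this homotopy at the finite stage $RGr(16n, 32n)$ I would build an explicit $\Aff^1$-linear deformation: inside a doubled ambient $\HH^{\oplus 2n} \oplus \HH^{\oplus 2n}$, a Gauss-style $\Aff^1$-family of rank-$2n$ subbundles interpolates between the tautological $(U_{2n}, 0)$ at $t=0$ and the constant $(0, U_{2n}|_\pt)$ at $t=1$; applying the hyperbolic completion ``$\otimes\,\HH$'' and embedding into $\HH_+^{\oplus 16n}$ via the fixed isometries then yields the required $\Aff^1$-family of rank-$16n$ orthogonal subbundles of $\HH_+^{\oplus 16n}$. The ambient rank $32n$ (twice the rank of $V$) provides precisely the room needed for this stabilization to fit inside $RGr(16n, 32n)$, and the linearity of the construction ensures compatibility with the stabilization inclusions.
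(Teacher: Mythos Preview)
Your definition of $f_{2n}$ via the rank-$16n$ orthogonal subbundle $V_i$ is exactly the paper's construction, and the verification of \eqref{E:GW.formula} and of constancy on $\pt \times HP^{1}$ goes through as you say.

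The gap is in the $\Aff^{1}$-homotopy. Your ``Gauss-style'' interpolation between $(U_{2n},0)$ and $(0,U_{2n}|_{\pt})$ inside a doubled ambient, followed by $\otimes\,\HH$, does not work: for the resulting family to land in $RGr$ you need the symplectic form to stay nondegenerate on the moving rank-$2n$ subbundle throughout, i.e.\ the path must lie in a quaternionic Grassmannian. But $U_{2n}$ is the universal symplectic subbundle on $HGr'(2n,4n)$, and there is no global $\Aff^{1}$-family of symplectic subbundles of any fixed trivial ambient connecting it to a constant bundle --- that would force $HGr'(2n,4n)$ to be $\Aff^{1}$-contractible. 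Elementary shears on the doubled ambient are isometries, but they either fix $(U_{2n},0)$ or move it to $(0,U_{2n})$, not to a constant.

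The paper's device is different and is the key point you are missing. At $x_{0}\in HP^{1}$ one has $U\oplus U^{\perp}=\HH\oplus\HH$, so the relevant piece of $V_i$ sits inside $\HH^{\oplus 2n}\boxtimes(\HH\oplus\HH)$. The paper does \emph{not} try to move $U_{2n}$; it acts on the second tensor factor $\HH\oplus\HH$ by an explicit path $M(t)\in Sp_{4}\cap O_{4}$ with $M(0)=I$ and $M(1)$ the block swap, obtained by writing the swap as a product of elementary symplectic/orthogonal matrices. Applying $(1_{U_{2n}}\boxtimes 1)\oplus(1_{U_{2n}^{\perp}}\boxtimes M(t))$ carries $(U_{2n}\boxtimes(\HH,0))\oplus(U_{2n}^{\perp}\boxtimes(0,\HH))$ to the constant $\HH^{\oplus 2n}\boxtimes(\HH,0)$, and postcomposing with the inverse of the same construction for the \emph{constant} decomposition $\HH^{\oplus 2n}=(\HH^{n},0)\oplus(0,\HH^{n})$ lands on the basepoint subbundle. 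Because $M(t)$ is both symplectic and orthogonal, every map in the family is an isometry of the ambient orthogonal form, so nondegeneracy is preserved automatically. A second, analogous homotopy handles the $i$-dependent trivial summands. You should replace your interpolation paragraph with this argument.
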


\begin{proof}
There are orthogonal direct sums $U_{2n} \oplus U_{2n}^{\perp} \cong \HH^{\oplus 2n}$
and $U \oplus U^{\perp} \cong \HH^{\oplus 2}$.  Consequently we have
\begin{equation*}
([U_{2n}]-(n-i)[\HH]) \boxtimes ([U]-[\HH])
=
[U_{2n} \boxtimes U]
+ [\HH^{\oplus n-i} \boxtimes U^{\perp}]
+ [U_{2n}^{\perp} \boxtimes \HH]
- (6n-2i) [\HH_{+}].
\end{equation*}
We have inclusions of orthogonal bundles
\begin{align*}
U_{2n} \boxtimes U
& \subset \HH^{\oplus 2n} \boxtimes U,
&
U_{2n}^{\perp} \boxtimes \HH
& \subset \HH^{\oplus 2n} \boxtimes \HH,
\\
\HH^{\oplus n-i} \boxtimes U^{\perp}
& \subset
\HH^{\oplus 2n} \boxtimes U^{\perp},
&
\HH_{+}^{\oplus 2n+2i}
& \subset
\HH_{+}^{\oplus 4n}
\end{align*}
Consequently the orthogonal subbundle
\begin{equation}
\label{E:subbundle}
(U_{2n} \boxtimes U)
\oplus (\HH^{\oplus n-i} \boxtimes U^{\perp})
\oplus (U_{2n}^{\perp} \boxtimes \HH)
\oplus \HH_{+}^{\oplus 2n+2i}
\end{equation}
of
\[
\bigl( \HH^{\oplus 2n} \boxtimes (U \oplus U^{\perp} \oplus \HH) \bigr)
\oplus \HH_{+}^{\oplus 4n} = \HH_{+}^{\oplus 16n}
\]
is classified by a map $f_{2n} \colon ([-n,n] \times HGr'(2n,4n)) \times HP^{1}
\to RGr(16n,32n)$.

When we restrict to $\pt \times HP^{1}$, we have $i=0$, and
the direct sum $U_{2n} \oplus U_{2n}^{\perp}$ becomes
$\HH^{\oplus n} \oplus \HH^{\oplus n}$.
The orthogonal direct sum
\eqref{E:subbundle} becomes
\[
\bigl( (\HH^{\oplus n} \oplus 0)\boxtimes (U \oplus U^{\perp} \oplus 0) \bigr)
\oplus
\bigl( ( 0 \oplus \HH^{\oplus n}) \boxtimes (0 \oplus 0 \oplus \HH) \bigr)
\oplus
(\HH_{+}^{\oplus 2n} \oplus 0).
\]
Since $U \oplus U^{\perp} \oplus 0 = \HH \oplus \HH \oplus 0$,
this is the same as
the subbundle
\begin{equation}
\label{E:pointing}
\bigl( (\HH^{\oplus n} \oplus 0)\boxtimes (\HH \oplus \HH \oplus 0) \bigr)
\oplus
\bigl( ( 0 \oplus \HH^{\oplus n}) \boxtimes (0 \oplus 0 \oplus \HH) \bigr)
\oplus
(\HH_{+}^{\oplus 2n} \oplus 0).
\end{equation}
corresponding to the pointing of $RGr(16n,32n)$.  So
$f_{2n}(\pt \times HP^{1}) = \pt$.

A similar argument shows that $f_{2n}$ is compatible with the inclusions
of the Grassmannians in higher-dimensional Grassmannians.

When we restrict $f_{2n}$ to $([-n,n] \times HGr'(2n,4n)) \times \pt$,
the direct sum over $\pt \subset HP^{1}$ becomes $U \oplus U^{\perp} = \HH \oplus \HH$.
The orthogonal direct sum
\eqref{E:subbundle} becomes
\begin{equation}
\label{E:homotopic.subbundle}
\bigl(U_{2n}  \boxtimes (\HH \oplus 0 \oplus 0) \bigr)
\oplus \bigl(\HH^{\oplus n-i} \boxtimes (0 \oplus \HH \oplus 0) \bigr)
\oplus \bigl(U_{2n}^{\perp} \boxtimes(0 \oplus 0 \oplus \HH) \bigr)
\oplus \HH_{+}^{\oplus 2n+2i}
\end{equation}
This direct sum decomposition of the subbundle of rank $16n$
is compatible with the orthogonal direct sum decomposition
of the bundle of rank $32n$ into two summands
\begin{equation}
\label{E:first.summand}
\HH^{\oplus 2n} \boxtimes (\HH \oplus 0 \oplus \HH)
= (U_{2n} \oplus U_{2n}^{\perp}) \boxtimes(\HH \oplus 0 \oplus \HH)
= (\HH^{\oplus n} \oplus \HH^{\oplus n}) \boxtimes(\HH \oplus 0 \oplus \HH)
\end{equation}
and
\begin{equation}
\label{E:second.summand}
\bigl( \HH^{\oplus 2n} \boxtimes (0 \oplus \HH \oplus 0) \bigr)
\oplus \HH_{+}^{4n} =
\HH_{+}^{8n}.
\end{equation}
Because
\[
M_{1} =
\begin{pmatrix}
0 & 0 & -1 & 0 \\
0 & 0 & 0 & -1 \\
1 & 0 & 0 & 0 \\
0 & 1 & 0 & 0
\end{pmatrix}
=
\begin{pmatrix}
1 & 0 & 0 & 0 \\
0 & 1 & 0 & -1 \\
1 & 0 & 1 & 0 \\
0 & 0 & 0 & 1
\end{pmatrix}
\begin{pmatrix}
1 & 0 & -1 & 0 \\
0 & 1 & 0 & 0 \\
0 & 0 & 1 & 0 \\
0 & 1 & 0 & 1
\end{pmatrix}
\begin{pmatrix}
1 & 0 & 0 & 0 \\
0 & 1 & 0 & -1 \\
1 & 0 & 1 & 0 \\
0 & 0 & 0 & 1
\end{pmatrix}
\]
is a product of matrices which are elementary symplectic and elementary orthogonal, there is a morphism $M \colon \Aff^{1} \to Sp_{4} \cap O_{4}$
with $M(0) = I$ and $M(1) = M_{1}$, namely
\[
M(t) =
\begin{pmatrix}
1-t^{2} & 0 & -t & 0 \\
0 & 1-t^{2} & 0 & -2t+t^{3} \\
2t-t^{3} & 0 & 1-t^{2} & 0 \\
0 & t & 0 & 1-t^{2}
\end{pmatrix}
\]
When we restrict to $\pt \times \pt$ the direct sum $U \oplus U^{\perp}$ becomes $\HH \oplus \HH$.
We may see that
\[
\bigl( (1_{\HH^{\oplus n} \oplus 0} \boxtimes 1_{\HH^{\oplus 2}})
\oplus (1_{0 \oplus \HH^{\oplus n}} \boxtimes M )\bigr)
^{-1}
\bigl( (1_{U_{2n}} \boxtimes 1_{\HH^{\oplus 2}})
\oplus (1_{U_{2n}^{\perp}} \boxtimes M )\bigr)
\]
is a pointed $\Aff^{1}$-homotopy between the subbundle
\[
\bigl(U_{2n}  \boxtimes (\HH \oplus 0 \oplus 0) \bigr)
\oplus \bigl(U_{2n}^{\perp} \boxtimes(0 \oplus 0 \oplus \HH) \bigr)
\]
and the subbundle
\[
\bigl( (\HH^{\oplus n} \oplus 0)\boxtimes (\HH \oplus 0 \oplus 0) \bigr)
\oplus
\bigl( ( 0 \oplus \HH^{\oplus n}) \boxtimes (0 \oplus 0 \oplus \HH) \bigr)
\]
One may construct a similar pointed $\Aff^{1}$-homotopy between the subbundles
\[
\bigl( (\HH^{\oplus n-i} \oplus 0) \boxtimes ( 0 \oplus \HH \oplus 0) \bigr)
\oplus (\HH_{+}^{\oplus 2n+2i} \oplus 0)
\]
and
\[
\bigl( (\HH^{\oplus n} \oplus 0) \boxtimes ( 0 \oplus \HH \oplus 0) \bigr)
\oplus (\HH_{+}^{\oplus 2n} \oplus 0)
\]
Combining the two gives a pointed $\Aff^{1}$-homotopy between \eqref{E:homotopic.subbundle}
and \eqref{E:pointing} and thus between $f_{2n} |_{([-n,n] \times HGr'(2n,4n)) \times \pt}$ and the
constant map.

The compatibility of the homotopies with the inclusions of Grassmannians is relatively straightforward.
\end{proof}

The next lemma is proven in the same way as the last one.

\begin{lem}
\label{L:RGr.HP1.to.HGr}
There
exist morphisms of pointed schemes
\[
g_{n} \colon ([-n,n]' \times RGr(n,2n)) \times HP^{1} \to HGr'(8n,16n)
\]
such that the Grothendieck-Witt classes satisfy
\begin{equation}
\label{E:GW.formula.2}
g_{n}^{*}([U_{8n}]-4n[\HH]) = ([V_{n}]-\tfrac{1}{2}(n-i)[\HH_{+}]) \boxtimes ([U]-[\HH])
\end{equation}
\parens{where $i \in [-n,n]' \subset \ZZ$ is the index of the component}
and such that
$g_{n}|_{\pt \times HP^{1}}$ is constant,
and $g_{n}|_{([-n,n]' \times RGr(n,2n)) \times \pt}$
is
pointed $\Aff^{1}$-homotopic to a constant map.  These maps and homotopies are
compatible with the inclusions $RGr(n,2n) \hra RGr(n+2,2n+4)$
and $HGr'(8n,16n) \hra HGr'(8(n+2),16(n+2))$.
\end{lem}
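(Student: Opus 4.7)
The plan is to mimic the construction of Lemma \ref{L:HGr.HP1.to.RGr} step by step, interchanging the roles of the orthogonal and symplectic factors. First I would expand the target Grothendieck--Witt class
\[
([V_n] - \tfrac{1}{2}(n-i)[\HH_+]) \boxtimes ([U] - [\HH])
\]
and rewrite it, using the orthogonal decomposition $V_n \oplus V_n^{\perp} \cong \HH_+^{\oplus n}$ and the symplectic decomposition $U \oplus U^{\perp} \cong \HH^{\oplus 2}$, as the difference between the class of a specific symplectic subbundle of the trivial symplectic bundle $\HH^{\oplus 8n}$ and a multiple of $[\HH]$. Concretely, orthogonal-tensor-symplectic is symplectic, so I expect the subbundle to have the form
\[
(V_n \boxtimes U) \,\oplus\, (\HH_+^{\oplus (n-i)/2} \boxtimes U^{\perp}) \,\oplus\, (V_n^{\perp} \boxtimes \HH) \,\oplus\, \HH^{\oplus (n+i)},
\]
lying symplectically inside $(\HH_+^{\oplus n} \boxtimes (U \oplus U^{\perp} \oplus \HH)) \oplus \HH^{\oplus 2n} \cong \HH^{\oplus 8n}$, with its rank matching the rank $8n$ of the tautological symplectic bundle $U_{8n}$ on $HGr'(8n,16n)$. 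This subbundle is classified by a morphism $g_{n}$ into $HGr'(8n,16n)$, and the GW class identity \eqref{E:GW.formula.2} then follows by direct bookkeeping, using that $n \equiv i \bmod 2$ so $(n-i)/2$ and $n+i$ are non-negative integers.

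Next I would verify that $g_{n}$ sends $\pt \times HP^{1}$ to the chosen basepoint of $HGr'(8n,16n)$: at the basepoint of $RGr(n,2n)$ the bundles $V_{n}$ and $V_{n}^{\perp}$ degenerate to canonical copies of $\HH_{+}^{\oplus n/2}$ (or the analogous hyperbolic subspaces when $n$ is odd), so the subbundle collapses to the fixed symplectic flag defining the basepoint of $HGr'(8n,16n)$. The construction of the pointed $\Aff^{1}$-homotopy between $g_{n}|_{([-n,n]' \times RGr(n,2n)) \times \pt}$ and a constant map parallels the one for $f_{2n}$: on $\pt \times HP^{1}$ we have $U \oplus U^{\perp} = \HH^{\oplus 2}$, so the subbundle $(V_{n} \boxtimes \HH) \oplus (V_{n}^{\perp} \boxtimes \HH)$ needs to be deformed via an $\Aff^{1}$-family of automorphisms of $\HH^{\oplus 2}$ into a fixed direct-sum decomposition. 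Exactly the same symplectic/orthogonal elementary matrix $M(t)$ used in the previous lemma (which lies in $\Sp_{4} \cap O_{4}$) supplies this homotopy after tensoring with $1_{V_{n}}$ and $1_{V_{n}^{\perp}}$, and the rest of the deformation on the complementary summand $\HH_{+}^{\oplus (n-i)/2} \boxtimes U^{\perp}$ versus the fixed hyperbolic model is handled by an elementary linear isotopy.

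Compatibility with the inclusions $RGr(n,2n) \hookrightarrow RGr(n+2, 2n+4)$ and $HGr'(8n,16n) \hookrightarrow HGr'(8(n+2), 16(n+2))$ follows because each of the four summands of the defining subbundle is constructed naturally from $V_{n}$, $V_{n}^{\perp}$, $U$, $U^{\perp}$ and trivial hyperbolic pieces, and adding two extra hyperbolic summands to $\HH_{+}^{\oplus n}$ simply enlarges $V_{n}^{\perp}$ and the ambient $\HH^{\oplus 8n}$ coherently. The same remark applies to the $\Aff^{1}$-homotopies, since $M(t)$ is independent of $n$.

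I expect the only delicate point to be the sign/parity accounting in verifying \eqref{E:GW.formula.2}: the constraint $i \in [-n,n]'$ (i.e.\ $i \equiv n \bmod 2$) is what makes $\HH_+^{\oplus (n-i)/2}$ and $\HH^{\oplus (n+i)}$ genuine bundles, and one needs to check rank and GW-class contributions carefully, using $[\HH_+ \boxtimes \HH] = 2[\HH]$ and $[V_n \boxtimes \HH] + [V_n^{\perp} \boxtimes \HH] = [\HH_+^{\oplus n} \boxtimes \HH] = 2n[\HH]$, so that the trivial summand $\HH^{\oplus(n+i)}$ exactly balances the residual $4n[\HH]$ on the left-hand side. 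Once this bookkeeping is done, everything else parallels Lemma \ref{L:HGr.HP1.to.RGr}.
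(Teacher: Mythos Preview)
Your proposal is correct and follows exactly the approach the paper intends: the paper's entire proof is the sentence ``proven in the same way as the last one,'' and you have correctly reconstructed that argument by swapping the orthogonal and symplectic roles, writing down the symplectic subbundle
\[
(V_n \boxtimes U) \oplus (\HH_+^{\oplus (n-i)/2} \boxtimes U^{\perp}) \oplus (V_n^{\perp} \boxtimes \HH) \oplus \HH^{\oplus (n+i)} \subset \HH^{\oplus 8n},
\]
and using the same elementary matrix $M(t) \in Sp_4 \cap O_4$ for the $\Aff^1$-homotopy. One small wording slip: when you discuss the homotopy you write ``on $\pt \times HP^1$ we have $U \oplus U^{\perp} = \HH^{\oplus 2}$,'' but you mean the restriction to $(\cdots) \times \pt$, i.e.\ at the basepoint $x_0$ of $HP^1$; the mathematics you then carry out is for the correct restriction, so this is purely a typo and does not affect the argument.
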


\begin{proof}[Proof of Theorem \ref{T:finite}]
By Proposition \ref{P:A1.homotopy} the maps and homotopies of
Lemmas \ref{L:HGr.HP1.to.RGr} and \ref{L:RGr.HP1.to.HGr}
give us maps
\begin{align*}
F_{n} \colon & HGr'_{2n}  \wedge HP^{1+} \to RGr_{16n},
\\
G_{n} \colon & RGr_{2n} \wedge HP^{1+} \to HGr'_{16n}.
\end{align*}

We now define $\BO^{\finite}$ to be the $HP^{1}$-spectrum with spaces as in the
statement of Theorem \ref{T:finite} and with bonding maps the compositions
\begin{gather*}
RGr_{2 \cdot 8^{2i}} \wedge HP^{1+}
\to
HGr'_{2 \cdot 8^{2i+1}}
\to
HGr'_{2 \cdot 8^{2i+2}}
\\
HGr'_{2 \cdot 8^{2i}} \wedge HP^{1+}
\to
RGr_{2 \cdot 8^{2i+1}}
\to
RGr_{2 \cdot 8^{2i+2}}
\end{gather*}
of the appropriate
$F_{n}$ or $G_{n}$ with the maps induced by the inclusions of Grassmannians.

We define $\BO^{\geom}$ to be the $HP^{1}$-spectrum with spaces
\[
\BO^{\geom}_{2i} =
\begin{cases}
\colim_{n} RGr_{2n}  = \ZZ \times RGr & \text{for even $i$}, \\
\colim_{n} HGr'_{2n} = \ZZ \times HGr & \text{for odd $i$},
\end{cases}
\]
and with bonding maps induced by the $F_{n}$ and $G_{n}$.

We claim that the inclusion map $\BO^{\finite} \to \BO^{\geom}$
is a stable weak equivalence.  To show this we need to show that the maps
$\colim_{i} \Omega_{HP^{1}}^{i} (\BO^{\finite}_{2i+2j})^{f} \to
\colim_{i} \Omega_{HP^{1}}^{i} (\BO^{\geom}_{2i+2j})^{f}$ are weak equivalences for all $j$.
The $(-)^{f}$ denotes fibrant replacement.
This is because we have two $\NN^{2}$-indexed families of spaces
\begin{align*}
E_{n,i} & =
\begin{cases}
\Omega_{HP^{1}}^{i} (RGr_{2\cdot 8^{i}})^{f} & \text{for even $i$}, \\
\Omega_{HP^{1}}^{i} (HGr'_{2\cdot 8^{i}})^{f} & \text{for odd $i$},
\end{cases}
\\
E'_{n,i} & =
\begin{cases}
\Omega_{HP^{1}}^{i} (HGr'_{2\cdot 8^{i}})^{f} & \text{for even $i$}, \\
\Omega_{HP^{1}}^{i} (RGr_{2\cdot 8^{i}})^{f} & \text{for odd $i$},
\end{cases}
\end{align*}
The inclusions of Grassmannians the $F_{n}$ and $G_{n}$ give us maps $E_{n,i} \to E_{n+1,i}$
and $E_{n,i} \to E_{n+1,i+1}$ which commute, and similarly for the $E'_{n,i}$.
Thus the $E_{n,i}$ and $E'_{n,i}$ are filtered systems of spaces
indexed by a category with set of objects $\NN^{2}$ such that there is a unique
arrow $(n,i) \to (n',i')$ if and only if $n \leq n'$ and $i-n \leq i'-n'$.  By cofinality we have
isomorphisms $\colim_{i} E_{i,2i+2j} = \colim_{n,i} E_{n,i} = \colim_{i} (\colim_{n} E_{n,i})$ for all $j$
and similarly for $E'_{n,i}$.  These are the required weak equivalences for even and odd $j$
respectively.

We now construct an isomorphism $\BO^{\geom} \cong \BO_{HP^{1+}}$
in $SH_{HP^{1+}}(S)$.
By Theorem \ref{T:varprojlim.b} we have an exact sequence
\begin{equation*}
0 \to \varprojlim\nolimits^{1} \BO_{HP^{1+}}^{4n-1,2n}(\BO^{\geom}_{2n})
\to \BO^{0,0}_{HP^{1+}}(\BO^{\geom}) \to
\varprojlim \BO^{4n,2n}_{HP^{1+}}(\BO^{\geom}_{2n})
\to 0.
\end{equation*}
For every odd $n = 2k+1$ the universal element of Definition \ref{D:tau} gives an isomorphism
$\tau_{4k+2} \colon \ZZ \times HGr \cong \KO^{[4k+2]}$ in $H_{\bullet}(S)$
and by adjunction a
$\tau_{4k+2}' \in \BO^{8k+4,4k+2}(\BO^{\geom}_{4k+2})$.  The inverse system
sends $\tau_{4k+2}' \mapsto \tau_{4k-2}'$ because in the diagram
\begin{equation}
\label{E:compatible}
\vcenter{
\xymatrix @M=5pt @C=70pt {
(\ZZ \times HGr) \wedge HP^{1+} \wedge HP^{1+} \ar[r]
\ar[d]_-{\tau_{4k-2} \wedge 1 \wedge 1}^-{\cong}
& \ZZ \times HGr \ar[d]_-{\tau_{4k+2}}^-{\cong}
\\
\KO^{[4k-2]} \wedge HP^{1+}\wedge HP^{1+} \ar[r]
& \KO^{[4k+2]}
}}
\end{equation}
both horizontal maps are a $\times$ product with $b_{1}(U)^{\times 2} = ([U]-[\HH])^{\times 2}$.
So the $\tau'_{4k+2}$ define an inverse system
$\tau'\in \varprojlim \BO^{8k+4,4k+2}_{HP^{1}}(\BO^{\geom}_{4k+2})$.  Since
the $\tau'_{4k+2}$ are all isomorphisms, any element of
$\BO^{0,0}_{HP^{1}}(\BO^{\geom})$
lifting $\tau'$ is an isomorphism by general facts about homotopy colimits of sequential direct systems
in triangulated categories.
\end{proof}

The inverse system $\tau'$ also lies in
$\varprojlim \BO^{4i,2i}_{HP^{1}}(\BO^{\geom}_{2i})$.  So it also gives
us maps $\tau_{4k} \colon \ZZ \times RGr \to \KO^{[4k]}$ in $H_{\bullet}(S)$.  Essentially
these are the compositions of $\tau_{4k+2}$ and the maps in the motivic unstable homotopy
category induced by the adjoint bonding maps of the two spectra
\[
\ZZ \times RGr \to \Omega_{HP^{1+}}(\ZZ \times HGr)
\xra{\sim} \Omega_{HP^{1+}}\KO^{[4k+2]} \xla{\sim} \KO^{[4k]}.
\]
We do not know whether these are isomorphisms in $H_{\bullet}(S)$.  The best we know how
to do is:

\begin{prop}
\label{P:right.inverse}
The morphism $\Omega_{HP^{1}}(\tau_{4k})$ has a right inverse in $H_{\bullet}(S)$.
\end{prop}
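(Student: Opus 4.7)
The plan is to exhibit an explicit right inverse, exploiting the fact that the adjoint bonding maps of $\BO^{\geom}$ and $\BO_{HP^{1+}}$ intertwine the classifying maps $\tau_{n}$ at every level, not only at the level at which $\tau_{4k}$ is defined from $\tau_{4k+2}$. Write $\tilde\sigma^{\geom}$ and $\tilde\sigma^{\BO}$ for the adjoint bonding maps of the respective spectra, and recall that the $\tilde\sigma^{\BO}$ are all isomorphisms in $H_{\bullet}(S)$ by Bott periodicity.

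By the very definition of $\tau_{4k}$ recalled just before the statement, the square
\[
\xymatrix @M=5pt @C=40pt {
\ZZ \times RGr \ar[r]^-{\tau_{4k}} \ar[d]_-{\tilde\sigma^{\geom}}
& \KO^{[4k]} \ar[d]^-{\tilde\sigma^{\BO}}
\\
\Omega_{HP^{1+}}(\ZZ \times HGr) \ar[r]^-{\Omega\tau_{4k+2}}
& \Omega_{HP^{1+}}\KO^{[4k+2]}
}
\]
commutes in $H_{\bullet}(S)$, with both $\Omega\tau_{4k+2}$ and $\tilde\sigma^{\BO}$ isomorphisms. The main step, and the only point truly requiring argument, will be to establish the analogous commutative square one level lower:
\[
\xymatrix @M=5pt @C=40pt {
\ZZ \times HGr \ar[r]^-{\tau_{4k-2}} \ar[d]_-{\tilde\sigma^{\geom}}
& \KO^{[4k-2]} \ar[d]^-{\tilde\sigma^{\BO}}
\\
\Omega_{HP^{1+}}(\ZZ \times RGr) \ar[r]^-{\Omega\tau_{4k}}
& \Omega_{HP^{1+}}\KO^{[4k]}
}
\]
This I would deduce directly from the construction of the isomorphism $\BO^{\geom} \cong \BO_{HP^{1+}}$ in $\SH_{HP^{1+}}(S)$ proved in Theorem \ref{T:finite}: that isomorphism arises by lifting a single inverse system $\tau' \in \varprojlim \BO^{4i,2i}_{HP^{1+}}(\BO^{\geom}_{2i})$, and the compatibility of $\tau'$ with the transition maps of the inverse limit is exactly the commutativity of squares of the above shape at every level, not only the ones used to define the $\tau_{4k}$ in terms of the $\tau_{4k+2}$.

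Granted the second square, its top edge $\tau_{4k-2}$ is an isomorphism by the symplectic Morel--Voevodsky theorem (Theorem \ref{T:MV.symp}), and its right edge $\tilde\sigma^{\BO}$ is an isomorphism by Bott periodicity. Hence the composition $\Omega_{HP^{1+}}\tau_{4k} \circ \tilde\sigma^{\geom} = \tilde\sigma^{\BO} \circ \tau_{4k-2}$ is itself an isomorphism in $H_{\bullet}(S)$, and
\[
\rho \defeq \tilde\sigma^{\geom} \circ \tau_{4k-2}^{-1} \circ (\tilde\sigma^{\BO})^{-1}
\]
provides the desired right inverse to $\Omega_{HP^{1+}}\tau_{4k}$. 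The argument manifestly does not produce a two-sided inverse, consistently with the remark just before the statement that it is unknown whether $\tau_{4k}$ is an isomorphism. For $k = 0$, where the formally required $\tau_{-2}$ does not appear in our indexing, one reduces to the case $k \geq 1$ by Bott periodicity.
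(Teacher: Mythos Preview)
Your proof is correct and follows essentially the same approach as the paper: the paper's argument is precisely your second square (drawn with the roles of horizontal and vertical swapped), together with the observation that two of its edges are weak equivalences by Theorems \ref{T:MV.symp} and \ref{OmegaSpectrumBO}. Your added justification for why that square commutes (via the inverse system $\tau'$ of Theorem \ref{T:finite}) and your remark on the $k=0$ case are welcome elaborations the paper leaves implicit.
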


This is because in the commutative diagram
\[
\xymatrix @M=5pt {
\ZZ \times HGr \ar[r] \ar[d]^{\sim}
&
\Omega_{HP^{1+}}(\ZZ \times RGr) \ar[d]
\\
\KO^{[4k-2]} \ar[r]^-{\sim}
&
\Omega_{HP^{1+}}\KO^{[4k]}
}
\]
the arrows on the left side and bottom of the square are weak equivalences by Theorem \ref{T:MV.symp}
and \ref{OmegaSpectrumBO} respectively.

\section{The commutative monoid structure in $SH(S)$ }
\label{S:vanishing}

The main technical result of this section is the following theorem.  We use it to show that
the almost commutative monoid structure on the $T$-spectrum $\BO$ we constructed in
Theorem \ref {monoidBO} is actually a commutative monoid for $S = \Spec \ZZ[\frac 12]$.

\begin{thm}
\label{T:lim1}
Let $S$ be a regular noetherian separated $\ZZ[\frac 12]$-scheme of finite Krull dimension.
Suppose
that $KO_{1}(S)$ and $KSp_{1}(S)$ are finite.
Then for all $m$
the natural map
\[
\BO^{0,0}(\BO^{\wedge m}) \to
\varprojlim\BO^{2mi,mi}((\KO^{[i]})^{\wedge m})
\]
is an isomorphism.
\end{thm}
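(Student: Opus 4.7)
The plan is to reduce the claim to the vanishing of a $\varprojlim^{1}$ term and then compute that term using the geometric spectrum $\BO^{\finite}$.

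By Theorem \ref{T:varprojlim.b} applied to $A = E = \BO$ we obtain a natural short exact sequence
\[
0 \to \varprojlim\nolimits^{1} \BO^{2mi-1,mi}((\KO^{[i]})^{\wedge m}) \to \BO^{0,0}(\BO^{\wedge m}) \to \varprojlim \BO^{2mi,mi}((\KO^{[i]})^{\wedge m}) \to 0,
\]
so it suffices to show that the $\varprojlim^{1}$ term vanishes. I would replace $\BO$ by $\BO^{\finite}$ at this point: via the equivalence $SH_{T}(S) \simeq SH_{HP^{1+}}(S)$ of Section \ref{S:finite} together with the stable weak equivalence $\BO^{\finite} \xrightarrow{\sim} \BO^{\geom} \cong \BO_{HP^{1+}}$ of Theorem \ref{T:finite}, the $\varprojlim^{1}$ is computed equivalently from the inverse system indexed by the spaces of the $HP^{1+}$-spectrum $\BO^{\finite}$, namely the finite disjoint unions $RGr_{2\cdot 8^{i}}$ and $HGr'_{2\cdot 8^{i}}$ of finite-dimensional real or quaternionic Grassmannians.

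Next I would compute the cohomology groups in the inverse system. Using the quaternionic projective bundle theorem (Theorem \ref{QPBTH}) and the cohomology of quaternionic Grassmannians (Theorem \ref{T:Grass}) applied iteratively with Lemma \ref{BO(AwedgeB)andBO(AtimesB)} to handle the $m$-fold smash product, each cohomology group $\BO^{*,*}((\BO^{\finite}_{2i})^{\wedge m})$ decomposes as a \emph{finite} direct sum of shifted copies of $\BO^{*,*}(\pt)$, the shifts arising from $m$-tuples of Schur monomials in the Pontryagin classes of the tautological symplectic subbundles; the case of real Grassmannian spaces $RGr_{2\cdot 8^{i}}$ is handled by the same procedure after applying the bonding maps $F_{n}$ and $G_{n}$ of Section \ref{S:finite}, which reduce its cohomology (up to one $HP^{1+}$-suspension) to that of a quaternionic Grassmannian. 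The key observation is that every basis monomial has bidegree $(4k,2k)$, so that in the odd bidegree $(2mi-1,mi)$ each summand takes the form $\BO^{2j-1,j}(\pt) = KO^{[j]}_{1}(S)$ with $j$ an integer of the same parity as $mi$; in particular $j$ runs only through even values at even levels of $\BO^{\finite}$.

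By the mod-$4$ periodicity of hermitian $K$-theory (Definition \ref{D:periodicity}) together with the identification \eqref{E:KSp}, every $KO^{[j]}_{1}(S)$ with $j$ even is isomorphic to either $KO_{1}(S)$ or $KSp_{1}(S)$, both finite by hypothesis. Hence every term of the inverse system is a finite group, the Mittag-Leffler condition holds trivially, and the $\varprojlim^{1}$ vanishes. The main obstacle will be executing the third step cleanly: one must verify that the splitting of $\BO^{*,*}((\BO^{\finite}_{2i})^{\wedge m})$ as a free finite-rank module over $\BO^{*,*}(\pt)$ on Pontryagin/characteristic monomials extends across the $m$-fold smash product compatibly with the bonding maps, and that the real-Grassmannian factors are handled uniformly through the maps $F_{n}$ and $G_{n}$ so that only even shifted dualities appear; once this compatibility is in place the finiteness argument is immediate.
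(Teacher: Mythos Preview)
Your overall strategy matches the paper's: reduce to the vanishing of the $\varprojlim^{1}$ via Theorem~\ref{T:varprojlim.b}, transfer the computation to $\BO^{\finite}$ using Theorem~\ref{T:finite}, and then show that the groups in the inverse system are finite direct sums of copies of $KO_{1}(S)$ and $KSp_{1}(S)$ via Theorem~\ref{T:Grass}.

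There is, however, a genuine gap in your handling of the even-indexed spaces $RGr_{2\cdot 8^{i}}$. You propose to reduce the cohomology of the real Grassmannians to that of quaternionic Grassmannians ``after applying the bonding maps $F_{n}$ and $G_{n}$ \dots\ up to one $HP^{1+}$-suspension.'' But the bonding map $G_{n}\colon RGr_{2n}\wedge HP^{1+}\to HGr'_{16n}$ only tells you about a map \emph{out of} the suspension of $RGr_{2n}$; it does not compute $\BO^{*,*}(RGr_{2n})$ itself, and indeed the paper explicitly remarks (end of \S\ref{S:Morel-Voevodsky}) that it does not know how to calculate $KO_{*}^{[*]}(RGr)$. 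So you cannot conclude that the even-indexed groups in the inverse system are finite, and your Mittag-Leffler argument as stated breaks down at exactly the step you flagged as the main obstacle.

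The paper's fix is simple and clean: rather than computing the real-Grassmannian terms, it observes that the odd-indexed subsystem (the $HGr'_{2N_{i}}$ with $i$ odd) is cofinal in the full inverse system, so the $\varprojlim^{1}$ can be computed entirely over the quaternionic Grassmannians. Then Theorem~\ref{T:Grass} applies directly, every group in the cofinal subsystem is a finite sum of copies of $KO_{1}(S)$ or $KSp_{1}(S)$ (depending on the parity of $m$), and the finiteness hypothesis finishes the argument. You should replace your attempted reduction via $F_{n},G_{n}$ with this cofinality step.
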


\begin{proof}

We prove the theorem for the $HP^{1}$-spectrum $\BO_{HP^{1}}$.  The theorem then follows
for the $T$-spectrum $\BO$.

By Theorem \ref{T:varprojlim.b} and Theorem \ref{T:finite} there is a commutative diagram with exact rows
\[
\xymatrix @M=5pt @C=12pt @R=18pt {
\raisebox{-18pt}{$\sideset{}{^{1}}\varprojlim\limits_{i}\BO_{HP^{1}}^{4mi-1,2mi}((\KO^{[2i]})^{\wedge m})$}
\ar@{>->}[r] \ar[d]
&
\BO_{HP^{1}}^{0,0}(\BO_{HP^{1}}^{\wedge m})
\ar@{->>}[r] \ar[d]^-{\cong}
&
\raisebox{-18pt}{$\varprojlim\limits_{i}\BO_{HP^{1}}^{4mi,2mi}((\KO^{[2i]})^{\wedge m})$}
\ar[d]^{\text{$\cong$ by cofinality}}
\\
\raisebox{-18pt}{$\sideset{}{^{1}}\varprojlim\limits_{i}\BO_{HP^{1}}^{4mi-1,2mi}((\BO^{\finite}_{2i})^{\wedge m})$} \ar@{>->}[r]
&
\BO^{0,0}_{HP^{1}}((\BO^{\finite})^{\wedge m}) \ar@{->>}[r]
&
\raisebox{-18pt}{$\varprojlim\limits_{i}\BO^{4mi,2mi}_{HP^{1}}((\BO^{\finite}_{2i})^{\wedge m})$}
}
\]
The middle vertical arrow is an isomorphism because
the morphism
$(\BO^{\finite}_{2i})^{\wedge m} \to \BO_{HP^{1}}^{\wedge m}$ is a stable
weak equivalence.  The righthand vertical arrow is an isomorphism by a cofinality argument
as in the proof of Theorem \ref{T:finite}.  It follows that the lefthand vertical arrow is also an
isomorphism.

The $\varprojlim^{1}$ in the lower row is isomorphic to
\[
\sideset{}{^{1}}\varprojlim\limits_{\text{$i$ odd}} KO_{1}(HGr'_{2N_{i}}{}^{\wedge m})
\qquad \text{or} \qquad
\sideset{}{^{1}}\varprojlim\limits_{\text{$i$ odd}} KSp_{1}(HGr'_{2N_{i}}{}^{\wedge m})
\]
for even $m$ and odd $m$, respectively, where $N_{i} = 8^{2i}$.  By Theorem \ref {T:Grass}
each group in the system
is a direct sum of a finite number of copies of $KO_{1}(S)$ and of $KSp_{1}(S)$.  By the hypothesis
it
follows that each group in this system is finite, and so the $\varprojlim^{1}$ in the lower row
of the diagram vanishes.
Therefore the $\varprojlim^{1}$ in the upper row also vanishes, proving the isomorphism.
%
%
%
\end{proof}

\begin{thm}
\label{T:KSp1.euclidean}
Let $R$ be a Euclidean domain.  Then we have $KSp_{1}(R) = 0$.
\end{thm}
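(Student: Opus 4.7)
The plan is to unwind $KSp_1(R)$ into the classical symplectic Whitehead group and then kill it by the Euclidean algorithm. Schlichting's hermitian $K$-theory spaces of an affine scheme admit a group-completion description (by the comparison results cited at the end of \S\ref{S:Morel-Voevodsky}), so under $KSp \cong KO^{[2]}$ one has a canonical identification
\[
KSp_1(R) \;=\; \mathrm{Sp}(R)/\mathrm{ESp}(R),
\]
where $\mathrm{Sp}(R)=\colim_n \mathrm{Sp}_{2n}(R)$ and $\mathrm{ESp}(R)$ is the subgroup generated by the elementary symplectic transvections (the symplectic Whitehead lemma makes the quotient abelian and independent of the stabilisation). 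Hence it suffices to prove the explicit statement $\mathrm{Sp}_{2n}(R)=\mathrm{ESp}_{2n}(R)$ for every $n\ge 1$.

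The main step is this reduction, which I would carry out in two substeps. First, given $A\in\mathrm{Sp}_{2n}(R)$, consider its first column $v\in R^{2n}$. Because $A$ is symplectic, $v$ is part of a symplectic basis and so in particular unimodular, i.e.\ the ideal generated by its entries is $R$. Using the Euclidean norm, I iteratively apply symplectic row operations: an operation ``add $t$ times row $i$ to row $j$'' is implemented by a product of one or two elementary symplectic transvections (which come in pairs that simultaneously modify the complementary row to preserve the form), and this is always possible because for any two indices $i\ne j$ the corresponding pair of basis vectors is linked by a symplectic transvection (possibly via an intermediate complementary index). The Euclidean algorithm applied to pairs of entries then drives $v$ to $(1,0,\dots,0)^{\mathsf T}$. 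Second, once the first column is $e_1$, the symplectic condition forces the first row to be $e_1^{\mathsf T}$ modulo the complementary slot $n{+}1$, and a further round of elementary symplectic operations produces a block decomposition $A=\begin{pmatrix}1&0\\0&A'\end{pmatrix}$ with $A'\in\mathrm{Sp}_{2n-2}(R)$. Induction on $n$ finishes the reduction.

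Passing to the colimit then gives $\mathrm{Sp}(R)=\mathrm{ESp}(R)$ and therefore $KSp_1(R)=0$, as desired.

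The main obstacle I expect is the careful bookkeeping in the substep where I perform Euclidean row reduction through \emph{symplectic} elementary matrices: unlike in the $SL_n$ case, not every transvection $I+tE_{ij}$ lies in $\mathrm{ESp}_{2n}(R)$, and one must instead use the two families of symplectic elementary matrices (the ``long root'' and ``short root'' transvections) and combine them correctly so that each Euclidean division step is realised as a genuine element of $\mathrm{ESp}_{2n}(R)$. Once this bookkeeping is organised, the argument is a straightforward Euclidean algorithm.
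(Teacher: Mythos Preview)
Your proposal is correct and follows precisely the approach the paper indicates: the paper's proof is the single sentence ``It is proven essentially by showing that the action of the group $ESp_{2n}(R)$ on unimodular vectors is transitive,'' and your argument does exactly this---reducing the first column of a symplectic matrix to $e_1$ via elementary symplectic operations driven by the Euclidean algorithm, then inducting on $n$. Your identification $KSp_1(R)=\mathrm{Sp}(R)/\mathrm{ESp}(R)$ and your flagging of the bookkeeping with long- and short-root symplectic transvections are both appropriate; nothing further is needed.
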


This is classical.  It is proven essentially
by showing that the action of the group $ESp_{2n}(R)$
on unimodular vectors is transitive.

\begin{thm}
\label{T:pid}
Let $R$ be a Euclidean domain with $\frac 12 \in R$.
%
%
Then we have
$KO_{1}(R) \cong \ZZ/ 2 \ZZ \times R^{\times}/R^{\times 2}$.

\end{thm}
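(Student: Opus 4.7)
The plan is to derive the theorem from Schlichting's fundamental theorem (Theorem \ref{T:fundamental}) together with the vanishing $KSp_{1}(R) = 0$ provided by Theorem \ref{T:KSp1.euclidean}. Since $R$ is a Euclidean domain, every finitely generated projective module is free, so $K_{0}(R) = \ZZ$ and $K_{1}(R) = R^{\times}$. Moreover every non-degenerate skew-symmetric form on a free $R$-module is a sum of hyperbolic planes, whence $KSp_{0}(R) = \ZZ \cdot [\HH]$, and the forgetful map $F \colon KSp_{0}(R) \hookrightarrow K_{0}(R)$ sends $[\HH] \mapsto 2[R]$.

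First I apply the fundamental theorem with $n = 3$ to the fibre sequence $KSp \xrightarrow{F} K \xrightarrow{H} KO^{[3]}$. Plugging in $KSp_{1}(R) = 0$ and the injectivity of $F$ on $KSp_{0}$, the resulting long exact sequence gives an isomorphism $H \colon K_{1}(R) \xrightarrow{\cong} KO^{[3]}_{1}(R)$, so that $KO^{[3]}_{1}(R) \cong R^{\times}$. The same sequence in degree zero, combined with the vanishing of Balmer's odd Witt group $W^{3}(R) = 0$ for a regular $\ZZ[\tfrac12]$-algebra, yields $KO^{[3]}_{0}(R) = K_{0}(R)/2K_{0}(R) = \ZZ/2$.

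Next I apply the fundamental theorem with $n = 0$, namely $KO^{[-1]} \xrightarrow{F} K \xrightarrow{H} KO^{[0]}$, and use the $4$-periodicity $KO^{[-1]} \simeq KO^{[3]}$ of hermitian $K$-theory to insert the groups computed above. The long exact sequence then takes the form
\[
R^{\times} \xrightarrow{\alpha} R^{\times} \xrightarrow{H} KO_{1}(R) \xrightarrow{\partial} \ZZ/2 \xrightarrow{0} \ZZ,
\]
where the last arrow vanishes because $\ZZ$ is torsion-free, and $\partial$ is therefore surjective. The map $\alpha$ is the composition of the hyperbolic $K_{1} \to KO^{[3]}_{1}$ from the previous paragraph with the forgetful $KO^{[-1]}_{1} = KO^{[3]}_{1} \to K_{1}$. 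For $u \in R^{\times}$ the hyperbolic image is $(R, u) \oplus (R^{\vee}[3], u^{-\vee}[3])$; forgetting the form and using the sign $(-1)^{3}$ carried by the degree-$3$ shift in $K_{1}$ gives $[u] - [u^{-1}] = [u^{2}]$, so $\alpha(u) = u^{2}$. This produces the short exact sequence
\[
0 \to R^{\times}/R^{\times 2} \to KO_{1}(R) \to \ZZ/2 \to 0.
\]

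To split this extension I use the class $[-1] \in KO_{1}(R)$ of the reflection $-1 \in O_{1}(\langle 1 \rangle) \subset O_{\infty}(R)$: it has order $2$ since $(-1)^{2} = 1$; it does not lie in the image of $H$, because the composite $F \circ H \colon K_{1}(R) \to K_{1}(R)$ sends $u$ to $[u] + [u^{-1}] = 0$ whereas the forgetful carries $[-1]$ to $-1 \in R^{\times}$; and $\partial([-1])$ is the rank-mod-$2$ generator of $KO^{[-1]}_{0}(R) = \ZZ/2$. Hence the sequence splits and $KO_{1}(R) \cong \ZZ/2 \oplus R^{\times}/R^{\times 2}$. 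The most delicate point is the sign computation for $\alpha$: a missed sign from the degree-$3$ shift would collapse $\alpha(u) = u^{2}$ to $\alpha(u) = 1$ and give the wrong group.
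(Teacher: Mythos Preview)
Your proof is correct and follows essentially the same route as the paper's: both arguments run two instances of the Karoubi/Schlichting fundamental sequence, identify $KO^{[-1]}_1 \cong R^\times$ and $KO^{[-1]}_0 \cong \ZZ/2$ from the $KSp$-sequence, feed these into the $KO^{[0]}$-sequence, and show the resulting endomorphism of $R^\times$ is squaring. The paper phrases this via Karoubi's groups ${}_{-1}V$ and ${}_{1}U$ (which are exactly your $KO^{[-1]}_1$ and $KO^{[-1]}_0$), cites \cite{Karoubi:1980aa,Barge:2008it} for the formula $[x]\mapsto[x]-[^t\bar x^{-1}]$, and splits with the hyperbolic swap $\sm{0&1\\1&0}$ (detected by $\det=-1$) rather than your reflection $-1\in O_1$ (detected by the forgetful map to $K_1$); both splittings work for the same reason.
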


This must be very well known to the experts.  We include the proof for completeness' sake.

\begin{proof}
[Proof of Theorem \ref{T:pid}]
We use the long exact sequences of Karoubi's fundamental theorem
\cite{Karoubi:1980aa,Schlichting:2006aa}
\[
\cdots \to KO_{i}^{[n]}(R) \xrightarrow{F} K_{i}(R) \xrightarrow{H}
KO_{i}^{[n+1]}(R) \xrightarrow{\eta} KO_{i-1}^{[n]}(R) \to \cdots.
\]
with $F$ the forgetful map and $H$ the hyperbolic map.  This amounts to four exact sequences including
\begin{gather}
\label{E:fund.3}
\cdots \to
{}_{-1}V \to K_{1} \to KO_{1} \to {}_{1}U \xra{0} K_{0} \hra GW^{+} \to W^{0} \to 0
\\
\label{E:fund.4}
\cdots \to
KSp_{1} \to K_{1} \onto {}_{-1}V \xra{0} GW^{-} \hra K_{0} \to {}_{1}U \to W^{-1} \to 0
\end{gather}
with the $GW^{+}$ and $GW^{-}$ the Grothendieck-Witt groups of symmetric and skew-symmetric bilinear forms respectively,
the ${}_{-1}V$ and ${}_{1}U$ the groups of \cite[Appendices 2 et 3]{Karoubi:1975aa},
and the $W^{i}$ the Witt groups with cohomological indexing \textit{\`a la} Balmer.


For a principal ideal domain containing $\frac 12$ we have $W^{i} = 0$ for $i \equiv 2$ or $3 \pmod 4$,
while the
the forgetful map $GW^{-} \to K_{0}$ of \eqref{E:fund.4}
is the inclusion $2\ZZ \subset \ZZ$, and the hyperbolic map $K_{0} \to GW^{+}$
of \eqref{E:fund.3} is injective.  It then follows from the two sequences that we have a short exact sequence
\[
0 \to \coker(K_{1} \onto {}_{-1}V \to K_{1}) \to KO_{1} \to \ZZ/2\ZZ \to 0.
\]
By \cite[\S 4.1]{Karoubi:1980aa} or \cite[\S 4.5]{Barge:2008it} the composition
$K_{1} \onto {}_{-1}V \to K_{1}$ is
$[x] \mapsto [x]-[{}^{t}\bar{x}^{-1}]$.  For a euclidean domain with trivial involution, this
is $[x] \mapsto [x^{2}]$.  So we have an exact sequence
$1 \to R^{\times}/R^{\times 2} \to KO_{1} \to \ZZ/2\ZZ \to 0$.
The image of $[b] \in R^{\times}/R^{\times 2}$ in $KO_{1}$
is the class of the auto-isometry $\bigl[\begin{smallmatrix} b & 0 \\ 0 & b^{-1} \end{smallmatrix}\bigr]$ of the
hyperbolic quadratic form $q(x_{1},x_{2}) = x_{1}x_{2}$.
The class of the auto-isometry $\bigl[\begin{smallmatrix} 0&1\\1&0 \end{smallmatrix}\bigr]$ is not in that image
because its determinant is $-1$.  It provides a splitting of the surjection in the exact sequence.
\end{proof}

For a nice discussion of the group ${}_{1}V$ and a bit of the other two exact sequences of Karoubi's
fundamental theorem see \cite[\S 4.5]{Barge:2008it}.

\begin{thm}
\label{T:commutative.monoid}
Suppose $KO_{1}(S)$ and $KSp_{1}(S)$ are finite, for instance $S = \Spec \ZZ[\frac 12]$.
Let $m \in Hom_{SH(S)}(\BO \wedge \BO , \BO)$ be the morphism of \eqref{E:m}.
Let $e \in Hom_{SH(S)}(\pt_{+},\BO) = \BO^{0,0}(\pt_{+})$ be the element corresponding
to $\angles{1} \in GW^{+}(\pt) = KO_{0}^{[0]}(\pt)$.

\parens{a} Then $(\BO,m,e)$ is a commutative monoid in $SH(S)$.

\parens{b} The map $m$ is the unique element of $Hom_{SH(S)}(\BO \wedge \BO , \BO)$ defining
a pairing which, when restricted to pairing
\[
\BO^{4p,2p}(X_{+}) \times \BO^{4q,2q}(Y_{+}) \to
\BO^{4p+4q,2p+2q}(X_{+} \wedge Y_{+})
\]
with $X,Y \in \Sm/S$ coincides with the tensor product pairing
\[
KO_{0}^{[2p]}(X) \times KO_{0}^{[2q]}(Y)
\to KO_{0}^{[2p+2q]}(X \times Y)
\]
of Grothendieck-Witt groups.
\end{thm}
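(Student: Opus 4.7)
The plan is to derive Theorem \ref{T:commutative.monoid} directly from the almost commutative monoid structure established in Theorem \ref{monoidBO} together with the $\varprojlim^{1}$-vanishing provided by Theorem \ref{T:lim1}. The obstructions to $(\BO,m,e)$ being a genuine commutative monoid, as recorded in Definition \ref{D:almost.monoid}, live in $\varprojlim^{1}$ groups that are precisely the ones which vanish under the finiteness hypothesis on $KO_{1}(S)$ and $KSp_{1}(S)$.

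For part (a) I would verify each monoid axiom separately. For associativity, view $m \circ (m \wedge \id_{\BO})$ and $m \circ (\id_{\BO} \wedge m)$ as two elements of $\Hom_{SH(S)}(\BO^{\wedge 3},\BO) = \BO^{0,0}(\BO^{\wedge 3})$. Theorem \ref{T:lim1} provides an isomorphism $\BO^{0,0}(\BO^{\wedge 3}) \xra{\cong} \varprojlim_{i}\BO^{6i,3i}((\KO^{[i]})^{\wedge 3})$, so it suffices to check equality of the two restrictions at each level $i$. Via the isomorphism $\tau_{4k+2}\colon \ZZ \times HGr \xra{\cong} \KO^{[4k+2]}$ of Theorem \ref{T:MV.symp} and the limit identification of Theorem \ref{T:A(ZxHGr)}, this reduces to equality of the two triple products on the smooth affine schemes $[-n,n]\times HGr(n,2n)$, which is nothing but the associativity of the naive tensor product pairing on Grothendieck-Witt groups combined with Theorem \ref{T:coincide}. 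Commutativity and the unit axiom are proved identically, using the analogous isomorphisms from Theorem \ref{T:lim1} for the double smash and for $\BO$ itself, together with the $\angles{-1}$-commutativity and the unit property of the naive tensor product pairing.

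For part (b), suppose $m' \in \Hom_{SH(S)}(\BO \wedge \BO,\BO)$ is any other element inducing the tensor product pairing on all groups $KO_{0}^{[2p]}(X) \times KO_{0}^{[2q]}(Y)$ for $X,Y \in \Sm/S$. By Theorem \ref{T:universal}, after reducing to the affine case via Jouanolou's trick, every Grothendieck-Witt class on a smooth $S$-scheme pulls back from a universal class on $\ZZ \times HGr$, so $m$ and $m'$ must agree on every pair of universal classes $\tau_{4k+2} \boxtimes \tau_{4k+2}$. Their images in $\varprojlim_{i}\BO^{4i,2i}((\KO^{[i]})^{\wedge 2})$ therefore coincide, and Theorem \ref{T:lim1} (applied to the double smash product) forces $m = m'$ in $\BO^{0,0}(\BO \wedge \BO)$.

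The technical weight is carried entirely by Theorem \ref{T:lim1}; once that is in hand, the present theorem is a clean transposition to $SH(S)$ of identities proved at the level of Grothendieck-Witt groups of finite quaternionic Grassmannians. The one bookkeeping point requiring attention is the matching of the bigradings in Theorem \ref{T:lim1}, Definition \ref{D:almost.monoid}, and Theorem \ref{T:varprojlim.b}, which amounts to tracking the suspension identifications $\KO^{[i]} \simeq \Omega_{T}^{i}\BO$.
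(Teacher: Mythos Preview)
Your proposal is correct and follows essentially the same approach as the paper. Your opening paragraph is in fact the paper's entire proof of part (a): Theorem \ref{monoidBO} already shows $(\BO,m,e)$ is an almost commutative monoid, so the obstructions of Definition \ref{D:almost.monoid} lie in the $\varprojlim^{1}$ subgroups, and Theorem \ref{T:lim1} makes those vanish. Your subsequent level-by-level verification via Theorem \ref{T:A(ZxHGr)} and Theorem \ref{T:coincide} is logically fine but redundant---that work was already done in the proof of Theorem \ref{monoidBO}, and you need only cite it. For part (b) your argument matches the paper's: the tensor product pairing pins down $\bar m$ in the inverse limit, and the injectivity from Theorem \ref{T:lim1} forces $m'=m$.
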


\begin{proof}
(a) By Theorem \ref{monoidBO} $(B,m,e)$ is an almost commutative monoid in $SH(S)$.
By Definition \ref{D:almost.monoid} the obstructions to $(B,m,e)$ being a commutative monoid
are three classes in the kernels of the maps of Theorem \ref{T:lim1} for $m=1,2,3$.  Those
classes vanish.

(b) The product on the $KO_{0}^{[2p]}(X_{+})$ determined uniquely the
$\bar m$ of \eqref{E:bar.m}.  By Theorem \ref{T:lim1} $m$ is the unique element of
$Hom_{SH(S)}(\BO \wedge \BO , \BO)$ mapping onto $\bar m$.
\end{proof}

We now wish to use the closed motivic model structure
of \cite[Appendix A]{Panin:2009aa}.
Among its properties are:

\begin{enumerate}
\item
The closed motivic model structure
and the local injective motivic model structure
used in \S\S \ref{S:KO.motivic.spaces}--\ref{S:BO}
have the same weak equivalences, but the closed motivic model structure
has fewer cofibrations
and more fibrations than the local injective motivic model structure.

\item
A pointed smooth $S$-scheme $(X,x_{0})$ is cofibrant in the closed motivic model structure.
More generally, a closed embedding $Z \mono X$ in $\Sm/S$ induces a cofibration
$Z_{+} \mono X_{+}$
\cite[Lemma A.10]{Panin:2009aa}.

Hence the pointed scheme $HP^{1+}$ is cofibrant for the closed motivic model structure, so we may
define levelwise and stable closed motivic model structures for $HP^{1+}$-spectra.

\item
For any morphism $u \colon S\to S'$ of noetherian schemes of finite Krull dimension,
the pullback $u^{*}\colon \M_{\bullet}(S') \to \M_{\bullet}(S)$ is a
strict symmetric monoidal left Quillen functor for the closed motivic model structure
\cite[Theorem A.17]{Panin:2009aa}.
Consequently $\mathbf{L}u^{*} \colon SH(S') \to SH(S)$ can be computed by
taking  levelwise closed cofibrant replacements and then applying $u^{*}$.

\end{enumerate}

To extend $m$ to other base schemes $S$, we need to discuss base change for morphisms
$u \colon S \to S'$.  For any $X \in \Sm/S'$ there is a duality-preserving pullback functor inducing
morphisms of hermitian $K$-theory spaces
$(1 \times u)^{*} \colon KO^{[n]}(X) \to KO^{[n]}(X \times_{S'}S)$.
This gives us maps $KO^{[n]}_{S'} \to u_{*}KO^{[n]}_{S}$ and adjoint maps $u^{*} KO_{S'}^{[n]} \to
KO_{S}^{[n]}$.  These maps are compatible with Thom isomorphisms, inducing maps of spectra.
The maps $u^{*}\BO^{\geom}_{S'} \to \BO^{\geom}_{S}$ are isomorphisms in
$SH(S)$ because $u^{*}$ acts as base change on the quaternionic and real Grassmannians and
their direct colimits.
The maps $\mathbf{L}u^{*}\BO^{\geom}_{S'} \to u^{*}\BO^{\geom}_{S'}$
are isomorphisms $SH(S)$ because for the closed motivic model structure
$\BO^{\geom}_{S'}$ is levelwise cofibrant
and $u^{*}$ is a levelwise left Quillen functor.

Setting $S' = \Spec \ZZ[\frac 12]$ with $u \colon S \to \Spec \ZZ[\frac 12]$
the canonical map, we can now define the monoidal structure on $\BO_{S}$ in $SH(S)$
as in \cite[Definition 3.7]{Panin:2009aa} as the composition
\[
m_{S} \colon
\BO_{S} \wedge \BO_{S} \cong
u^{*}\BO_{\ZZ[\frac 12]} \wedge u^{*}\BO_{\ZZ[\frac 12]}
\cong
u^{*}(\BO_{\ZZ[\frac 12]} \wedge \BO_{\ZZ[\frac 12]})
\xra{u^{*}m_{\ZZ[\frac 12]}}
u^{*}\BO_{\ZZ[\frac 12]}
\cong
\BO_{S}
\]

\begin{thm}
\label{T:unique.2}
The assertions of Theorem \ref{T:unique} hold.
\end{thm}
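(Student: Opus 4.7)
The plan is to prove the theorem first for the universal base $S = \Spec \ZZ[\tfrac{1}{2}]$, then transfer the result to arbitrary $S$ by base change. Existence of $m$ and $e$ as an almost commutative monoid satisfying the compatibility properties (1) and (2) is already provided by Theorem \ref{monoidBO} and the construction in \eqref{E:m}; what remains is to show that over $\Spec \ZZ[\tfrac 12]$ the obstructions to $(\BO,m,e)$ being a strict commutative monoid vanish, and that no other $m$ induces the same naive pairing on Grothendieck-Witt groups. Both issues are controlled by the same $\varprojlim^{1}$ calculation.

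For the obstructions, Definition \ref{D:almost.monoid} tells us that associativity, commutativity, and the unit property of $(m,e)$ hold strictly as soon as three specific $\varprojlim^{1}$ groups of the form $\varprojlim^{1} \BO^{\ast,\ast}(A_n^{\wedge k})$ (for $k=1,2,3$) vanish. I would replace the $T$-spectrum $\BO$ by the stably equivalent $HP^{1+}$-spectrum $\BO^{\finite}$ of Theorem \ref{T:finite}, whose spaces are finite unions of smooth real and quaternionic Grassmannians; a cofinality argument, exactly as in the proof of Theorem \ref{T:lim1}, identifies the two $\varprojlim^{1}$ systems. The splitting principle and the explicit computation of the cohomology of quaternionic Grassmannians (Theorems \ref{T:Grass}, \ref{T:HGr.lim.cohom}, combined with Lemma \ref{BO(AwedgeB)andBO(AtimesB)}) present each term in the inverse system as a finite direct sum of copies of $KO_1(S)$ and $KSp_1(S)$. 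When both groups are finite, the inverse system consists of finite groups and thus satisfies Mittag-Leffler, so $\varprojlim^{1}=0$. For $S = \Spec \ZZ[\tfrac 12]$, finiteness (indeed vanishing up to $2$-torsion) follows from Theorems \ref{T:KSp1.euclidean} and \ref{T:pid}.

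For uniqueness, Theorem \ref{T:lim1} gives an isomorphism $\BO^{0,0}(\BO\wedge\BO) \cong \varprojlim \BO^{4i,2i}(\KO^{[i]\wedge 2})$, so any $m$ is determined by its image $\bar m$ in the inverse limit. Theorem \ref{T:A(ZxHGr)} in turn identifies $\bar m$ with a compatible system of classes on $([-n,n]\times HGr(n,2n))^{\wedge 2}$. By the universal property of $\ZZ\times HGr$ (Theorem \ref{T:universal}) together with Theorem \ref{T:coincide}, such a compatible system is uniquely prescribed by the requirement of inducing the tensor-product pairing on $KO^{[2p]}_{0}(X)\times KO^{[2q]}_{0}(Y)$ for smooth $X,Y$; the same line of reasoning forces $e$ to correspond to $\langle 1\rangle$. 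This gives uniqueness of the commutative monoid over $\Spec \ZZ[\tfrac 12]$.

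To pass from $\Spec \ZZ[\tfrac 12]$ to an arbitrary regular noetherian $S$ with $\tfrac 12 \in \Gamma(S,\OO_S)$, I would use the closed motivic model structure of \cite{Panin:2009aa}, for which $u^{\ast}\colon \M_{\bullet}(\ZZ[\tfrac 12])\to \M_{\bullet}(S)$ is a strict symmetric monoidal left Quillen functor and $\BO^{\geom}$ is levelwise cofibrant. Because the duality-preserving pullback functor on chain complexes induces isomorphisms $u^{\ast}\BO^{\geom}_{\ZZ[\tfrac 12]}\xra{\cong}\BO^{\geom}_{S}$ in $SH(S)$ (being a base change on Grassmannians) and $\mathbf{L}u^{\ast}\cong u^{\ast}$ on levelwise cofibrant spectra, the monoid structure $(m_{\ZZ[\tfrac 12]},e_{\ZZ[\tfrac 12]})$ transports to a commutative monoid structure on $\BO_{S}$. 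The compatibility with naive products on $KO^{[2p]}_{0}(X)\times KO^{[2q]}_{0}(Y)$ survives pullback because Gille-Nenashev pairings and tensor products commute with flat base change. The main technical obstacle here is bookkeeping: one must verify that the $\varprojlim^{1}$ computation for the threefold smash $\BO^{\wedge 3}$ really does reduce to finite direct sums of $KO_{1}(S)$ and $KSp_{1}(S)$ — this is where the explicit Grassmannian cohomology calculation of \cite{Panin:2010fk} is indispensable, and where any weakening of the finiteness hypothesis on $KO_{1}(S)$ or $KSp_{1}(S)$ would break the argument.
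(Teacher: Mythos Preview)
Your proposal is correct and follows essentially the same route as the paper: reduce to $S=\Spec\ZZ[\tfrac12]$ via the closed motivic model structure and base change through $\BO^{\geom}$, and over $\Spec\ZZ[\tfrac12]$ invoke Theorem~\ref{T:lim1} (whose proof you have effectively reproduced --- pass to $\BO^{\finite}$, use cofinality, and apply Theorem~\ref{T:Grass} to see the inverse system consists of finite sums of $KO_1(S)$ and $KSp_1(S)$) to kill the $\varprojlim^1$ obstructions of Definition~\ref{D:almost.monoid} and force uniqueness of $m$ lifting $\bar m$. The paper packages the $\Spec\ZZ[\tfrac12]$ case as Theorem~\ref{T:commutative.monoid} and then does the base change exactly as you describe.
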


For $S = \Spec \ZZ[\frac 12]$ this is part of Theorem \ref{T:commutative.monoid}.
For other $S$ it is deduced by base change from $\Spec \ZZ[\frac 12]$.

\begin{thm}
The assertions of Theorem \ref{T:SLc.ring} hold.
\end{thm}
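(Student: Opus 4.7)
The plan is to assemble the theorem from three previously established ingredients, with compatibility providing the only real work. First I would recall from Theorem \ref{T:SLc.oriented} that $KO^{[*]}_{*}$ on $\SmOp/S$ carries the structure of a cohomology theory with $SL^{c}$ Thom classes $\thom(E,L,\lambda) \in KO_{0}^{[n]}(E,E-X)$ and with an $\angles{-1}$-commutative partial multiplicative structure in the sense of Definition \ref{multiplicative}. Because every Thom class lives in a bieven bidegree $(2n,n)$, the partial pairing $\times \colon A^{p,q} \times A^{2r,r} \to A^{p+2r,q+r}$ is already defined where the Thom class sits on the right, and is therefore already sufficient to yield both the multiplicativity identity $\thom(E_{1} \oplus E_{2}, L_{1} \otimes L_{2}, \lambda_{1} \otimes \lambda_{2}) = q_{1}^{*}\thom(E_{1},L_{1},\lambda_{1}) \cup q_{2}^{*}\thom(E_{2},L_{2},\lambda_{2})$ and the Thom isomorphism $\cup \thom(E,L,\lambda) \colon KO_{i}^{[m]}(X) \to KO_{i}^{[m+n]}(E,E-X)$.

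Next I would invoke Theorem \ref{uniq1}, which upgrades this partial multiplication to a full associative, $\angles{-1}$-commutative ring product on $(KO^{[*]}_{*},\partial)$ with unit $\angles{1}$ and which restricts on bieven-degree Grothendieck-Witt groups to the naive tensor product of symmetric chain complexes. Theorem \ref{uniq1} itself is obtained by applying Theorem \ref{T:unique.prod} on $\M^{\ft}_{\bullet}(S)$ and transporting back to $\SmOp/S$ along the cohomology-theory isomorphism $\gamma$ of Corollary \ref{KO**andBO**}. At this point the ring cohomology theory structure in the sense of \cite[Definitions 2.1, 2.13]{Panin:2003rz} is in hand, and the only remaining task is to merge it with the Thom classes theory from the first paragraph.

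The step that requires genuine content is the verification that for $\alpha \in KO_{i}^{[m]}(X,U)$ and $\beta \in KO_{0}^{[2r]}(Y,V)$ the ring product of Theorem \ref{uniq1} coincides with the partial product of Theorem \ref{T:SLc.oriented} with which the Thom classes were manipulated. Granted this coincidence, the multiplicativity of Thom classes and the Thom isomorphism transfer verbatim from the partial setting to the ring setting, which completes the proof. The main obstacle is therefore precisely this compatibility, which is the content of Theorem \ref{T:compatible}. A direct argument from the construction of the product carried out in the paper is not available; instead one appeals to Schlichting's orally communicated construction of a pairing on his hermitian $K$-theory spaces arising from a pairing of complicial exact categories with weak equivalences and duality. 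By its very construction, Schlichting's pairing specializes to the Gille--Nenashev style partial pairings of \eqref{E:partial.1}--\eqref{E:partial.2}, while on small motivic spaces it coincides with the ring product built here by the uniqueness clause of Theorem \ref{uniq2}. Chaining these two identifications gives the required compatibility of products on $A^{p,q} \times A^{2r,r}$ and thereby finishes the proof of Theorem \ref{T:SLc.ring}.
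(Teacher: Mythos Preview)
Your assembly of ingredients mirrors the paper's own proof: it too cites Theorem~\ref{T:SLc.oriented} for the $SL^{c}$ Thom classes with the partial product, a theorem giving the ring structure, and a compatibility statement to glue them. The differences are in which results are invoked for the last two steps.

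For the ring structure the paper cites Theorem~\ref{T:unique.2} (the commutative monoid in $SH(S)$) rather than Theorem~\ref{uniq1}; this is a minor difference since either yields the ring cohomology theory on $\SmOp/S$.

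The substantive divergence is in the compatibility step. The paper invokes Theorem~\ref{T:coincide}, which is proven directly from the construction of $m$ and shows that the ring product agrees with the naive tensor product on $KO_{0}^{[2p]}(X)\times KO_{0}^{[2q]}(Y)$. You instead invoke Theorem~\ref{T:compatible}, which covers the full partial pairings \eqref{E:partial.1}--\eqref{E:partial.2} but whose proof, as the paper itself explains, goes through Schlichting's orally communicated pairing rather than anything constructed here. Your remark that ``a direct argument from the construction of the product carried out in the paper is not available'' therefore slightly overshoots: Theorem~\ref{T:coincide} \emph{is} such a direct argument, just a narrower one. On the other hand you are right that Theorem~\ref{T:coincide} alone does not literally cover the cup products $\alpha\cup\thom(E,L,\lambda)$ for arbitrary $\alpha\in KO_{i}^{[m]}(X)$ or for $SL^{c}$ bundles of odd rank, so the paper's one-line citation is terse and your route via Theorem~\ref{T:compatible} is the more thorough justification. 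Either way the logical skeleton is the same.
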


Theorem \ref{T:SLc.oriented} shows that hermitian $K$-theory is an $SL^{c}$-oriented cohomology
theory with a partial multiplicative structure.  The ring structure is given by Theorem
\ref{T:unique.2}.  The compatibility of the two multiplications is Theorem \ref{T:coincide}.

Schlichting's multiplicative structure, which we mentioned when discussing Theorem \ref{T:compatible},
could replace our partial multiplicative structure for
Theorems \ref{T:SLc.ring}, \ref{uniq2}, \ref{uniq1}, etc.
However, as we understand it,
Schlichting's product is defined in unstable homotopy theory.
To get our main Theorem \ref{T:unique}
with the monoid structure for $T$-spectra, we need our argument with the $\varprojlim^{1}$.

\section{
{$CP^{1+}$-spectra $\BGL^{\finite}$ and $\BGL^{\geom}$ for algebraic $K$-theory}
{CP\^{ }1+ spectra BGL\^{ }fin and BGL\^{ } for algebraic K-theory}
}
\label{S:K.theory}

The $HP^{1+}$-spectra
constructed in \S\ref{S:finite} have
an analogue for ordinary algebraic $K$-theory: the $CP^{1}$-spectra $\BGL^{\finite}$
and $\BGL^{\geom}$.
We sketch their construction.  The first can be used to show that the uniqueness results concerning
the algebraic $K$-theory spectrum $\BGL$ and its $\times$ product
of \cite[Remark 2.19 and Theorem 3.6]{Panin:2009aa} hold for any base scheme $S$
which is noetherian of finite Krull dimension with finite $K_{1}(S)$ and not just
for $S = \Spec \ZZ$.

We use the affine Grassmannians which can be defined as
\[
CGr(m,n) = GL_{n}/(GL_{m} \times GL_{n-m})
\]
or as the open subscheme
\[
CGr(m,n) \subset Gr(m,n) \times Gr(n-m,n)
\]
where the two tautological subbundles of $\OO^{\oplus n}$ are supplementary
or as the closed subscheme of the space on $n \times n$ matrices parametrizing projectors of rank $m$.
Each $CGr(m,n)$  is affine over the base scheme and
an $\Aff^{m(n-m)}$-bundle over the ordinary Grassmannnian
$Gr(m,n)$.  Morphisms $V \to CGr(m,n)$ are in bijection with direct sum decompositions
$\OO_{V}^{\oplus n} = U'_{m} \oplus U''_{n-m}$ with $U'_{m}$ and $U''_{n-m}$ subbundles
of ranks $m$ and $n-m$ respectively.  We let $CGr = \colim_{n} CGr(n,2n)$.

In particular
$CP^{1} = CGr(1,2) \cong \mathbf{P}^{1} \times \mathbf{P}^{1} - \Delta$ is an $\Aff^{1}$-bundle over
$\mathbf P^{1}$.  We may point $CGr(1,2)$ by $CGr(0,0)$.  Let $CP^{1+}$ then be the pointed scheme
constructed in \eqref{E:X+}.
The motivic stable homotopy categories of $\mathbf{P}^{1}$-spectra,
of $CP^{1}$-spectra and of $CP^{1+}$-spectra are equivalent.  In particular there is a $CP^{1+}$-spectrum
$\BGL_{CP^{1+}}$ corresponding to the $\mathbf{P}^{1}$-spectrum $\BGL$
of \cite{Panin:2009aa}.  For any smooth $S$-scheme $X$ we write $n = n[\OO_{X}] \in K_{0}(X)$.

\begin{lem}
\label{L:Gr.P1.to.Gr}
There
exist morphisms of pointed schemes
\[
h_{n} \colon \bigl( [-n,n] \times CGr(n,2n) \bigr) \times CP^{1} \to CGr(4n,8n)
\]
such that the classes in $K_{0}$ satisfy
\begin{equation}
\label{E:K0.formula}
h_{n}^{*}([U'_{4n}]-4n) = ([U'_{n}]-(n-i)) \boxtimes ([U'_{1}]-1)
\end{equation}
\parens{where $i \in [-n,n] \subset \ZZ$ is the index of the component}
and such that
$h_{n}|_{\pt \times CP^{1}}$ is constant, and
$h_{n}|_{([-n,n] \times CGr(n,2n)) \times \pt}$ is
pointed $\Aff^{1}$-homotopic to a constant map.  Moreover, these maps and homotopies are
compatible with the inclusions $CGr(n,2n) \hra CGr(n+1,2(n+1))$
and $CGr(4n,8n) \hra CGr(4(n+1),8(n+1))$.
\end{lem}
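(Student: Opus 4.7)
The plan is to mirror the construction of Lemma \ref{L:HGr.HP1.to.RGr} in the $\GL$-setting, replacing the orthogonal decompositions $U_{2n} \oplus U_{2n}^{\perp} \cong \HH^{\oplus 2n}$ and $U \oplus U^{\perp} \cong \HH^{\oplus 2}$ with the tautological direct-sum decompositions $U'_n \oplus U''_n \cong \OO^{\oplus 2n}$ on $CGr(n,2n)$ and $U'_1 \oplus U''_1 \cong \OO^{\oplus 2}$ on $CP^1$. To define $h_n$, inside the trivial ambient bundle $\OO^{\oplus 2n} \boxtimes (U'_1 \oplus U''_1 \oplus \OO) \oplus \OO^{\oplus 2n} \cong \OO^{\oplus 8n}$ I consider the rank-$4n$ subbundle
\[
V' = (U'_n \boxtimes U'_1) \oplus (\OO^{\oplus n-i} \boxtimes U''_1) \oplus (U''_n \boxtimes \OO) \oplus \OO^{\oplus n+i}
\]
together with the natural rank-$4n$ supplement $V''$ obtained by interchanging primed and doubly-primed subbundles and first- vs.\ last-coordinate embeddings. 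The decomposition $\OO^{\oplus 8n} = V' \oplus V''$ classifies $h_n$, and a direct expansion using $[U''_m] = 2m - [U'_m]$ verifies \eqref{E:K0.formula}.

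For the condition that $h_n|_{\pt \times CP^1}$ is constant, I take the basepoint of $CGr(n,2n)$ in the component $i = 0$ with $U'_n = \OO^{\oplus n} \oplus 0$ and $U''_n = 0 \oplus \OO^{\oplus n}$. Then the first two summands of $V'$ combine as $(\OO^{\oplus n} \oplus 0) \boxtimes (U'_1 \oplus U''_1) = (\OO^{\oplus n} \oplus 0) \boxtimes \OO^{\oplus 2}$, independent of the point of $CP^1$. With a compatible choice of basepoint of $CGr(4n,8n)$ the restriction $h_n|_{\pt \times CP^1}$ is therefore the constant map.

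For the condition that $h_n|_{(\cdot) \times \pt}$ is pointed $\Aff^1$-homotopic to a constant map, restricting to the basepoint of $CP^1$ yields the rank-$4n$ subbundle $(U'_n \boxtimes \OO e_1) \oplus (\OO^{\oplus n-i} \boxtimes \OO e_2) \oplus (U''_n \boxtimes \OO) \oplus \OO^{\oplus n+i}$ of $\OO^{\oplus 8n}$. I construct the required homotopy in two pieces: an $\Aff^1$-family $M(t) \in \GL_2(\OO)$ with $M(0) = I$ and $M(1)$ the coordinate swap, acting on the rank-$2$ subbundle $\OO e_1 \oplus \OO$; and the composition
\[
\bigl( (1_{\OO^{\oplus n}\oplus 0} \boxtimes 1_{\OO^{\oplus 2}}) \oplus (1_{0\oplus \OO^{\oplus n}} \boxtimes M(t)) \bigr)^{-1}
\circ
\bigl( (1_{U'_n} \boxtimes 1_{\OO^{\oplus 2}}) \oplus (1_{U''_n} \boxtimes M(t)) \bigr)
\]
gives an $\Aff^1$-family of automorphisms of $\OO^{\oplus 2n} \boxtimes \OO^{\oplus 2}$ carrying the varying part of $V'|_{(\cdot)\times \pt}$ to the basepoint subbundle; at $t = 1$ the swap brings $U''_n$ into the same slot as $U'_n$, and the identity $U'_n \oplus U''_n = \OO^{\oplus 2n}$ then exhibits the result as a constant subbundle. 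A similar (trivial) $\Aff^1$-family in $\GL$ handles the constant summands $\OO^{\oplus n\pm i}$. This is the main technical step, but is considerably easier than its analogue in Lemma \ref{L:HGr.HP1.to.RGr} since we need only an element of $\GL_2$ rather than of $Sp_4 \cap O_4$.

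Finally, compatibility with the standard inclusions $CGr(n,2n) \hra CGr(n+1, 2(n+1))$ and $CGr(4n, 8n) \hra CGr(4(n+1), 8(n+1))$ follows from consistent choices of the trivial subbundles $\OO^{\oplus n\pm i}$ under the standard coordinate inclusions $\OO^{\oplus 2n} \hra \OO^{\oplus 2(n+1)}$, both for the maps $h_n$ themselves and for the $\Aff^1$-homotopies above.
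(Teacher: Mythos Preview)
Your approach is essentially the same as the paper's: the same rank-$4n$ subbundle
\[
V' = (U'_n \boxtimes U'_1) \oplus (\OO^{\oplus n-i} \boxtimes U''_1) \oplus (U''_n \boxtimes \OO) \oplus \OO^{\oplus n+i}
\]
inside the same ambient $\OO^{\oplus 8n}$, the same verification that the restriction to $\pt \times CP^{1}$ is constant, and the same conjugation trick with an $\Aff^{1}$-path $M(t)$ acting on the second tensor factor to produce the pointed homotopy.

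There is one small but genuine slip. You ask for $M(t) \in \GL_{2}$ with $M(0) = I$ and $M(1)$ equal to the coordinate swap $\sm{0&1\\1&0}$. No such path exists: any morphism $\Aff^{1} \to \GL_{2}$ has constant determinant (units in $\OO_{S}[t]$ are constant for reduced $S$), and $\det I = 1 \neq -1 = \det\sm{0&1\\1&0}$. What you want instead is the signed swap $\sm{0&-1\\1&0} \in SL_{2}$, which is a product of elementary matrices and hence is connected to $I$ by an explicit $\Aff^{1}$-path; it sends the line $\OO e_{3}$ to $\OO e_{1}$ just as well. This is exactly the $\GL$-analogue of the paper's $M_{1} \in Sp_{4} \cap O_{4}$ in Lemma \ref{L:HGr.HP1.to.RGr}, which is also a signed permutation rather than a literal swap. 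With that correction your argument goes through and coincides with the paper's.
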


This lemma is proven in the same way as Lemma \ref{L:HGr.HP1.to.RGr} using the equality
\[
([U'_{n}]-(n-i)) \boxtimes ([U'_{1}]-1) = [U'_{n} \boxtimes U'_{1}] + [\OO^{\oplus n-i}\boxtimes U''_{1}]
+ [U''_{n} \boxtimes \OO] - (3n-i)[\OO \boxtimes \OO]
\]
in $K_{0}([-n,n] \times CGr(n,2n))$
and the direct sum decompositions of vector bundles
\begin{gather*}
(U'_{n} \boxtimes U'_{1}) \oplus (U''_{n} \boxtimes U'_{1})
 = \OO^{\oplus 2n} \boxtimes U'_{1},
\\
(\OO^{\oplus n-i} \boxtimes U''_{1}) \oplus (\OO^{\oplus n+i} \boxtimes U''_{1})
 =
\OO^{\oplus 2n} \boxtimes U''_{1},
\\
(U''_{n} \boxtimes \OO) \oplus (U'_{n} \boxtimes \OO)
 = \OO^{\oplus 2n} \boxtimes \OO,
\\
(\OO^{\oplus n+i} \boxtimes \OO) \oplus (\OO^{\oplus n-i} \boxtimes \OO)
 =
\OO^{\oplus 2n} \boxtimes \OO,
\end{gather*}
yielding a decomposition of the trivial bundle of  rank $8n$ on $([-n,n] \times CGr(n,2n)) \times CP^{1}$
as the direct sum of two
subbundles of rank $4n$.

\begin{thm}
\label{T:BGL.finite}
There are $CP^{1+}$-spectra
$\BGL^{\finite}$ and $\BGL^{\geom}$ isomorphic to
$\BGL_{CP^{1}}$ in $SH_{CP^{1}}(S)$
with spaces
\begin{align*}
\BGL^{\finite}_{n} & =
[-4^{n},4^{n}] \times CGr(4^{n}, 2 \cdot 4^{n})
&
\BGL^{\geom}_{n} & = \mathbb Z \times CGr
\end{align*}
which are unions of affine Grassmannians.
The bonding maps $\BGL^{*}_{n} \wedge CP^{1+} \to \BGL_{n+1}^{*}$
of the two spectra
are morphisms of schemes or ind-schemes which are constant on the wedge
$\BGL^{*}_{n} \vee CP^{1+}$.
\end{thm}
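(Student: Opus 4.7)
The plan is to imitate the proof of Theorem \ref{T:finite} closely, substituting the maps $h_{n}$ of Lemma \ref{L:Gr.P1.to.Gr} for the maps $f_{2n}$ and $g_{n}$ used there. The situation is in fact cleaner because there is only one family of Grassmannians rather than two alternating families, so no pairs of lemmas have to be interleaved.

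First I would apply Proposition \ref{P:A1.homotopy} to each $h_{n}$ together with the null $\Aff^{1}$-homotopy on $([-n,n] \times CGr(n,2n)) \times \pt$ furnished by Lemma \ref{L:Gr.P1.to.Gr}, obtaining morphisms of pointed schemes
\[
H_{n} \colon \bigl([-n,n] \times CGr(n,2n)\bigr) \wedge CP^{1+} \to CGr(4n,8n)
\]
which are constant on the two wedge summands and which are compatible with the inclusions of Grassmannians. I would then define $\BGL^{\finite}$ as the $CP^{1+}$-spectrum whose $n$-th space is $[-4^{n},4^{n}] \times CGr(4^{n}, 2\cdot 4^{n})$ and whose bonding maps are the composition of $H_{4^{n}}$ with the inclusion $CGr(4 \cdot 4^{n}, 8 \cdot 4^{n}) \hra CGr(4^{n+1}, 2 \cdot 4^{n+1})$. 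Similarly $\BGL^{\geom}$ has each space $\colim_{m}\bigl([-m,m] \times CGr(m,2m)\bigr) = \ZZ \times CGr$ and bonding maps obtained by passing to the colimit in the $H_{n}$. The canonical inclusion $\BGL^{\finite} \to \BGL^{\geom}$ is a stable motivic weak equivalence by the same $\NN^{2}$-cofinality argument used in the proof of Theorem \ref{T:finite}.

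To produce the isomorphism $\BGL^{\geom} \cong \BGL_{CP^{1+}}$ in $SH_{CP^{1+}}(S)$, I would use Morel and Voevodsky's theorem \cite[Theorem 4.3.13]{Morel:1999ab} together with the fact that $CGr(m,2m) \to Gr(m,2m)$ is a Zariski-locally trivial affine-space bundle, yielding isomorphisms $\sigma_{n} \colon \ZZ \times CGr \cong \BGL_{n}$ in $H_{\bullet}(S)$, where $\BGL_{n}$ denotes the $n$-th space of $\BGL_{CP^{1+}}$, classified by the universal $K$-theory elements $[U']-m$ on each $[-m,m] \times CGr(m,2m)$. By formula \eqref{E:K0.formula} the bonding map of $\BGL^{\geom}$ realizes, up to $\sigma$, the $\times$-pairing of the universal class with $[U'_{1}]-1$, and this agrees with the bonding map of $\BGL_{CP^{1+}}$, which is cup product with the image of the $K$-theoretic Thom class of the trivial line bundle on $\Aff^{1}$ under the motivic weak equivalence $T \simeq CP^{1}/\pt$. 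Consequently the $\sigma_{n}$ assemble into a compatible inverse system of classes in $\varprojlim \BGL_{CP^{1+}}^{2n,n}(\BGL^{\geom}_{n})$, and by Theorem \ref{T:varprojlim.b} this system lifts to an element of $\BGL_{CP^{1+}}^{0,0}(\BGL^{\geom})$. Any such lift is an isomorphism in $SH_{CP^{1+}}(S)$ because each $\sigma_{n}$ is an isomorphism in $H_{\bullet}(S)$, by the standard homotopy-colimit argument in triangulated categories.

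The main obstacle, as in the proof of Theorem \ref{T:finite}, is the verification of the diagram analogous to \eqref{E:compatible}, namely the identification of the bonding map of $\BGL_{CP^{1+}}$ with cup product by $[U'_{1}]-1$ on $CP^{1+}$. This requires tracing through the equivalence $T \simeq CP^{1}/\pt$ and the description of $\BGL$ as a $T$-spectrum whose bonding maps are adjoint to multiplication by the $K$-theoretic Thom class of the trivial line bundle, then matching that Thom class with the class $[U'_{1}]-1$ under the identification. Once this identification is in place, everything else in the argument is formal and follows the template of \S\ref{S:finite}.
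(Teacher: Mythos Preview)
Your proposal is correct and follows essentially the same approach as the paper, which simply states that the theorem is proven in the same way as Theorem \ref{T:finite}. Your situation is in fact slightly cleaner than the hermitian case because every $\sigma_{n}$ is already an isomorphism in $H_{\bullet}(S)$ by Morel--Voevodsky, so you need not restrict to a cofinal subsystem as was done with the $\tau'_{4k+2}$.
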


This theorem is proven in the same way as Theorem \ref{T:finite}.


\end{document}